\theoremstyle{definition}
\newtheorem{theorem}{Theorem}[section]
\newtheorem{definition}[theorem]{Definition}
\newtheorem{proposition}[theorem]{Proposition}
\newtheorem{lemma}[theorem]{Lemma}
\newtheorem{remark}[theorem]{Remark}
\newtheorem{corollary}[theorem]{Corollary}
\newtheorem{question}[theorem]{Question}
\numberwithin{equation}{section}
\newcommand\ind{\operatorname{ind}}
\DeclareMathOperator*{\nul}{null}
\DeclareMathOperator*{\spa}{span}
\renewcommand\div{\operatorname{div}}
\newcommand{\tr}{\operatorname{tr}}
\DeclareMathOperator*{\id}{id}
\newcommand{\pr}{\partial}
\newcommand{\Lap}{\Delta}
\newcommand{\Ric}{\operatorname{Ric}}
\newcommand{\Hess}{\operatorname{Hess}}
\renewcommand*\d{\mathop{}\!\mathrm{d}}
\DeclareMathOperator{\ct}{ct}
\newcommand{\pit}{\frac{\pi}{2}}
\newcommand{\Conf}{\operatorname{Conf}}
\DeclareMathOperator*{\artanh}{artanh}
\def\MR#1{}
\title[Capillary minimal surfaces in spherical caps: Low energy]{Free boundary and capillary minimal surfaces in spherical caps II: Low energy}
\author{Jonathan J. Zhu}
\address{University of Washington, Seattle, WA, USA}
\email{jonozhu@uw.edu}
\begin{document}
\begin{abstract}
This is the second of two articles in which we investigate the geometry of free boundary and capillary minimal surfaces in balls $B_R\subset\mathbb{S}^3$. In this article, we find monotonicity formulae which imply that capillary minimal surfaces maximise a certain modified energy in their conformal orbit (preserving $B_R$). In the hemisphere, this energy is precisely the capillary energy. We also prove a partial characterisation by index for capillary minimal surfaces in the hemisphere, analogous to Urbano's characterisation of the Clifford torus. 
\end{abstract}
\date{\today}
\maketitle

%\tableofcontents
\vspace{-2em}
\section{Introduction}
\label{sec:intro}

This article is the second of two papers (together with \cite{NZ25a}) in which we investigate the geometry of free boundary and capillary minimal surfaces $\Sigma$ in geodesic balls $B^3_R\subset \mathbb{S}^3$. We endeavour to study these surfaces under a unified framework, with key parameters given by the cap radius $R$ and the contact angle $\gamma$. In this article, we focus on rigidity phenomena for surfaces of low energy or of low index. 
Capillary minimal surfaces arise as (relative) boundaries of regions $\Omega \subset B_R$ that are critical for the \textit{wetting energy}
\begin{equation}\label{eq:cap-energy-intro}
 \mathcal{A}^\gamma(\Omega) = |\pr\Omega \cap B_R| + \cos\gamma\, |\pr\Omega \cap \pr B_R| ,
 \end{equation}
where $\gamma \in (0,\pit]$ is fixed. Classical solutions arising this way yield a surface $\Sigma = \pr \Omega \cap B_R$, and a `wet' portion $S^- = \pr\Omega \cap \pr B^n_R$ of the barrier that satisfies the contact angle condition \[\sin\gamma \, \bar{\eta} = \eta + \cos\gamma \, \bar{\nu}\]
along $\pr\Sigma=\pr S^-$, where $\bar{\eta}$ is the outward normal of $B_R$ in $\mathbb{S}^n$, $\eta$ is the outer conormal of $\pr\Sigma$ in $\Sigma$ and $\bar{\nu}$ is the outer conormal of $\pr\Sigma$ in $S^-$. The capillary setting has seen a sharp rise in recent interest - see for instance \cite{CX19, LZZ, So23, HS23, CEL24, dMEL}; and the \textit{free boundary} setting $\gamma=\pit$ is an important special case, with a wealth of recent progress - we highlight \cite{FS13, Li19, LM23, Me23, KKMS, CFM25}. 

\textit{Dual} to the free boundary setting (see \cite{NZ25a}), we consider capillary minimal surfaces in the hemisphere ($R=\pit$), which are also significant as such surfaces model tangent cones (hence singularities) of other capillary minimal surfaces. In this distinguished setting, we pose:

\begin{question}
\label{q:energy}
What are the capillary minimal surfaces in $\mathbb{S}^3_+$ of \textit{least energy} $\mathcal{A}^\gamma$? 
\end{question}

By monotonicity formulae for free boundary minimal varifolds (and also from our Theorem \ref{thm:conf-max-sph-intro-cap} below), the least energy capillary minimal surface in $\mathbb{S}^3_+$ is a totally geodesic half-equator. The question becomes: What is the capillary surface of \textit{second} least energy?

In the model setting of \textit{closed} minimal surfaces in $\mathbb{S}^3$, the least area surface is the totally geodesic equator. By the work of Marques-Neves \cite{MN14} on the Willmore conjecture, the second least area surface is the Clifford torus. In \cite{MN14}, two results related to the index are crucial: their construction of 5-parameter families that decrease area (using the $\Conf(\mathbb{S}^3)$-action); and Urbano's classification of (2-sided) minimal surfaces with Morse index at most 5 \cite{Ur90}. 

In this article, we show that the $\Conf(B_R)$-action decreases the following modified energy of a capillary minimal surface in $B_R$ with contact angle $\gamma$ and wet portion $S^-$:

\begin{equation}
\mathcal{E}^{R,\gamma}(\Sigma, S^-) = |\Sigma| +\cos\gamma \csc^2 R\, |S^-| + \sin\gamma \cot R\, |\pr\Sigma|.
\end{equation}

We have the following conformal maximisation result, stated somewhat more generally:

\begin{theorem}
\label{thm:conf-max-sph-intro-cap}
Let $(\Sigma^{n-1},\pr\Sigma)$ be a capillary minimal surface in $(B^n_R , \pr B^n_R)$ with contact angle $\gamma \in (0,\pit]$. Given a conformal diffeomorphism $\Psi \in \Conf(B_R)$, consider the image surface $\Sigma_* = \Psi_*\Sigma$, which also contacts $\pr B_R$ at angle $\gamma$. Similarly, let $S^-_* = \Psi_* S^-$.

Assume that either $n-1=2$, or $R=\pit$. Then $  \mathcal{E}^{R,\gamma}(\Sigma_*, S^-_*)\leq \mathcal{E}^{R,\gamma}(\Sigma, S^-)$. 
\end{theorem}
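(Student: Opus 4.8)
The plan is to reduce to a one–parameter conformal flow, show by a first–variation computation that $\Sigma$ is a critical point of $\mathcal{E}^{R,\gamma}$ along such flows, and then upgrade this to a global maximum by a monotonicity formula — equivalently, by exhibiting a conformally invariant completion of $\mathcal{E}^{R,\gamma}$.

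\smallskip\noindent\emph{Step 1: reduction to a one–parameter flow.} Since $\Conf(B_R)$ is, as an abstract group, the isometry group of hyperbolic $n$–space (conformal automorphisms of a round ball), its $KAK$ decomposition writes any $\Psi$ in the identity component as $\Psi = \kappa_1\circ\Psi_{s_0}\circ\kappa_2$, where $\kappa_1,\kappa_2$ fix the centre $o$ of $B_R$ and hence are isometries of $\mathbb{S}^n$ — so they preserve $|\Sigma|$, $|S^-|$ and $|\pr\Sigma|$ individually — and $\{\Psi_s\}_{s\in\R}$ is a fixed one–parameter group of conformal ``dilations'' of $B_R$, fixing two points of $\pr B_R$ (reflections outside the identity component are likewise absorbed into $\mathbb{S}^n$–isometries). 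Replacing $\Sigma$ by $(\kappa_2)_*\Sigma$, which is again capillary minimal, it suffices to prove $V(s)\le V(0)$ for all $s$, where $V(s):=\mathcal{E}^{R,\gamma}\big((\Psi_s)_*\Sigma,(\Psi_s)_*S^-\big)$. Let $X$ generate $\{\Psi_s\}$: it is a conformal Killing field of $\mathbb{S}^n$ tangent to $\pr B_R$, and its conformal factor $\psi=\tfrac1n\div_{\mathbb{S}^n}X$ is the restriction of a linear function $\langle b,\cdot\rangle$ on $\R^{n+1}$ with $\langle b,o\rangle=0$; this orthogonality is exactly the condition that $X$ be tangent to $\pr B_R$ (integrate $\langle X,\bar\eta\rangle=0$ over $\pr B_R$, using that the Killing part of $X$ is divergence free).

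\smallskip\noindent\emph{Step 2: $\Sigma$ is a critical point.} Because $X$ is conformal, $\div_{\Sigma_s}X=(n-1)\psi$, $\div_{S^-_s}X=(n-1)\psi$ and $\div_{\pr\Sigma_s}X=(n-2)\psi$ on the time–$s$ surfaces $\Sigma_s:=(\Psi_s)_*\Sigma$, etc., so the first variation formula gives $V'(s)$ as the corresponding combination of $\int_{\Sigma_s}\psi$, $\int_{S^-_s}\psi$, $\int_{\pr\Sigma_s}\psi$. I would then integrate by parts: on a minimal submanifold of $\mathbb{S}^n$ the position vector restricts to an eigenfunction, so at $s=0$ one has $\Lap_{\Sigma}\psi=-(n-1)\psi$; and $\pr B_R$ is intrinsically a round sphere of radius $\sin R$ on which $\psi$ restricts to a first eigenfunction (using $\langle b,o\rangle=0$), so $\Lap_{\pr B_R}\psi=-(n-1)\csc^2R\,\psi$. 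This converts the bulk integrals into boundary integrals over $\pr\Sigma$ of $\langle b,\eta\rangle$ and $\langle b,\bar\nu\rangle$, and the contact–angle identity $\sin\gamma\,\bar\eta=\eta+\cos\gamma\,\bar\nu$ together with $\langle b,\bar\eta\rangle=\cot R\,\psi$ on $\pr B_R$ collapses everything — and it is precisely here that the coefficients $\cos\gamma\csc^2R$ and $\sin\gamma\cot R$ in $\mathcal{E}^{R,\gamma}$ are engineered. One finds
\begin{equation}
V'(s)=\int_{\Sigma_s}\langle b,\Hvec_{\Sigma_s}\rangle+(n-3)\,\sin\gamma\cot R\int_{\pr\Sigma_s}\psi,
\end{equation}
where $\Hvec_{\Sigma_s}$ is the mean curvature vector of $\Sigma_s$ in $\mathbb{S}^n$. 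The hypothesis $n-1=2$ or $R=\pit$ is exactly what makes the last term vanish, leaving $V'(s)=\int_{\Sigma_s}\langle b,\Hvec_{\Sigma_s}\rangle$; in particular $V'(0)=0$ by minimality, and since this holds for the flow toward every boundary point, $0$ is a critical point of $\Psi\mapsto\mathcal{E}^{R,\gamma}(\Psi_*\Sigma,\Psi_*S^-)$.

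\smallskip\noindent\emph{Step 3 and the main obstacle.} It remains to pass from critical point to global maximum, and this is where I expect the real work to lie. The natural mechanism is a monotonicity formula realised as a conformally invariant completion: a ``capillary Willmore energy'' $\mathcal{W}^{R,\gamma}(\Sigma,S^-)=\mathcal{E}^{R,\gamma}(\Sigma,S^-)+\tfrac14\int_\Sigma|\Hvec_\Sigma|^2+(\text{curvature boundary terms along }\pr\Sigma)$ that is invariant under all of $\Conf(B_R)$, satisfies $\mathcal{W}^{R,\gamma}\ge\mathcal{E}^{R,\gamma}$, and equals $\mathcal{E}^{R,\gamma}$ precisely when $\Hvec_\Sigma\equiv0$ and the boundary curvatures take their capillary–minimal values; granting this, $\mathcal{E}^{R,\gamma}((\Psi_s)_*\Sigma,\cdot)\le\mathcal{W}^{R,\gamma}((\Psi_s)_*\Sigma,\cdot)=\mathcal{W}^{R,\gamma}(\Sigma,S^-)=\mathcal{E}^{R,\gamma}(\Sigma,S^-)$. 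When $R=\pit$, $\mathcal{E}^{R,\gamma}$ is the capillary energy $\mathcal{A}^\gamma$ itself and $\pr B_R$ is totally geodesic, which is what lets the completion go through in every dimension; when $n-1=2$ it is the conformal invariance of the classical Willmore energy of surfaces, adapted to the capillary boundary, that does the work, as in the closed case of Li--Yau and Marques--Neves \cite{MN14}. The obstacle — and the reason the theorem is stated under exactly these two hypotheses — is that a conformally invariant Willmore–type completion of a hypersurface energy with these boundary data is available only for surfaces, or in higher dimensions only for the degenerate contact geometry $R=\pit$; constructing it with the correct boundary terms, together with the careful bookkeeping in Step 2 that produces the clean factor $(n-3)\sin\gamma\cot R$, is the heart of the proof.
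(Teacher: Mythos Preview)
Your Step~2 computation is essentially correct and recovers the paper's key identity: the first variation of $\mathcal{E}^{R,\gamma}$ along a conformal flow reduces to the mean curvature term, plus a boundary remainder proportional to $(n-3)\sin\gamma\cot R$. For $n-1=2$, your Step~3 Willmore-completion argument is exactly the paper's first proof (Section~\ref{sec:conf-willmore}): Gauss--Bonnet applied to both $\Sigma$ and $S^-$, together with the conformal scaling of $|\mathring{\mathbf A}|^2$, gives the invariant and the inequality directly, with no need for the flow picture at all.

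The genuine gap is in Step~3 for the case $R=\pit$, $n>3$. There is no conformally invariant Willmore-type completion for hypersurfaces of dimension $\geq 3$; the fact that $\pr B_{\pit}$ is totally geodesic does not help here. What actually closes this case is the el~Soufi--Ilias sign argument \cite{eSI}: writing the mean curvature term on $\Sigma_t$ via the conformal transformation law (\ref{eq:mc-conf}) and pulling back to $\Sigma$, one obtains
\[
-\int_{\Sigma_t}\langle \mathbf{H}_{\Sigma_t},V_a\rangle = -\int_\Sigma\langle \mathbf{H}_\Sigma,V_a\rangle e^{k\psi} + k\int_\Sigma \frac{u_a - u_a\circ\Psi^a_t}{1-u_a^2}\,|\pi^\perp V_a|^2 e^{k\psi},
\]
and the second integrand is nonpositive for $t\geq 0$ precisely because $\Psi^a_t$ is the gradient flow of $u_a$. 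This step has no Willmore analogue in higher dimension. Note that your Step~1 is compatible with this: when $R=\pit$ the rotational part $K$ of your generator $X$ vanishes (since $\cos R=0$), so your flow is exactly $\Psi^b_t$ and the el~Soufi--Ilias trick applies verbatim.

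There is also a structural difference worth noting for the two-dimensional monotonicity. You work with one-parameter subgroups of $\Conf(B_R)$, which for $R\neq\pit$ are generated by $V_b+K$ with a nontrivial Killing part. The paper instead leaves $\Conf(B_R)$ entirely and flows by pure $V_a$ in the larger group $\Conf(\mathbb{S}^n)$, letting the ball move to $B_{R_t}(o_t)$ and tracking $\mathcal{E}^{R_t,\gamma}$ along the way (Theorems~\ref{thm:mono-sph-capillary} and~\ref{thm:mono-sph-capillary-hemi}); this is what makes the el~Soufi--Ilias sign argument available uniformly, since the flow is always a gradient flow. Your in-group approach can still be completed via Willmore when $n-1=2$, but the monotonicity itself is less transparent along the mixed flow $V_b+K$.
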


This result is new even for free boundary minimal surfaces ($\gamma=\pit$), in which case the wet portion $S^-$ does not contribute to the above functional. (The special case $R=\gamma=\pit$ follows from the results of Li-Yau \cite{LY82} for closed surfaces, and Fraser-Schoen \cite{FS11} established a version in the Euclidean limit $R\to0$.) Returning to the hemisphere case $R=\pit$, we point out that $\mathcal{E}^{\pit,\gamma}$ precisely coincides with the capillary energy $\mathcal{A}^\gamma$, which we highlight as: 

\begin{corollary}
\label{cor:conf-max-sph-cap-intro}
Let $(\Sigma,\pr\Sigma)$ be a capillary minimal surface in $\mathbb{S}^3_+$, which contacts $\pr \mathbb{S}^3_+$ at angle $\gamma \in (0,\pit]$, and bounds the region $\Omega \subset\mathbb{S}^3_+$. Given $\Psi \in \Conf(\mathbb{S}^3_+)$, let $\Omega_* = \Psi(\Omega)$. 

Then $\mathcal{A}^\gamma(\Omega_*)\leq \mathcal{A}^\gamma(\Omega) $. 
\end{corollary}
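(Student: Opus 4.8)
The plan is to obtain Corollary \ref{cor:conf-max-sph-cap-intro} as an immediate consequence of Theorem \ref{thm:conf-max-sph-intro-cap}, by observing that on the hemisphere $R=\pit$ the modified energy $\mathcal{E}^{\pit,\gamma}$ coincides on the nose with the wetting energy $\mathcal{A}^\gamma$, and that the $\Conf(\mathbb{S}^3_+)$-action carries the region $\Omega$ to a region $\Omega_*$ whose relative boundary and wet portion are exactly the pushforwards $\Sigma_*$ and $S^-_*$ appearing in the theorem.

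First I would record the trigonometric identities $\csc^2\pit = 1$ and $\cot\pit = 0$, so that for any capillary surface $(\Sigma,\pr\Sigma)$ in $\mathbb{S}^3_+$ with wet portion $S^-$ one has $\mathcal{E}^{\pit,\gamma}(\Sigma,S^-) = |\Sigma| + \cos\gamma\,|S^-|$. When $\Sigma$ bounds $\Omega\subset\mathbb{S}^3_+$ in the sense that $\Sigma = \pr\Omega\cap\mathbb{S}^3_+$ and $S^- = \pr\Omega\cap\pr\mathbb{S}^3_+$, this is precisely $\mathcal{A}^\gamma(\Omega)$ as defined in \eqref{eq:cap-energy-intro}. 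Next I would check the behaviour of the region under $\Psi$: since $\Psi\in\Conf(\mathbb{S}^3_+)$ is a diffeomorphism of $\ol{\mathbb{S}^3_+}$ preserving $\mathbb{S}^3_+$ and therefore its boundary sphere, the image $\Omega_* = \Psi(\Omega)$ satisfies $\pr\Omega_*\cap\mathbb{S}^3_+ = \Psi(\pr\Omega\cap\mathbb{S}^3_+) = \Sigma_*$ and $\pr\Omega_*\cap\pr\mathbb{S}^3_+ = \Psi(\pr\Omega\cap\pr\mathbb{S}^3_+) = S^-_*$, so that $\mathcal{A}^\gamma(\Omega_*) = \mathcal{E}^{\pit,\gamma}(\Sigma_*,S^-_*)$ by the same identification.

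With these in place, I would apply Theorem \ref{thm:conf-max-sph-intro-cap} in the case $n-1=2$, $R=\pit$ (both hypotheses in fact hold), which gives $\mathcal{E}^{\pit,\gamma}(\Sigma_*,S^-_*)\leq\mathcal{E}^{\pit,\gamma}(\Sigma,S^-)$; chaining this with the two equalities above yields $\mathcal{A}^\gamma(\Omega_*)\leq\mathcal{A}^\gamma(\Omega)$. There is no genuine obstacle here beyond Theorem \ref{thm:conf-max-sph-intro-cap} itself; the only step requiring a moment's care is confirming that $\Psi$ splits $\pr\Omega$ correctly into its interior and boundary parts, which is immediate since $\Psi$ is a homeomorphism fixing both $\mathbb{S}^3_+$ and $\pr\mathbb{S}^3_+$ setwise.
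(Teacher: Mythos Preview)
Your proposal is correct and matches the paper's own approach: the paper likewise presents this corollary as the specialisation of Theorem~\ref{thm:conf-max-sph-intro-cap} to $R=\pit$, simply noting that $\mathcal{E}^{\pit,\gamma}$ coincides with the capillary energy $\mathcal{A}^\gamma$ because $\csc^2\pit=1$ and $\cot\pit=0$. Your extra sentence verifying that $\Psi$ carries the interior and boundary pieces of $\pr\Omega$ to those of $\pr\Omega_*$ is a reasonable bit of care that the paper leaves implicit.
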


Similarly, a linear analysis of $\mathcal{A}^\gamma$ suggests that any capillary minimal surface in the hemisphere $\mathbb{S}^3_+$ should have $1+3=4$ unstable directions; again Theorem \ref{thm:conf-max-sph-intro-cap} implies that 3 of these directions may be realised by the $\Conf(\mathbb{S}^3_+)$-orbit. Towards an index characterisation analogous to Urbano's theorem, we will show:

%urbano
\begin{theorem}
\label{thm:urbano-intro}
Let $(\Sigma,\pr\Sigma) \looparrowright (\mathbb{S}^3, \pr B_\pit)$, be a 2-sided capillary minimal surface with contact angle $\gamma$. If $\Sigma$ is a totally geodesic disc, then $\ind(\Sigma)=1$.

If $\Sigma$ is not totally geodesic, then $\ind(\Sigma)\geq 4$. 

Further, assume that $\cos\gamma \in [0, \frac{2}{5})$ and $\int_{\pr\Sigma}H^{\pr\Sigma}_\Sigma \leq 0$, where $H^{\pr\Sigma}_\Sigma$ is the geodesic curvature of $\pr\Sigma$ in $\Sigma$. Then $\ind(\Sigma)=4$ holds only if $\Sigma$ is a rotationally symmetric annulus.
\end{theorem}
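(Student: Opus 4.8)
The plan is to analyse the capillary index form and adapt Urbano's strategy. For a capillary minimal surface $(\Sigma,\pr\Sigma)$ in $(\mathbb{S}^3,\pr B_\pit)$ with contact angle $\gamma$ and unit normal $N$, I would first observe that since $\pr B_\pit$ is totally geodesic in $\mathbb{S}^3$, the contact-angle relation forces $\cot\gamma\,\mathrm{II}_\Sigma(\tau,\tau)=-H^{\pr\Sigma}_\Sigma$ along $\pr\Sigma$ (with $\tau$ the unit tangent of $\pr\Sigma$), so that the second variation of $\mathcal{A}^\gamma$ takes the form
\begin{equation*}
Q(u,u)=\int_\Sigma\bigl(|\nabla_\Sigma u|^2-(|A|^2+2)u^2\bigr)+\int_{\pr\Sigma}H^{\pr\Sigma}_\Sigma\,u^2,\qquad \pr_\eta u=-H^{\pr\Sigma}_\Sigma\,u \text{ on }\pr\Sigma .
\end{equation*}
Viewing $\mathbb{S}^3\subset\R^4$ with $\mathbb{S}^3_+=\{x_0\ge0\}$, $\pr B_\pit=\{x_0=0\}$, I would use the standard identities for minimal $\Sigma^2\subset\mathbb{S}^3$: the Jacobi operator $L$ satisfies $Lx_\alpha=|A|^2x_\alpha$ on the ambient coordinates $x_\alpha=\langle x,e_\alpha\rangle$ and $Ln_\alpha=2n_\alpha$ on the Gauss components $n_\alpha=\langle N,e_\alpha\rangle$. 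For the totally geodesic disc, $\Sigma$ is a great hemisphere, so $|A|\equiv0$ and $\pr\Sigma$ is a geodesic of $\Sigma$, whence $H^{\pr\Sigma}_\Sigma\equiv0$ and $Q$ is the Neumann form of $\Delta_{\mathbb{S}^2_+}+2$ \emph{regardless of} $\gamma$; since the Neumann spectrum of $-\Delta_{\mathbb{S}^2_+}$ is $0,2,2,6,\dots$, exactly one eigenvalue is negative and $\ind(\Sigma)=1$.

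For the second claim, suppose $\Sigma$ is not totally geodesic. The function $x_0|_\Sigma$ vanishes on $\pr\Sigma$, so its boundary contributions drop and $Q(x_0,x_0)=-\int_\Sigma|A|^2x_0^2<0$. For $w\in\R^3:=\spa\{e_1,e_2,e_3\}$, the conformal field $X_w=\nabla^{\mathbb{S}^3}\langle x,w\rangle$ is tangent to $\pr B_\pit$ and preserves $\mathbb{S}^3_+$, and such fields span $\mathfrak{conf}(\mathbb{S}^3_+)$ modulo the isometries of $\mathbb{S}^3_+$ (the pole-fixing rotations). Its normal variation is $n_w=\langle N,w\rangle$, so — using $\mathcal{E}^{\pit,\gamma}=\mathcal{A}^\gamma$ — Theorem~\ref{thm:conf-max-sph-intro-cap} gives $Q(n_w,n_w)\le0$ for all $w$. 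It then suffices to show $Q$ is negative definite on $V=\spa\{n_1,n_2,n_3,x_0\}$, which is $4$-dimensional unless $\Sigma$ lies in a great $\mathbb{S}^2$ (hence is totally geodesic). The content is the cross terms $Q(n_w,x_0)=-2\int_\Sigma x_0 n_w$; I would kill these either by a Hersch-type balancing of the conformal frame or, more robustly, via the equality case of Theorem~\ref{thm:conf-max-sph-intro-cap}, according to which a degenerate conformal second variation forces $\Sigma$ totally geodesic. This yields $\ind(\Sigma)\ge4$.

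For the rigidity statement, assume moreover $\cos\gamma\in[0,\tfrac25)$, $\int_{\pr\Sigma}H^{\pr\Sigma}_\Sigma\le0$, and $\ind(\Sigma)=4$. Enlarging the test space to $\spa\{x_0,x_1,x_2,x_3,n_1,n_2,n_3\}$ and intersecting with the codimension-$4$ subspace on which $Q\ge0$, one obtains a subspace $U$ of dimension $\ge3$ of such functions with $Q|_U\ge0$. Feeding this into the identities $\sum_\alpha x_\alpha^2=1=\sum_\alpha n_\alpha^2$ — for which $\sum_\alpha Q(x_\alpha,x_\alpha)=-\int_\Sigma|A|^2+\int_{\pr\Sigma}H^{\pr\Sigma}_\Sigma$ and $\sum_\alpha Q(n_\alpha,n_\alpha)=-2|\Sigma|+\int_{\pr\Sigma}H^{\pr\Sigma}_\Sigma$ — together with $\int_{\pr\Sigma}H^{\pr\Sigma}_\Sigma\le0$ and the quantitative bound $\cos\gamma<\tfrac25$ to absorb the boundary error terms with a favourable sign, I expect to obtain a sharp curvature pinching on $\Sigma$ (morally $|A|^2\le2$, or an integrated version). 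A capillary minimal surface in $\mathbb{S}^3_+$ satisfying such a pinching must be either totally geodesic — excluded since $\ind(\Sigma)\ne1$ — or a rotationally symmetric annulus, by the classification of low-curvature capillary minimal surfaces in $\mathbb{S}^3_+$ from \cite{NZ25a} (the analogue of the equator/Clifford torus dichotomy under $|A|^2\le2$ in the closed case).

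The main obstacle is this last step: the boundary integrals produced by the coordinate and Gauss test functions must be estimated essentially exactly, and it is precisely there that the hypotheses $\cos\gamma<\tfrac25$ and $\int_{\pr\Sigma}H^{\pr\Sigma}_\Sigma\le0$ are consumed to close the pinching inequality and to feed into the classification input. A secondary difficulty is the negative-definiteness of $Q$ on the four-dimensional space $V$ in the second step, i.e.\ ruling out the degenerate and balanced configurations.
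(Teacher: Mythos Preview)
Your treatment of the totally geodesic case is correct and matches the paper. The remaining two steps, however, each contain a genuine gap.

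\textbf{The lower bound $\ind(\Sigma)\ge 4$.} Your four-dimensional test space $V=\spa\{n_1,n_2,n_3,x_0\}$ need not carry a negative-definite $Q^{\mathrm{A}}$. The $n_i$ (for $i>0$) are in fact Robin eigenfunctions of $Q^{\mathrm{A}}$ with eigenvalue $-2$ (they satisfy $(\Delta+|A|^2)n_i=0$ and the Robin boundary condition $(\partial_\eta-q_{\mathrm{A}})n_i=0$), so $Q^{\mathrm{A}}(n_i,\phi)=-2\int_\Sigma n_i\phi$ for every $\phi$. Writing $u=v+bx_0$ with $v\in\spa\{n_1,n_2,n_3\}$, one computes $Q^{\mathrm{A}}(u)=-2\|u\|_{L^2}^2+b^2\int_\Sigma(2-|A|^2)x_0^2$. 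If $|A|^2$ is small on the support of $x_0$ while $x_0$ is nearly $L^2$-orthogonal to $\spa\{n_i\}$, this can be nonnegative for a suitable choice of $v$. Neither Hersch balancing (you cannot balance the fixed function $x_0$) nor the equality case of the conformal maximisation addresses this. The paper's device is to pass to the shifted form $Q^{\mathrm{A}}_*=Q_{|A|^2,\,q_{\mathrm{A}}}$, for which the $n_i$ lie in the \emph{kernel} (so all cross terms with any fourth function vanish automatically), and then to produce the fourth nonpositive direction abstractly via the Dirichlet/Steklov decomposition: either $\lambda_0^{\mathrm{D}}\le 0$, or else $\lambda_0^{\mathrm{D}}>0$ and simplicity of the lowest Steklov eigenvalue forces $\lambda_0^{\mathrm{S}}<0$ (since the $0$-Steklov eigenspace already contains the three $n_i$). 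This yields $\ind_0(Q^{\mathrm{A}}_*)\ge 4$, and since $Q^{\mathrm{A}}(u)=Q^{\mathrm{A}}_*(u)-2\|u\|_{L^2}^2$, one gets $\ind(Q^{\mathrm{A}})\ge 4$ unconditionally. Note in particular that the conformal maximisation theorem is not used at all for this step.

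\textbf{The rigidity when $\ind(\Sigma)=4$.} Your proposed route --- derive an $|A|^2\le 2$-type pinching from the seven test functions and then invoke a classification from \cite{NZ25a} --- does not match either the actual argument or the content of that reference. The paper proceeds in two distinct stages. First, it controls the \emph{topology}: from $\ind_0(Q^{\mathrm{A}}_*)=4$ one extracts enough eigenvalue information (Proposition~\ref{prop:area-ind-1}) to run a Hersch-balancing argument on the \emph{conformal images} $u_i=\langle\Phi\circ x,e_i\rangle$ against the variational characterisations for $Q^{\mathrm{A}}_*$, yielding $\int_\Sigma|A|^2+\int_{\pr\Sigma}q_{\mathrm{A}}\le 2|\Phi_*\Sigma|$. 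Combining this with Gauss--Bonnet on $\Sigma$ and on the wet surface $S^-$, and with the conformal maximisation of $\mathcal{A}^\gamma$, gives an inequality of the form $\cos\gamma\,(-2\pi\chi(S^-)-|S^-|+2|\Phi(S^-)|)\le 4\pi\chi(\Sigma)$. The hypothesis $\cos\gamma<\tfrac{2}{5}$ enters \emph{here}, together with the crude bound $|S^-|\le 4\pi$, to force $g=0$ and $b=2$; it is not a device for closing a curvature pinching. Second, once $\Sigma$ is known to be an annulus, the paper uses the \emph{duality} of \cite{NZ25a}: the Gauss map sends $\Sigma$ to a free boundary minimal annulus $\tilde\Sigma$ in a cap $B_{\tilde R}$, and one checks $Q^{\mathrm{A}}_*=\tilde Q^{\mathrm{S}}$, so $\ind(\tilde Q^{\mathrm{S}})\le 1$; then Lima--Menezes' characterisation of free boundary minimal annuli immersed by first eigenfunctions gives rotational symmetry of $\tilde\Sigma$, hence of $\Sigma$. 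There is no pointwise or integrated curvature classification of the kind you invoke.
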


(See Theorem \ref{thm:urbano} and Lemma \ref{lem:tot-geo-index} for precise statements.)

For $\gamma=\pit$, a free boundary minimal surface in $\mathbb{S}^3_+$ must have $H^{\pr\Sigma}_\Sigma\equiv 0$, so the boundary curvature condition above is automatically satisfied. For other $\gamma$, there are also certain natural conditions on $\Sigma$ that guarantee $H^{\pr\Sigma}_\Sigma\leq 0$ - for instance, if $\Sigma$ is graphical (cf. \cite[Section 4.4]{PTV}). We remark that some boundary curvature condition as above should be necessary to obtain a full uniqueness result. Indeed, when $\gamma <\pit$ we may not expect uniqueness of rotationally symmetric capillary minimal annuli. (As we discussed in \cite[Remark 1.3]{NZ25a}, numerical experiments suggest that there is a $\gamma_*$ so that for each contact angle $\gamma \in(\gamma_*,\pit)$, there may be multiple embedded rotationally symmetric capillary minimal annuli in $\mathbb{S}^3_+$; however, only one is a radial graph and hence satisfies $H^{\pr\Sigma}_\Sigma < 0$.) Finally, the angle restriction $\cos\gamma < \frac{2}{5}$ appears to be a technical condition, at least with current methods. 

In this and our companion paper \cite{NZ25a}, we (generally) take a parametric approach towards minimal surfaces. Accordingly, there are weaker notions of capillary surface for which our methods hold (see Section \ref{sec:contact-angle} for discussion).  

An interesting feature of our approach to conformal maximisation is that Theorem \ref{thm:conf-max-sph-intro-cap} will actually follow from a monotonicity formula approach, which surprisingly \textit{leaves} the group $\Conf(B^3_R)$ and instead gives monotonicity on paths in the larger conformal group $\Conf(\mathbb{S}^3) \simeq \Conf(\mathbb{B}^4)$. 

We now briefly discuss the above and other features of our conformal maximisation and index characterisation results.

\subsection{Conformal monotonicity}

In the setting of closed surfaces in $\mathbb{S}^3$, Li and Yau \cite{LY82} observed (using the Willmore energy) that minimal surfaces maximise area in their $\Conf(\mathbb{S}^3)$-orbit. They used this observation to define an important notion of \textit{conformal volume}. An alternative proof was given by Montiel and Ros \cite{MR86} (using a special vector field method). In the closed setting, this conformal maximisation was extended to minimal submanifolds $\Sigma^k \looparrowright \mathbb{S}^n$ of any dimension and codimension by el Soufi and Ilias \cite{eSI}, using a monotonicity approach. 

Turning to free boundary surfaces in the Euclidean ball, Fraser and Schoen showed that a free boundary minimal surface $\Sigma^2 \subset \mathbb{B}^n$ maximises \textit{boundary length} $|\pr\Sigma|$ in its $\Conf(\mathbb{B}^n)$-orbit. Their first proof in \cite{FS11} again uses the Willmore energy, while in \cite{FS13} they gave another proof using a special vector field. Fraser-Schoen defined analogous notions of \textit{boundary conformal volume} and \textit{relative conformal volume}, and posed two natural questions:

\begin{itemize}
\item Does a free boundary minimal submanifold $\Sigma^k \subset \mathbb{B}^n$ maximise boundary mass $|\pr\Sigma|$ in its $\Conf(\mathbb{B}^n)$-orbit, for any dimension $k$? (See \cite[Conjecture 3.7]{FS13}.)
\item Does a free boundary minimal submanifold $\Sigma^k \subset \mathbb{B}^n$ maximise interior area $|\Sigma|$ in its $\Conf(\mathbb{B}^n)$-orbit? (See \cite[Question 3.2]{FS13}; also \cite[Section 7]{Li19}.)
\end{itemize}

For free boundary surfaces in a spherical cap $B^n_R \subset \mathbb{S}^n_R$, the functional $\mathcal{E}^{\pit,R}$ will not depend on the wet surface; as such, submanifolds $(\Sigma^k, \pr\Sigma) \looparrowright (\mathbb{S}^n, \pr B^n_R)$ we more generally define
\[
\mathcal{E}^{R}(\Sigma) := |\Sigma|  +  \cot R\, |\pr\Sigma|.
\]

Our methods will yield the following:

\begin{theorem}
\label{thm:conf-max-sph-intro}
Let $(\Sigma^k,\pr\Sigma)$ be a free boundary minimal submanifold in $(\mathbb{S}^n, \pr B^n_R)$. Given $\Psi \in \Conf(B^n_R)$, let $\Sigma_* = \Psi_* \Sigma$. 

Assume either $k=2$, or $R=\pit$. Then $\mathcal{E}^{R}(\Sigma_*) \leq \mathcal{E}^{R}(\Sigma)$. 
\end{theorem}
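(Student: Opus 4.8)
The plan is to recast the problem in terms of a canonical family of conformal dilations and prove monotonicity along them. Realise $\mathbb{S}^n$ as the unit sphere in $\R^{n+1}$, equivalently as $\pr\mathbb{B}^{n+1}$, so that $\Conf(\mathbb{S}^n)\cong\Conf(\mathbb{B}^{n+1})\cong\operatorname{Isom}(\mathbb{H}^{n+1})$; under this identification $B^n_R$ is the ideal boundary of a totally geodesic copy $\mathbb{H}^n\subset\mathbb{H}^{n+1}$, with centre corresponding to $0\in\mathbb{B}^{n+1}$, and $\Conf(B^n_R)$ is its side-preserving stabiliser. For $\xi\in\mathbb{B}^{n+1}$ let $\phi_\xi\in\Conf(\mathbb{S}^n)$ be the conformal transformation with $\phi_0=\operatorname{id}$ induced by the hyperbolic translation taking $0$ to $\xi$, so that $\phi_\xi^*g_{\mathbb{S}^n}=u_\xi^2\,g_{\mathbb{S}^n}$ with $u_\xi(\omega)=\frac{1-|\xi|^2}{|\omega-\xi|^2}$. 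Any $\Psi\in\Conf(B^n_R)$ is $\phi_\xi$ post-composed with a rotation fixing the centre of $B^n_R$ (an ambient isometry, hence irrelevant to $\mathcal{E}^R$), where $\xi=\Psi(0)$ lies in the slice $\mathbb{H}^n$. Allowing \emph{all} $\xi\in\mathbb{B}^{n+1}$ — which is exactly the sense in which the argument ``leaves'' $\Conf(B^n_R)$ — and using $|\phi_\xi(\Sigma)|=\int_\Sigma u_\xi^k\,d\mu$, $|\pr(\phi_\xi\Sigma)|=\int_{\pr\Sigma}u_\xi^{k-1}\,d\sigma$, it suffices to show that
\[
F(\xi):=\int_\Sigma u_\xi^k\,d\mu+\cot R\int_{\pr\Sigma}u_\xi^{k-1}\,d\sigma
\]
satisfies $F(\xi)\le F(0)=|\Sigma|+\cot R\,|\pr\Sigma|$ for every $\xi\in\mathbb{B}^{n+1}$.

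The core step is a monotonicity formula: for each fixed $\omega\in\mathbb{S}^n$ the function $\rho\mapsto F(\rho\omega)$ is non-increasing on $[0,1)$, which together with its value at $\rho=0$ gives the inequality. I would obtain this by differentiating in $\rho$ and integrating by parts on $\Sigma$, using the two structural facts available at $\Sigma$: (i) $\Sigma$ is minimal in $\mathbb{S}^n$, equivalently the position map $X\colon\Sigma\to\R^{n+1}$ obeys $\Lap_\Sigma X=-kX$; and (ii) the free boundary condition, which says that the outward conormal of $\pr\Sigma$ in $\Sigma$ equals $\frac{\cos R\,X-c}{\sin R}$ along $\pr\Sigma$, where $c$ is the cap centre. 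The interior variation, integrated against $\Lap_\Sigma X=-kX$, yields a bulk term of a definite sign plus a boundary integral over $\pr\Sigma$; the latter must be cancelled by $\frac{d}{d\rho}\bigl(\cot R\int_{\pr\Sigma}u_\xi^{k-1}\bigr)$, and it is precisely the coefficient $\frac{\cos R}{\sin R}=\cot R$ in the conormal from (ii) that arranges this cancellation — which is where the weight $\cot R$ in $\mathcal{E}^R$ is forced. After unwinding, $\frac{d}{d\rho}F(\rho\omega)$ should be manifestly non-positive, a multiple of the integral of a squared quantity. Equivalently one may try to present $F(\xi)$ as a mass-ratio for the minimal free boundary cone $C(\Sigma)\subset\mathbb{B}^{n+1}$ over $\Sigma$ and invoke the Euclidean monotonicity formula for free boundary minimal submanifolds; I would use whichever packaging is cleanest to push through rigorously.

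As a check, and an independent treatment of $R=\pit$ for all $k$: here $\cot R=0$, so $\mathcal{E}^{\pit}(\Sigma)=|\Sigma|$, and one can double. Reflection $\sigma$ across the great sphere $\pr B^n_{\pit}$ is an isometry of $\mathbb{S}^n$; the orthogonality condition makes $\widetilde\Sigma:=\Sigma\cup\sigma(\Sigma)$ a closed minimal submanifold of $\mathbb{S}^n$; and every $\Psi\in\Conf(B^n_{\pit})$ commutes with $\sigma$, since $\sigma$ is the unique conformal involution fixing $\pr B^n_{\pit}$ pointwise. Hence $\Psi_*\widetilde\Sigma$ is the double of $\Psi_*\Sigma$, and the conformal area-maximality of closed minimal submanifolds of $\mathbb{S}^n$ (\cite{eSI}) gives $2|\Psi_*\Sigma|=|\Psi_*\widetilde\Sigma|\le|\widetilde\Sigma|=2|\Sigma|$. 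For $k=2$ and general $R$ one could alternatively bypass the direct computation by producing a $\Conf(B^n_R)$-invariant Willmore-type energy $\mathcal{W}^R$ for surfaces with boundary on the cap, with $\mathcal{W}^R\ge\mathcal{E}^R$ in general and $\mathcal{W}^R=\mathcal{E}^R$ on free boundary minimal surfaces; then $\mathcal{E}^R(\Psi_*\Sigma)\le\mathcal{W}^R(\Psi_*\Sigma)=\mathcal{W}^R(\Sigma)=\mathcal{E}^R(\Sigma)$.

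I expect the boundary term to be the main obstacle. Away from $\xi=0$ the surface $\phi_\xi(\Sigma)$ is neither minimal nor orthogonal to $\pr B^n_R$, so monotonicity at general $\rho$ is not a single first variation at $\Sigma$; one must genuinely exploit the scale invariance of the cone picture or carry out a global computation. Moreover the cancellation of the two boundary contributions in $\frac{d}{d\rho}F$ is delicate: the conformal weights on $\Sigma$ and on $\pr\Sigma$ differ by exactly one power of $u_\xi$, which is precisely the factor the divergence theorem on $\Sigma$ supplies, so the accounting closes only when $k-1=1$, or when $\cos R=0$ so that there is no boundary term at all — exactly the dichotomy $k=2$ or $R=\pit$ in the hypothesis. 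Verifying that no residual term survives, with the correct sign, in these two regimes is the crux.
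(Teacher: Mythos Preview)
Your plan is essentially the paper's own: realise the monotonicity along one-parameter conformal flows $\Psi^a_t$ (your rays $\rho\mapsto\phi_{\rho\omega}$), allowing the flow to leave $\Conf(B^n_R)$ and move through all of $\Conf(\mathbb{S}^n)$ while tracking the varying radius $R_t$; the paper carries out exactly this computation (Theorem~\ref{thm:mono-sph}), and your diagnosis of why the boundary bookkeeping closes only when $k=2$ or $\cot R=0$ matches the paper's Remark~\ref{rmk:higher-dim}. One technical point to flag: the sign of the interior term is not obtained as a pure square, but via the el~Soufi--Ilias trick of writing $\mathbf{H}_{\Sigma_t}$ through the conformal change formula and using that the gradient flow of $u_a$ satisfies $u_a\circ\Psi^a_t\ge u_a$, so the residual is $\int_\Sigma \tfrac{u-u\circ\Psi}{1-u^2}\,|\pi^\perp V_a|^2\,e^{k\psi}\le 0$.

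Two of your alternatives are worth recording. Your Willmore route for $k=2$ is precisely the paper's Section~\ref{sec:conf-willmore}: Gauss--Bonnet on $\Sigma$ plus conformal invariance of $\int|\mathring{\mathbf A}|^2$ gives $\mathcal{E}^{R}(\Sigma)-\mathcal{E}^{R_*}(\Sigma_*)=\tfrac14\int_{\Sigma_*}|\mathbf H_{\Sigma_*}|^2\ge 0$. Your doubling argument for $R=\pit$ is a genuinely different and shorter proof in that case which the paper does \emph{not} use: reflection across $\pr B^n_{\pit}$ produces a closed minimal $\widetilde\Sigma\subset\mathbb{S}^n$ (free boundary gives $C^1$, hence smooth by elliptic regularity), each $\Psi\in\Conf(B^n_{\pit})$ commutes with the reflection, and el~Soufi--Ilias for closed minimal submanifolds yields $|\Psi_*\Sigma|\le|\Sigma|$. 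This buys simplicity at the cost of losing the stronger monotonicity along the full $\Conf(\mathbb{S}^n)$-orbit (Theorem~\ref{thm:conf-max-sph-intro-2}), which the paper's direct computation delivers and later exploits for the Euclidean limit in Section~\ref{sec:euclid-limit}. Your cone/mass-ratio repackaging is plausible but not pursued in the paper and would need care, since the cone over a free boundary $\Sigma\subset B^n_R$ is not obviously free boundary in a Euclidean half-space unless $R=\pit$.
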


Note that as the radius $R\to 0$, the ball $B^n_R \subset \mathbb{S}^n$ should, after a suitable rescaling, converge to the Euclidean ball $\mathbb{B}^n$. Upon this rescaling, when $k=2$, the functionals $\mathcal{E}^{R}$ will converge to the \textit{boundary length} in the limit $R\to 0$; Indeed, in Section \ref{sec:euclid-limit} we show that our methods, combined with a limiting argument, will recover the result of Fraser-Schoen. We believe this may be evidence that the boundary length is indeed the more natural quantity associated to deformations by the conformal group, that is, the first question of Fraser-Schoen above may be more plausible than the second. (However, under our approach the first question should be connected with finding a suitable monotone quantity for higher dimensional minimal submanifolds in the \textit{spherical} setting, which also seems difficult at present; see Remarks \ref{rmk:higher-dim} and \ref{rmk:higher-dim-euc}.) From the contrapositive perspective, the conformal action may not be the ideal candidate to construct canonical \textit{area}-decreasing families. 

In this article, our main approach to conformal maximisation is the monotonicity-style approach of \cite{eSI}, which will also allow some access to higher dimensions (at least when $R=\pit$). As mentioned above, the key is to allow movement through the entire $\Conf(\mathbb{S}^n)$-orbit, adjusting the radius $R$ accordingly (as $\Conf(\mathbb{S}^n)$ always maps geodesic balls to geodesic balls). To state this, recall that $\Conf(\mathbb{S}^n)$ is generated by rotations together with flows by the conformal vector fields $V_a = \pi_{T\mathbb{S}^n}(a)$, where $a$ is a fixed vector in $\mathbb{S}^n$. 

\begin{theorem}
\label{thm:conf-max-sph-intro-2}
Let $(\Sigma^k,\pr\Sigma)$ be a free boundary minimal submanifold in $(\mathbb{S}^n, \pr B^n_R)$. 

Fix $a\in\mathbb{S}^n$ and let $\Psi^a_t$ be the flow of the conformal vector field $V_a$. Let $\Sigma_t = (\Psi^a_t)_*\Sigma$, which is a free boundary submanifold in $(\mathbb{S}^n, \pr B^n_{R_t}(o_t))$, where $B^n_{R_t}(o_t) = \Psi^a_t(B^n_R)$. 

Assume either: $k=2$; or $R=\pit$ and $a\perp e_0$. Then for $t\geq 0$ we have $\frac{d}{dt} \mathcal{E}^{R_t}(\Sigma_t) \leq 0$. 
\end{theorem}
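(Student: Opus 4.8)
The plan is to pass to the light‑cone model of the conformal group, which turns the assertion into a fact about a single geodesic in hyperbolic space, and there to run a monotonicity argument in the spirit of el Soufi--Ilias \cite{eSI}. Realise $\mathbb S^n$ as the projectivised future null cone of $\R^{n+1,1}$ via $x\mapsto\wh x=(x,1)$, so $\Conf(\mathbb S^n)\cong O(n+1,1)$; the flow $\Psi^a_t$ is the one‑parameter hyperbolic subgroup generated by the boost $X$ in the plane of $\wh\tau:=(0,\dots,0,1)$ and $\wh a:=(a,0)$, and for $\wh\phi$ in the hyperboloid $\mathbb H^{n+1}=\{\wh\phi:\langle\wh\phi,\wh\phi\rangle=-1,\ \wh\phi\ \text{future}\}$ the associated conformal map of $\mathbb S^n$ has conformal factor $-\langle\wh\phi,\wh x\rangle$. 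Putting $\wh\phi_t=\exp(tX)\wh\tau$ and $\wh\sigma=(o,\cos R)$ (so that $\pr B_R=\{\langle\wh\sigma,\wh x\rangle=0\}$ and $\langle\wh\sigma,\wh\sigma\rangle=\sin^2 R$), one computes --- from $\mathcal L_{V_a}g=-2h_a\,g$ with $h_a=\langle a,\cdot\rangle$, and from the conformal transformation law for the umbilicity factor of the image sphere $\pr B_{R_t}(o_t)$ --- that
\[
|\Sigma_t|=\int_\Sigma D^{-k}\,dA,\qquad |\pr\Sigma_t|=\int_{\pr\Sigma}D^{-(k-1)}\,ds,\qquad \cot R_t=-\tfrac1{\sin R}\langle\wh\sigma,\wh\phi_t\rangle,
\]
where $D=D(t,x)=-\langle\wh\phi_t,\wh x\rangle>0$, and where $D(\cdot,x)$ as well as $P(t):=\sin R\cot R_t=-\langle\wh\sigma,\wh\phi_t\rangle$ both solve $f''=f$. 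Thus $\mathcal E^{R_t}(\Sigma_t)=\mathcal F(\wh\phi_t)$ for the function $\mathcal F(\wh\phi)=\int_\Sigma(-\langle\wh\phi,\wh x\rangle)^{-k}dA-\tfrac1{\sin R}\langle\wh\sigma,\wh\phi\rangle\int_{\pr\Sigma}(-\langle\wh\phi,\wh x\rangle)^{-(k-1)}ds$ on $\mathbb H^{n+1}$, with $\wh\phi_t$ the unit‑speed geodesic through $\wh\phi_0=\wh\tau$ --- this is the sense in which the argument ``leaves'' $\Conf(B_R)$.

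The first step is that $\wh\tau$ is a critical point of $\mathcal F$. Differentiating at $\wh\tau$ along $\wh w\in T_{\wh\tau}\mathbb H^{n+1}=\wh\tau^{\,\perp}$ and evaluating the integrals using minimality, $\Lap_\Sigma x=-kx$ (hence $\int_\Sigma x\,dA=-\tfrac1k\int_{\pr\Sigma}\pr_\eta x\,ds=-\tfrac1k\int_{\pr\Sigma}\bar\eta\,ds$), together with the free boundary identity $\eta=\bar\eta=\tfrac{\cos R\,x-o}{\sin R}$ on $\pr\Sigma$, all terms cancel except
\[
d\mathcal F_{\wh\tau}(\wh w)=\tfrac{(k-2)\cos R}{\sin R}\,\Big\langle\wh w,\Big(\textstyle\int_{\pr\Sigma}x\,ds,\ |\pr\Sigma|\Big)\Big\rangle .
\]
Since the $\wh\tau^{\,\perp}$‑component of $(\int_{\pr\Sigma}x\,ds,|\pr\Sigma|)$ is $(\int_{\pr\Sigma}x\,ds,0)$, generically nonzero, this vanishes for every $\wh w$ exactly when $(k-2)\cos R=0$, i.e.\ precisely when $k=2$ or $R=\pit$; this is where the hypothesis enters (and it indicates the statement genuinely requires it). Hence $\tfrac{d}{dt}\mathcal E^{R_t}(\Sigma_t)\big|_{t=0}=0$.

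The second step is to promote this to $\tfrac{d}{dt}\mathcal E^{R_t}(\Sigma_t)\le0$ for all $t\ge0$, by establishing convexity in $t$ of a suitable power of $\mathcal F(\wh\phi_t)$. When $R=\pit$ and $a\perp e_0$ one has $\langle\wh\sigma,\wh\phi_t\rangle\equiv0$, so $\cot R_t\equiv0$ and $\mathcal F(\wh\phi_t)=|\Sigma_t|=\int_\Sigma D^{-k}dA$ with $D''=D$; moreover $\langle V_a,\bar\eta\rangle\equiv0$ along $\pr\Sigma$, so $\Sigma$ behaves like a closed minimal submanifold for the relevant integrations by parts, and the el Soufi--Ilias argument gives that $t\mapsto|\Sigma_t|^{-1/k}$ is convex --- the one nontrivial input being $|\nabla_\Sigma h_a|^2\le|V_a|^2=1-h_a^2$, which together with $\int_\Sigma|\nabla_\Sigma h_a|^2=k\int_\Sigma h_a^2$ controls the relevant variance by $\tfrac1{k+1}$ --- so with Step 1 it is minimised at $t=0$. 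When instead $k=2$ (and $R$ arbitrary) the functional is the mixed quantity $\mathcal F(\wh\phi_t)=\int_\Sigma D^{-2}dA+\tfrac{P(t)}{\sin R}\int_{\pr\Sigma}D^{-1}ds$, and the plan is to show that $t\mapsto\mathcal E^{R_t}(\Sigma_t)^{-1}$ is convex --- consistent with the pure‑interior case above (where $|\Sigma_t|^{-1/2}$ convex forces $|\Sigma_t|^{-1}$ convex) and, in the Euclidean limit $R\to0$, with Fraser--Schoen's convexity of $|\pr\Sigma_t|^{-1}$ \cite{FS13}. One proves it by a Cauchy--Schwarz estimate using $D''=D$, $P''=P$ and the minimality and free boundary identities $\Lap_\Sigma h_a=-2h_a$, $\pr_\eta h_a=\langle V_a,\bar\eta\rangle=\tfrac{\cos R\,h_a-\langle a,o\rangle}{\sin R}$ to integrate by parts; at $t=0$ these conspire so that the boundary contributions to $(\mathcal E^{R_t}(\Sigma_t)^{-1})''$ cancel and one is left with $(\mathcal E^{R_t}(\Sigma_t)^{-1})''\big|_{t=0}=\tfrac{2}{\mathcal E^R(\Sigma)^2}\int_\Sigma|V_a^{\perp_\Sigma}|^2\,dA\ge0$.

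The hard part is extending this last convexity to general $t\ge0$ in the $k=2$ case: the surfaces $\Sigma_t$ are not minimal but only conformal images of a minimal surface, so the integration‑by‑parts identities acquire mean‑curvature terms whose signs must be tracked, and the weight $P(t)=\cos R\cosh t+\langle a,o\rangle\sinh t$ need not stay positive as $t\to\infty$ when $\langle a,o\rangle<0$, forcing the power trick to be applied to the interior and boundary pieces separately and recombined; organising this, and verifying the requisite variance/eigenvalue‑type bounds along the flow, is the main technical content. Finally, Theorem~\ref{thm:conf-max-sph-intro} --- and likewise Theorem~\ref{thm:conf-max-sph-intro-cap} and its relatives --- will follow by connecting the given $\Psi$ to the identity through a path in $\Conf(\mathbb S^n)$ built from such flows together with rotations, the latter being isometries and hence leaving $\mathcal E^{R}$ invariant.
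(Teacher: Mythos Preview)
Your light-cone repackaging and Step~1 (criticality of $\mathcal F$ at $\wh\tau$) are correct and match the paper's computation in different coordinates: at $t=0$ the derivative is $(2-k)\cot R\int_{\pr\Sigma}u_a$, which vanishes exactly under the stated hypothesis. The gap is in Step~2. You only verify $(\mathcal E^{R_t}(\Sigma_t)^{-1})''\ge 0$ \emph{at} $t=0$ in the $k=2$ case and then defer the extension to all $t\ge 0$ as ``the main technical content'' without carrying it out. A convexity inequality at a single point does not yield monotonicity for $t\ge 0$; and, as you note, for $t>0$ the surfaces $\Sigma_t$ are not minimal, the weight $P(t)$ may change sign, and the mean-curvature correction terms must be controlled. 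So as written the proposal does not prove the theorem.

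The paper bypasses this difficulty entirely: there is no convexity argument. One computes $\frac{d}{dt}\mathcal E^{R_t}(\Sigma_t)$ directly on the moving surface and makes two observations. First, from $\langle\bar\eta_t,V_a\rangle=-\frac{d}{dt}\cot R_t+u\cot R_t$ (Proposition~\ref{prop:flow-ball}) and $\frac{d}{dt}|\pr\Sigma_t|=-(k-1)\int_{\pr\Sigma_t}u$, the boundary contributions combine with $\frac{d}{dt}(\cot R_t\,|\pr\Sigma_t|)$ to leave only $(2-k)\cot R_t\int_{\pr\Sigma_t}u$, which vanishes identically when $k=2$ (and when $R_t\equiv\pit$). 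Second --- this is the key step that replaces your entire convexity programme --- the remaining term has a sign \emph{for every} $t\ge 0$: using the conformal transformation law for $\mathbf H$ together with $\bar\nabla\psi_t=\frac{u-u\circ\Psi^a_t}{1-u^2}V_a$ (Lemma~\ref{lem:conf-factor}), one obtains
\[
-\int_{\Sigma_t}\langle\mathbf H_{\Sigma_t},V_a\rangle\,d\mu_t \;=\; k\int_\Sigma \frac{u-u\circ\Psi^a_t}{1-u^2}\,\bigl|\pi^{\perp}(V_a)\bigr|^2\,e^{k\psi_t}\,d\mu \;\le\; 0,
\]
since the gradient flow of $u_a$ increases $u_a$. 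This single identity is the el~Soufi--Ilias input as the paper uses it, and it gives $\dot{\mathcal F}(\wh\phi_t)\le 0$ along the whole geodesic in one stroke, with no second-derivative analysis and no issue when $P(t)$ changes sign.
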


Note that Theorem \ref{thm:conf-max-sph-intro-2} certainly implies Theorem \ref{thm:conf-max-sph-intro}, as $\Conf(B_R)$ is just the subgroup of $\Conf(\mathbb{S}^n)$ that preserves the ball $B_R$, and so each element of $\Conf(B_R)$ may certainly be realised by some $\Psi^a_t$ (with $R_t=R$). The more general Theorem \ref{thm:conf-max-sph-intro-cap} follows from the monotonicity Theorems \ref{thm:mono-sph-capillary} and \ref{thm:mono-sph-capillary-hemi} in exactly the same manner. 

We may visualise this scheme as in Figure \ref{fig:conf-BR}: Up to rotations, elements of $\Conf(\mathbb{S}^n)\simeq \Conf(\mathbb{B}^{n+1})$ may be parametrised by points in $\mathbb{B}^{n+1}$. Furthermore, the subgroups $\Conf(B_R)$ may be put in correspondence with the sets $\mathcal{S}_{\cos R}$, where $\mathcal{S}_c := \{ x\in\mathbb{B}^{n+1} | \langle x,e_0\rangle = |x|^2 c \}$. (The sets $\mathcal{S}_c$, $c\in \mathbb{R}$ foliate $\mathbb{B}^{n+1}$, but only those with $|c|<1$ correspond to transformations which preserve some $B_R$.) Flows of the vector fields $V_a$ correspond to rays from the origin, and so may reach any point in $\mathbb{B}^{n+1}$. Finally, we remark that each ray intersects a given slice $\mathcal{S}_{\cos R}$ in (at most) one point, other than the origin. 

\begin{figure}
\includegraphics[scale=0.6]{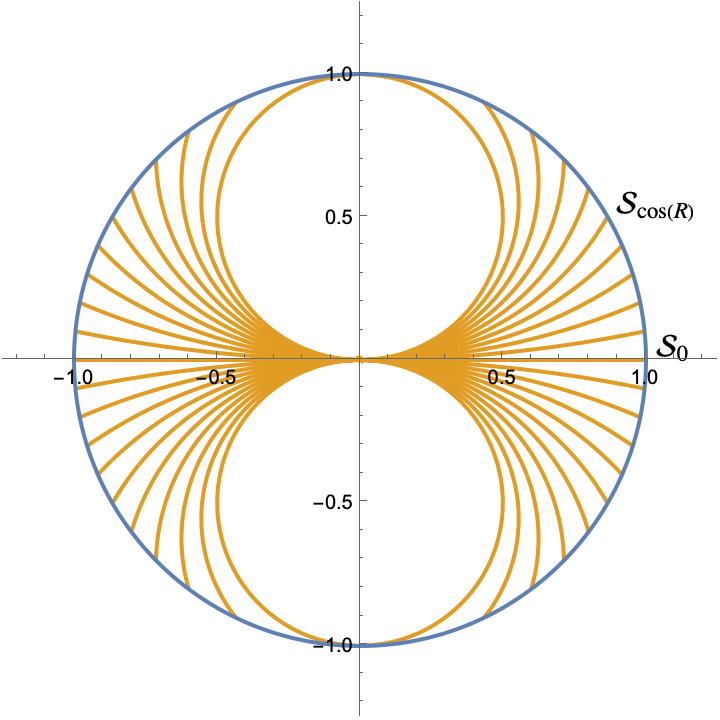}
\caption{Diagram representing the sets $\mathcal{S}_{\cos R} \subset \mathbb{B}^{n+1}$, which correspond to subgroups $\Conf(B^n_R)\subset \Conf(\mathbb{B}^{n+1})$.}
\label{fig:conf-BR}
\end{figure}

We also obtain similar conformal monotonicity results for other contact angles; see Section \ref{sec:conf-mono-cap} for details. 

\subsection{Index characterisation}

The Morse index of a minimal surface is a fundamental geometric quantity that (essentially) counts the dimension of its unstable manifold, when considered as a critical point of the area functional. 

In the closed setting, the minimal surface in $\mathbb{S}^3$ of least index is the equator, which has index 1. Urbano \cite{Ur90} showed that there is a gap to the second least index - again realised by the Clifford torus, which has index 5. He further showed that having index equal to 5 characterises the Clifford torus. As mentioned above, this was one of the crucial properties in Marques-Neves' proof of the Willmore conjecture \cite{MN14}. 

Let us briefly mention the analogue of Question \ref{q:energy} in the setting of free boundary minimal surfaces. In the Euclidean ball $\mathbb{B}^3$, the flat equatorial disc has least area and least index (equal to 1), and it is an open question whether the critical catenoid has second least area (see \cite[Open Question 10]{Li19}) or second least index (see \cite[Open Question 6]{Li19}). The index of the critical catenoid was computed to be 4 by several authors \cite{Dev19, SZ19, Tr20}. Tran \cite{Tr20} additionally proved that having index 4 characterises the critical catenoid under the additional hypothesis that the minimal surface is an \textit{annulus}. Similar results were obtained by Medvedev \cite{Me23} for free boundary minimal surfaces in $B_R\subset \mathbb{S}^3$. 

We also mention the interesting work of Devyver \cite{Dev21}, who points out that the conformal method of Urbano \cite{Ur90} seems to require a suitable variational inequality, which classically arises from the min-max characterisation of eigenvalues for the index form, together with natural eigenfunctions. For free boundary minimal surfaces in $\mathbb{B}^3$, an immediate issue is that one does not have such easily identifiable eigenfunctions. (Devyver introduces a new eigenvalue problem which partially addresses this issue.) We believe that this issue is related to the observation above that the $\Conf(\mathbb{B}^3)$-action seems more suitable for decreasing boundary length than the area functional, whereas $\Sigma$ is critical for the latter. 

For capillary minimal surfaces in the hemisphere $\mathbb{S}^3_+$, however, our conformal maximisation results above show that the $\Conf(\mathbb{S}^3_+)$-action does decrease the natural capillary energy $\mathcal{E}^{\pit,\gamma} = \mathcal{A}^\gamma$. Moreover, one can verify (see Section \ref{sec:cap-morse}) that in this setting, the components of the Gauss map indeed give eigenfunctions of the index form. This natural interaction of the conformal deformations with the capillary energy $\mathcal{A}^{\gamma}$ is what allows us to prove Theorem \ref{thm:urbano-intro}, and furthermore leads us to believe that Question \ref{q:energy} is most promising for capillary minimal surfaces in the hemisphere. (Note again that the energies of capillary minimal surfaces in a hemisphere correspond to the energy densities of capillary minimal \textit{cones}.)

Analogously, we suggest that, to make progress on whether the critical catenoid has second least area, it may be prudent to explore canonical deformations of free boundary minimal surfaces \textit{other than conformal deformations}, since in the Euclidean ball, the conformal group appears to interact more naturally with boundary length rather than area. 

\subsection{Overview of the paper}

We begin by discussing some basic notions in Section \ref{sec:prelim}. 

In Section \ref{sec:conf-ball}, we review some explicit presentations of the conformal group of round balls; these may not be new, but we have included this section (and Appendix \ref{sec:conf-mono-overflow}) as the precise descriptions may not be readily available in the literature. We then proceed to the conformal maximisation - first using the Willmore energy in dimension 2 (Section \ref{sec:conf-willmore}), then by the more general monotonicity methods (Section \ref{sec:conf-mono}). In Section \ref{sec:euclid-limit} (and Appendix \ref{sec:model}) we discuss applications in the Euclidean limit. 

Similarly, in Section \ref{sec:quadratic} we review some of the spectral theory associated to quadratic forms on manifolds with boundary; again these results may not be new, but we find it helpful to give a clear treatment, and introduce our (somewhat nonstandard) realisation of the spectra. Finally, in Section \ref{sec:index-apps} we apply the spectral theory to discuss the spectral index and Morse index of minimal submanifolds, and in particular prove our index characterisation for capillary minimal surfaces in the hemisphere. 

\subsection*{Acknowledgements}

The author would like to acknowledge the manyfold contributions of his friend and collaborator Keaton Naff. This project owes much of its development to the many insightful conversations we have enjoyed, as well as Keaton's constant encouragement. We hope that Keaton's ideas and influence will be evident in this work and in many more works to come. 

The author would also like to thank Ailana Fraser for her interest in this work, as well as Hung Tran for insightful discussions about his work. 

JZ was supported in part by a Sloan Research Fellowship, and the National Science Foundation under grant DMS-2439945. 

\section{Preliminaries}
\label{sec:prelim}

\subsection{Notation}
\label{sec:notation}

We will denote by $\mathbb{B}^n_R(o)\subset \mathbb{R}^n$ the Euclidean ball of radius $R$ and centre $o$, and by $B^n_R(o)\subset\mathbb{S}^n$ the geodesic ball of (spherical) radius $R \in (0,\pi)$ and centre $o$. If the centre is not specified, we will take $\mathbb{B}^n_R = \mathbb{B}^n_R(0)$ or $B^n_R = B^n_R(e_0)$. If the radius of a Euclidean ball is not specified it will be taken to be 1, that is, $\mathbb{B}^n = \mathbb{B}^n_1$. 

\subsection{Submanifolds and contact angles}
\label{sec:contact-angle}

In this article, we will consider immersed compact submanifolds with boundary $(\Sigma^k,\pr\Sigma) \looparrowright (M^n,S)$, where $M$ is either $\mathbb{R}^{n}$ or $\mathbb{S}^{n}$, and $S$ is a closed embedded hypersurface in $M$ with a unit normal $\bar{\eta}$. We emphasise that the metric on $M$ and the normal $\bar{\eta}$ on $S$ are chosen as part of this setup, although their notation may be suppressed. If the barrier is written as the boundary of a region, $S=\pr \Omega$, then the barrier normal $\bar{\eta}$ should be understood to be the \textit{outer} unit normal. We denote by $\eta$ the outer unit conormal of $\pr\Sigma$ in $\Sigma$. We will often conflate $\eta$ with its pushforward $\pr_\eta x$, where $x$ is the immersion map.

In general, we use $\bar{\nabla}$ for the connection on $M$ and $\nabla$ for the connection on the submanifold $\Sigma$. We will reserve $D$ for the Euclidean connection and $\delta$ for the Euclidean metric. In particular, if $M=\mathbb{S}^n \hookrightarrow \mathbb{R}^{n+1}$ then $\bar{\nabla}$ will refer to the connection on $\mathbb{S}^n$ induced by the unit round metric $\bar{g}$. 

We write $\pi^{\perp}$ for the projection to the normal bundle of $\Sigma$ (in $M$), and the shorthand $\bar{\nabla}^{\perp} = \pi^{\perp} \circ \bar{\nabla}$. The second fundamental form of $\Sigma$ in $M$ is the normal bundle-valued 2-tensor $\mathbf{A}_\Sigma(X,Y) =\bar{\nabla}^{\perp}_X Y$, and the mean curvature vector is $\mathbf{H}_\Sigma = \tr_{T\Sigma} \mathbf{A}_\Sigma$. We emphasise that if $M=\mathbb{S}^n\hookrightarrow \mathbb{R}^{n+1}$ then, unless otherwise qualified, $\mathbf{H}_\Sigma$ still denotes the mean curvature vector in $\mathbb{S}^n$. The submanifold $\Sigma$ is \textit{minimal} in $M$ if $\mathbf{H}_\Sigma \equiv 0$. 

If $\Sigma$ is a 2-sided hypersurface, then we denote by $\nu$ the unit normal of $\Sigma$ in $M$. In the hypersurface case, we may work with the scalar-valued second fundamental form, for which our convention is $A_\Sigma(X,Y) = \langle \bar\nabla_X \nu, Y\rangle$. The second fundamental form of the hypersurface $S$ is always $k_S(X,Y) = \langle \bar \nabla_X \bar{\eta}, Y\rangle$. We will drop the subscripts when clear from context.

There are a number of parametric concepts which may be associated to variational problems for submanifolds with boundary, and many different conventions may be chosen. As such, we now take particular care to describe the precise assumptions we will work with.

\begin{definition}
Let $(\Sigma^k,\pr\Sigma) \looparrowright (M^n,S)$ be an immersed submanifold with boundary. We say that $\Sigma$ contacts $S$ with angle $\gamma\in(0,\frac{\pi}{2}]$ along $\pr\Sigma\subset S$ if 
\begin{equation}\label{eq:contact-angle-def}\tag{$\dagger$} \langle \eta,\bar{\eta}\rangle = \sin\gamma>0.
\end{equation}

In the special case $\gamma =\pit$, the condition (\ref{eq:contact-angle-def}) is equivalent to $\eta = \bar{\eta}$ along $\pr\Sigma\subset S$, and we say that $\Sigma$ is a \textit{free boundary} submanifold. 

We say that an immersion is \textit{constrained} in $\Omega\subset M$ if the image of (the interior of) $\Sigma$ lies in (the interior of) $\Omega$. 
\end{definition}

For a discussion our terminology for `constant contact angle', `proper' and `constrained', the reader is encouraged to consult \cite[Section 2.1]{NZ25a}, particularly Remarks 2.2-2.4 therein. As it will be useful, we reproduce the following abbreviated remark:
\begin{remark}
\label{rmk:constrained}
Consider a minimal submanifold $(\Sigma, \pr\Sigma)\looparrowright (\mathbb{S}^N, \pr B_R)$. If $R\leq \pit$ and $\Sigma$ lies in the hemisphere $\{x_0\geq 0\}$, then an application of the strong maximum principle to $x_0$ shows that $\Sigma$ must be constrained in $B_R$. 
\end{remark}

The capillary problem typically involves a notion of wetting energy. We set a version of this out from a parametric perspective as follows. (The resulting notion of `capillary' surface will be somewhat more general than that discussed in the introduction, and the below notion is the one we will use for the remainder of the article.)

\begin{definition}
\label{def:capillary}
Let $(\Sigma^{n-1},\pr\Sigma) \looparrowright (M^n,S)$ be a 2-sided hypersurface that contacts $S$ with angle $\gamma$ along $\pr\Sigma$. 

If there is an \textit{immersed} $(n-1)$-manifold with boundary $S^-\looparrowright S$ so that $\pr S^- = \pr \Sigma$, and the outer conormal $\bar{\nu}$ of $\pr\Sigma$ in $S^-$ satisfies 
\begin{equation}\label{eq:angle-balance}\tag{$\ddagger$}\sin\gamma \, \bar{\eta} = \eta + \cos\gamma \, \bar{\nu},\end{equation}
then we say that $\Sigma$ is a \textit{capillary} hypersurface with wet surface $S^-$. Note that (\ref{eq:angle-balance}) certainly implies the contact angle condition (\ref{eq:contact-angle-def}).

We may consider additional conditions on the wet surface; in decreasing order of strength:
\begin{enumerate}[(a)]
\item $S^-$ is embedded as a region in $S$. %(For example, if $S$ is a topological 2-sphere, then $S^-$ will be a region in $S$ with genus 0 and $b$ boundary components, where $b$ is also the number of boundary components of $\Sigma$; the region $S^-$ itself may be disconnected.)
\item Each component of $S^-$ is embedded as a region in $S$.
\end{enumerate}

For hypersurfaces with constant contact angle $\gamma \in(0,\pit)$ but which may not admit a \textit{global} wet surface as above, we may still define a \textit{local} wetting energy as follows: (for $\gamma\neq \pit$) the projection $\eta - \sin\gamma\,\bar{\eta}$ of $\eta$ to $TS$ is nonzero and orthogonal to $\pr\Sigma$. By normalising, it follows that there is a unit normal $\bar{\nu}$ of $\pr\Sigma$ in $S$ satisfying the balancing condition ($\ddagger$). Given a smooth variation $X: \Sigma \times [0,t] \looparrowright M$ of $\Sigma$ (which maps $\pr\Sigma$ into $S$ for each $t$), one may consider the local wetting energy $W(t)=\int_{\Sigma \times [0,t]} X^*\d\mu_S$, where the integral is defined with respect to the orientation induced on $\pr\Sigma$ in $S$ by $\bar{\nu}$. This corresponds to the (signed) area swept out by $\pr\Sigma$. 
\end{definition}

\begin{remark}
\label{rmk:wet-surface}
Consider a 2-sided hypersurface $(\Sigma^{n-1},\pr\Sigma) \looparrowright (M^n,S)$ that contacts $S$ with angle $\gamma$ along $\pr\Sigma$ as above. Note that for embedded $\Sigma$, even though $\pr\Sigma$ will separate $S$ into 2 regions, when $\pr\Sigma$ has more than 1 component it may not be clear that Definition \ref{def:capillary}(a) can be satisfied - it may be that neither component of $S\setminus \pr\Sigma$ satisfies $(\ddagger)$ on all of $\pr\Sigma$. 

Nevertheless, even if we only assume that each component of the boundary $\pr\Sigma$ is embedded in $S$, it is always possible to find a wet surface $S^-$ satisfying Definition \ref{def:capillary}(b) above. Indeed, each component $\pr\Sigma^{(j)}$ of $\pr\Sigma$ will itself separate $S$, so one may choose $S^-_j$ to be the component of $S\setminus \pr\Sigma^{(j)}$ that satisfies $(\ddagger)$ on $\pr\Sigma^{(j)}$. In particular, taking the abstract union of the $S^-_j$ always gives an \textit{immersed} wet surface $S^-$ (although there may be pairwise overlaps). (For example, if $n-1=2$ and $S$ is a topological 2-sphere, then one may take $S^-$ to be an abstract union of discs in $S$, each bounded by one of the loops of $\pr\Sigma$.)

On the other hand, if $\Sigma$ is a smoothly embedded capillary hypersurface which arises from the \textit{global} variational problem for (\ref{eq:cap-energy-intro}) - that is, $\Sigma$ is the relative boundary of a critical domain $\Omega$ in $(M, \pr M)$ - then $\Sigma$ is certainly constrained in $(M, \pr M)$ and the subset $S^- = \pr\Omega \cap \pr M$ of $S=\pr M$ gives a wet surface satisfying $(\ddagger)$, as per Definition \ref{def:capillary}(a). 
\end{remark}

The following lemma is a standard calculation but we include it to illustrate our conventions. Consider a 2-sided hypersurface $\Sigma$ and recall that $A(X,Y) = - \langle \bar{\nabla}_X Y,\nu \rangle$ and $k_S(X,Y) = - \langle \bar{\nabla}_X Y, \bar{\eta} \rangle$ denote the scalar-valued second fundamental forms of $\Sigma$ and $S$ in $M$ respectively. Note that the normal bundle of $\pr\Sigma$ in $M$ has orthonormal frames given by $\{\nu,\eta\}$ or $\{\bar{\eta},\bar{\nu}\}$. Let $h(X,Y) = \bar{\nabla}^\perp_X Y$ denote the second fundamental form of $\pr \Sigma$ in $M$, so that $h^\eta = -\langle h,\eta \rangle$ and $h^{\bar{\nu}}=-\langle h,\bar{\nu} \rangle$ are the scalar-valued second fundamental forms of $\pr\Sigma$ in $\Sigma$ and $S$ respectively. Similarly, in this case we define the scalar mean curvatures $H^\Sigma = \tr_{T\Sigma} A$, $H^S = \tr_{TS} k$, $H^{\pr\Sigma}_\Sigma = \tr_{T\pr \Sigma} h^\eta$, $H^{\pr\Sigma}_S = \tr_{T\pr\Sigma} h^{\bar{\nu}}$. 

\begin{lemma}
\label{lem:bdry-mc}
Let $(\Sigma,\pr\Sigma)$ be a 2-sided hypersurface in $(M,S)$ that contacts $S$ with angle $\gamma$ along $\pr\Sigma$. Let $\bar{\nu}$ be a unit conormal of $\pr\Sigma$ in $S$, chosen so that 
 \begin{equation}\tag{$\ddagger$}\sin\gamma \, \bar{\eta} = \eta + \cos\gamma \, \bar{\nu}.\end{equation}

Then for $X,Y \in T\pr \Sigma$, we have \[ k_S(X,Y) = \cos \gamma \, A(X,Y) + \sin\gamma \, h^\eta(X,Y),\] 
\[ A(X,Y) = \cos \gamma \, k_S(X,Y) + \sin\gamma \, h^{\bar{\nu}}(X,Y).\] 

In particular, 

%\[ h^\eta + \cos\gamma\, h^{\bar{\nu}} = \sin \gamma \, k, \]

\[ H^{S} - k_S(\bar{\nu},\bar{\nu}) = \cos\gamma \, (H^\Sigma - A(\eta,\eta)) + \sin\gamma \, H^{\pr \Sigma}_\Sigma,\]
\[ H^\Sigma - A(\eta,\eta) = \cos\gamma \, (H^{S} - k_S(\bar{\nu},\bar{\nu})) + \sin\gamma \, H^{\pr \Sigma}_{S}.\]
%\[ H^{\pr\Sigma}_\Sigma + \cos\gamma\, H^{\pr\Sigma}_S = \sin\gamma \, (H^S-k_S(\bar{\nu},\bar{\nu})) .\]
\end{lemma}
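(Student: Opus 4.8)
The plan is to work entirely along $\pr\Sigma$, where both orthonormal frames $\{\nu,\eta\}$ and $\{\bar\eta,\bar\nu\}$ of the normal bundle of $\pr\Sigma$ in $M$ are available, and to compare the two second fundamental forms $A$ and $k_S$ by expressing one frame in terms of the other. The balancing condition $(\ddagger)$ says $\sin\gamma\,\bar\eta = \eta + \cos\gamma\,\bar\nu$; since both $\{\nu,\eta\}$ and $\{\bar\eta,\bar\nu\}$ are positively oriented orthonormal bases of the same rank-2 bundle and the contact angle condition $(\dagger)$ gives $\langle\eta,\bar\eta\rangle=\sin\gamma$, the change of basis is just a rotation by angle $\gamma$ in this plane. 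Concretely, I would first record the pointwise linear-algebra identities
\begin{equation*}
\bar\eta = \sin\gamma\,\eta - \cos\gamma\,\nu, \qquad \bar\nu = \cos\gamma\,\eta + \sin\gamma\,\nu,
\end{equation*}
up to the orientation/sign conventions fixed in the excerpt (here I am using that $\Sigma$ and $S$ meet along $\pr\Sigma$ with $\eta,\bar\nu$ on the same side and $\nu,\bar\eta$ compatibly oriented; the signs should be pinned down by the stated convention $A(X,Y)=-\langle\bar\nabla_X Y,\nu\rangle$ and $k_S(X,Y)=-\langle\bar\nabla_X Y,\bar\eta\rangle$). One of these two relations is essentially a rewriting of $(\ddagger)$, and the other follows by taking the orthogonal complement within the plane.

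Next I would simply contract. For $X,Y\in T\pr\Sigma$, the vector $\bar\nabla_X Y$ lies in $TM$; its component in the normal plane to $\pr\Sigma$ in $M$ is what enters all four scalar second fundamental forms. Using $k_S(X,Y) = -\langle\bar\nabla_X Y,\bar\eta\rangle$ and substituting $\bar\eta = \sin\gamma\,\eta - \cos\gamma\,\nu$ gives
\begin{equation*}
k_S(X,Y) = -\sin\gamma\,\langle\bar\nabla_X Y,\eta\rangle + \cos\gamma\,\langle\bar\nabla_X Y,\nu\rangle = \sin\gamma\,h^\eta(X,Y) + \cos\gamma\,A(X,Y),
\end{equation*}
which is the first displayed identity. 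The second follows symmetrically: $A(X,Y) = -\langle\bar\nabla_X Y,\nu\rangle$, and writing $\nu = \sin\gamma\,\bar\nu - \cos\gamma\,\bar\eta$ (the mirror of the above, again up to sign conventions) yields $A(X,Y) = \sin\gamma\,h^{\bar\nu}(X,Y) + \cos\gamma\,k_S(X,Y)$, recalling $h^{\bar\nu}(X,Y) = -\langle\bar\nabla_X Y,\bar\nu\rangle$ and $h^\eta(X,Y) = -\langle\bar\nabla_X Y,\eta\rangle$. Note that each of these is a genuinely tensorial identity on $T\pr\Sigma$, since all four quantities are second fundamental forms evaluated on tangent vectors to $\pr\Sigma$ and the defining vector fields differ only in which normal direction one projects onto.

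Finally, for the ``in particular'' statements I would take traces. Fix a local orthonormal frame $\{e_1,\dots,e_{n-2}\}$ of $T\pr\Sigma$; then $\{e_1,\dots,e_{n-2},\eta\}$ is an orthonormal frame of $T\Sigma$ and $\{e_1,\dots,e_{n-2},\bar\nu\}$ is one of $TS$. Hence $H^\Sigma = \sum_i A(e_i,e_i) + A(\eta,\eta)$, so $H^\Sigma - A(\eta,\eta) = \sum_i A(e_i,e_i)$, and similarly $H^S - k_S(\bar\nu,\bar\nu) = \sum_i k_S(e_i,e_i)$, while $H^{\pr\Sigma}_\Sigma = \sum_i h^\eta(e_i,e_i)$ and $H^{\pr\Sigma}_S = \sum_i h^{\bar\nu}(e_i,e_i)$. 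Summing the two tensorial identities over $i=1,\dots,n-2$ immediately produces
\begin{equation*}
H^S - k_S(\bar\nu,\bar\nu) = \cos\gamma\,(H^\Sigma - A(\eta,\eta)) + \sin\gamma\,H^{\pr\Sigma}_\Sigma,
\end{equation*}
and the companion formula with the roles of $\Sigma$ and $S$ swapped. I do not expect any serious obstacle here — the only thing requiring care is bookkeeping of signs and orientations so that the rotation by $\gamma$ relating the two frames is implemented consistently with the stated sign conventions for $A$, $k_S$, $h^\eta$, $h^{\bar\nu}$, and with $(\ddagger)$; once that is fixed, everything is a one-line contraction and a trace.
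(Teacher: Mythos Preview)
Your approach is essentially identical to the paper's: express $\bar\eta$ and $\nu$ in the other orthonormal frame, pair with $\bar\nabla_X Y$, then trace over $T\pr\Sigma$. One correction: the decomposition consistent with $(\ddagger)$ and the stated sign conventions is $\bar\eta = \cos\gamma\,\nu + \sin\gamma\,\eta$ and $\nu = \cos\gamma\,\bar\eta + \sin\gamma\,\bar\nu$ (your $-\cos\gamma\,\nu$ is off by a sign, which you rightly flagged as needing to be pinned down; with your sign the contraction would yield $-\cos\gamma\,A$ rather than $+\cos\gamma\,A$).
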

\begin{proof}
The first two identities follow by respectively pairing both sides of $\bar{\eta} = \cos \gamma \, \nu + \sin \gamma \, \eta$  and $\nu = \cos\gamma \, \bar{\eta} + \sin\gamma\, \bar{\nu}$ with $\bar{\nabla}_X Y$, where $X,Y\in T\pr\Sigma$. 

Tracing over $T\pr\Sigma$ gives the last two identities, as $\eta, \bar{\nu}$ are the respective conormals. 
\end{proof}

\begin{corollary}
\label{cor:bdry-mc}
Let $(\Sigma,\pr\Sigma)$ be a 2-sided surface in $(\mathbb{S}^3,\pr B_R)$ that contacts $S=\pr B_R$ with angle $\gamma$ along $\pr\Sigma$. Let $\bar{\nu}$ be a unit conormal of $\pr\Sigma$ in $S$, satisfying $(\ddagger)$ as above. Then 

\[ \cot\gamma \,(H^\Sigma - A(\eta,\eta)) =\csc \gamma \cot R - H^{\pr\Sigma}_\Sigma = \cos\gamma \cot\gamma \cot R + \cos\gamma \, H^{\pr\Sigma}_S.\]
\end{corollary}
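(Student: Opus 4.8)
The plan is to specialize Lemma \ref{lem:bdry-mc} to the case $(M,S) = (\mathbb{S}^3, \pr B_R)$, where $\dim \Sigma = 2$, and then exploit the two extra pieces of structure: $\Sigma$ is minimal, and the barrier $S = \pr B_R$ is an umbilic hypersurface (a geodesic sphere) in $\mathbb{S}^3$. First I would record that minimality gives $H^\Sigma = 0$, so $H^\Sigma - A(\eta,\eta) = -A(\eta,\eta)$ (which we may also write as $A(\tau,\tau)$ for a unit tangent $\tau$ to $\pr\Sigma$, using $\tr_{T\Sigma} A = 0$, though this reformulation is not strictly needed). Second, I would compute the shape operator of $\pr B_R \subset \mathbb{S}^3$ with respect to the outward normal $\bar\eta$: a geodesic sphere of radius $R$ in $\mathbb{S}^3$ is totally umbilic with principal curvatures all equal to $\cot R$, so $k_S = \cot R \, g_S$ on $TS$. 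In particular $k_S(\bar\nu,\bar\nu) = \cot R$ and, tracing over the $2$-dimensional $T\pr\Sigma \subset TS$ together with $\bar\nu$, we get $H^S = 2\cot R$, hence $H^S - k_S(\bar\nu,\bar\nu) = \cot R$.

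Next I would substitute these two evaluations into the two trace identities from Lemma \ref{lem:bdry-mc}. The first identity, $H^S - k_S(\bar\nu,\bar\nu) = \cos\gamma\,(H^\Sigma - A(\eta,\eta)) + \sin\gamma\, H^{\pr\Sigma}_\Sigma$, becomes $\cot R = \cos\gamma\,(H^\Sigma - A(\eta,\eta)) + \sin\gamma\, H^{\pr\Sigma}_\Sigma$. Multiplying through by $\csc\gamma$ (using $\gamma \in (0,\pit]$ so $\sin\gamma > 0$) and rearranging yields
\[
\cos\gamma\,\csc\gamma\,(H^\Sigma - A(\eta,\eta)) = \csc\gamma\cot R - H^{\pr\Sigma}_\Sigma,
\]
which is the claimed equality between the first and second expressions, since $\cos\gamma\,\csc\gamma = \cot\gamma$. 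For the second equality, I would take the other trace identity, $H^\Sigma - A(\eta,\eta) = \cos\gamma\,(H^S - k_S(\bar\nu,\bar\nu)) + \sin\gamma\, H^{\pr\Sigma}_S = \cos\gamma\cot R + \sin\gamma\, H^{\pr\Sigma}_S$, and multiply by $\cot\gamma$, giving
\[
\cot\gamma\,(H^\Sigma - A(\eta,\eta)) = \cos\gamma\cot\gamma\cot R + \cos\gamma\, H^{\pr\Sigma}_S,
\]
which matches the third expression in the statement. Chaining the two computations gives the full string of equalities.

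I expect the only real point requiring care — the ``main obstacle,'' though it is minor — is the bookkeeping of signs and conormal orientations: one must confirm that the $\bar\nu$ appearing here is exactly the conormal normalized by $(\ddagger)$ as in Lemma \ref{lem:bdry-mc}, and that the umbilicity constant $\cot R$ carries the correct sign for the chosen \emph{outward} normal $\bar\eta$ of $B_R$ (with the convention $k_S(X,Y) = -\langle \bar\nabla_X Y, \bar\eta\rangle = \langle \bar\nabla_X\bar\eta, Y\rangle$). A quick sanity check is the free boundary case $\gamma = \pit$: there $\cot\gamma = 0$ and the identity degenerates to $0 = 0 = \cos\gamma(\ldots) = 0$, consistent with the fact that free boundary minimal surfaces in $\mathbb{S}^3_+$ (where additionally $R = \pit$, $\cot R = 0$) have $H^{\pr\Sigma}_\Sigma \equiv 0$ as noted after Theorem \ref{thm:urbano-intro}. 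Another check: when $R = \pit$ the middle term is $-H^{\pr\Sigma}_\Sigma$ and the right term is $\cos\gamma\, H^{\pr\Sigma}_S$, recovering the expected relation between the two boundary geodesic curvatures in the hemisphere. Beyond these consistency checks the argument is a direct substitution, so no genuinely hard step is anticipated.
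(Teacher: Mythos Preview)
Your approach is correct and is exactly the intended derivation: specialize Lemma~\ref{lem:bdry-mc} using the umbilicity $k_S = (\cot R)\, g_S$ of $\partial B_R \subset \mathbb{S}^3$, so that $H^S - k_S(\bar\nu,\bar\nu) = \cot R$, and then divide/multiply the two trace identities by $\sin\gamma$. The paper states the corollary without proof, and this substitution is precisely what is meant.

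One correction, though: you claim minimality of $\Sigma$ as one of the ``two extra pieces of structure'' to exploit, but the corollary does \emph{not} assume $\Sigma$ is minimal, and your derivation never actually uses $H^\Sigma = 0$ --- you (correctly) carry the full quantity $H^\Sigma - A(\eta,\eta)$ throughout. So simply delete the remark about minimality; the identity holds for any 2-sided surface meeting $\partial B_R$ at angle $\gamma$. (Indeed, the paper invokes the corollary for non-minimal surfaces, e.g.\ in the Willmore-energy proof of Theorem~\ref{thm:conf-max-sph-intro}.) Relatedly, your first sanity check at $\gamma = \tfrac{\pi}{2}$ is slightly misread: the outer terms vanish but the middle term becomes $\cot R - H^{\partial\Sigma}_\Sigma$, so the identity yields the nontrivial conclusion $H^{\partial\Sigma}_\Sigma = \cot R$ (for general $R$), which is exactly how the corollary is used later.
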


\subsection{Conformal maps and submanifolds}

Consider a submanifold $\Sigma$ immersed by the map $x:\Sigma \looparrowright M$. Given a diffeomorphism $\Phi : M\to N$, we denote by $\Psi_*\Sigma$ the submanifold immersed by $\Psi\circ x: \Sigma \looparrowright M$. If $\Sigma$ contacts $S$ at angle $\gamma$, and $\Psi$ is a conformal map, then it is clear that $\Psi_*\Sigma$ contacts $\Psi(S)$ at angle $\gamma$. 

It will also be helpful to recall the transformation formula for the second fundamental form under conformal diffeomorphisms: if $\Psi^* g_N = e^{2\psi} g_M$ and $k=\dim \Sigma$, then

\begin{equation}
\label{eq:2ff-conf}
\mathbf{A}_{\Psi_*\Sigma}(\Psi(x)) = (d\Psi)_x \left(\mathbf{A}_\Sigma(x) - g \otimes \pi^{\perp}(\bar{\nabla} \psi(x)) \right),
\end{equation}

\begin{equation}
\label{eq:mc-conf}
\mathbf{H}_{\Psi_*\Sigma}(\Psi(x)) = e^{-2\psi(x)} (d\Psi)_x \left((\mathbf{H}_\Sigma(x) - k \pi^{\perp}(\bar{\nabla} \psi(x)) \right).
\end{equation}

To illustrate these conventions we check that in dimension $k=2$, the norm of the traceless second fundamental form has a scaling property under conformal diffeomorphisms:

\begin{lemma}
\label{lem:traceless-2ff}
Let $\Sigma_* = \Psi_*\Sigma$ as above. Then
\[|\mathring{\mathbf{A}}_{\Sigma_*}|^2(\Psi(x)) = e^{-2\psi(x)} |\mathring{\mathbf{A}}_\Sigma|^2(x)  ,\]
\end{lemma}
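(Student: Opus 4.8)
The plan is to reduce the statement to the transformation formula \eqref{eq:2ff-conf} together with the pointwise conformal rescaling of the metric. First I would fix a point $x\in\Sigma$, write $p=\Psi(x)$, and work with the two tensors $\mathbf{A}_\Sigma(x)$ and $\mathbf{A}_{\Sigma_*}(p)$. By \eqref{eq:2ff-conf} we have $\mathbf{A}_{\Sigma_*}(p) = (d\Psi)_x\big(\mathbf{A}_\Sigma(x) - g\otimes \pi^{\perp}(\bar\nabla\psi(x))\big)$, and tracing gives $\mathbf{H}_{\Sigma_*}(p) = (d\Psi)_x\big(\mathbf{H}_\Sigma(x) - k\,\pi^{\perp}(\bar\nabla\psi(x))\big)$ with $k=2$. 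Subtracting $\tfrac12 g_\Sigma\otimes \mathbf{H}$ from each side (with the appropriate metric on each side) I would obtain the key algebraic identity
\[
\mathring{\mathbf{A}}_{\Sigma_*}(p) = (d\Psi)_x\big(\mathring{\mathbf{A}}_\Sigma(x)\big),
\]
where the correction terms $g\otimes \pi^\perp(\bar\nabla\psi)$ cancel precisely because in dimension $2$ the traceless part of $g\otimes(\text{fixed normal vector})$ vanishes: $g\otimes w - \tfrac12 g\,\tr_g(g\otimes w) = g\otimes w - \tfrac12 g\cdot 2w\cdot(\text{normalisation}) $; here one must be slightly careful that $g\otimes w$ is a multiple of $g$ as a $T^*\Sigma\otimes T^*\Sigma$-valued section, and tracing a $2\times 2$ multiple of the identity and removing it leaves $0$. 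This is the only place $k=2$ enters.

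Next I would convert the pushforward identity into the claimed scalar identity by computing norms. The norm of $\mathring{\mathbf{A}}_{\Sigma_*}$ at $p$ is taken with respect to $g_N$ on the normal bundle and $g_{\Sigma_*}$ on $T\Sigma_*$, i.e. the pullbacks $e^{2\psi(x)}g_M$ and $e^{2\psi(x)}g_M|_{T\Sigma}$ under $d\Psi$. Since $\mathring{\mathbf{A}}$ is a section of $T^*\Sigma\otimes T^*\Sigma\otimes N\Sigma$ (two covariant slots, one normal slot), each of the two covariant slots contributes a factor $e^{-2\psi(x)}$ to the squared norm when we pass from the $g_{\Sigma_*}$-norm to the $g_M$-norm (inverse metric on a covariant index), and the single normal slot contributes a factor $e^{+2\psi(x)}$. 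The net factor is $e^{-4\psi}e^{2\psi} = e^{-2\psi(x)}$, which combined with the pushforward identity above yields
\[
|\mathring{\mathbf{A}}_{\Sigma_*}|^2(p) = e^{-2\psi(x)}\,|\mathring{\mathbf{A}}_\Sigma|^2(x),
\]
as desired. An alternative bookkeeping: $\mathring{\mathbf{A}}$ viewed with one index raised is a $(1,1)$-tensor valued in $N\Sigma$, conformally invariant up to the normal-slot scaling, and the conversion between $|\cdot|^2$ on $(0,2)$-tensors versus $(1,1)$-tensors again introduces the stated factor.

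The main obstacle is purely bookkeeping: correctly tracking how the conformal factor acts on each tensor slot (covariant versus normal-bundle) and confirming that the $g\otimes\pi^\perp(\bar\nabla\psi)$ term is genuinely pure-trace in dimension $2$ so that it drops out of $\mathring{\mathbf{A}}$. There is no analytic difficulty; the subtlety is entirely in verifying that the metric used to define $|\mathring{\mathbf{A}}_{\Sigma_*}|^2$ is indeed $e^{2\psi}$ times the pushed-forward metric on both the tangent and normal bundles, and that $d\Psi$ is a $g_M$-to-($e^{-2\psi}g_N$) isometry so that applying it does not change the $g_M$-norm. Once these identifications are in place, the two displayed equations above constitute the full proof.
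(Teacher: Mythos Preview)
Your argument is correct and takes a slightly different route from the paper. The paper expands $|\mathbf{A}_{\Sigma_*}|^2$ and $|\mathbf{H}_{\Sigma_*}|^2$ separately via the transformation formulae, then subtracts using $|\mathring{\mathbf{A}}|^2 = |\mathbf{A}|^2 - \tfrac12|\mathbf{H}|^2$ and observes the cross terms cancel. You instead first establish the tensorial identity $\mathring{\mathbf{A}}_{\Sigma_*}(\Psi(x)) = (d\Psi)_x\,\mathring{\mathbf{A}}_\Sigma(x)$ and then read off the scalar identity by tracking the conformal weight of each slot. Your approach is more conceptual, exhibiting the conformal covariance of $\mathring{\mathbf{A}}$ as a tensor before passing to norms; the paper's is a direct scalar computation. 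Both are short.

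One small correction to your commentary: the step ``the traceless part of $g\otimes w$ vanishes'' is \emph{not} special to $k=2$; for any $k$ one has $\tr_g(g\otimes w)=kw$ and hence $g\otimes w - \tfrac1k g\otimes(kw)=0$. Likewise the slot-counting for the norm ($e^{-4\psi}$ from two covariant slots, $e^{+2\psi}$ from one normal slot) is independent of $\dim\Sigma$. So in fact your argument, and the lemma itself, hold for submanifolds of any dimension; the restriction to $k=2$ in the paper is contextual rather than essential. This does not affect the validity of your proof, only the aside about where $k=2$ enters.
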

\begin{proof}
The traceless second fundmental form is $\mathring{\mathbf{A}} = \mathbf{A}-\frac{1}{2}\mathbf{H} g$, and has norm \[|\mathring{\mathbf{A}}|^2 =|\mathbf{A}|^2 + \frac{2}{4}|\mathbf{H}|^2 - |\mathbf{H}|^2 = |\mathbf{A}|^2 - \frac{1}{2}|\mathbf{H}|^2.\] Contracting the above formulae we have 

\[|\mathbf{A}_{\Sigma_*}|^2(\Psi(x)) = e^{-4\psi(x)}e^{2\psi(x)} \left( |\mathbf{A}_\Sigma|^2(x) + 2 |\bar{\nabla}^{\perp} \psi|^2(x) - 2g(\mathbf{H}_\Sigma(x), \bar{\nabla}^{\perp}\psi(x)) \right),\]

\[|\mathbf{H}_{\Sigma_*}|^2(\Psi(x)) = e^{-4\psi(x)}e^{2\psi(x)} \left( |\mathbf{H}_\Sigma|^2(x) +4 |\bar{\nabla}^{\perp} \psi|^2(x) - 4g(\mathbf{H}_\Sigma(x), \bar{\nabla}^{\perp}\psi(x)) \right),\]

and subtracting and simplifying gives the result. 
\end{proof}

\section{Conformal group of round balls}
\label{sec:conf-ball}

In this section, we describe the conformal group of Euclidean balls $\mathbb{B}^n$, the round sphere $\mathbb{S}^n$ as well as the geodesic balls $B^n_R$. This section is comprised of rather well-known results, so may be skipped by expert readers, but we have included it to make our presentation of the conformal group completely concrete and highlight precisely what information is needed. 

In our presentation, we will take the description of $\Conf(\mathbb{B}^{n+1}) \simeq \Conf(\mathbb{S}^n)$ in terms of fractional linear transformations for granted, and deduce realisations of $\Conf(B^n_R)$ as a subgroup of $\Conf(\mathbb{B}^{n+1})$. We will also describe $\Conf(\mathbb{S}^n)$ as flows of conformal vector fields, and the relation between these presentations. 

\subsection{Fractional linear presentation}
\label{sec:conf-euc}

For a given $y\in \mathbb{B}^{n+1}$, the fractional linear transformation
\begin{equation}
\label{eq:Phi-y}
\Phi_y(x) = \frac{(1-|y|^2)x + (1+2\langle x,y\rangle+|x|^2)y}{1+2\langle x,y\rangle + |x|^2|y|^2}
\end{equation}
defines a conformal diffeomorphism $\mathbb{B}^{n+1}\to \mathbb{B}^{n+1}$, which satisfies $\Phi_y(0)=y$. It may be readily verified that its inverse map is $\Phi_y^{-1} = \Phi_{-y}$. 

In fact, the conformal group $\Conf(\mathbb{B}^{n+1})$ consists precisely of maps $\Theta \circ \Phi_y$, for some $\Theta \in \mathrm{SO}(n+1)$ and $y\in\mathbb{B}^{n+1}$. Moreover, the restriction map gives an identification $\Conf(\mathbb{B}^{n+1}) \simeq \Conf(\mathbb{S}^n)$.

A standard calculation gives that the conformal factor $\Phi_y^* \delta = e^{\phi} \delta$ is given by 

\begin{equation}
\label{eq:conf-factor-phi}
e^\phi = \frac{1-|y|^2}{1+2\langle x,y\rangle + |x|^2|y|^2}.
\end{equation}

(See also Lemma \ref{lem:conf-factor}, in the case of the restriction to $\mathbb{S}^n=\pr \mathbb{B}^{n+1}$.)

\begin{remark}
Restricted to $\mathbb{S}^n$, the map $\Phi_y$ acts as
\begin{equation}
\label{eq:Phi-y-Sn}
\Phi_y(x) = \frac{(1-|y|^2)x + 2(1+\langle x,y\rangle)y}{1+2\langle x,y\rangle + |y|^2}
\end{equation}
Montiel-Ros \cite{MR86} give a slightly different description, namely 
\[ x \mapsto \frac{x+ (\lambda + \mu\langle x, z\rangle)z}{\lambda(1+\langle x,z\rangle)},\]
where $\lambda = (1-|z|^2)^{-1/2}$ and $\mu=(\lambda -1)|z|^{-2}$. 

We note that these maps coincide when $z= \frac{2y}{1-|y|^2}$. 
\end{remark}

We will need the following fact about fractional linear transformations (cf. \cite[Theorem 3.2.1]{Bear83}):

\begin{proposition}
\label{prop:balls-to-balls}
$\Phi_y$ maps round balls to round balls. 
\end{proposition}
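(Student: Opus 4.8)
The plan is to show that $\Phi_y$ sends the sphere $\{|x|^2 - 2\langle x, p\rangle + c = 0\}$ (for a point $p$ and constant $c$) to another set of the same form, since round balls in $\mathbb{R}^{n+1}$ are precisely the sublevel sets $\{|x|^2 - 2\langle x, p\rangle + c < 0\}$, and affine hyperplanes arise as the degenerate case. A clean way to organise this is to recall that $\Phi_y$ is, by construction, a composition of a Möbius inversion (about a sphere centred outside $\mathbb{B}^{n+1}$) with Euclidean similarities, and that each of these elementary maps preserves the class of ``spheres and hyperplanes.'' So the first step is to reduce to the case of a single inversion $x \mapsto c_0 + r_0^2 (x - c_0)/|x - c_0|^2$.

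Next, I would verify directly that an inversion $j$ about the sphere of radius $r_0$ centred at $c_0$ maps generalised spheres to generalised spheres. Writing a generalised sphere as the zero set of $Q(x) = a|x|^2 - 2\langle b, x\rangle + d$ with $a, d \in \mathbb{R}$, $b \in \mathbb{R}^{n+1}$ (allowing $a = 0$ for hyperplanes), one substitutes $x = j^{-1}(w)$; after clearing the denominator $|w - c_0|^2$, the equation $Q(j^{-1}(w)) = 0$ becomes another expression of the form $a'|w|^2 - 2\langle b', w\rangle + d' = 0$, with coefficients depending affinely on $(a, b, d)$ and on $c_0, r_0$. This is a short computation. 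One then checks that the ``sublevel set side'' is preserved up to orientation, i.e. $\Phi_y$ maps the open ball bounded by a sphere to one of the two open regions bounded by the image sphere (or half-space); since $\Phi_y$ is a homeomorphism of $\mathbb{S}^n = \partial\mathbb{B}^{n+1}$ and of $\mathbb{B}^{n+1}$, connectedness of the complementary regions forces it to take a ball to a ball, not to the exterior of a ball. Alternatively, and perhaps more in keeping with the self-contained spirit of this section, one can avoid decomposing $\Phi_y$ at all: substitute the explicit formula \eqref{eq:Phi-y} for $\Phi_y(x)$ directly into $|z|^2 - 2\langle z, p\rangle + c$ with $z = \Phi_y(x)$, and using $|\Phi_y(x)|^2 = \dfrac{(1-|y|^2)^2 |x|^2 + \ldots}{(1 + 2\langle x, y\rangle + |x|^2|y|^2)^2}$ together with the inner-product expansions, clear the common denominator; the numerator is then manifestly of the form $a|x|^2 - 2\langle b, x\rangle + d$, which is the defining equation of a generalised sphere in the $x$-variable.

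The main obstacle I anticipate is purely bookkeeping: keeping the algebra of the substitution under control, in particular tracking how $a$, $b$, $d$ transform and confirming that the image is genuinely a sphere (i.e. has nonempty bounded interior) rather than degenerating. In fact, since $\Phi_y$ is a diffeomorphism of $\overline{\mathbb{B}^{n+1}}$ onto itself, a round ball $\mathbb{B} \subset \mathbb{B}^{n+1}$ has $\Phi_y(\mathbb{B})$ open, connected, with $\Phi_y(\partial\mathbb{B})$ a generalised sphere; the only way the boundary could be a hyperplane is if that hyperplane separated $\mathbb{B}^{n+1}$, but then $\Phi_y(\mathbb{B})$ would be one of the two pieces, still a (spherical) ball in $\mathbb{B}^{n+1}$ — so no genuine obstruction arises there. (This also shows the statement is really about the ambient $\mathbb{R}^{n+1}$: ``round ball'' includes half-spaces intersected with $\mathbb{B}^{n+1}$, matching the use of $\Phi_y$ to produce the balls $B^n_{R_t}(o_t)$ in Theorem~\ref{thm:conf-max-sph-intro-2}.) I would present the direct-substitution version as the cleanest route, relegating the verification that the denominator $1 + 2\langle x, y\rangle + |x|^2|y|^2$ is nonvanishing on $\overline{\mathbb{B}^{n+1}}$ (so no spurious poles appear) to a one-line remark, as it follows from $|y| < 1$ by Cauchy--Schwarz.
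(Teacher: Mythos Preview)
The paper does not actually prove this proposition: it is stated as a known fact about fractional linear transformations, with a reference to \cite[Theorem 3.2.1]{Bear83}. Your proposal supplies a genuine (and standard) argument where the paper simply cites, so there is nothing to compare line by line.

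Your outline is correct. One small point of caution on the direct-substitution route: after clearing denominators na\"ively by $D^2$ (where $D = 1 + 2\langle x,y\rangle + |x|^2|y|^2$), the resulting expression in $x$ is a priori \emph{quartic}, not quadratic, so the claim that the numerator is ``manifestly'' of the form $a|x|^2 - 2\langle b,x\rangle + d$ needs one further identity. The clean way is to first establish
\[
|\Phi_y(x)|^2 \;=\; \frac{|x+y|^2}{D}, \qquad\text{equivalently}\qquad 1 - |\Phi_y(x)|^2 \;=\; \frac{(1-|x|^2)(1-|y|^2)}{D},
\]
which is a one-line computation (and is closely related to the conformal factor formula \eqref{eq:conf-factor-phi}). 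With this in hand, the equation $|\Phi_y(x)|^2 - 2\langle \Phi_y(x),p\rangle + c = 0$ clears to a genuine quadratic $|x+y|^2 - 2\langle N,p\rangle + cD = 0$, whose leading part is $(1 - 2\langle y,p\rangle + c|y|^2)\,|x|^2$, exactly as you want. Your alternative approach via decomposition into inversions and similarities is also correct and is in fact how Beardon organises the proof.
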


\begin{lemma}
\label{lem:conf-translation}
Let $a\in\mathbb{S}^n$, $s\in(-1,1)$. Then 
\begin{equation} \langle \Phi_{sa}(x), a\rangle =  \frac{2s+(1+s^2) \langle x,a\rangle}{1+s^2 + 2s \langle x,a\rangle}\end{equation}
\end{lemma}

\subsection{Conformal group of spherical caps}
\label{sec:conf-sph}

In this subsection, we emphasise the explicit presentation of the conformal group $\Conf(B^n_R)$ as the subgroup of $\Conf(\mathbb{B}^{n+1})$ that preserves $B^n_R\subset \mathbb{S}^n$ (see Proposition \ref{prop:conf-description} below). Of course, $\Conf(B^n_R)$ is isomorphic to $\Conf(\mathbb{B}^n)$, and many of the properties we will review may be verified by direct computation of this isomorphism. In our presentation, we find it efficient and hopefully enlightening to deduce these properties from properties of $\Conf(\mathbb{B}^{n+1})$. 

Given $R\in(0,\pi)$, we define the number \begin{equation}s_R = \tan \frac{\pit-R}{2}.\end{equation} (In particular, by the tangent half-angle formulae $\frac{2s_R}{1+s_R^2}=\cos R$, $\frac{1-s_R^2}{1+s_R^2} = \sin R$.) With coordinates so that $o=e_0$ is the centre of $B^n_R$, it follows from Lemma \ref{lem:conf-translation} that
\begin{equation}
\label{eq:e0-to-e0}
 \langle\Phi_{s_R e_0}(x) ,e_0\rangle = \frac{\langle x,e_0\rangle +\cos R}{1 + \langle x, e_0\rangle \cos R}.
 \end{equation}

Consequently, $\Phi_{s_R e_0}$ provides a conformal diffeomorphism $B^n_\pit \to B^n_R$, with inverse $\Phi_{-s_R e_0}$. Moreover, this conformal equivalence commutes with the subgroup $\mathrm{SO}(n+1)^{o}$ of rotations fixing $o$. 

Inverse stereographic projection gives a conformal equivalence $\Xi: \mathbb{R}^n \to \mathbb{S}^n\setminus\{-o\}$ which maps $\mathbb{B}^n$ to $B^n_\pit$ and $0\to o=e_0$; this correspondence clearly sends  $\mathrm{Isom}(\mathbb{B}^n)=\mathrm{SO}(n)$ to the subgroup $\mathrm{SO}(n+1)^{o}$. If we realise $\mathbb{B}^n$ as the subset $\{\langle x,e_0\rangle =0\}\subset \mathbb{B}^{n+1}$, then for $z\in \mathbb{B}^n$ we have the explicit formula 
\begin{equation}
\label{eq:stereo}
 \Xi(z) = \frac{1-|z|^2}{1+|z|^2}e_0 +  \frac{2z}{1+|z|^2}
\end{equation}

\begin{definition}
It will be convenient to define the sets \[\mathcal{S}_c = \{ x\in\mathbb{B}^{n+1} | \langle x,e_0\rangle = |x|^2 c \}.\] 
For example, the above discussion identifies $\mathbb{B}^n \simeq \mathcal{S}_0$. We remark that each $\mathcal{S}_c$ is the intersection of $\mathbb{B}^{n+1}$ with a particular sphere or hyperplane. (See also Figure \ref{fig:conf-BR}.)
\end{definition}

We first note that elements of $\Conf(B^n_R)$ are determined, up to rotation, by where they send the centre $o=e_0$. 

\begin{lemma}
\label{lem:rotation}
Suppose that $\Psi\in \Conf(B^n_R)$ fixes $o$, that is, $\Psi(o)=o$. Then $\Psi$ is given by a rotation in $\mathrm{SO}(n+1)^o$. 

If $\Psi, \Psi' \in \Conf(B^n_R)$ satisfy $\Psi(o) = \Psi'(o)$, then there is $\Theta \in \mathrm{SO}(n+1)^o$ so that (acting on $B^n_R$) we have $\Psi' = \Theta \circ \Psi$.
\end{lemma}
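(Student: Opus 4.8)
The plan is to deduce both statements from the fractional linear presentation of $\Conf(\mathbb{B}^{n+1})$ already recalled in Section~\ref{sec:conf-euc}, together with Proposition~\ref{prop:conf-description} (which identifies $\Conf(B^n_R)$ as the subgroup of $\Conf(\mathbb{B}^{n+1})$ preserving $B^n_R$). Since every element of $\Conf(\mathbb{B}^{n+1})$ has the form $\Theta\circ\Phi_y$ with $\Theta\in\mathrm{SO}(n+1)$ and $y\in\mathbb{B}^{n+1}$, and since the second statement is an immediate consequence of the first (if $\Psi(o)=\Psi'(o)$ then $\Psi'\circ\Psi^{-1}$ fixes $o$, so it equals some $\Theta\in\mathrm{SO}(n+1)^o$, and then $\Psi'=\Theta\circ\Psi$ on $B^n_R$), the whole lemma reduces to the first claim: \emph{if $\Psi\in\Conf(B^n_R)$ fixes $o=e_0$ then $\Psi\in\mathrm{SO}(n+1)^o$.}

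First I would write $\Psi=\Theta\circ\Phi_y$ and use the conjugation by $\Phi_{s_R e_0}$ from equation~(\ref{eq:e0-to-e0}) to reduce to the case $R=\pit$, i.e.\ $o=e_0$ is the centre of a hemisphere: indeed $\Phi_{-s_R e_0}\circ\Psi\circ\Phi_{s_R e_0}$ preserves $B^n_\pit$ and still fixes $e_0$ (note $\Phi_{s_R e_0}(e_0)=e_0$ by the tangent half-angle identities and~(\ref{eq:e0-to-e0})). So it suffices to treat $\Psi\in\Conf(B^n_\pit)$ with $\Psi(e_0)=e_0$. Next I would argue that preserving $B^n_\pit=\{x\in\mathbb{S}^n:\langle x,e_0\rangle\geq 0\}$ forces $\Psi$ to preserve the boundary equator $\pr B^n_\pit=\{\langle x,e_0\rangle=0\}=\mathbb{S}^{n-1}$, and that fixing the centre $e_0$ of this hemisphere together with preserving the equator pins down the conformal data: working with $\Psi=\Theta\circ\Phi_y$, the condition $\Psi(e_0)=e_0$ reads $\Theta(\Phi_y(e_0))=e_0$; one computes $\Phi_y(e_0)$ from~(\ref{eq:Phi-y-Sn}), and then the preservation of the equator (equivalently, preservation of the two polar points $\pm e_0$, since $\Psi$ must fix $e_0$ and hence fix the unique ``antipode in the hemisphere structure'' $-e_0$) forces $y=0$. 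With $y=0$ we get $\Psi=\Theta\in\mathrm{SO}(n+1)$, and $\Theta(e_0)=e_0$ gives $\Theta\in\mathrm{SO}(n+1)^o$.

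A cleaner way to organize the $y=0$ step, which I would likely use instead of direct computation: the Euclidean extension $\wt\Psi\in\Conf(\mathbb{B}^{n+1})$ preserves the hyperplane slice $\mathcal{S}_0=\mathbb{B}^n=\{\langle x,e_0\rangle=0\}$ (this is the content of $\Psi$ preserving $\pr B^n_\pit$), and it fixes $e_0\in\mathbb{S}^n=\pr\mathbb{B}^{n+1}$. A fractional linear transformation of $\mathbb{B}^{n+1}$ preserving a hyperplane through the origin is itself ``odd'' with respect to reflection across that hyperplane, so it restricts to a fractional linear transformation of each half-ball; fixing the boundary point $e_0$ which is the pole of that half-ball forces the transformation to fix the center $0$ of $\mathbb{B}^{n+1}$, hence to be linear, hence a rotation. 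Restricting back to $\mathbb{S}^n$ and using $\Psi(e_0)=e_0$ yields $\Psi\in\mathrm{SO}(n+1)^o$.

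The main obstacle is bookkeeping rather than conceptual: one must be careful that ``$\Psi$ preserves $B^n_R$'' is used in the strong sense that it maps the cap onto itself (not merely into itself) so that it genuinely preserves the boundary sphere $\pr B^n_R$, and one must correctly track how the conjugation by $\Phi_{s_R e_0}$ interacts with the rotation factor $\Theta$ and with the basepoint $e_0$. I would also double-check the claim that a conformal self-map of $\mathbb{B}^{n+1}$ fixing one boundary point and preserving a hyperplane through the center must fix the center --- this is where the only real content lies, and it follows from the explicit form~(\ref{eq:Phi-y}) together with~(\ref{eq:conf-factor-phi}): if $\Phi_y$ preserves $\{\langle x,e_0\rangle=0\}$ then $y\perp e_0$, and if additionally $\Phi_y(e_0)=e_0$ then~(\ref{eq:Phi-y}) with $\langle e_0,y\rangle=0$ gives $y$ parallel to $e_0$, whence $y=0$.
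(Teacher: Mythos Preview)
Your argument has a circularity problem. You invoke Proposition~\ref{prop:conf-description} to regard $\Conf(B^n_R)$ as a subgroup of $\Conf(\mathbb{B}^{n+1})$ and to write $\Psi=\Theta\circ\Phi_y$, but in the paper Proposition~\ref{prop:conf-description} is proved \emph{after} Lemma~\ref{lem:rotation} and its proof explicitly uses Lemma~\ref{lem:rotation}. At the point of this lemma, $\Conf(B^n_R)$ is just the group of conformal self-maps of the cap; the identification with a subgroup of $\Conf(\mathbb{B}^{n+1})$ is not yet available.

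The paper's proof avoids this by going \emph{down} one dimension rather than up: for $R=\tfrac{\pi}{2}$ it conjugates by the stereographic map $\Xi:\mathbb{B}^n\to B^n_{\pi/2}$ (which sends $0\mapsto e_0$), landing in $\Conf(\mathbb{B}^n)$, where the fractional linear description $\{\Theta\circ\Phi_y\}$ is already taken for granted in Section~\ref{sec:conf-euc}. A map in $\Conf(\mathbb{B}^n)$ fixing the origin then has $y=0$, hence is a rotation in $\mathrm{SO}(n)$, which corresponds under $\Xi$ to $\mathrm{SO}(n+1)^o$. The reduction from general $R$ to $R=\tfrac{\pi}{2}$ via conjugation by $\Phi_{s_R e_0}$ is the same as yours.

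There is also a secondary gap in your last paragraph: you verify the key step only for $\Phi_y$ (assuming $\Phi_y$ itself preserves $\{\langle x,e_0\rangle=0\}$ and fixes $e_0$), whereas the actual hypothesis is on $\Theta\circ\Phi_y$. The rotation $\Theta$ need not fix $e_0$ or preserve that hyperplane individually, so the conclusion $y\perp e_0$ and $y\parallel e_0$ does not follow as written. One can repair this (e.g.\ by a hyperbolic-geometry argument: the geodesic perpendicular to $\mathcal{S}_0$ with ideal endpoint $e_0$ meets $\mathcal{S}_0$ only at $0$), but the resulting argument is longer than the paper's one-line reduction to $\Conf(\mathbb{B}^n)$.
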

\begin{proof}
First consider $R=\pit$. If $\Psi \in \Conf(B^n_\pit)$ fixes $o$, then its conjugate by $\Xi$ is an element of $\Conf(\mathbb{B}^n)$ that fixes the origin. By the description of $\Conf(\mathbb{B}^n)$ above, the latter must be rotation in $\mathrm{SO}(n)$, and so $\Psi$ must be a rotation in $\mathrm{SO}(n+1)^o$. 

Similarly, note that $\Phi_{s_R e_0}(e_0)=e_0$. So if $\Psi \in \Conf(B^n_R)$ fixes $o=e_0$, then its conjugate by $\Phi_{s_R e_0}$ is an element of $\Conf(B^n_\pit)$ that still fixes $o$. Again by the above discussion, $\Psi$ and its conjugate must be induced by a rotation in $\mathrm{SO}(n+1)^o$.

For the last statement, simply note that $\Psi' \circ  \Psi^{-1}$ fixes $o$ and hence must be a rotation. 
\end{proof}

We now give an explicit realisation of $\Conf(B^n_R)$ as a subgroup of $\Conf(\mathbb{B}^{n+1})$. 

\begin{proposition}
\label{prop:conf-description}
Let $\Theta \in \mathrm{SO}(n+1)^o$ be a rotation fixing $o=e_0$ and let $y\in \mathcal{S}_0$ be a point in $\mathbb{B}^{n+1}$ orthogonal to $e_0$. The map $\Phi_{s_R e_0} \circ \Phi_y \circ \Phi_{-s_R e_0}$ restricts to a conformal diffeomorphism of $B^n_R$. Consequently, we may identify 
\begin{equation}
\label{eq:conf-B-R-presentation}
\begin{split}
 \Conf(B^n_R)  &=\{ \Theta \circ \Phi_{s_R e_0} \circ \Phi_y \circ \Phi_{-s_R e_0} | \Theta \in \mathrm{SO}(n+1)^o, y\in \mathcal{S}_0\}
 \\&= \{ \Psi \in \Conf(\mathbb{B}^{n+1}) | \Psi(B^n_R)=B^n_R\} 
 \end{split}
 \end{equation}
\end{proposition}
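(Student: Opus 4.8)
The plan is to establish the two claimed set equalities in \eqref{eq:conf-B-R-presentation}. The second equality is really the \emph{definition} of $\Conf(B^n_R)$ as a subgroup of $\Conf(\mathbb{B}^{n+1})$ (an element of $\Conf(\mathbb{B}^{n+1})$ preserves $B^n_R$ precisely when it is a conformal self-map of $B^n_R$ that extends to the ambient ball), so the real content is the first equality: every conformal self-map of $B^n_R$ has the stated normal form $\Theta \circ \Phi_{s_R e_0} \circ \Phi_y \circ \Phi_{-s_R e_0}$ with $\Theta \in \mathrm{SO}(n+1)^o$ and $y \in \mathcal{S}_0$. First I would verify the easy inclusion: that each such map does preserve $B^n_R$. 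The key point is that $\Phi_{-s_R e_0}$ conjugates $B^n_R$ to $B^n_\pit$ (by \eqref{eq:e0-to-e0}, which shows $\Phi_{s_R e_0}(B^n_\pit) = B^n_R$ with inverse $\Phi_{-s_R e_0}$), so it suffices that $\Phi_y$ preserves $B^n_\pit = \{\langle x, e_0\rangle \geq 0\} \cap \mathbb{S}^n$ whenever $y \perp e_0$. This follows from Proposition \ref{prop:balls-to-balls} (round balls go to round balls, so $B^n_\pit$ maps to \emph{some} geodesic ball) together with a symmetry argument: since $y \in \mathcal{S}_0$, one checks using \eqref{eq:Phi-y-Sn} that $\Phi_y$ commutes with the reflection $x_0 \mapsto -x_0$, hence it must fix the equator $\{\langle x, e_0\rangle = 0\}$ setwise and therefore preserve each of the two hemispheres; combined with $\Phi_y(0) = y \in \mathbb{B}^n$ (so the image hemisphere is the one containing $o$), this gives $\Phi_y(B^n_\pit) = B^n_\pit$. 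Composing, $\Phi_{s_R e_0} \circ \Phi_y \circ \Phi_{-s_R e_0}$ preserves $B^n_R$, and pre-composing with any $\Theta \in \mathrm{SO}(n+1)^o$ (which fixes $B^n_R$ since it fixes the center $o=e_0$ and is an isometry) keeps us in $\Conf(B^n_R)$.

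For the reverse inclusion, let $\Psi \in \Conf(\mathbb{B}^{n+1})$ with $\Psi(B^n_R) = B^n_R$. Set $y' := \Psi(o)$; then $\Psi(o)$ lies in $B^n_R$, and I claim the point $y \in \mathcal{S}_0$ can be chosen so that $\Phi_{s_R e_0} \circ \Phi_y \circ \Phi_{-s_R e_0}$ also sends $o$ to $y'$. Indeed, $\Phi_{-s_R e_0}$ maps $o = e_0$ to $e_0$ (as noted, $\Phi_{s_R e_0}(e_0) = e_0$, so its inverse does too) and maps $y'$ to some point $z' \in B^n_\pit$; then choosing $y := z' \in \mathcal{S}_0$... — more carefully, one wants $\Phi_y(e_0)$ to equal $z'$, but $\Phi_y(0) = y$ whereas we are evaluating at $e_0 \in \mathbb{S}^n$, so instead one uses the transitivity of $\Conf(\mathbb{B}^n) \cong \Conf(B^n_\pit)$ on $B^n_\pit$: there is some $y \in \mathcal{S}_0$ with $\Phi_y(e_0) = z'$, since $\{\Phi_y|_{y \in \mathcal{S}_0}\}$ already realizes all of $\Conf(\mathbb{B}^n)$ modulo rotations via the stereographic identification \eqref{eq:stereo}, and these act transitively on interior points. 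Granting such a $y$, the map $\Psi_0 := \Phi_{s_R e_0} \circ \Phi_y \circ \Phi_{-s_R e_0}$ lies in $\Conf(B^n_R)$ by the first part and satisfies $\Psi_0(o) = y' = \Psi(o)$. Then $\Psi \circ \Psi_0^{-1} \in \Conf(B^n_R)$ fixes $o$, so by Lemma \ref{lem:rotation} it equals some $\Theta \in \mathrm{SO}(n+1)^o$, giving $\Psi = \Theta \circ \Psi_0$ as desired.

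The step I expect to be the main obstacle is pinning down the transitivity/parametrization claim cleanly: namely that as $y$ ranges over $\mathcal{S}_0$, the maps $\Phi_{s_R e_0} \circ \Phi_y \circ \Phi_{-s_R e_0}$ realize \emph{every} coset of $\Conf(B^n_R)$ modulo $\mathrm{SO}(n+1)^o$ — equivalently, that $\{\Phi_y(e_0) : y \in \mathcal{S}_0\}$ is all of $B^n_\pit$. This is where one must actually use the explicit formula \eqref{eq:Phi-y-Sn} (or the Montiel–Ros form, or conjugate through stereographic projection \eqref{eq:stereo} to reduce to the statement that translations in $\Conf(\mathbb{B}^n)$ move $0$ to every interior point, which is classical). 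The rest is bookkeeping with the conjugations and an appeal to Lemma \ref{lem:rotation} to absorb the rotational ambiguity. A clean way to organize the whole argument is: (i) $\Phi_{-s_R e_0}$ conjugates $\Conf(B^n_R)$ isomorphically onto $\Conf(B^n_\pit)$; (ii) stereographic projection $\Xi$ conjugates $\Conf(B^n_\pit)$ onto $\Conf(\mathbb{B}^n)$; (iii) invoke the known structure $\Conf(\mathbb{B}^n) = \{\Theta \circ \Phi_y^{(n)} : \Theta \in \mathrm{SO}(n), y \in \mathbb{B}^n\}$; then (iv) unwind to recover the stated form, using that $y \in \mathbb{B}^n = \mathcal{S}_0$ and that $\Xi$ intertwines the $\Phi_y$'s with the $\Phi_y^{(n)}$'s up to the identifications already set up.
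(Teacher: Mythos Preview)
Your proposal is correct and follows essentially the same approach as the paper: reduce to the hemisphere case via conjugation by $\Phi_{\pm s_R e_0}$, verify that $\Phi_y$ with $y\perp e_0$ preserves $B^n_\pit$, establish transitivity of these maps on $B^n_\pit$, and then invoke Lemma \ref{lem:rotation} to absorb the rotational part. The paper carries out the transitivity step you flagged as the main obstacle by a direct computation: from \eqref{eq:Phi-y-o} one has $\Phi_y(e_0) = \frac{(1-|y|^2)e_0 + 2y}{1+|y|^2}$, so writing an arbitrary $z\in B^n_\pit$ as $(\cos\zeta)e_0 + (\sin\zeta)e_1$ and taking $y = (\tan\tfrac{\zeta}{2})e_1$ gives $\Phi_y(e_0)=z$ exactly---this is precisely the explicit form of the argument you sketched via stereographic projection.
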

\begin{proof}
We first consider the case $R=\pit$, in which case $\Phi_{s_R e_0} = \Phi_0 = \id$. It is easily verified that if $\langle y,e_0\rangle=0$, then $\Phi_y$ preserves the hemisphere $B^n_\pit$. 

We claim that for any $z\in B^n_\pit$, there is such a $y$ for which $\Phi_y(o)=z$. Indeed, as $\langle y,e_0\rangle=0$, we have 
\begin{equation}
\label{eq:Phi-y-o}
\Phi_y(e_0) = \frac{(1-|y|^2)e_0 + 2y}{1+|y|^2}.
\end{equation}
 Choosing coordinates so that $z= (\cos \zeta)e_0 + (\sin \zeta)e_1$ and taking $y= (\tan \frac{\zeta}{2})e_1$ establishes the claim. (Note that $\zeta \in [0,\pit)$, so that $|\tan \frac{\zeta}{2}|<1$.)
 
Given the claim, Lemma \ref{lem:rotation} now implies that any $\Psi \in \Conf(B^n_\pit)$ is, up to rotation, induced by $\Phi_y$ for some $y\in\mathcal{S}_0$. This establishes the first equality in (\ref{eq:conf-B-R-presentation}) for $R=\pit$. Now recall that $\Phi_{s_R e_0}$ gives a conformal equivalence $B^n_\pit \to B^n_R$ that commutes with $\mathrm{SO}(n+1)^o$. In particular, conjugation by $\Phi_{s_R e_0}$ establishes the first equality in (\ref{eq:conf-B-R-presentation}) for general $R$.
 
%We also check where the map $\Phi_{s_R e_0} \circ \Phi_y \circ \Phi_{-s_R e_0}$ sends $e_0$. We already have that $\Phi_{-(cos R)e_0}$ fixes $o=e_0$. As above, write $y= (\tan \frac{\zeta}{2})e_1$. Then by (\ref{eq:Phi-y-o}), it only remains to compute
%
%\[
%\begin{split}
%Y &= \Phi_{s_R e_0}(\Phi_y(o)) = \Phi_{s_R e_0}( (\cos\zeta)e_0 + (\sin\zeta)e_1) 
%\\&=  \frac{(1-s_R^2)((\cos\zeta)e_0 + (\sin\zeta)e_1) + 2(1+s_R \cos \zeta)s_R e_0}{1+ 2s_R \cos\zeta + s_R^2}
%\\&= \frac{(\cos R+\cos \zeta)e_0+ (\sin R\ \sin \zeta)e_1}{1+\cos R \cos \zeta}
%\end{split}
%\]
\end{proof}

Henceforth, we will always regard $\Conf(B^n_R)$ as a subgroup of $\Conf(\mathbb{B}^{n+1})$.

\subsection{Flow presentation and relations}
\label{sec:conf-flow}

Given a unit vector $a\in \mathbb{S}^n$, define $u_a:\mathbb{S}^n\to\mathbb{R}$ by $u_a(x) = \langle x,a\rangle$. The gradient
\begin{equation}V_a(x) = \bar{\nabla} u_a(x)= \pi_{T_x\mathbb{S}^n}(a) = a-\langle x,a\rangle x\end{equation}
defines a conformal vector field on $\mathbb{S}^n$. 

We note that 
\begin{equation}
\label{eq:u-grad}
|\bar{\nabla} u_a(x)|^2 = 1-\langle x,a\rangle^2 = 1-u_a(x)^2,
\end{equation}
\begin{equation}
\label{eq:u-hess}
\bar{\nabla}^2 u_a = -\langle x,a\rangle \bar{g} = - u_a\bar{g}.
\end{equation}

 We can explicitly identify the gradient flow of $u_a$ with (restrictions of) the fractional linear presentation above:

\begin{lemma}
\label{lem:flow-relation}
Given $a\in\mathbb{S}^n$, let $\Psi^a_t$ be the 1-parameter family of conformal diffeomorphisms generated by $V_a$. Then (acting on $\mathbb{S}^n$)
\[ \Psi^a_t = \Phi_{sa},\] where $s=\tanh \frac{t}{2}$. 
\end{lemma}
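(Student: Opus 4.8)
The plan is to verify the identity $\Psi^a_t = \Phi_{sa}$ with $s = \tanh\frac t2$ by checking that the right-hand side, viewed as a curve in $\Conf(\mathbb S^n)$ parametrised by $t$, satisfies the defining ODE of the flow of $V_a$: namely $\frac{d}{dt}\big|_{t=t_0} \Phi_{s(t)a}(x) = V_a\big(\Phi_{s(t_0)a}(x)\big)$ for all $x$, together with the initial condition $\Phi_{0}=\id$. The initial condition is immediate since $s(0)=\tanh 0 = 0$ and $\Phi_0$ is the identity. For the ODE, by uniqueness of solutions to ODEs it suffices to check it at $t_0=0$ after exploiting the group law: since $\{\Phi_{sa}\}$ (restricted to $\mathbb S^n$) ought to form a one-parameter subgroup under the composition law coming from the addition formula for $\tanh$, differentiating at a general $t_0$ reduces to differentiating at $0$. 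So the heart of the matter is two computations: (i) $\frac{d}{dt}\big|_{t=0}\Phi_{sa}(x) = V_a(x)$, and (ii) the one-parameter group property $\Phi_{s_1 a}\circ\Phi_{s_2 a} = \Phi_{s_3 a}$ on $\mathbb S^n$ with $s_3 = \frac{s_1+s_2}{1+s_1 s_2}$, which matches $\tanh\frac{t_1+t_2}{2}$ under $s_i = \tanh\frac{t_i}2$.

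For step (i), I would differentiate the explicit formula \eqref{eq:Phi-y-Sn} for $\Phi_{y}|_{\mathbb S^n}$ at $y = sa$, $s=0$. Writing $\Phi_{sa}(x) = \dfrac{(1-s^2)x + 2(1+s\langle x,a\rangle)sa}{1+2s\langle x,a\rangle + s^2}$, at $s=0$ the denominator is $1$ and the numerator is $x$, recovering $\Phi_0(x)=x$; differentiating in $s$ at $s=0$, the $(1-s^2)x$ term contributes $0$, the $2(1+s\langle x,a\rangle)sa$ term contributes $2a$, and the quotient rule contributes $-x\cdot\frac{d}{ds}\big|_0(1+2s\langle x,a\rangle+s^2) = -2\langle x,a\rangle x$. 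Since $\frac{ds}{dt}\big|_{t=0} = \frac12\operatorname{sech}^2 0 = \frac12$, we get $\frac{d}{dt}\big|_{t=0}\Phi_{sa}(x) = \frac12\big(2a - 2\langle x,a\rangle x\big) = a - \langle x,a\rangle x = V_a(x)$, as desired. (One should double-check that $\Phi_{sa}$ indeed maps $\mathbb S^n$ to $\mathbb S^n$, which is part of the setup already recorded, and that this differentiated vector is tangent to $\mathbb S^n$, which is clear since $\langle V_a(x),x\rangle = 0$.)

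For step (ii), rather than multiplying out the fractional linear transformations by hand I would invoke that $\Conf(\mathbb B^{n+1})$ has a faithful linear (or $2\times 2$ block) representation under which each $\Phi_y$ corresponds to an explicit matrix, so that $\Phi_{s_1 a}\circ\Phi_{s_2 a}$ corresponds to a matrix product; for $y$'s that are all scalar multiples of a fixed unit vector $a$, these matrices commute and the composition is again of the form $\Phi_{s_3 a}$, and one reads off $s_3$ by tracking the image of a convenient point (e.g. $\langle\Phi_{s_ia}(x),a\rangle$ via Lemma \ref{lem:conf-translation}, which gives a Möbius action $t\mapsto \frac{2s + (1+s^2)t}{1+s^2+2st}$ on the $a$-coordinate; composing two such Möbius maps on $[-1,1]$ and comparing fixed points/derivatives pins down the $\tanh$-addition law). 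Alternatively, and perhaps most cleanly, since $V_a$ is a complete vector field its flow $\Psi^a_t$ is a genuine one-parameter subgroup; having verified in (i) that $\frac{d}{dt}\big|_0\Phi_{sa} = V_a = \frac{d}{dt}\big|_0\Psi^a_t$ at the identity, and knowing $\{\Phi_{sa}\}_s$ is at least a one-parameter subgroup (the curves $s\mapsto \Phi_{sa}$ and their reparametrisation in $t$ stay within the abelian subgroup generated by $V_a$), the two one-parameter subgroups of $\Conf(\mathbb S^n)$ have the same generator and hence coincide; matching parameters via the initial-velocity computation forces $s=\tanh\frac t2$.

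I expect step (ii) — establishing that $\{\Phi_{sa}\}$ composes according to the $\tanh$ addition law, equivalently that it is the correctly-parametrised one-parameter subgroup rather than merely a reparametrised orbit — to be the main obstacle, since step (i) is a short explicit differentiation. The cleanest route is likely the representation-theoretic one: fix the $a$-direction, restrict attention to the rank-one family $\{\Phi_{sa}\}$, observe it lies in a one-dimensional connected abelian subgroup of $\Conf(\mathbb S^n)$ (a boost in the $e_0$–$a$ plane of the Lorentz model $\mathrm{O}(n+1,1)$), and compute the rapidity: the boost by rapidity $\theta$ acts on the relevant coordinate exactly as the Möbius map above with $s=\tanh\frac\theta2$, and step (i) identifies $\theta$ with $t$. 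Tracking constants carefully through the model identification is the one place where an error could creep in, so I would double-check by evaluating both $\Psi^a_t$ and $\Phi_{\tanh(t/2)a}$ on the single point $-a\in\mathbb S^n$ (a fixed point of the flow) and on $x$ with $\langle x,a\rangle = 0$, where both reduce to one-variable computations.
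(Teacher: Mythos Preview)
Your proposal is correct but takes a genuinely different route from the paper. The paper verifies the flow equation \emph{at every time}: it computes $\partial_s \Phi_{sa}(x)$ and $V_a(\Phi_{sa}(x))$ directly for general $s$ (both are explicit rational expressions in $s$ and $u_a(x)$), finds that $\partial_s \Phi_{sa}(x) = \tfrac{2}{1-s^2} V_a(\Phi_{sa}(x))$, and then observes that $\tfrac{ds}{dt} = \tfrac{1-s^2}{2}$ absorbs the factor. No group law is invoked; it is a brute-force ODE check.

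Your approach instead checks the ODE only at $t=0$ and then appeals to the one-parameter subgroup structure to propagate. This is conceptually cleaner and explains \emph{why} $\tanh\tfrac{t}{2}$ appears (rapidity in the Lorentz model), but it trades one computation for another: you must still establish that $\Phi_{s_1 a}\circ\Phi_{s_2 a} = \Phi_{s_3 a}$ with $s_3 = \tfrac{s_1+s_2}{1+s_1 s_2}$, which is roughly the same amount of algebra as the paper's direct check. One caution: your ``cleanest'' formulation --- matching generators of two one-parameter subgroups --- quietly assumes that $t\mapsto\Phi_{s(t)a}$ \emph{is} a one-parameter subgroup, which is precisely the content of step (ii); so you cannot skip the composition-law verification (via the Lorentz boost or the M\"obius action on $\langle x,a\rangle$, as you outline). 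With that verification in hand, your argument is complete. The paper's approach buys self-containment and avoids any model identification; yours buys structural insight and a shorter differentiation.
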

\begin{proof}
We need only check the flow equation, that is, $\pr_t \Phi_{sa}(x) = V_a(\Phi_{sa}(x))$ for $x\in \mathbb{S}^n$. Indeed, using (\ref{eq:Phi-y-Sn}) we write 

\[\Phi_{sa}(x) = \frac{(1-s^2)x + 2s(1+s u_a)a}{1+2s u_a + s^2},\]

in particular 

\[\langle \Phi_{sa}(x), a\rangle = \frac{(1-s^2)u_a + 2s(1+s u_a)}{1+2s u_a + s^2}= \frac{u_a+2s+s^2u_a}{1+2s u_a + s^2}.\]

It follows that \[ 
\begin{split}
V_a(\Phi_{sa}(x))&=a- \langle \Phi_{sa}(x), a\rangle \Phi_{sa}(x) 
\\&= a-\frac{(u_a+2s+s^2u_a )( (1-s^2)x + 2s(1+s u_a)a)}{(1+2s u_a + s^2)^2}
\\&= \frac{1-s^2}{(1+2s u_a + s^2)^2} \left(  -(u_a+2s+s^2u_a )x + (1+2su_a +s^2(-1+2u_a^2))a\right)
\end{split}\]

Differentiating gives

\[\begin{split}
\pr_s \Phi_{sa}(x) &= \frac{  2 (1+2s u_a + s^2)(-sx +a + 2s u_a a) - 2((1-s^2)x + 2s(1+s u_a)a)(u_a+s)  }{(1+2s u_a + s^2)^2}
\\&= \frac{2}{(1+2s u_a + s^2)^2} \left( -(u_a+2s+s^2u_a)x +(1+2su_a +s^2 (-1+2u_a^2))a\right)
\end{split}\]

Thus we have shown $\pr_s \Phi_{sa}(x)  = \frac{2}{1-s^2} V_a(\Phi_{sa}(x)).$ As $s=\tanh\frac{t}{2}$ we indeed have $\frac{ds}{dt} = \frac{1-s^2}{2}$, which completes the proof. 
\end{proof}

The following lemma is contained in \cite[Proof of Theorem 1.1]{eSI}, but for the reader's convenience we have translated it here:

\begin{lemma}
\label{lem:conf-factor}
Fix $a\in\mathbb{S}^n$ and let $\Psi^a_t$ be the gradient flow of $u_a:\mathbb{S}^n\to \mathbb{S}^n$ as above. For any $t$, we have $(\Psi^t_a)^*\bar{g} = e^{2\psi_t} \bar{g}$, where $\psi_t:\mathbb{S}^n\to\mathbb{S}^n$ satisfies
\begin{equation}
\label{eq:conf-factor}
e^{2\psi(x)} =   \frac{1-(u_a\circ \Psi^a_t)(x)^2}{1-u_a(x)^2}.
\end{equation}

\begin{equation} 
\label{eq:conf-factor-grad}
\bar{\nabla} \psi_t = \frac{u_a-u_a\circ \Psi^a_t}{1-u_a^2}\bar{\nabla} u_a,
\end{equation}
\end{lemma}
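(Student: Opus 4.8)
\textbf{Proof strategy for Lemma \ref{lem:conf-factor}.}

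The plan is to compute the conformal factor directly from the explicit fractional linear presentation. By Lemma \ref{lem:flow-relation} we know $\Psi^a_t = \Phi_{sa}$ on $\mathbb{S}^n$, where $s = \tanh\frac{t}{2}$. So the first step is to specialise the general conformal factor formula \eqref{eq:conf-factor-phi}, $e^\phi = \frac{1-|y|^2}{1+2\langle x,y\rangle + |x|^2|y|^2}$, to $y = sa$ and $x\in\mathbb{S}^n$ (so $|x|=1$), obtaining $e^{\phi(x)} = \frac{1-s^2}{1+2s\,u_a(x)+s^2}$. One subtlety: \eqref{eq:conf-factor-phi} is the factor for $\Phi_y^*\delta = e^\phi\delta$ on $\mathbb{B}^{n+1}$ with the \emph{Euclidean} metric, whereas here we want the factor for the restriction to $\mathbb{S}^n$ with its round metric $\bar g$. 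Since $\Phi_y$ maps $\mathbb{S}^n$ to $\mathbb{S}^n$ and acts conformally there, and the round metric is just the restriction of $\delta$ to the tangent spaces of $\mathbb{S}^n$, the restricted conformal factor $e^{2\psi}$ equals the ambient $e^\phi$ evaluated on $\mathbb{S}^n$; so $e^{2\psi(x)} = \frac{1-s^2}{1+2s\,u_a(x)+s^2}$. (Alternatively one checks this against the computation in the proof of Lemma \ref{lem:flow-relation}, where the factor $\frac{1-s^2}{(1+2su_a+s^2)^2}$ appears in $V_a(\Phi_{sa}(x))$ — consistent after accounting for the $d\Psi$ contribution.)

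The second step is to rewrite this in the desired form \eqref{eq:conf-factor}. From the proof of Lemma \ref{lem:flow-relation} we have the identity $u_a(\Phi_{sa}(x)) = \frac{u_a + 2s + s^2 u_a}{1+2su_a+s^2}$. A direct computation then gives
\[
1 - u_a(\Phi_{sa}(x))^2 = \frac{(1+2su_a+s^2)^2 - (u_a+2s+s^2u_a)^2}{(1+2su_a+s^2)^2}.
\]
Expanding the numerator: $(1+2su_a+s^2)^2 - (u_a(1+s^2)+2s)^2$. Both squares, one finds, factor as $(1-s^2)^2(1-u_a^2)\cdot(\text{something})$; more precisely the numerator equals $(1-s^2)^2(1-u_a^2)$ after simplification, so that
\[
\frac{1-u_a(\Psi^a_t(x))^2}{1-u_a(x)^2} = \frac{(1-s^2)^2}{(1+2su_a+s^2)^2} = \left(\frac{1-s^2}{1+2su_a+s^2}\right)^2 = e^{2\psi(x)}\cdot\frac{1-s^2}{1+2su_a+s^2},
\]
hmm — I need to double check the power: if $e^{2\psi} = \frac{1-s^2}{1+2su_a+s^2}$ then $e^{4\psi} = \frac{(1-s^2)^2}{(1+2su_a+s^2)^2}$, which matches the ratio above. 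Since $\Psi^a_t$ restricted to $\mathbb{S}^n$ has conformal factor $e^{2\psi}$ (not $e^{4\psi}$), the correct reading is that \emph{both} $\frac{1-s^2}{1+2su_a+s^2}$ and its square are in play; resolving this carefully — i.e. getting the exponent right by tracking whether \eqref{eq:conf-factor-phi} already is the pullback factor on the sphere or needs an extra power from the radial direction — is where I would be most careful. The cleanest route is: verify $e^{2\psi(x)} = \frac{1-u_a(\Psi^a_t(x))^2}{1-u_a(x)^2}$ directly by checking that the right-hand side squared equals $\left(\frac{1-s^2}{1+2su_a+s^2}\right)^2$ via the numerator expansion above, hence the right-hand side itself equals $\frac{1-s^2}{1+2su_a+s^2}$, which is the restricted factor from \eqref{eq:conf-factor-phi}. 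This gives \eqref{eq:conf-factor}.

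For the gradient formula \eqref{eq:conf-factor-grad}, the third step is to differentiate \eqref{eq:conf-factor}. Taking $\log$ of both sides, $2\psi = \log(1 - (u_a\circ\Psi^a_t)^2) - \log(1-u_a^2)$, so
\[
2\,\bar\nabla\psi = \frac{-2(u_a\circ\Psi^a_t)\,\bar\nabla(u_a\circ\Psi^a_t)}{1-(u_a\circ\Psi^a_t)^2} + \frac{2u_a\,\bar\nabla u_a}{1-u_a^2}.
\]
Now $\bar\nabla(u_a\circ\Psi^a_t) = (d\Psi^a_t)^*(\bar\nabla u_a \circ \Psi^a_t)$, and since $\Psi^a_t$ is the flow of $V_a = \bar\nabla u_a$, one can use the explicit form of $u_a\circ\Psi^a_t$ in $s$ (from Lemma \ref{lem:flow-relation}) to express everything in terms of $u_a$ and $\bar\nabla u_a$ alone. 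Rather than differentiate the composition abstractly, it is cleanest to substitute the closed form $u_a\circ\Psi^a_t = \frac{u_a(1+s^2)+2s}{1+2su_a+s^2}$ into the first fraction and simplify; combined with \eqref{eq:u-grad} ($|\bar\nabla u_a|^2 = 1-u_a^2$) the two terms collapse to a single multiple of $\bar\nabla u_a$, and matching coefficients yields $\bar\nabla\psi_t = \frac{u_a - u_a\circ\Psi^a_t}{1-u_a^2}\bar\nabla u_a$. The main obstacle throughout is purely bookkeeping: keeping the power of the conformal factor straight between the ambient $\mathbb{B}^{n+1}$ picture and the restricted $\mathbb{S}^n$ picture, and then carrying out the algebraic simplification of the numerator $(1+2su_a+s^2)^2 - (u_a(1+s^2)+2s)^2$ without sign errors. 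No genuinely hard idea is needed beyond what is already assembled in Lemmas \ref{lem:flow-relation} and the identities \eqref{eq:u-grad}, \eqref{eq:conf-factor-phi}.
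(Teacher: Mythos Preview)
Your approach is viable but differs from the paper's, and the paper's route is considerably cleaner. You go through the explicit fractional linear formula $\Phi_{sa}$ and verify the identity algebraically; the paper instead exploits the flow property directly. Since $\Psi^a_t$ is the gradient flow of $u_a$, one has $d\Psi_x(V_a(x)) = V_a(\Psi(x))$, and the conformal factor is read off immediately by measuring the length of this single vector:
\[
e^{2\psi(x)} = \frac{|d\Psi_x(V_a(x))|^2}{|V_a(x)|^2} = \frac{|V_a(\Psi(x))|^2}{|V_a(x)|^2} = \frac{1-(u_a\circ\Psi)(x)^2}{1-u_a(x)^2}.
\]
No explicit $s$-dependence, no power bookkeeping. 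For \eqref{eq:conf-factor-grad}, the paper differentiates this relation and uses that $(du\circ d\Psi)_x(X) = \langle V_a(\Psi(x)), d\Psi_x(X)\rangle = \langle d\Psi_x(V_a(x)), d\Psi_x(X)\rangle = e^{2\psi(x)}\langle V_a(x), X\rangle$, so the factor $1-(u\circ\Psi)^2$ cancels and one is left with $d\psi = \frac{u-u\circ\Psi}{1-u^2}\,du$ in one line.

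Your exponent confusion is genuine and not entirely your fault: the paper's \eqref{eq:conf-factor-phi} as written gives the \emph{length} scaling $e^\phi$, so the metric factor restricted to $\mathbb{S}^n$ is its square, $e^{2\psi} = (e^\phi)^2 = \bigl(\tfrac{1-s^2}{1+2su_a+s^2}\bigr)^2$. Your numerator computation $(1+2su_a+s^2)^2 - (u_a(1+s^2)+2s)^2 = (1-s^2)^2(1-u_a^2)$ is correct and then matches perfectly --- so your route does close once that exponent is fixed. What the flow-property argument buys is that it never touches the explicit $\Phi_{sa}$ formula or the parameter $s$ at all, and in particular sidesteps the ambient-versus-restricted factor issue entirely.
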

\begin{proof}
For convenience, write $u=u_a$ and $\Psi=\Psi^a_t$. As $\Psi$ is a conformal diffeomorphism, we certainly have $\Psi^* \bar{g} = e^{2\psi} \bar{g}$ for some $\psi=\psi_t$. Moreover, since $\Psi^a_t$ was the gradient flow of $u$, we have that $d\Psi_x(V_a(x)) = V_a(\Psi(x))$, where as above $V_a = \bar{\nabla} u$. 

Therefore, we can determine $\psi$ by measuring the length of $V_a$: For any $x\in\mathbb{S}^n$ we have \[e^{2\psi(x)} = \frac{|d\Psi_x(V_a(x))|^2}{|V_a(x)|^2} =  \frac{|V_a(\Psi(x))|^2}{|V_a(x)|^2} = \frac{1-(u\circ \Psi)(x)^2}{1-u(x)^2}.\] 

Differentiating this relation, it follows that
\[ d\psi = -\frac{u\circ \Psi}{1-(u\circ \Psi)^2} du \circ d\Psi + \frac{u}{1-u^2} du.\] 
But now using again that $d\Psi_x(V_a(x)) = V_a(\Psi(x))$, for any $X\in T_x\mathbb{S}^n$ we have
\[ (du\circ d\Psi)_x(X) = \langle V_a(\Psi(x)), d\Psi_x(X)\rangle = \langle d\Psi_x(V_a(x)), d\Psi_x(X)\rangle = e^{2\psi(x)} \langle V_a(x), X\rangle. \]

Cancelling the factors of $1-(u\circ \Psi)^2$ yields 
\[ d\psi = \frac{u-u\circ \Psi}{1-u^2}du,\]
which implies (\ref{eq:conf-factor-grad}) as desired. 
\end{proof}

We may also precisely describe $\Conf(B^n_R)$ in terms of these conformal flows; we have included such a description but confined it to Appendix \ref{sec:conf-mono-overflow}, as it is not strictly necessary to prove our results.

\section{Conformal maximisation via the Willmore energy}
\label{sec:conf-willmore}

In this subsection, we establish our conformal maximisation results for minimal surfaces using the conformal invariance of the Willmore energy. We begin with this approach as it gives some of the shorter proofs, and is also enlightening as to the definition of the relevant quantities. The key will be that the norm of the traceless second fundamental form has a \textit{pointwise} conformal scaling (cf. Lemma \ref{lem:traceless-2ff}). 

We first consider immersed surfaces $(\Sigma, \pr\Sigma)$ in $(\mathbb{S}^n, \pr B_R)$. By the Gauss equations, the Gaussian curvature satisfies \[ 2K_\Sigma = 2(n-1)-2(n-2) + |\mathbf{H}_\Sigma|^2 - |\mathbf{A}_\Sigma|^2 = 2  +\frac{1}{2}|\mathbf{H}|^2 - |\mathring{\mathbf{A}}_\Sigma|^2.\] Note that the second fundamental form of the barrier is given by $k^S = (\cot R) g|_S$. 

Recall that $\mathcal{E}^{R}(\Sigma) = |\Sigma| + \cot R\, |\pr\Sigma|$. 

\begin{proof}[Proof of Theorem \ref{thm:conf-max-sph-intro} ($k=2$) via Willmore energy]
Let $\Psi \in \Conf(\mathbb{S}^n)$ and let $R_*$ be such that $\Psi$ maps $B^n_R$ to a geodesic ball of radius $R_*$. 

Consider any (not necessarily minimal) immersed surface $(\Sigma, \pr\Sigma)$ in $(\mathbb{S}^n, \pr B_R)$ that contacts $\pr B_R$ at angle $\gamma =\pit$ along $\pr\Sigma$, which has wet surface $S^-$. Recall that $\sin\gamma \, \bar{\eta} = \eta + \cos\gamma \, \bar{\nu}$ on $\pr S^-=\pr\Sigma$, where $\bar{\nu}$ is the outward normal on $\pr \Sigma$ with respect to $S^-$. By Corollary \ref{cor:bdry-mc}, the geodesic curvature of $\pr\Sigma$ in $\Sigma$ is given by $H^{\pr\Sigma}_\Sigma = \cot R$. Applying Gauss-Bonnet to $\Sigma$ gives 
\[ 2\pi \chi(\Sigma) = \int_\Sigma (1+\frac{1}{4}|\mathbf{H}_\Sigma|^2-\frac{1}{2}|\mathring{\mathbf{A}}_\Sigma|^2) + \int_{\pr\Sigma} \cot R.\]

As $\Psi$ is a conformal diffeomorphism, the above also applies to $\Sigma_*=\Psi_*\Sigma$ (with $R$ replaced by $R_*$). By the conformal scaling for $|\mathring{\mathbf{A}}|^2$ (Lemma \ref{lem:traceless-2ff}) we have $\int_{\Sigma_*}|\mathring{\mathbf{A}}_{\Sigma_*}|^2 =\int_\Sigma |\mathring{\mathbf{A}}_\Sigma|^2$. If $\Sigma$ is also minimal, then subtracting the resulting equations gives 

\[0 = |\Sigma|  - |\Sigma_*| - \frac{1}{4}\int_{\Sigma_*} |\mathbf{H}_{\Sigma_*}|^2 + \cot R \,|\pr\Sigma| - \cot R_*\, |\pr\Sigma_*|.\] 

This immediately implies that $\mathcal{E}^{R}(\Sigma) \geq \mathcal{E}^{R_*}(\Sigma_*)$; if $\Psi \in \Conf(B^n_R)$ then $R_*=R$ which completes the proof. 
\end{proof}

Recall that $\mathcal{E}^{R,\gamma}(\Sigma,S^-) = |\Sigma| +\cos\gamma \csc^2 R\, |S^-| + \sin\gamma \cot R\, |\pr\Sigma|$. 

This method also gives the conformal maximisation for capillary surfaces:

\begin{proposition}
Let $(\Sigma,\pr\Sigma)$ be a capillary minimal surface in $(B^n_R, \pr B^n_R)$ with contact angle $\gamma \in (0,\pit]$ and wet surface $S^-$. 

Given $\Psi \in \Conf(\mathbb{S}^n)$, let $\Sigma_* = \Psi_* \Sigma$, $S^-_* = \Psi_* S^-$, and let $R_*$ be such that $\Psi$ maps $B^n_R$ to a geodesic ball of radius $R_*$. Then $\mathcal{E}^{R_*, \gamma}(\Sigma_*, S^-_*) \leq \mathcal{E}^{R,\gamma}(\Sigma,S^-)$. 
\end{proposition}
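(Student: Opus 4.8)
The plan is to mirror the preceding proof of Theorem~\ref{thm:conf-max-sph-intro} in the case $k=2$, but now applying Gauss--Bonnet on \emph{both} the surface and the wet surface, and using Corollary~\ref{cor:bdry-mc} to combine the two resulting boundary integrals. Write $S=\pr B_R$ and $S_*=\Psi(S)=\pr B_{R_*}$; since $\Psi$ is conformal it preserves the contact angle and the balancing relation $(\ddagger)$, so $S^-_*$ is a genuine wet surface for $\Sigma_*$, while $\chi(\Sigma_*)=\chi(\Sigma)$ and $\chi(S^-_*)=\chi(S^-)$ because $\Psi$ is a diffeomorphism. (Gauss--Bonnet applies even if $S^-$ is only immersed, with $\chi$ that of the abstract domain.)

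First I would record two Gauss--Bonnet identities for $\Sigma_*$ (which need \emph{not} be minimal). Using $2K_{\Sigma_*}=2+\tfrac12|\mathbf{H}_{\Sigma_*}|^2-|\mathring{\mathbf{A}}_{\Sigma_*}|^2$,
\[ 2\pi\chi(\Sigma_*)=|\Sigma_*|+\tfrac14\int_{\Sigma_*}|\mathbf{H}_{\Sigma_*}|^2-\tfrac12\int_{\Sigma_*}|\mathring{\mathbf{A}}_{\Sigma_*}|^2+\int_{\pr\Sigma_*}H^{\pr\Sigma_*}_{\Sigma_*}. \]
The barrier $S$ is intrinsically a round $2$-sphere of Gaussian curvature $\csc^2R$ (by the Gauss equation, since $k^S=\cot R\,g|_S$), so Gauss--Bonnet on the wet surface reads
\[ 2\pi\chi(S^-_*)=\csc^2R_*\,|S^-_*|+\int_{\pr\Sigma_*}H^{\pr\Sigma_*}_{S_*}, \]
where the geodesic curvature is taken with respect to the outer conormal $\bar\nu$ of $\pr\Sigma_*$ in $S^-_*$ — exactly the $\bar\nu$ appearing in $(\ddagger)$, which is why $H^{\pr\Sigma_*}_{S_*}$ is the relevant quantity. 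Adding the second identity multiplied by $\cos\gamma$ to the first, the boundary integrands combine into $H^{\pr\Sigma_*}_{\Sigma_*}+\cos\gamma\,H^{\pr\Sigma_*}_{S_*}$; by the last equality in Corollary~\ref{cor:bdry-mc} this equals $\csc\gamma\cot R_*-\cos\gamma\cot\gamma\cot R_*$, and the identity $\csc\gamma-\cos\gamma\cot\gamma=\sin\gamma$ shows this is precisely $\sin\gamma\cot R_*$. Hence
\[ \mathcal{E}^{R_*,\gamma}(\Sigma_*,S^-_*)=2\pi\bigl(\chi(\Sigma_*)+\cos\gamma\,\chi(S^-_*)\bigr)-\tfrac14\int_{\Sigma_*}|\mathbf{H}_{\Sigma_*}|^2+\tfrac12\int_{\Sigma_*}|\mathring{\mathbf{A}}_{\Sigma_*}|^2, \]
and the analogous identity holds for $(\Sigma,S^-)$ with $R_*$ replaced by $R$ and $\mathbf{H}_\Sigma\equiv0$.

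Finally I would subtract the two identities. The Euler characteristics agree, and $\int_{\Sigma_*}|\mathring{\mathbf{A}}_{\Sigma_*}|^2=\int_\Sigma|\mathring{\mathbf{A}}_\Sigma|^2$ by the pointwise conformal scaling of Lemma~\ref{lem:traceless-2ff} (the factor $e^{-2\psi}$ exactly cancels the change in area element in dimension $2$). What remains is $\mathcal{E}^{R_*,\gamma}(\Sigma_*,S^-_*)-\mathcal{E}^{R,\gamma}(\Sigma,S^-)=-\tfrac14\int_{\Sigma_*}|\mathbf{H}_{\Sigma_*}|^2\le 0$, as desired; when $\gamma=\pit$ the wet surface drops out and one recovers the preceding proof. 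The only real subtlety — bookkeeping rather than a genuine obstacle — is the coefficient matching in the combination step: recognizing $\csc^2R$ as the intrinsic curvature of the barrier sphere so that Gauss--Bonnet on $S^-$ produces exactly the $\csc^2R\,|S^-|$ term in $\mathcal{E}^{R,\gamma}$, verifying the trigonometric identity so the combined boundary term matches the coefficient $\sin\gamma\cot R$, and checking that the sign/orientation conventions for Gauss--Bonnet on $S^-$ (geodesic curvature via $\bar\nu$) are consistent with those used for $\Sigma$ in the $\gamma=\pit$ proof above.
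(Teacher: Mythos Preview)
Your proof is correct and follows essentially the same approach as the paper's own proof: apply Gauss--Bonnet to both $\Sigma_*$ and $S^-_*$, combine the boundary integrals via Corollary~\ref{cor:bdry-mc} to produce exactly $\sin\gamma\cot R_*\,|\pr\Sigma_*|$, and then subtract from the analogous identity for $(\Sigma,S^-)$ using the conformal invariance of $\int|\mathring{\mathbf{A}}|^2$ from Lemma~\ref{lem:traceless-2ff}. The paper's write-up is slightly terser (it states the combined Gauss--Bonnet identity directly rather than rearranging into the explicit formula for $\mathcal{E}^{R_*,\gamma}$), but the content is identical.
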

\begin{proof}
Consider any (not necessarily minimal) immersed hypersurface $(\Sigma, \pr\Sigma)$ in $(\mathbb{S}^3, \pr B_R)$ that contacts $\pr B_R$ at angle $\gamma$ along $\pr\Sigma$ with wet surface $S^-$. Again recall $\sin\gamma \, \bar{\eta} = \eta + \cos\gamma \, \bar{\nu}$, so by Corollary \ref{cor:bdry-mc}, the geodesic curvatures of $\pr\Sigma$ in $\Sigma$ and $S^-$ are related by 
\[H^{\pr\Sigma}_\Sigma +\cos\gamma\, H^{\pr\Sigma}_{S^-} =\sin\gamma \cot R.\] 

Note that the barrier $\pr B_R$ (hence $S^-$) has Gaussian curvature $\csc^2 R$. (One may use the Gauss equations as above, or note that $\pr B_R$ is isometric to a round 2-sphere of radius $\sin R$.)

Applying Gauss-Bonnet to $\Sigma$ and $S^-$ then gives
\[ 2\pi( \chi(\Sigma) + \cos\gamma \, \chi(S^-)) = \int_\Sigma (1+\frac{1}{4}|\mathbf{H}_\Sigma|^2-\frac{1}{2}|\mathring{\mathbf{A}}_\Sigma|^2)  + \cos\gamma \csc^2 R\, |S^-| + \sin\gamma \cot R\, |\pr\Sigma|.\]

As $\Psi$ is a conformal diffeomorphism, the above also applies to $\Sigma_*, S^-_*, R_*$, and by the conformal invariance of $|\mathring{\mathbf{A}}|^2$ we still have $\int_{\Sigma_*}|\mathring{\mathbf{A}}_{\Sigma_*}|^2 =\int_\Sigma |\mathring{\mathbf{A}}_\Sigma|^2$. If $\Sigma$ is also minimal, then subtracting the resulting equations gives 

\[\begin{split} 0 = &|\Sigma|  - |\Sigma_*| - \frac{1}{4}\int_{\Sigma_*} |\mathbf{H}_{\Sigma_*}|^2 +\cos\gamma \csc^2 R\, |S^-| - \cos\gamma \csc^2 R_*\,|S^-_*| \\&+ \sin\gamma \cot R \,|\pr\Sigma| -\sin\gamma \cot R_* \, |\pr\Sigma_*|.\end{split}\] 
\end{proof}

By essentially the same proof, we have the following result for capillary minimal surfaces in the Euclidean ball. We omit the details for the sake of brevity.

\begin{proposition}
Let $(\Sigma,\pr\Sigma)$ be a capillary minimal surface in $(\mathbb{B}^n, \pr \mathbb{B}^n)$ with contact angle $\gamma \in (0,\pit]$ and wet surface $S^-$. 

Given $\Psi \in \Conf(\mathbb{B}^n)$, let $\Sigma_* = \Psi_* \Sigma$, $S^-_* = \Psi_* S^-$. Then \[ \cos\gamma \,|S^-_*|+ \sin\gamma\, |\pr\Sigma_*| \leq \cos\gamma \,|S^-|+ \sin\gamma\, |\pr\Sigma|.\] 
\end{proposition}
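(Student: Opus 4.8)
The strategy is to mimic exactly the Gauss--Bonnet computation used for the preceding proposition, but replacing the spherical ambient setting $\mathbb{S}^3$ with the Euclidean ball $\mathbb{B}^n$ with $n=3$. First I would recall the Euclidean analogue of Corollary \ref{cor:bdry-mc}: Lemma \ref{lem:bdry-mc} applies with $M=\mathbb{R}^3$ and $S=\pr\mathbb{B}^3$, for which $k_S = g|_S$ (the sphere $S^2$ has unit principal curvatures) and $H^S=2$. Tracing the balancing identity $(\ddagger)$ as in Corollary \ref{cor:bdry-mc} then gives, for a minimal $\Sigma$ (so $\mathbf{H}_\Sigma\equiv0$, hence $H^\Sigma = 0$ and $A(\eta,\eta)=0$ after using minimality appropriately — more precisely one uses $H^\Sigma - A(\eta,\eta) = \cos\gamma(H^S - k_S(\bar\nu,\bar\nu)) + \sin\gamma H^{\pr\Sigma}_{S^-}$, with $H^S - k_S(\bar\nu,\bar\nu) = 1$), a relation of the form $H^{\pr\Sigma}_\Sigma + \cos\gamma\, H^{\pr\Sigma}_{S^-} = \sin\gamma$. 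This is the Euclidean replacement of the identity $H^{\pr\Sigma}_\Sigma + \cos\gamma\, H^{\pr\Sigma}_{S^-} = \sin\gamma\cot R$, obtained by sending $\cot R \to 1$.

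Next I would apply Gauss--Bonnet to $\Sigma$ and to $S^-$ separately. The wet surface $S^-$ is an immersed surface in the round unit sphere $S^2 = \pr\mathbb{B}^3$, so it has Gaussian curvature identically $1$. For $\Sigma$, the Gauss equation in flat $\mathbb{R}^3$ gives $2K_\Sigma = |\mathbf{H}_\Sigma|^2 - |\mathbf{A}_\Sigma|^2 = \tfrac12|\mathbf{H}_\Sigma|^2 - |\mathring{\mathbf{A}}_\Sigma|^2$ (note: no additive constant, unlike the spherical case). Combining:
\[
2\pi\bigl(\chi(\Sigma) + \cos\gamma\,\chi(S^-)\bigr) = \int_\Sigma\bigl(\tfrac14|\mathbf{H}_\Sigma|^2 - \tfrac12|\mathring{\mathbf{A}}_\Sigma|^2\bigr) + \cos\gamma\,|S^-| + \sin\gamma\,|\pr\Sigma|.
\]
Here the $\cos\gamma\,|S^-|$ term arises from $\int_{S^-} K_{S^-} = |S^-|$, and the boundary term from $\int_{\pr\Sigma}(H^{\pr\Sigma}_\Sigma + \cos\gamma\, H^{\pr\Sigma}_{S^-}) = \sin\gamma\,|\pr\Sigma|$.

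Finally, I would invoke conformal invariance. Given $\Psi\in\Conf(\mathbb{B}^3)$ with conformal factor $e^{2\psi}$, the surface $\Sigma_* = \Psi_*\Sigma$ contacts $\pr\mathbb{B}^3$ at the same angle $\gamma$ with wet surface $S^-_* = \Psi_* S^-$, and $\chi(\Sigma_*)=\chi(\Sigma)$, $\chi(S^-_*)=\chi(S^-)$ since $\Psi$ is a diffeomorphism. By Lemma \ref{lem:traceless-2ff}, $\int_{\Sigma_*}|\mathring{\mathbf{A}}_{\Sigma_*}|^2 = \int_\Sigma|\mathring{\mathbf{A}}_\Sigma|^2$. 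Writing the displayed Gauss--Bonnet identity for $(\Sigma_*, S^-_*)$ and subtracting from that for $(\Sigma, S^-)$, all the Euler characteristic and $|\mathring{\mathbf{A}}|^2$ terms cancel, and — crucially using that $\Sigma$ is minimal, so $|\mathbf{H}_\Sigma|^2\equiv 0$ and that term disappears from the $\Sigma$ side — we obtain
\[
0 = -\tfrac14\int_{\Sigma_*}|\mathbf{H}_{\Sigma_*}|^2 + \cos\gamma\,|S^-| - \cos\gamma\,|S^-_*| + \sin\gamma\,|\pr\Sigma| - \sin\gamma\,|\pr\Sigma_*|.
\]
Since $\int_{\Sigma_*}|\mathbf{H}_{\Sigma_*}|^2 \geq 0$, this rearranges immediately to $\cos\gamma\,|S^-_*| + \sin\gamma\,|\pr\Sigma_*| \leq \cos\gamma\,|S^-| + \sin\gamma\,|\pr\Sigma|$, which is the claim. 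The only mild subtlety — and the one place to be careful rather than a genuine obstacle — is bookkeeping the boundary-curvature identity with the correct signs when passing from the spherical $\cot R$ normalization to the Euclidean value $1$, and confirming that the wet surface really does sit in a unit-curvature sphere so that $\int_{S^-}K = |S^-|$; once these are pinned down the argument is entirely parallel to the spherical case and the author's instruction to omit details is justified.
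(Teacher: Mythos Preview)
Your proposal is correct and follows exactly the approach the paper intends: it is the Euclidean version of the preceding Gauss--Bonnet/Willmore argument, with the ambient curvature term dropping out (so no $|\Sigma|$ term appears) and the barrier curvature replacing $\cot R$ by $1$, yielding the boundary identity $H^{\pr\Sigma}_\Sigma + \cos\gamma\, H^{\pr\Sigma}_{S^-} = \sin\gamma$ and thence the displayed inequality after subtracting and using $\int |\mathring{\mathbf{A}}|^2$-invariance. One small cleanup: that boundary identity holds for any surface with contact angle $\gamma$ (it follows purely from Lemma~\ref{lem:bdry-mc} and $H^S - k_S(\bar\nu,\bar\nu)=1$), so your parenthetical about needing minimality or $A(\eta,\eta)=0$ there is unnecessary---minimality enters only at the final step to kill the $|\mathbf{H}_\Sigma|^2$ term.
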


\section{Conformal maximisation via monotonicity}
\label{sec:conf-mono}

In this section, we will show that our conformal maximisation results for minimal surfaces also follow from certain monotonicity formulae. These monotonicity methods also allow for generalisations to higher dimensions, in certain cases. We first investigate how the conformal flow moves the cap $B^n_R$, then we detail our proof of monotonicity in the free boundary setting and finally discuss monotonicity in the capillary setting. We will perform our calculations on the flowing surface, but we remark that one could also calculate on the base surface.

\subsection{Evolution of the cap}

%\textcolor{blue}{We remark that the full quantitative descriptions in Section \ref{sec:cap-evo} and the bounded mean curvature version of the monotonicity in Section \ref{sec:mono-sph} are not strictly needed to establish the conformal maximisation for free boundary minimal submanifolds in spherical caps, but they will be useful in establishing our later results for free boundary minimal submanifolds in the Euclidean ball. }

In this subsection, we prove some quantitative facts about the conformal flows in Section \ref{sec:conf-flow}. Some of these calculations may also be performed directly using the fractional linear presentation, but we find it thematic to analyse using the flow. 

\begin{lemma}
\label{lem:u-evo}
Fix $x_0, a\in\mathbb{S}^n$. Let $x_t = \Psi^a_t(x_0)$ be the trajectory under the flow by $V_a = \bar{\nabla} u_a$. Then setting $s=\tanh \frac{t}{2}$, we have
\begin{equation}
\label{eq:u-a-s}
u_a(x_t) =\tanh(t + \artanh(u_a(x_0)))=\frac{2s + (1+s^2)u_a(x_0)}{1+s^2+2s u_a(x_0)}. 
\end{equation}\end{lemma}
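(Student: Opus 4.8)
The plan is to verify the ODE satisfied by $t \mapsto u_a(x_t)$ directly, and then solve it. First I would compute $\frac{d}{dt} u_a(x_t)$. Since $x_t$ flows by $V_a = \bar\nabla u_a$, we have
\[
\frac{d}{dt} u_a(x_t) = \langle \bar\nabla u_a(x_t), V_a(x_t)\rangle = |\bar\nabla u_a(x_t)|^2 = 1 - u_a(x_t)^2,
\]
using the identity $|\bar\nabla u_a|^2 = 1 - u_a^2$ from \eqref{eq:u-grad}. So if we write $f(t) = u_a(x_t)$, then $f' = 1 - f^2$ with $f(0) = u_a(x_0)$. This is a separable ODE whose solution is $f(t) = \tanh(t + c)$ with $c = \artanh(u_a(x_0))$ (valid since $|u_a(x_0)| < 1$ unless $x_0 = \pm a$, in which case $x_t$ is stationary and the formula holds trivially); this gives the first equality in \eqref{eq:u-a-s}.

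Next I would derive the second, algebraic expression. One route is to expand $\tanh(t + c)$ via the addition formula: $\tanh(t+c) = \frac{\tanh t + \tanh c}{1 + \tanh t \tanh c}$, substitute $\tanh c = u_a(x_0)$, and then convert from $\tanh t$ to the half-angle variable $s = \tanh\frac{t}{2}$ using $\tanh t = \frac{2s}{1+s^2}$. A short simplification (clearing denominators by $1+s^2$) turns $\frac{\frac{2s}{1+s^2} + u_a(x_0)}{1 + \frac{2s}{1+s^2} u_a(x_0)}$ into $\frac{2s + (1+s^2)u_a(x_0)}{1 + s^2 + 2s\,u_a(x_0)}$, as claimed. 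Alternatively — and perhaps more in keeping with the paper's style — one can read this off directly from Lemma \ref{lem:flow-relation} together with Lemma \ref{lem:conf-translation}: by Lemma \ref{lem:flow-relation}, $x_t = \Psi^a_t(x_0) = \Phi_{sa}(x_0)$ with $s = \tanh\frac{t}{2}$, and then Lemma \ref{lem:conf-translation} gives exactly
\[
u_a(x_t) = \langle \Phi_{sa}(x_0), a\rangle = \frac{2s + (1+s^2)\langle x_0, a\rangle}{1 + s^2 + 2s\langle x_0, a\rangle},
\]
which is the right-hand side. Matching this to $\tanh(t + \artanh(u_a(x_0)))$ then only requires the elementary $\tanh$ identities above.

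I do not anticipate a genuine obstacle here: the statement is essentially a packaging of the flow equation for $u_a$ (whose key input, $|\bar\nabla u_a|^2 = 1 - u_a^2$, is already recorded) together with the $\tanh$ half-angle formula and the fractional-linear description of the flow from Lemma \ref{lem:flow-relation}. The only minor care needed is to handle the degenerate case $x_0 = \pm a$ separately (where both sides are constant equal to $\pm 1$) and to note that $s \in (-1,1)$ for all real $t$ so the denominator $1 + s^2 + 2s\,u_a(x_0)$ never vanishes. I would present the proof via the ODE derivation for conceptual transparency, and remark that the algebraic form also follows from Lemmas \ref{lem:flow-relation} and \ref{lem:conf-translation}.
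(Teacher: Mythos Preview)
Your proposal is correct and follows essentially the same approach as the paper: derive the ODE $f'=1-f^2$ from \eqref{eq:u-grad}, integrate to obtain $\tanh(t+\artanh(u_a(x_0)))$, and then apply the $\tanh$ addition and half-angle formulae to reach the expression in $s$. Your added remarks (the degenerate case $x_0=\pm a$, nonvanishing of the denominator, and the alternative route via Lemmas~\ref{lem:flow-relation} and~\ref{lem:conf-translation}) are fine embellishments but not needed beyond what the paper does.
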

\begin{proof}
Write $u=u_a$. By the chain rule we have 

\begin{equation}
\label{eq:u-monotone}
\frac{d}{dt} u(x_t) = |\bar{\nabla} u|^2(x_t) = 1-u(x_t)^2.
\end{equation}
 Integrating gives 
\[
u(x_t) = \tanh(t + \artanh(u(x_0))),
\]
and using the addition (and half-angle) formula for $\tanh$ gives the formula in terms of $s$. 
\end{proof}

\begin{proposition}
\label{prop:flow-ball}
Fix $a\in\mathbb{S}^n$ and let $\alpha \in [0,\pi]$ be such that $\cos\alpha = -\langle e_0,a\rangle$. Under the flow $\Psi^a_t$, the ball $B^n_R$ is mapped to a ball $\Psi^a_t(B^n_R) = B^n_{R_t}(o_t)$, where the radius satisfies 
\begin{equation}
\label{eq:moving-radius}
\cot R_t = \cot R\cosh t - \cos\alpha \csc R \sinh t = \frac{-2s\cos\alpha \csc R + (1+s^2)\cot R}{1-s^2}.
\end{equation}

Moreover, let $\bar{\eta}_t$ be the outer unit normal of $\pr B^n_{R_t}(o_t)$ and suppose $\cos R_t\neq 0$. Then 
\begin{equation}
\label{eq:conormal-V}
\langle\bar{\eta}_t, V_a\rangle = -\frac{d}{dt}\cot R_t + u\cot R_t.
\end{equation}
\end{proposition}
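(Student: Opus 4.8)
The plan is to track the ball $B^n_R$ under the conformal flow $\Psi^a_t$ by following what happens to the two antipodal points on the boundary sphere $\pr B^n_R$ that lie on the geodesic through the centre $o=e_0$ in the direction determined by $a$. Since $\Psi^a_t$ maps round balls to round balls (Proposition \ref{prop:balls-to-balls}), the image $\Psi^a_t(B^n_R)$ is again a geodesic ball $B^n_{R_t}(o_t)$, and its centre $o_t$ must lie on the image geodesic; the two relevant boundary points are those whose $u_a$-values are $\cos(\alpha - R)$ and $\cos(\alpha + R)$ at time $0$ (here I use $\cos\alpha = -\langle e_0,a\rangle$, equivalently $\langle e_0, a\rangle = \cos(\pi - \alpha)$). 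Both of these points flow according to Lemma \ref{lem:u-evo}, so I can compute their $u_a$-values at time $t$ and read off $R_t$ and $o_t$. Concretely, if the two antipodal boundary points $p_\pm$ on the distinguished geodesic have $u_a(p_t^\pm)$ at time $t$, then $\cos R_t$ is recovered from the fact that $o_t$ is the midpoint, so that $\langle o_t, p_t^\pm\rangle = \cos R_t$; expressing $p_t^\pm$ in terms of the frame $\{a, \text{(geodesic direction)}\}$ and using the addition formula in Lemma \ref{lem:u-evo} should collapse to \eqref{eq:moving-radius}. An alternative and perhaps cleaner route to \eqref{eq:moving-radius}: differentiate directly. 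The boundary sphere $\pr B^n_{R_t}(o_t) = \{x : \langle x, o_t\rangle = \cos R_t\}$; parametrising $o_t$ and $R_t$ and using that the flow velocity on $\mathbb{S}^n$ is $V_a$, one finds an ODE for $\cot R_t$. Using \eqref{eq:u-monotone} along the centre trajectory and the relation $u_a(o_t)$ satisfies $\frac{d}{dt}u_a(o_t) = 1 - u_a(o_t)^2$, together with spherical trigonometry relating $u_a(o_t)$, $R_t$, and $\alpha$, yields a first-order linear ODE in $\cot R_t$ whose solution is exactly the stated hyperbolic expression; the second equality in \eqref{eq:moving-radius} is then just the substitution $\cosh t = \frac{1+s^2}{1-s^2}$, $\sinh t = \frac{2s}{1-s^2}$.

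For the second formula \eqref{eq:conormal-V}, the idea is to differentiate the defining relation of the moving boundary sphere. Write $F_t(x) = \langle x, o_t\rangle - \cos R_t$, so $\pr B^n_{R_t}(o_t) = F_t^{-1}(0)$, and the outer unit normal is $\bar\eta_t = \frac{1}{\sin R_t}(o_t - \langle x, o_t\rangle x)$, i.e. the normalised tangential projection of $o_t$ — this follows exactly as $V_a = \bar\nabla u_a$ was computed, since $\langle x, o_t\rangle$ plays the role of $u_{o_t}$ and $\bar\nabla\langle x, o_t\rangle = o_t - \langle x,o_t\rangle x$ has length $\sin R_t$ on the boundary sphere (where $\langle x,o_t\rangle = \cos R_t$). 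Now take a point $x_t$ moving with the flow and lying on $\pr B^n_{R_t}(o_t)$ for all $t$, so $F_t(x_t) \equiv 0$. Differentiating: $0 = \frac{d}{dt}\langle x_t, o_t\rangle - \frac{d}{dt}\cos R_t = \langle V_a(x_t), o_t\rangle + \langle x_t, \dot o_t\rangle + \sin R_t\, \dot R_t$. Pairing $\bar\eta_t = \frac{1}{\sin R_t}(o_t - \cos R_t\, x_t)$ against $V_a(x_t) = a - u_a(x_t) x_t$ gives $\langle \bar\eta_t, V_a\rangle = \frac{1}{\sin R_t}(\langle o_t, a\rangle - u_a \cos R_t - \cos R_t \langle x_t, a\rangle + \cos R_t u_a) = \frac{1}{\sin R_t}(\langle o_t,a\rangle - \cos R_t\, u_a)$, wait — one must be careful that $\langle x_t, a\rangle = u_a(x_t) = u_a$, so the two middle terms cancel and $\langle\bar\eta_t, V_a\rangle = \frac{\langle o_t, a\rangle}{\sin R_t} - u_a \cot R_t \cdot$ (hmm, let me recompute) $= \frac{1}{\sin R_t}\langle o_t, a\rangle - \frac{\cos R_t}{\sin R_t}u_a$. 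So it remains to identify $\frac{\langle o_t, a\rangle}{\sin R_t}$ with $-\frac{d}{dt}\cot R_t + u_a\cot R_t + u_a \cot R_t$? No — I'd match $\frac{\langle o_t,a\rangle}{\sin R_t}$ against $-\frac{d}{dt}\cot R_t$ directly using the ODE derived for \eqref{eq:moving-radius}, since $\frac{d}{dt}\cot R_t = -\csc^2 R_t \dot R_t$ and the expression in \eqref{eq:moving-radius} differentiates to something proportional to $\csc R_t \langle o_t, a\rangle$ after reinterpreting $\cos\alpha\cosh t - (\text{correction}) = -\langle o_t, a\rangle$ — indeed $\langle o_t, a\rangle$ is precisely how the angle $\alpha$ evolves. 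So the chain is: differentiate \eqref{eq:moving-radius} and compare.

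The main obstacle I anticipate is \emph{bookkeeping of signs and the identification of $\langle o_t, a\rangle$ with the hyperbolic-trig expression}, i.e. correctly parametrising how the centre $o_t$ of the moving ball evolves and confirming that $-\langle o_t, a\rangle = \cos\alpha_t$ for an appropriately evolving angle. Getting \eqref{eq:moving-radius} is really a one-variable ODE once the geometry is set up, but setting it up requires care: one must verify that $o_t$ genuinely stays on the distinguished geodesic (which uses the $\mathrm{SO}(n)$-equivariance of the flow fixing that geodesic, reducing to a $2$-dimensional — indeed essentially $1$-dimensional hyperbolic — picture), and one must correctly handle the case $\cos R_t = 0$ where $\cot R_t$ vanishes but the formulae remain valid by continuity. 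I would organise the proof as: (i) reduce to the $2$-plane spanned by $e_0$ and $a$ by equivariance; (ii) derive \eqref{eq:moving-radius} via the ODE for $\cot R_t$, checking the initial condition at $t=0$; (iii) compute $\bar\eta_t$ as a normalised tangential projection of $o_t$; (iv) differentiate the incidence relation $\langle x_t, o_t\rangle = \cos R_t$ along a flow line on the moving boundary and substitute, identifying the leftover term with $-\frac{d}{dt}\cot R_t$ using step (ii).
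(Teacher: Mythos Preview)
Your primary route---tracking the two antipodal boundary points on the geodesic in the $(e_0,a)$-plane, using Lemma \ref{lem:u-evo} for their $u_a$-evolution, and recovering $R_t$ and $o_t$ from them---is exactly the paper's approach. The paper writes $\cos 2R_t = \langle z^+_t, z^-_t\rangle$, derives an ODE, integrates, and manipulates to \eqref{eq:moving-radius}; for \eqref{eq:conormal-V} it pairs $\bar\eta_t$ with $V_a$ directly and identifies $\langle o_t,a\rangle$ via $o_t = (z^+_t + z^-_t)/|z^+_t + z^-_t|$, feeding back into the ODE for $R_t$. So the skeleton of your plan is correct.

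Two concrete corrections. First, your formula $\bar\eta_t = \frac{1}{\sin R_t}(o_t - \cos R_t\,x)$ is the \emph{inner} unit normal: since $\langle x,o_t\rangle = \cos\rho_t$ decreases as $\rho_t$ increases, the outward normal is $\bar\eta_t = -\csc R_t\,(o_t - \cos R_t\,x)$ (cf.\ \eqref{eq:moving-normal}). This sign flip is precisely what turns your $\frac{\langle o_t,a\rangle}{\sin R_t} - u_a\cot R_t$ into the correct $-\csc R_t\,\langle o_t,a\rangle + u_a\cot R_t$, after which the match with $-\frac{d}{dt}\cot R_t + u_a\cot R_t$ goes through. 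Second, your ``alternative route'' invokes $\frac{d}{dt}u_a(o_t) = 1 - u_a(o_t)^2$, but this is false: the centre $o_t$ of the image ball is \emph{not} the flow-image $\Psi^a_t(e_0)$ of the original centre (conformal maps of spheres do not send centres of geodesic balls to centres), so Lemma \ref{lem:u-evo} does not apply to $o_t$. For instance, when $a\perp e_0$ the ball $B^n_{\pi/2}$ is preserved, so $o_t\equiv e_0$, yet $\Psi^a_t(e_0)\neq e_0$. Abandon that alternative and stay with the antipodal-points computation.
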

\begin{proof}
By Proposition \ref{prop:balls-to-balls}, we know that $B^n_R$ is mapped to some ball $B^n_{R_t}(o_t)$, where clearly $R_t, o_t$ depend smoothly on $t$. 

Choose coordinates on $\mathbb{R}^{n+1}$ so that $a= -(\cos\alpha)e_0 + (\sin\alpha)e_1$. Consider the two points $z^\pm = (\cos R)e_0 \pm (\sin R)e_1 \in \pr B^n_R$. Then $z^\pm$ are endpoints of a diameter of $B^n_R$, and by symmetry in the remaining variables, for each $t$ their images $z^\pm_t = \Psi^a_t(z^\pm)$ must be endpoints of a diameter of $B^n_{R_t}(o_t)$. In particular, the length of the diameter is related to the spherical distance by $\cos 2R_t = \langle z^+_t, z^-_t\rangle.$ From this we obtain the differential equation
\begin{equation}
\label{eq:radius-evo}
 \frac{d}{dt}\cos 2R_t = \langle V_a(z^-_t), z^+_t\rangle + \langle z^-_t, V_a(z^+_t)\rangle = (u_a(z^+_t) + u_a(z^-_t))(1-\cos 2R_t).
\end{equation}

Using Lemma \ref{lem:u-evo}, we find that 

\begin{equation}
 \frac{d}{dt}\cos 2R_t = (1-\cos 2R_t)(\tanh(t+\artanh(u_a(z^+))+ \tanh(t+\artanh(u_a(z^-)) ).
\end{equation}

Integrating, we have 
\[ -\log\left(\frac{1-\cos 2R_t}{1-\cos 2R}\right)  = \log \left( \frac{\cosh(t+ \artanh(u_a(z^+)))}{\cosh(\artanh(u_a(z^+)))}\right) + \log \left( \frac{\cosh(t+ \artanh(u_a(z^-)))}{\cosh(\artanh(u_a(z^-)))}\right) \]

Using the identity $\frac{\cosh(t+\artanh(u))}{\cosh(\artanh(u))} = \cosh t + u \sinh t$, and noting that the initial values are 
\begin{equation}
\label{eq:endpt-initial}
u_a(z^\pm)  = \langle z^\pm,a\rangle= -\cos R \cos \alpha \pm \sin R \sin \alpha = -\cos(\alpha\pm R),
\end{equation}

we find the desired equation

\[
\frac{1-\cos 2R}{1-\cos 2R_t} = ( \cosh t - \cos(\alpha +R)\sinh t)( \cosh t - \cos(\alpha -R) \sinh t)
\]

Using $\frac{1-\cos 2R}{2}=\csc^2 R = 1+\cot^2 R$ and substituting $s=\tanh \frac{t}{2}$, after some algebraic manipulations we find that 
\[
\cot^2 R_t =\left(\cot R\cosh t - \cos\alpha \csc R \sinh t\right)^2.
\]
We then take the square root of both sides, noting that (as $R_t$ varies smoothly) the signs are fixed by the signs at $s=t=0$. This yields (\ref{eq:moving-radius}), where we use the hyperbolic tangent half-angle formulae for the expression in terms of $s$.

In the remainder of the proof, we consider the case $R_t\neq \pit$. Then $z^+_t + z^-_t \neq 0$, and the centre $o_t$ may be found as
\begin{equation}
\label{eq:moving-centre} 
o_t = \frac{z^+_t + z^-_t}{|z^+_t + z^-_t|},
\end{equation}
and the radius is given by 

\begin{equation}
\label{eq:moving-centre-norm}
\cos R_t = \langle o_t, z^\pm_t\rangle = \frac{1+\langle z^+_t, z^-_t\rangle}{|z^+_t + z^-_t|} = \frac{1}{2}|z^+_t + z^-_t|. 
\end{equation}

The outer unit normal of $\pr B^n_{R_t}(o_t)$ is $\bar{\eta}_t = \bar{\nabla} \rho_{t}$, where $\rho_t$ is the spherical distance to $o_t$. Note that $\cos\rho_{t} = \langle x, o_t\rangle = u_{o_t}$, and therefore 
\begin{equation}
\label{eq:moving-normal}
\bar{\eta}_t =\bar{\nabla} \rho_{t}=  - \csc R_t\, V_{o_t}.
\end{equation}

In particular, we have that 
\begin{equation}
\label{eq:conormal-1}
\langle\bar{\eta}_t, V_a\rangle  = -  \csc R_t\, \langle o_t , V_a\rangle= - \csc R_t\, (\langle o_t, a\rangle -u_a \cos R_t) .
\end{equation}

where we have used that $\langle o_t,x\rangle = \cos R_t$ for $x\in \pr B^n_{R_t}(o_t)$. Using (\ref{eq:moving-centre}) and (\ref{eq:moving-centre-norm}) to find $\langle o_t,a\rangle$, we then have

\[ \langle\bar{\eta}_t, V_a\rangle= - \csc R_t\, \frac{(u_a(z^+_t) + u_a(z^-_t))}{2\cos R_t} + u \cot R_t.\]

But the numerator above is related to the evolution of $R_t$ by (\ref{eq:radius-evo}), and so
\[
\begin{split}
u_a(z^+_t) + u_a(z^-_t) &= \frac{1}{1-\cos 2 R_t} \frac{d}{dt} \cos 2R_t = \frac{1}{2\sin^2 R_t}(-2\sin 2R_t) \frac{d}{dt}R_t
\\&= -2\cot R_t \,\frac{d}{dt} R_t
\end{split}
\]

Substituting in to (\ref{eq:conormal-1}) we have that, as desired,

\[
\langle\bar{\eta}_t, V_a\rangle =  \csc^2 R_t\,  \frac{d}{dt}R_t + u \cot R_t = -\frac{d}{dt}\cot R_t + u\cot R_t.
\]

\end{proof}

\subsection{Conformal monotonicity in the free boundary setting}
\label{sec:conf-mono-image}

We now establish our monotonicity formulae for minimal submanifolds in $(\mathbb{S}^n , \pr B^n_R)$ under conformal translation. In this subsection, we will focus on the free boundary setting, that is, constant contact angle $\pit$. We state our results under the more general assumption of bounded mean curvature. 

Recall again that $\mathcal{E}^{R}(\Sigma) = |\Sigma| + \cot R\, |\pr\Sigma|$. 

\begin{theorem}
\label{thm:mono-sph}
Let $(\Sigma^k,\pr\Sigma)$ be an immersed submanifold in $(\mathbb{S}^n, \pr B^n_R)$ that contacts $\pr B^n_R$ at angle $\pit$ along $\pr \Sigma$, and suppose that $\sup_\Sigma |\mathbf{H}_\Sigma| \leq C_H$. 

Fix $a\in\mathbb{S}^n$ and let $\Psi^a_t$ be the flow of the conformal vector field $V_a$. Let $\Sigma_t = (\Psi^a_t)_*\Sigma$, which is a free boundary submanifold in $(\mathbb{S}^n, \pr B^n_{R_t}(o_t))$, where $B^n_{R_t}(o_t) = \Psi^a_t(B^n_R)$. 

Assume either: $k=2$; or $R=\pit$ and $a\perp e_0$. Then for $t\geq 0$ we have 
\begin{equation}
\label{eq:mono-sph}
\frac{d}{dt} \mathcal{E}^{R_t}(\Sigma_t) \leq C_H \mathcal{E}^{R_t}(\Sigma_t), 
\end{equation}

and hence $e^{-C_H t}\mathcal{E}^{R_t}(\Sigma_t)$ is non-increasing for $t\geq 0$.
\end{theorem}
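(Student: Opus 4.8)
\textbf{Proof strategy for Theorem \ref{thm:mono-sph}.}

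The plan is to compute the first variation of $\mathcal{E}^{R_t}(\Sigma_t) = |\Sigma_t| + \cot R_t\,|\pr\Sigma_t|$ directly along the conformal flow, working on the flowing surface $\Sigma_t$ with velocity field $V_a$ restricted to $\Sigma_t$. First I would decompose $V_a = V_a^\top + V_a^\perp$ into its tangential and normal parts relative to $\Sigma_t$. The first variation of area contributes $\frac{d}{dt}|\Sigma_t| = -\int_{\Sigma_t} \langle \mathbf{H}_{\Sigma_t}, V_a^\perp\rangle + \int_{\pr\Sigma_t} \langle \eta_t, V_a\rangle$, where the boundary term comes from the tangential part via the divergence theorem (since $\div_{\Sigma_t} V_a^\top$ integrates to a boundary flux, and one must also account for the fact that $V_a$ is conformal on $\mathbb{S}^n$, so $\div_{\Sigma_t} V_a = k\,(\text{something involving } u_a)$ — in fact $\bar\nabla^2 u_a = -u_a \bar g$ from (\ref{eq:u-hess}) gives $\div_{\Sigma_t} V_a = -k\,u_a$ plus a normal-bundle term). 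So the area derivative is essentially $-\int_{\Sigma_t}\langle \mathbf{H}_{\Sigma_t}, V_a^\perp\rangle - k\int_{\Sigma_t} u_a + \int_{\pr\Sigma_t}\langle\eta_t, V_a\rangle$ after care with the conformal correction. The mean curvature term is bounded by $C_H |\Sigma_t|$ in absolute value, which will feed the right-hand side of (\ref{eq:mono-sph}).

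Next I would handle the boundary. Since $\Sigma_t$ is free boundary in $B^n_{R_t}(o_t)$, along $\pr\Sigma_t$ we have $\eta_t = \bar\eta_t$, so $\langle\eta_t, V_a\rangle = \langle\bar\eta_t, V_a\rangle$, and Proposition \ref{prop:flow-ball} gives precisely $\langle\bar\eta_t, V_a\rangle = -\frac{d}{dt}\cot R_t + u_a \cot R_t$. Meanwhile $\frac{d}{dt}\big(\cot R_t\,|\pr\Sigma_t|\big) = \big(\frac{d}{dt}\cot R_t\big)|\pr\Sigma_t| + \cot R_t \frac{d}{dt}|\pr\Sigma_t|$. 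The first piece exactly cancels the $-\frac{d}{dt}\cot R_t$ contribution from $\int_{\pr\Sigma_t}\langle\bar\eta_t,V_a\rangle$ after integrating $-\frac{d}{dt}\cot R_t$ over $\pr\Sigma_t$. For $\frac{d}{dt}|\pr\Sigma_t|$, I would use the first variation of the $(k-1)$-dimensional boundary under the flow: this produces a mean-curvature-of-$\pr\Sigma_t$-in-$\mathbb{S}^n$ term paired with $V_a$, plus a $-(k-1)\int_{\pr\Sigma_t} u_a$ conformal term. Assembling everything, the terms that must be shown to have a sign (or to be dominated by $C_H\mathcal{E}^{R_t}$) are: the leftover integrals of $u_a$ over $\Sigma_t$ and $\pr\Sigma_t$ with their respective coefficients, and the leftover curvature pairings.

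The main obstacle — and the reason for the hypothesis "$k=2$, or $R=\pit$ with $a\perp e_0$" — is showing that the surviving conformal/curvature terms combine into something nonpositive. In the case $k=2$, I expect the key identities to be the Gauss equation on $\Sigma$ (relating $|\mathbf{A}|^2$, $|\mathbf{H}|^2$, and $K_\Sigma$) together with Gauss–Bonnet, exactly as in the Willmore-energy proof earlier in the excerpt; the conformal scaling $|\mathring{\mathbf{A}}_{\Sigma_t}|^2\,d\mu_{\Sigma_t}$ being pointwise-conformally-natural (Lemma \ref{lem:traceless-2ff}) is what forces the right cancellation, so that the net derivative is $-\tfrac14\int_{\Sigma_t}|\mathbf{H}_{\Sigma_t}|^2$-type term plus the $C_H$-controlled piece. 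In the case $R=\pit$ (so $\cot R_t$ may vanish and one needs $a\perp e_0$ to keep $o_t$ on the relevant sphere and $R_t\equiv\pit$, making the boundary-length coefficient identically zero), the argument should instead rely on the hypothesis $a\perp e_0$ forcing $\cot R_t \equiv 0$ so that $\mathcal{E}^{R_t}(\Sigma_t) = |\Sigma_t|$ and one only needs $\frac{d}{dt}|\Sigma_t| \le C_H|\Sigma_t|$, which follows once the free-boundary condition kills the boundary flux term $\int_{\pr\Sigma_t}\langle\bar\eta_t, V_a\rangle$ — here one checks $\langle\bar\eta_t, V_a\rangle$ has the right sign, or vanishes, using (\ref{eq:conormal-V}) with $\frac{d}{dt}\cot R_t = 0$ and $u \cot R_t = 0$. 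The final "hence $e^{-C_H t}\mathcal{E}^{R_t}(\Sigma_t)$ is non-increasing" statement is then immediate from (\ref{eq:mono-sph}) by Gr\"onwall / the product rule applied to $e^{-C_H t}\mathcal{E}^{R_t}(\Sigma_t)$.
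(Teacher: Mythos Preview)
Your outline has the boundary bookkeeping essentially right (use of Proposition~\ref{prop:flow-ball}, the free boundary condition $\eta_t=\bar\eta_t$, and in the hemisphere case the observation $\cot R_t\equiv 0$), but the treatment of the interior mean-curvature term contains a genuine gap, and your plan to close it via Gauss--Bonnet/Willmore is not the mechanism the paper uses and will not produce the differential inequality (\ref{eq:mono-sph}).

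The hypothesis is $\sup_\Sigma|\mathbf{H}_\Sigma|\le C_H$ on the \emph{base} surface $\Sigma$, not on the flowed surface $\Sigma_t$; under a conformal map $\mathbf{H}_{\Sigma_t}$ can be large (cf.\ (\ref{eq:mc-conf})), so your claim ``the mean curvature term is bounded by $C_H|\Sigma_t|$ in absolute value'' is not justified as stated. Likewise, the Willmore/Gauss--Bonnet argument of Section~\ref{sec:conf-willmore} compares a minimal $\Sigma$ to a single image $\Sigma_*$; it does not give a sign for $\frac{d}{dt}\mathcal{E}^{R_t}(\Sigma_t)$ at times $t>0$, since $\Sigma_t$ is no longer minimal. (Also, your first-variation expression double-counts: $\int_{\Sigma_t}\div_{\Sigma_t}V_a=-k\int_{\Sigma_t}u_a$ and $-\int_{\Sigma_t}\langle\mathbf{H}_{\Sigma_t},V_a\rangle+\int_{\pr\Sigma_t}\langle\eta_t,V_a\rangle$ are two expressions for the \emph{same} quantity.)

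The paper's key step is instead the el Soufi--Ilias trick: pull $-\int_{\Sigma_t}\langle\mathbf{H}_{\Sigma_t},V_a\rangle$ back to $\Sigma$ via (\ref{eq:mc-conf}), obtaining
\[
-\int_\Sigma\langle\mathbf{H}_\Sigma,V_a\rangle e^{k\psi}\,d\mu \;+\; k\int_\Sigma \frac{u_a-u_a\circ\Psi}{1-u_a^2}\,|\pi^\perp V_a|^2 e^{k\psi}\,d\mu,
\]
where $\bar\nabla\psi=\frac{u_a-u_a\circ\Psi}{1-u_a^2}V_a$ by Lemma~\ref{lem:conf-factor}. Since $\Psi^a_t$ is the gradient flow of $u_a$, one has $u_a\circ\Psi\ge u_a$ for $t\ge 0$, so the second integral is $\le 0$; the first is now bounded by $C_H|\Sigma_t|$ because it involves $\mathbf{H}_\Sigma$. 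Combining with (\ref{eq:mono-sph-dB}) and (\ref{eq:mono-sph-conormal-V}) yields
\[
\frac{d}{dt}|\Sigma_t|\le C_H|\Sigma_t|-|\pr\Sigma_t|\frac{d}{dt}\cot R_t-\tfrac{1}{k-1}\cot R_t\,\frac{d}{dt}|\pr\Sigma_t|,
\]
and the role of the hypothesis $k=2$ is simply that $\tfrac{1}{k-1}=1$ makes the right-hand side exactly $C_H|\Sigma_t|-\frac{d}{dt}(\cot R_t\,|\pr\Sigma_t|)$. The hemisphere case then proceeds as you said.
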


\begin{proof}[Proof of Theorem \ref{thm:mono-sph}]
For convenience we write $u=u_a$. We write $\d\mu, \d\sigma$ for the respective measures on $\Sigma, \pr\Sigma$, and similarly $\d\mu_t, \d\sigma_t$ for the respective measures on $\Sigma_t, \pr\Sigma_t$.

By the first variation formula for submanifolds, we have that:

\begin{equation}
\label{eq:mono-sph-dB}
 \frac{d}{dt}|\pr \Sigma_t| = \int_{\pr\Sigma_t}\div_{\pr\Sigma_t} V_a \,\d\sigma_t = -(k-1) \int_{\pr\Sigma_t}u \,\d\sigma_t,
 \end{equation}

where we have used that $\bar{\nabla}^2 u =-u \bar{g}$ as in (\ref{eq:u-hess}). Similarly, by the divergence theorem,

\begin{equation}
\label{eq:mono-sph-dA-0}
 \frac{d}{dt}|\Sigma_t| = \int_{\Sigma_t} \div_{\Sigma_t} V_a \, \d\mu_t=  - \int_{\Sigma_t} \langle \mathbf{H}_{\Sigma_t}, V_a\rangle\,\d\mu_t+ \int_{\pr \Sigma_t} \langle \eta_t, V_a\rangle\,\d\mu_t.
 \end{equation}

As $\Psi^a_t$ is a conformal diffeomorphism, $\Sigma_t$ contacts $\pr B^n_{R_t}(o_t)$ at angle $\pit$ along $\pr\Sigma_t \subset \pr B^n_{R_t}(o_t)$. In particular, the unit conormal along $\pr\Sigma_t$ coincides with the outer unit normal of $\pr B^n_{R_t}(o_t)$. So by (\ref{eq:conormal-V}), we have 
\begin{equation}
\label{eq:mono-sph-conormal-V}
\langle \eta_t, V_a\rangle =\langle\bar{\eta}_t, V_a\rangle =   -\frac{d}{dt} \cot R_t + u\cot R_t.
\end{equation}

To deal with the mean curvature term, we proceed as in el Soufi-Ilias \cite{eSI}. For the remainder of the proof, we will write $\Psi=\Psi^a_t$ where convenient. 

Recall that by Lemma \ref{lem:conf-factor}, we have $\Psi^*\bar{g}= e^{2\psi} \bar{g}$, where $\psi$ satisfies
\begin{equation}
\label{eq:mono-sph-2}
\bar{\nabla} \psi = \frac{u-u\circ \Psi}{1-u^2}\bar{\nabla} u = \frac{u-u\circ \Psi}{1-u^2} V_a.
\end{equation}

As $\Sigma_t= \Psi_*\Sigma$, by the conformal transformation formula (\ref{eq:mc-conf}), we have 
\begin{equation}
\begin{split}
 \mathbf{H}_{\Sigma_t}(\Psi(x)) &= e^{-2\psi(x)} (d\Psi)_x\left( \mathbf{H}_{\Sigma}(x) - k \pi^{\perp}(\bar{\nabla} \psi(x))\right)
 \end{split}
  \end{equation}

Then by change of variables, we have 
 \[
 \begin{split}
  - \int_{\Sigma_t} &\langle \mathbf{H}_{\Sigma_t}(x), V_a(x)\rangle \,\d\mu_t(x) 
\\&= -\int_\Sigma \langle \mathbf{H}_{\Sigma_t}(\Psi(x)), V_a(\Psi(x))\rangle e^{k\psi(x)} \,\d\mu(x)
\\&= -\int_\Sigma \left\langle(d\Psi)_x\left( \mathbf{H}_{\Sigma}(x) - k \pi^{\perp}(\bar{\nabla} \psi(x))\right), (d\Psi)_x(V_a(x))\right\rangle e^{(k-2)\psi(x)} \,\d\mu(x) 
\\&= -\int_\Sigma \langle \mathbf{H}_{\Sigma}(x) - k \pi^{\perp}(\bar{\nabla} \psi(x)), V_a(x)\rangle e^{k\psi(x)} \,\d\mu(x) 
\\&= -\int_\Sigma \langle \mathbf{H}_{\Sigma} , V_a\rangle e^{k\psi}\d\mu + k\int_\Sigma \frac{u - u\circ \Psi}{1-u^2}|\pi^{\perp}(V_a)|^2  e^{k\psi}\,\d\mu ,
\end{split}
\]
where in the third equality we have used that $\Psi$ is a conformal map, and in the last step we have finally used (\ref{eq:mono-sph-2}). 

 As $\Psi^a_t$ was the gradient flow for $u$, for $t\geq 0$ we certainly have $u\circ \Psi \geq u$, that is, the last term above is nonpositive.
  
For the remaining mean curvature term, we use $|V_a| \leq \sqrt{1-u^2}\leq 1$, $|\mathbf{H}_\Sigma|\leq C_H$, so that 
\begin{equation}
\label{eq:mono-sph-H}
 - \int_{\Sigma_t} \langle \mathbf{H}_{\Sigma_t}(x), V_a(x)\rangle \,\d\mu_t(x) \leq -\int_\Sigma \langle \mathbf{H}_{\Sigma} , V_a\rangle e^{k\psi}\d\mu \leq C_H \int_\Sigma e^{k\psi} = C_H |\Sigma_t|.
 \end{equation}

Combining (\ref{eq:mono-sph-dB}), (\ref{eq:mono-sph-dA-0}), (\ref{eq:mono-sph-conormal-V}) and (\ref{eq:mono-sph-H}), we find that variation of area satisfies

\begin{equation}
\label{eq:mono-sph-dA}
 \frac{d}{dt}|\Sigma_t| \leq C_H|\Sigma_t| - |\pr\Sigma_t| \frac{d}{dt}\cot R_t - \frac{1}{k-1} \cot R_t \frac{d}{dt} |\pr\Sigma_t| . 
 \end{equation}

In the case $k=2$, it follows immediately from the definition of $\mathcal{E}^{R}$ that $\frac{d}{dt} \mathcal{E}^{R_t}(\Sigma_t) \leq C_H |\Sigma_t|\leq C_H \mathcal{E}^{R_t}(\Sigma_t)$.

In the case $R=\pit$, we also assume $a\perp e_0$. Then $\Psi^a_t$ preserves $B^n_\pit$ (cf. Proposition \ref{prop:flow-correspondence}), so $R_t\equiv \pit$ for all $t$. In particular, $\mathcal{E}^{R_t}(\Sigma_t) = |\Sigma_t|$, and (\ref{eq:mono-sph-dA}) gives $\frac{d}{dt}|\Sigma_t| \leq C_H |\Sigma_t|$. 
 
%
%But now integrating in $t$ and using that $\Psi^a_t$ was the gradient flow of $u$, whose total variation is at most 2, we have 
%\[ \int_0^T \int_\Sigma |\bar{\nabla} u| \, \d\mu \d t \leq \int_\Sigma \int_0^\infty |\bar{\nabla} u(x)| \,\d t \,\d\mu(x) \leq 2 |\Sigma|.\]
%
%This gives 
%\begin{equation}
%\label{eq:mono-sph}
%\mathcal{E}^{R_T}(\Sigma_T) - \mathcal{E}^{R}(\Sigma)  \leq 2C_H |\Sigma|. 
%\end{equation}
\end{proof}

Note that Theorem \ref{thm:conf-max-sph-intro-2} certainly follows from Theorem \ref{thm:mono-sph} by taking $C_H=0$. 

\begin{proof}[Proof of Theorem \ref{thm:conf-max-sph-intro}]
By Lemma \ref{lem:flow-relation}, up to isometries of $\mathbb{S}^n$, every element of $\Conf(\mathbb{S}^n)$ is given by $\Psi=\Psi^a_t$ for some $a\in\mathbb{S}^n$ and some $t = 2\artanh|\Psi(0)|\geq 0$. 

In the case $k=2$, then Theorem \ref{thm:mono-sph} in fact implies that $\mathcal{E}^{R_*}(\Sigma_*) \leq \mathcal{E}^{R}(\Sigma)$, where $\Psi$ maps $B^n_R$ to a geodesic ball of radius $R_*$. If $\Psi \in \Conf(B^n_R)$ then $R_*=R$, which completes the proof. 

In the case $R=\pit$, then as in Lemma \ref{lem:conf-Y}, the subgroup $\Conf(B^n_\pit)$ is realised, up to rotations, by $\Psi=\Psi^a_t$, where $a\perp e_0$, $t\geq 0$. So the result again follows immediately from Theorem \ref{thm:mono-sph}. 
\end{proof}

\begin{remark}
\label{rmk:higher-dim}
For higher dimensions $k>2$, an issue seems to arise from the mismatch of coefficients of $u$ in the boundary integrands of (\ref{eq:mono-sph-dB}) and (\ref{eq:mono-sph-conormal-V}). One may recover a monotonicity formula by replacing the boundary term $\cot R_t \, |\pr\Sigma_t|$ by the boundary integral \[\mathcal{B}(t) = \int_{\pr\Sigma_t} F(t,u_a(x))\d\sigma_t(x),\] where (setting $c(t)=\cos R_t$; note that $c''(t)=c(t)$ by (\ref{eq:moving-radius}))
\[ F(t,u) := \beta(u)\frac{c(t)+c'(t)}{2} + \bar{\beta}(u)\frac{c(t)-c'(t)}{2} = \frac{\beta(u) + \bar{\beta}(u)}{2} c(t) + \frac{\beta(u) - \bar{\beta}(u)}{2} c'(t),\] 
and $\beta(u),\bar{\beta}(u)$ solve the ODEs
\[ \beta -(k-1)u\beta + (1-u^2)\beta' = 1-u,\qquad \bar{\beta} +(k-1)u\bar{\beta} -(1-u^2)\bar{\beta}' =1+u.\]
(Such solutions may be given in terms of the incomplete beta function.) However, it is unclear to us whether this quantity yields any natural consequences. (See also Remark \ref{rmk:higher-dim-euc}.) 
\end{remark}

\subsection{Conformal monotonicity in the capillary setting}
\label{sec:conf-mono-cap}

In this section, we investigate conformal monotonicity formulae for capillary hypersurfaces $(\Sigma^{n-1},\pr\Sigma) \looparrowright (\mathbb{S}^n, S)$, where $S=\pr B^n_R$. These formulae hold and have satisfying interpretations for somewhat weaker notions of capillary hypersurface as outlined in Definition \ref{def:capillary}. 

We first consider the case $n-1=2$, for which we assume that $\Sigma$ has an associated wet surface, namely an \textit{immersed} $(n-1)$-manifold with boundary $S^-\looparrowright S$ so that $\pr S^- = \pr \Sigma$, and the outer conormal $\bar{\nu}$ of $\pr\Sigma$ in $S^-$ satisfies 
\begin{equation}\tag{$\ddagger$}\sin\gamma \, \bar{\eta} = \eta + \cos\gamma \, \bar{\nu},\end{equation}

Recall that

\[
\mathcal{E}^{R,\gamma}(\Sigma, S^-) = |\Sigma| +\cos\gamma \csc^2 R\, |S^-| + \sin\gamma \cot R\, |\pr\Sigma|.
\]

For simplicity, in this subsection we will state results for which the base $\Sigma$ is minimal. We begin with the $k=2$ case of Theorem \ref{thm:conf-max-sph-intro-cap}, restated precisely for capillary minimal surfaces as in Definition \ref{def:capillary}:

\begin{theorem}
\label{thm:mono-sph-capillary}
Let $(\Sigma^{2},\pr\Sigma)$ be a capillary minimal surface in $(\mathbb{S}^3 , \pr B_R)$ with contact angle $\gamma \in (0,\pit]$, and wet surface $S^-$. 

Fix $a\in\mathbb{S}^3$ and let $\Psi^a_t$ be the flow of the conformal vector field $V_a$. Let $\Sigma_t = (\Psi^a_t)_*\Sigma$, which is a capillary surface with contact angle $\gamma$ in $(B^n_{R_t}(o_t), \pr B^n_{R_t}(o_t))$, where $B^n_{R_t}(o_t) = \Psi^a_t(B^n_R)$. Similarly let $S^-_t = (\Psi^a_t)_* S^-$. 

Then $\mathcal{E}^{R_t,\gamma}(\Sigma_t , S^-_t)$ is non-increasing for $t\geq 0$.
\end{theorem}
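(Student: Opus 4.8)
The plan is to follow the monotonicity argument for the free boundary case (Theorem~\ref{thm:mono-sph}), now tracking the first variation of each of the three terms in
\[ \mathcal{E}^{R_t,\gamma}(\Sigma_t, S^-_t) = |\Sigma_t| + \cos\gamma\csc^2 R_t\,|S^-_t| + \sin\gamma\cot R_t\,|\pr\Sigma_t| \]
along the flow $\Psi^a_t$, whose velocity on each of $\Sigma_t$, $S^-_t$ and $\pr\Sigma_t$ is the restriction of $V_a = \bar\nabla u_a$. Since $V_a$ is a gradient field and $\bar\nabla^2 u_a = -u_a\,\bar g$ by (\ref{eq:u-hess}), the tangential divergence of $V_a$ along any $j$-dimensional immersed piece equals $-j\,u_a$; this at once gives the clean identities $\frac{d}{dt}|\pr\Sigma_t| = -\int_{\pr\Sigma_t} u_a$ and $\frac{d}{dt}|S^-_t| = -2\int_{S^-_t} u_a$.

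For the area term I would instead apply the divergence theorem on $\Sigma_t$ to write $\frac{d}{dt}|\Sigma_t| = -\int_{\Sigma_t}\langle\mathbf{H}_{\Sigma_t}, V_a\rangle + \int_{\pr\Sigma_t}\langle\eta_t, V_a\rangle$, and dispose of the mean curvature term by the el Soufi--Ilias device used in the proof of Theorem~\ref{thm:mono-sph}: since $\Sigma$ is minimal and $\dim\Sigma = 2$, the conformal transformation rule (\ref{eq:mc-conf}) together with Lemma~\ref{lem:conf-factor} gives, pointwise on $\Sigma_t$,
\[ \langle\mathbf{H}_{\Sigma_t}(\Psi^a_t(x)),\, V_a(\Psi^a_t(x))\rangle = -2\,\frac{u_a - u_a\circ\Psi^a_t}{1 - u_a^2}\,|\pi^\perp V_a|^2(x) \ \geq\ 0 \qquad (t\geq 0), \]
since the gradient flow increases $u_a$. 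Hence $\frac{d}{dt}|\Sigma_t| \leq \int_{\pr\Sigma_t}\langle\eta_t, V_a\rangle$. Into this I would substitute the balancing identity (\ref{eq:angle-balance}) in the form $\eta_t = \sin\gamma\,\bar\eta_t - \cos\gamma\,\bar\nu_t$ together with the cap evolution formula (\ref{eq:conormal-V}), $\langle\bar\eta_t, V_a\rangle = -\frac{d}{dt}\cot R_t + u_a\cot R_t$, and use $\int_{\pr\Sigma_t} u_a = -\frac{d}{dt}|\pr\Sigma_t|$, to obtain
\[ \frac{d}{dt}|\Sigma_t| \ \leq\ -\frac{d}{dt}\!\left(\sin\gamma\cot R_t\,|\pr\Sigma_t|\right) - \cos\gamma\int_{\pr\Sigma_t}\langle\bar\nu_t, V_a\rangle. \]

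It then remains only to show that $\frac{d}{dt}\!\left(\csc^2 R_t\,|S^-_t|\right) = \int_{\pr\Sigma_t}\langle\bar\nu_t, V_a\rangle$, for then the two boundary flux terms cancel, the surviving terms telescope against the third term of $\mathcal{E}^{R_t,\gamma}$, and $\frac{d}{dt}\mathcal{E}^{R_t,\gamma}(\Sigma_t, S^-_t)\leq 0$ follows. For this I would use that $S^-_t$ lies in the geodesic sphere $\pr B_{R_t}(o_t)$, a round $2$-sphere with mean curvature $H = 2\cot R_t$: decomposing $V_a = \bar\nabla u_a$ on this sphere into tangential and normal parts and comparing with $\div_{S^-_t} V_a = -2u_a$ gives $\Delta_{\pr B_{R_t}(o_t)} u_a = -2u_a - 2\cot R_t\,\langle\bar\eta_t, V_a\rangle$ along $S^-_t$; integrating this over $S^-_t$ via the intrinsic divergence theorem on the sphere (with $\partial_{\bar\nu_t} u_a = \langle\bar\nu_t, V_a\rangle$ on $\pr\Sigma_t = \pr S^-_t$), then substituting (\ref{eq:conormal-V}) and using $\frac{d}{dt}\csc^2 R_t = 2\cot R_t\,\frac{d}{dt}\cot R_t$ together with $\frac{d}{dt}|S^-_t| = -2\int_{S^-_t} u_a$, produces exactly the claimed identity. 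The main obstacle is this wet-surface bookkeeping: it is precisely the computation that \emph{forces} the weight $\csc^2 R_t$ in the definition of $\mathcal{E}^{R,\gamma}$, and it requires keeping careful track of the moving sphere $\pr B_{R_t}(o_t)$ — both its varying radius (\ref{eq:moving-radius}) and its varying centre $o_t$ — with the instants where $R_t = \pit$ (so that $o_t$ degenerates) handled by continuity in $t$.
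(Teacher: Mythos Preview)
Your proof is correct and follows essentially the same approach as the paper: both use the el Soufi--Ilias device for the mean curvature term, the cap evolution formula (\ref{eq:conormal-V}), and the wet-surface computation that forces the $\csc^2 R_t$ weight. The only differences are cosmetic ordering --- the paper first combines the $\Sigma_t$ and $S^-_t$ divergence formulae so the boundary term appears directly as $\langle\eta_t+\cos\gamma\,\bar\nu_t,V_a\rangle = \sin\gamma\,\langle\bar\eta_t,V_a\rangle$, whereas you split $\eta_t$ first and match the $\bar\nu_t$-flux against $\frac{d}{dt}(\csc^2 R_t\,|S^-_t|)$ separately; and the paper obtains the wet-surface identity via the first variation formula on $S^-_t$ (with $\mathbf{H}_{S^-_t}=-2\cot R_t\,\bar\eta_t$) rather than your intrinsic Laplacian $\Delta_{\pr B_{R_t}} u_a$, which is the same computation packaged differently.
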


\begin{proof}[Proof of Theorem \ref{thm:mono-sph-capillary}]
As in the proof of Theorem \ref{thm:mono-sph}, for convenience we write $u=u_a$, and $\d\mu, \d\mu_t$ for interior measures and $\d\sigma, \d\sigma_t$ for boundary measures. 

Again let $\bar{\eta}_t$ be the outward unit normal on $\pr B_{R_t}(o_t)$, and by (\ref{eq:conormal-V}) we have
\begin{equation}
\label{eq:mono-sph-cap-1}
\langle\bar{\eta}_t, V_a\rangle = -\frac{d}{dt}\cot R_t + u\cot R_t.
\end{equation}

As $S^-_t$ is an immersed surface in $\pr B_{R_t}(o_t)$, we note that $\mathbf{H}_{S^-_t}= -2\cot R_t \, \bar{\eta}_t$. Then by the divergence theorem, we have 

\[
\begin{split}
 \frac{d}{dt}|S^-_t| &= \int_{S^-_t} \div_{S^-_t}V_a = -2 \int_{S^-_t} u \d\mu_t
\\& = -\int_{S^-_t} \langle \mathbf{H}_{S^-_t}, V_a\rangle\d\mu_t  + \int_{\pr\Sigma_t} \langle \bar{\nu}_t, V_a\rangle \d\sigma_t
\\&= 2\cot R_t  \left(-|S^-_t|\frac{d}{dt}\cot R_t + \cot R_t\int_{S^-_t} u\d\mu_t\right) + \int_{\pr\Sigma_t} \langle \bar{\nu}_t, V_a\rangle \d\sigma_t
 \end{split}
\]

Comparing the last line to the first, we can eliminate the $u$ term. As $1+\cot^2 = \csc^2$, we find that

\[
\csc^2 R_t\, \frac{d}{dt}|S^-_t| = - |S^-_t| \frac{d}{dt} \csc^2 R_t + \int_{\pr\Sigma_t} \langle \bar{\nu}_t, V_a\rangle\d\sigma_t. 
\]

Adding this to the divergence formula on $\Sigma_t$, we have

\begin{equation}
\label{eq:mono-sph-cap-2}
\frac{d}{dt}(|\Sigma_t| + \cos\gamma \csc^2 R_t \,|S^-_t|) =  - \int_{\Sigma_t} \langle \mathbf{H}_{\Sigma_t}, V_a\rangle\,\d\mu_t +  \int_{\pr \Sigma_t} \langle \eta_t +\cos\gamma\, \bar{\nu}_t, V_a\rangle\,\d\mu_t.
\end{equation}

As in the proof of Theorem \ref{thm:mono-sph}, and using that $\mathbf{H}_\Sigma=0$ on the base $\Sigma$, we have that $- \int_{\Sigma_t} \langle \mathbf{H}_{\Sigma_t}, V_a\rangle\,\d\mu_t \leq 0$. Also, by (\ref{eq:angle-balance}) and (\ref{eq:mono-sph-cap-1}), we have

\[\begin{split}
\int_{\pr \Sigma_t} \langle \eta_t +\cos\gamma\, \bar{\nu}_t, V_a\rangle\,\d\mu_t &= \sin\gamma \int_{\pr\Sigma_t} \langle \bar{\eta}_t, V_a\rangle \d\sigma_t
\\&= \sin\gamma \left( -|\pr\Sigma_t| \frac{d}{dt}\cot R_t + \cot R_t \int_{\pr\Sigma_t} u \d\sigma_t\right)
\end{split}\]

But for the boundary variation, it still holds that (as $n-1=2$)

\[ \frac{d}{dt}|\pr \Sigma_t| = \int_{\pr\Sigma_t}\div_{\pr\Sigma_t} V_a \,\d\sigma_t = - \int_{\pr\Sigma_t}u \,\d\sigma_t.\]

Thus 

\[ \frac{d}{dt}(|\Sigma_t| + \cos\gamma \csc^2 R_t \,|S^-_t|) \leq \sin\gamma \left( - |\pr\Sigma_t| \frac{d}{dt} \cot R_t - \cot R_t\, \frac{d}{dt} |\pr\Sigma_t|\right),\]

which implies $\frac{d}{dt}\mathcal{E}^{R_t,\gamma}\leq 0$ by its definition. 
\end{proof}

We now turn to the hemisphere case. In this case, for $a\perp e_0$ the flow $\Psi^a_t$ preserves the hemisphere, so as in Definition \ref{def:capillary} we may define a local wetting energy to be the area swept out by $\pr\Sigma$ (in $\pr B^n_\pit$). We then have the following monotonicity, which is the $R=\pit$ case of Theorem \ref{thm:conf-max-sph-intro-cap}, restated more generally for minimal hypersurfaces with constant contact angle $\gamma$. (Note that if $\Sigma$ is a \textit{capillary} minimal hypersurface as in Definition \ref{def:capillary}, that is, $\Sigma$ admits a wet surface $S^-\looparrowright \pr B^n_\pit$, then the local wetting energy is precisely $W(t) = |S^-_t| - |S^-|$.)

\begin{theorem}
\label{thm:mono-sph-capillary-hemi}
Let $(\Sigma^{n-1},\pr\Sigma)$ be a minimal hypersurface in $(\mathbb{S}^n , \pr B^n_\pit)$ which contacts $S=\pr B^n_\pit$ at angle $\gamma \in (0,\pit]$. 

Fix $a\in\mathbb{S}^n$ so that $a\perp e_0$, and let $\Psi^a_t$ be the flow of the conformal vector field $V_a$. Let $\Sigma_t = (\Psi^a_t)_*\Sigma$, which also contacts $\pr B^n_\pit$ at angle $\gamma \in (0,\pit]$. Let $W(t)=\int_{\pr\Sigma\times [0,t]} X^* \d\mu_S$ be the local wetting energy for the variation $X(\cdot,t) = \Psi^a_t$, as in Definition \ref{def:capillary}. 

Then $|\Sigma_t| + \cos\gamma\, W(t)$ is non-increasing for $t\geq 0$.
\end{theorem}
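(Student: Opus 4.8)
The plan is to adapt the first-variation argument of Theorem \ref{thm:mono-sph}, which is considerably simplified here because the hypothesis $a\perp e_0$ forces the flow $\Psi^a_t$ to preserve the hemisphere: one has $R_t\equiv\pit$ and $o_t\equiv e_0$ for all $t$ (cf. Proposition \ref{prop:flow-correspondence}), so the barrier $S=\pr B^n_\pit$ is fixed. Write $u=u_a$ and let $\d\mu_t,\d\sigma_t$ denote the measures on $\Sigma_t,\pr\Sigma_t$. The starting point is the first-variation identity
\[
\frac{d}{dt}|\Sigma_t| = \int_{\Sigma_t}\div_{\Sigma_t}V_a\,\d\mu_t = -\int_{\Sigma_t}\langle\mathbf{H}_{\Sigma_t},V_a\rangle\,\d\mu_t + \int_{\pr\Sigma_t}\langle\eta_t,V_a\rangle\,\d\sigma_t,
\]
together with the computation of the rate at which $\pr\Sigma$ sweeps out area in $S$: since $X(\cdot,s)=\Psi^a_s$ and $W$ is defined using the orientation on $\pr\Sigma$ in $S$ induced by the conormal $\bar\nu$ normalised by $(\ddagger)$, we have $W'(t)=\int_{\pr\Sigma_t}\langle V_a,\bar\nu_t\rangle\,\d\sigma_t$. (When $\Sigma$ admits a genuine wet surface this also follows from the divergence theorem on $S^-_t$, using that $S^-_t\subset\pr B^n_\pit$ is totally geodesic, and recovers $W(t)=|S^-_t|-|S^-|$.)

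Adding $\cos\gamma$ times the second identity to the first gives
\[
\frac{d}{dt}\bigl(|\Sigma_t|+\cos\gamma\,W(t)\bigr) = -\int_{\Sigma_t}\langle\mathbf{H}_{\Sigma_t},V_a\rangle\,\d\mu_t + \int_{\pr\Sigma_t}\langle\eta_t+\cos\gamma\,\bar\nu_t,V_a\rangle\,\d\sigma_t.
\]
The key simplification is that the boundary term vanishes identically: by the balancing condition $(\ddagger)$ we have $\eta_t+\cos\gamma\,\bar\nu_t=\sin\gamma\,\bar\eta_t$, and since $o_t\equiv e_0$, $R_t\equiv\pit$, formula (\ref{eq:moving-normal}) gives $\bar\eta_t=-\csc R_t\,V_{o_t}=-V_{e_0}$, which along the equator $\{\langle x,e_0\rangle=0\}$ equals $-e_0$. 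Hence
\[
\langle\eta_t+\cos\gamma\,\bar\nu_t,V_a\rangle = -\sin\gamma\,\langle e_0,V_a\rangle = -\sin\gamma\bigl(\langle e_0,a\rangle-\langle x,a\rangle\langle x,e_0\rangle\bigr) = 0
\]
pointwise along $\pr\Sigma_t$, because $a\perp e_0$ and $\langle x,e_0\rangle=0$ there.

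It remains to show the interior term is nonpositive for $t\geq 0$, which is exactly the el Soufi--Ilias estimate used in Theorem \ref{thm:mono-sph}: applying the conformal transformation formula (\ref{eq:mc-conf}), changing variables by $\Psi^a_t$, using that $\Psi^a_t$ is conformal and that $(d\Psi^a_t)_x V_a(x)=V_a(\Psi^a_t(x))$, and invoking (\ref{eq:conf-factor-grad}) together with the minimality $\mathbf{H}_\Sigma\equiv 0$, one obtains
\[
-\int_{\Sigma_t}\langle\mathbf{H}_{\Sigma_t},V_a\rangle\,\d\mu_t = (n-1)\int_\Sigma\frac{u-u\circ\Psi^a_t}{1-u^2}\,|\pi^\perp(V_a)|^2\,e^{(n-1)\psi}\,\d\mu \leq 0,
\]
since $\Psi^a_t$ is the gradient flow of $u$ and so $u\circ\Psi^a_t\geq u$ for $t\geq 0$. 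Combining the three displays yields $\frac{d}{dt}(|\Sigma_t|+\cos\gamma\,W(t))\leq 0$ for $t\geq0$, which is the claim. The only slightly delicate point is the sign bookkeeping for $W'(t)$: one must confirm that the orientation defining the swept area in Definition \ref{def:capillary} is the one for which $\bar\nu_t$ obeys $(\ddagger)$, so that the combination $\eta_t+\cos\gamma\,\bar\nu_t$ collapses to $\sin\gamma\,\bar\eta_t$; granting this, everything else is a direct specialisation of Theorem \ref{thm:mono-sph}, with the cap now stationary.
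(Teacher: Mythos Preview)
Your proof is correct and follows essentially the same route as the paper: first variation plus $W'(t)=\int_{\pr\Sigma_t}\langle\bar\nu_t,V_a\rangle\,\d\sigma_t$, combine via $(\ddagger)$ to reduce the boundary term to $\sin\gamma\langle\bar\eta_t,V_a\rangle$, observe this vanishes because $R_t\equiv\pit$, and handle the interior mean-curvature term by the el Soufi--Ilias estimate exactly as in Theorem~\ref{thm:mono-sph}. The only cosmetic difference is that the paper invokes (\ref{eq:conormal-V}) to see $\langle\bar\eta_t,V_a\rangle=0$, whereas you compute $\bar\eta_t=-e_0$ directly; both are fine.
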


\begin{proof}
If $\gamma=\pit$ then the result follows from the free boundary monotonicity Theorem \ref{thm:mono-sph}. So we may assume $\gamma \in (0,\pit)$. 

Recall that there is still a well-defined unit normal $\bar{\nu}$ of $\pr\Sigma$ in $S$, satisfying 
\[\sin\gamma \, \bar{\eta} = \eta + \cos\gamma \, \bar{\nu},\] 
and that the wetting energy $W(t)$ is defined with respect to the induced orientation of $\pr\Sigma$. 
In particular,
\[
\begin{split}
 W(t) &= \int_{\pr\Sigma\times [0,t]} X^* \d\mu_S \\&=\int_0^t \int_{\pr\Sigma_\tau} \langle \bar{\nu}_\tau,V_a\rangle \d\sigma_\tau \d \tau. 
 \end{split}
 \]

With this in hand, arguing as in the proof of Theorem \ref{thm:mono-sph-capillary}, we have 
\[
\begin{split}
\frac{d}{dt}(|\Sigma_t| + \cos\gamma \, W(t)) &=  - \int_{\Sigma_t} \langle \mathbf{H}_{\Sigma_t}, V_a\rangle\,\d\mu_t +  \int_{\pr \Sigma_t} \langle \eta_t +\cos\gamma\, \bar{\nu}_t, V_a\rangle\,\d\mu_t
\\&\leq \int_{\pr \Sigma_t} \langle \bar{\eta}_t, V_a\rangle\,\d\mu_t. 
\end{split}
\]

In this case, as $R_t\equiv \pit$, by (\ref{eq:conormal-V}) we have
\[
\langle\bar{\eta}_t, V_a\rangle = -\frac{d}{dt}\cot R_t + u\cot R_t =0,
\]
which completes the proof. 

\end{proof}

\subsection{Conformal blowup}
\label{sec:blowup}

In this subsection, we briefly record corollaries of our conformal maximisation results under conformal blowup - in essence, by taking a limit as $t\to \infty$. This blowup proceeds as in \cite{LY82} and \cite[Remark 5.2]{FS11} (see also \cite[Proposition 3.2]{RS97}, which essentially performs this process for capillary surfaces in the Euclidean ball).

First, we consider capillary minimal hypersurfaces in the hemisphere. 

\begin{corollary}
Let $(\Sigma^{n-1},\pr\Sigma)$ be a capillary minimal hypersurface in $(\mathbb{S}^n , \pr B_\pit)$ with contact angle $\gamma \in (0,\pit]$, and wet surface $S^-$. We have \[\mathcal{E}^{\pit,\gamma}(\Sigma, S^-) \geq \frac{1+\cos\gamma}{2}|\mathbb{S}^n|.\] 
\end{corollary}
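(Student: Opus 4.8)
The strategy is to combine the conformal monotonicity of $\mathcal{E}^{\pit,\gamma}$ (Corollary~\ref{cor:conf-max-sph-cap-intro}, or equivalently Theorems~\ref{thm:mono-sph-capillary} and~\ref{thm:mono-sph-capillary-hemi}) with a conformal blowup limit, along the lines of Li--Yau~\cite{LY82} and Fraser--Schoen~\cite{FS11}. The key observation is that in the hemisphere $\mathbb{S}^n_+$, the functional $\mathcal{E}^{\pit,\gamma}$ coincides with the capillary energy $\mathcal{A}^\gamma(\Omega) = |\pr\Omega\cap B_\pit| + \cos\gamma\,|\pr\Omega\cap\pr B_\pit|$, and that by Corollary~\ref{cor:conf-max-sph-cap-intro} this energy only decreases under the $\Conf(\mathbb{S}^n_+)$-action (equivalently, $\mathcal{E}^{\pit,\gamma}$ decreases along the conformal flows $\Psi^a_t$ for $a\perp e_0$, by Theorem~\ref{thm:mono-sph-capillary-hemi}). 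So it suffices to exhibit, in the conformal orbit closure of $(\Sigma,S^-)$, a degenerate limit whose energy is exactly $\frac{1+\cos\gamma}{2}|\mathbb{S}^n|$.

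\textbf{The blowup.} Fix a point $p\in\Sigma$ in the interior (or on the boundary), choose $a\in\mathbb{S}^n$ with $a\perp e_0$ so that the flow $\Psi^a_t$ ``concentrates'' the hemisphere toward a boundary point of $\pr B_\pit$ while pushing $p$ appropriately; in the limit $t\to\infty$, the rescaled surface $\Sigma_t = (\Psi^a_t)_*\Sigma$ converges (as a varifold, with multiplicity) to a limit configuration. Because $\Psi^a_t$ preserves the hemisphere and acts by conformal dilations fixing a pair of antipodal boundary points of $\pr B^n_\pit$, the interior piece $\Sigma_t$ degenerates to a totally geodesic hemisphere-slice (a copy of $\mathbb{S}^{n-1}_+$, i.e.\ half of a great $(n-1)$-sphere) counted with the appropriate multiplicity, while the wet part $S^-_t\subset\pr B^n_\pit$ degenerates to a hemisphere of $\pr B^n_\pit\cong\mathbb{S}^{n-1}$. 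More precisely, near the concentration point the surface looks, after rescaling, like a minimal cone in the Euclidean half-space meeting the boundary hyperplane at angle $\gamma$; the multiplicity-one model is a flat half-hyperplane $\mathbb{R}^{n-1}_+$ together with its ``wet'' complement, and the spherical limit of $|\Sigma_t| + \cos\gamma\csc^2(\pit)\,|S^-_t| + \sin\gamma\cot(\pit)\,|\pr\Sigma_t| = |\Sigma_t| + \cos\gamma\,|S^-_t|$ is $|\mathbb{S}^{n-1}_+|\cdot\tfrac{|\mathbb{S}^n|}{2|\mathbb{S}^{n-1}_+|}$-type normalization — the point being that the limiting interior piece has the area of a hemi-equator and the limiting wet piece has the area of a hemisphere of $\pr B_\pit$, and these add up, with the weights $1$ and $\cos\gamma$, to $\frac{1+\cos\gamma}{2}|\mathbb{S}^n|$. (Here one uses $|\mathbb{S}^n| = 2|\mathbb{S}^{n-1}_+|\cdot(\text{normalizing factor})$; the cleanest route is to note that a totally geodesic half-equator $\Sigma_\infty$ in $\mathbb{S}^n_+$ together with a hemispherical wet surface has exactly $\mathcal{E}^{\pit,\gamma} = \tfrac12|\mathbb{S}^{n-1}| \cdot (\text{interior})$, which by direct computation equals $\tfrac{1+\cos\gamma}{2}|\mathbb{S}^n|$ when one accounts correctly for the dimensional volume ratios.) Combining this with the monotonicity $\mathcal{E}^{\pit,\gamma}(\Sigma,S^-) \geq \lim_{t\to\infty}\mathcal{E}^{\pit,\gamma}(\Sigma_t,S^-_t)$, and lower semicontinuity of the energy under varifold convergence, gives the claimed bound.

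\textbf{Main obstacle.} The technical crux is controlling the $t\to\infty$ limit of $\mathcal{E}^{\pit,\gamma}(\Sigma_t,S^-_t)$ rigorously: one must ensure the interior area, the wet area, and the boundary length each converge (and the boundary length term, while having coefficient $\sin\gamma\cot\pit = 0$, must still be shown not to absorb mass in the limit), and one must correctly identify the limiting varifold — in particular, that the concentration picks up exactly the energy of one multiplicity-one totally geodesic configuration and not less (which would require a monotonicity/density lower bound at the blowup point) or more (which would require ruling out the boundary loop concentrating mass). This is exactly the kind of blowup analysis carried out in \cite{LY82}, \cite[Remark 5.2]{FS11}, and \cite[Proposition 3.2]{RS97}; the adaptation to the capillary spherical-cap setting is routine given those references and the monotonicity established above, but verifying the convergence of the wet-surface term $\cos\gamma\,|S^-_t|$ to the hemisphere area — equivalently, that $S^-_t$ fills out exactly half of $\pr B_\pit$ in the limit — is the step requiring the most care.
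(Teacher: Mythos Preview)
Your overall strategy---conformal monotonicity plus a blowup---is the same as the paper's, but there is a real gap in your execution: the choice of blowup point. You write ``fix a point $p\in\Sigma$ in the interior (or on the boundary)'' and then choose $a\perp e_0$ vaguely. The paper instead fixes a point $y\in\pr\Sigma$ on the \emph{boundary} and takes $a=-y$ (the antipode). This is essential for two reasons. First, since $y\in\pr B_\pit$ we have $\langle y,e_0\rangle=0$, so $a\perp e_0$ automatically and the flow $\Phi_{sa}$ preserves the hemisphere. Second, and more importantly, since $y\in\pr\Sigma=\pr S^-$, \emph{both} $\Sigma$ and $S^-$ pass through the fixed point $-a=y$ of the flow. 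As $s\to1$, every trajectory other than the fixed point converges to $a$, so $(\Phi_{sa})_*\Sigma$ converges to the totally geodesic half-equator tangent to $\Sigma$ at $y$, and $(\Phi_{sa})_*S^-$ converges to the half of $\pr B_\pit$ tangent to $S^-$ at $y$. If instead $p$ were an interior point of $\Sigma$, you could not take $a=-p$ (it would not be orthogonal to $e_0$), and for a generic $a\perp e_0$ the wet surface $S^-$ would not pass through the fixed point, so $S^-_t$ would collapse to either a point or all of $\pr B_\pit$---neither giving the correct limit.

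Your area discussion is also muddled: there are no ``dimensional volume ratios'' or normalizing factors to sort out. The limiting half-equator and the limiting half of $\pr B_\pit$ are each isometric to a hemisphere $\mathbb{S}^{n-1}_+$ and hence each have area $\tfrac12|\mathbb{S}^{n-1}|$; since $\cot\tfrac\pi2=0$ kills the boundary-length term, the limit energy is simply $\tfrac12|\mathbb{S}^{n-1}|(1+\cos\gamma)$. (The paper writes this as $\tfrac{1+\cos\gamma}{2}|\mathbb{S}^n|$, which appears to be a typo for $|\mathbb{S}^{n-1}|$.) With the correct blowup point chosen, the convergence is elementary---each trajectory is a geodesic arc---so the ``main obstacle'' you describe largely evaporates; the only subtlety is that the limits may carry multiplicity $\geq 1$, which only strengthens the inequality.
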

\begin{proof}
Fix $y\in \pr\Sigma$, let $a$ be the antipodal point of (the image of) $y$ and consider the one-parameter family of conformal translations $\Phi_{sa} \in \Conf(B_\pit)$. This family fixes the points $a$ and $-a$, and for any other point $x\in \mathbb{S}^n$ the trajectory $\Phi_{sa}(x)$ is a geodesic connecting $-a = \lim_{s\to -1} \Phi_{sa}(x)$ to $a= \lim_{s\to1} \Phi_{sa}(x)$. 

In particular, as $s\to 1$ the image $(\Psi_{sa})_*\Sigma$ converges to a totally geodesic half-equator $\Sigma'$, whose tangent half-plane $T_y \Sigma' = T_y\Sigma$ (possibly with multiplicity). Similarly, the wet surface $(\Psi_{sa})_* S^-$ converges to half of the barrier $\pr B_\pit$. Both limiting surfaces have area $\frac{1}{2}|\mathbb{S}^n|$, so the result follows from Corollary \ref{cor:conf-max-sph-cap-intro}.
\end{proof}

Note that the totally geodesic half-equator (with the correct contact angle) is indeed a capillary minimal hypersurface, so the above bound is sharp. For capillary surfaces in other spherical caps, it seems that the analogous blowup process in $B_R$ should not give a sharp bound (similar to the Euclidean ball, wherein the blowup may not be minimal), so we leave the following discussion:

\begin{remark}
Consider a capillary minimal surface $(\Sigma^{2},\pr\Sigma)\looparrowright(\mathbb{S}^3 , \pr B_R)$ with contact angle $\gamma \in (0,\pit]$, and wet surface $S^-$. Considering $y\in \pr\Sigma$ as before, we may again flow by $\Phi_{-sy}$. Since $y\in \pr B_R$, this will flow $B_R$ to a (rotated) hemisphere, and the blowup should again be a half-equator. This would give $\mathcal{E}^{R,\gamma}(\Sigma) \geq 2\pi(1+\cos\gamma)$ in the limit. 

One may also consider a conformal blowup by transformations in $\Conf(B_R)$. In this way, the conformal blowup should be the image in $B_R$ of the half-equator in $B_\pit$, under the map $\Phi_{s_R e_0}$. It seems that this gives a weaker bound than above. 
%we may let $a=-\Phi_{-s_R e_0}(y)$, so that $\Phi_{s_R e_0} \circ \Phi_{sa} \circ \Phi_{-s_R e_0}$ is a one-parameter family in $\Conf(B_R)$, which flows from $y$ to its antipodal point on $\pr B_R$. In this way, the conformal blowup should be the image $\Sigma'_R$ of the half-equator in $B_\pit$ by the map $\Phi_{s_R e_0}$. This surface has boundary length $|\pr\Sigma'_R| = 2\pi\sin R$ and, by an integral calculation, area $|\Sigma'_R| = 2\pi \frac{\sin^2 R}{1+\cos R\sin\gamma}$. The wet surface converges to half the barrier $\pr B_R$, which has area $2\pi \sin^2 R$. 
\end{remark}

\subsection{A global view, and the Euclidean limit as $R\to 0$}
\label{sec:euclid-limit}
In this subsection, we sketch some remarks about what information may be obtained for minimal submanifolds in the Euclidean ball $\mathbb{B}^n$, by taking suitable limits as $R\to 0$. We will focus on free boundary submanifolds, for which we recall the definition
\[
\mathcal{E}^{R}(\Sigma) = |\Sigma|  +  \cot R\, |\pr\Sigma|.
\]

To begin our discussion, we first give a unified overview of some monotonicity results we have proven so far. Consider a free boundary submanifold $(\Sigma_\pit^k, \pr \Sigma_\pit)$ in the hemisphere $(B^n_\pit, \pr B^\pit)$, and for $R\in(0,\pi)$, $y\in \mathcal{S}_0 = \mathbb{B}^n$ define the submanifolds in $(B^n_R, \pr B^n_R)$ given by
\[\Sigma_R = (\Phi^{n+1}_{s_R e_0})_* \Sigma_\pit, \qquad \Sigma^y_R = (\Phi^{n+1}_{s_R e_0})_* (\Phi^{n+1}_y)_* \Sigma_\pit.\]

(Recall that $\Phi^{n+1}_y$ will preserve $B^n_\pit$, and $\Phi^{n+1}_{s_R e_0}$ gives a conformal equivalence $B^n_\pit \to B^n_R$.) 

For such $y\in \mathbb{B}^n$ we may also consider the corresponding conformal transformations in $\Conf(B^n_R)\subset \Conf(\mathbb{B}^{n+1})$ given by \[\Psi := \Phi^{n+1}_{s_R e_0} \circ \Phi^{n+1}_y \circ \Phi^{n+1}_{-s_R e_0},\]
so that \[ \Sigma^y_R  = \Psi_* \Sigma_R.\]

In this way, $\{\Sigma^y_R\}$ comprises all conformal images (up to rotation) of $\Sigma_R$ (which are free boundary in $B^n_R$). This is schematically depicted in Figure \ref{fig:euc}, using (see Appendix \ref{sec:conf-mono-overflow}) \[Y=\frac{|y|^2(\cos R) e_0 + (\sin R) y}{\sin^2 R + |y|^2 \cos^2 R}\in\mathbb{B}^{n+1}\] to parametrise all $\Sigma^y_R$ at once. In dimension $k=2$, if $\Sigma_\pit$ is also minimal, then we showed that the quantity $\mathcal{E}^R(\Sigma^y_R)$ is monotone along rays from the origin (in any direction). Similarly, if $\Sigma_{R_0}$ is minimal, then we will have that $\mathcal{E}^R(\Sigma^y_R)$ is monotone along circular arcs beginning at $(\cos R_0) e_0$ (which intersects $\pr\mathbb{B}^{n+1}$ orthogonally; indeed, these are the images of ray from the origin under the conformal transformation $\Phi^{n+1}_{s_R e_0}$). As in Figure \ref{fig:euc}, for any $R,y$ there is always such a path connecting $\Sigma_R$ to $\Sigma^y_R$. 

\begin{figure}
\includegraphics[scale=0.6]{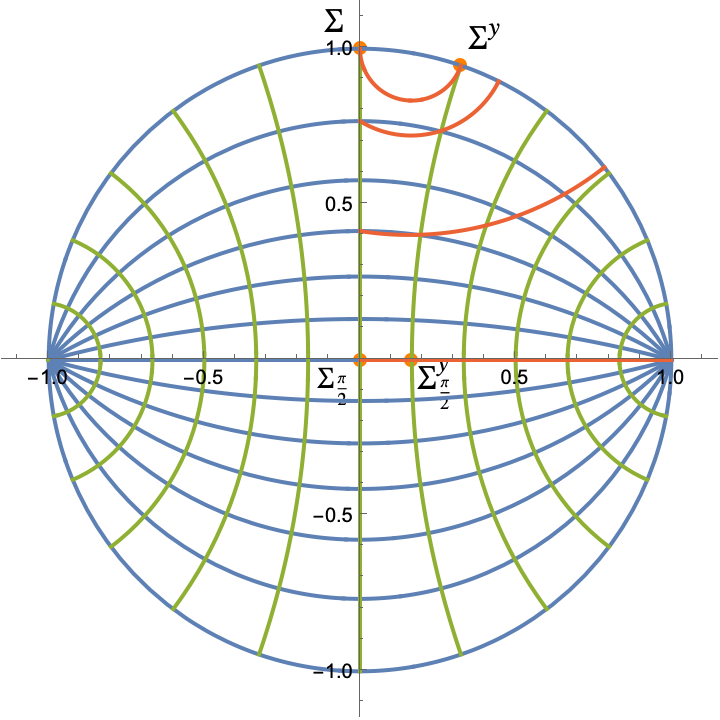}
\caption{Schematic diagram, wherein each point $Y\in \mathbb{B}^{n+1}$ corresponds to one of the surfaces $\Sigma^y_R$. The horizontal `meridians' correspond to fixed values of $R$. The vertical `parallels' correspond to fixed values of $y$, so that points on the (upper) boundary will correspond to $\Sigma^y$. Additionally, four paths are depicted that begin at $\Sigma^0_R$ and pass through $\Sigma^y_R$, for $R=\pit, \frac{\pi}{4}, \frac{\pi}{6},0$ respectively. These paths are hyperbolic geodesics if $\mathbb{B}^{n+1}$ is treated as the Poincar\'{e} model. 
}
\label{fig:euc}
\end{figure}

In this scheme, points on the boundary $\pr \mathbb{B}^{n+1}$ represent limits of $\Sigma^y_R$ as $R\to 0$, which correspond to submanifolds $\Sigma^y$ of the Euclidean ball $(\mathbb{B}^n,\pr\mathbb{B}^n)$. To make this precise, we recall some notation: $\delta$ refers to the Euclidean metric, $\bar{g}$ the unit round metric on $\mathbb{S}^n$; we defined $s_R = \tan \frac{\pit-R}{2}$. Also recall that $\Xi:\mathbb{B}^n\to B^n_\pit$ is the (inverse) stereographic projection, and so the composition $\Phi_{s_R e_0}\circ \Xi$ defines a conformal equivalence $\Xi_R:\mathbb{B}^n \to B^n_R$. As in Lemma \ref{lem:h-R}, we can compute the pullback factor $\Xi_R^*\bar{g} = f_R^2 \delta$, where $f_R:\mathbb{B}^n\to\mathbb{R}$ is defined by \[f_R(z)=\frac{2\sin R}{1+|z|^2 + (1-|z|^2)\cos R}.\]

Given a free boundary minimal submanifold $(\Sigma^k,\pr\Sigma)$ in $(\mathbb{B}^n,\pr\mathbb{B}^n)$, for each $R$ we define the (inversely) projected submanifold \[\Sigma_\pit = \Xi_*\Sigma,\] so that $\Sigma_R = (\Xi_R)_*\Sigma$ as above. We will be interested in the orbit of $\Sigma$ under the action of $\Conf(\mathbb{B}^n)$, so for $y\in \mathbb{B}^n$ we consider \[\Sigma^y = (\Phi^n_y)_*\Sigma;\] 
note that (by Lemma \ref{lem:conf-Y}, for instance) we also have $\Sigma^y_R= (\Xi_R)_* \Sigma^y$. In this way, as $f_R \sim \sin R$ for small $R$, after rescaling by $\frac{1}{\sin R}$ the submanifolds $\Sigma^y_R$ will converge smoothly to $\Sigma^y$. The $\Sigma_R$ will not be minimal, but instead have small mean curvature as $R\to 0$. Indeed, using the conformal transformation formula (\ref{eq:mc-conf}), one may verify that \[|\mathbf{H}_{\Sigma_R}| \leq k \frac{1-\cos R}{\sin R}.\] 

Henceforth we will restrict to the case $k=2$. In the scheme depicted in Figure \ref{fig:euc}, for any fixed $y\in \mathbb{B}^n$ we will now have an \textit{almost}-monotone path connecting $\Sigma_R$ to $\Sigma^y_R$, and we expect a genuine monotonicity along the limiting path, which is a circular arc orthogonal to $\pr\mathbb{B}^n$ at the points corresponding to $\Sigma= \Sigma^0$ and $\Sigma^y$. Here we will not explicitly prove this limiting monotonicity, but instead use the scheme above to recover the boundary maximisation result prove by Fraser-Schoen \cite{FS11}, namely that for any $y\in\mathbb{B}^n$, 
\begin{equation}
\label{eq:FS-result} |\pr\Sigma^y| \leq |\pr\Sigma|.
\end{equation}

By Lemma \ref{lem:conf-Y}, (up to ambient rotation) $\Sigma^y_R=\Psi_* \Sigma_R$ may be reached by a conformal flow of $\Sigma_R$ for time $t= 2\artanh |Y|$, where $Y=\Psi(0)=\frac{|y|^2(\cos R) e_0 + (\sin R) y}{\sin^2 R + |y|^2 \cos^2 R}$ as above. (The interested reader may consult Appendix \ref{sec:conf-mono-overflow}, in particular Proposition \ref{prop:flow-correspondence}, for the flow direction.)

By the monotonicity Theorem \ref{thm:mono-sph} we have that 

\[\mathcal{E}^{R}(\Sigma^y_R) \leq \exp\left(2 \frac{1-\cos R}{\sin R} \artanh|Y|\right) \mathcal{E}^{R}(\Sigma_R).\]

Since $\lim_{R\to 0}\frac{f_R}{\sin R}\to1$, taking into account the scaling properties of area and length (see Corollary \ref{cor:area-limit}) we have $\mathcal{E}^{R}(\Sigma_R) \to |\pr\Sigma|$, and similarly $ \mathcal{E}^{R}(\Sigma^y_R) \to |\pr\Sigma^y|$. Thus we will recover the Euclidean result (\ref{eq:FS-result}) so long as 

\begin{equation}\label{eq:euc-limit} \lim_{R\to 0} \frac{1-\cos R}{\sin R}\artanh|Y| =0.\end{equation}

But \[|Y| = \frac{|y|}{\sqrt{\sin^2 R + |y|^2 \cos^2 R}},\] and using $\artanh s= \frac{1}{2} \log \frac{1+s}{1-s}$, it follows that $\artanh\left(\frac{|y|}{\sqrt{\sin^2 R + |y|^2 \cos^2 R}}\right)  \sim -\log R$. Then (\ref{eq:euc-limit}) follows as $\frac{1-\cos R}{\sin R} \sim \frac{1}{2}R$ and $R\log R\to 0$. 

\begin{remark}
We have seen above that, when $k=2$, the conformal maximisation in spherical caps (with mean curvature term) indeed recovers the Fraser-Schoen conformal maximisation in Euclidean balls. However, it seems that we may \textit{not} obtain a limiting monotonicity formula for the Euclidean setting, at least by the naive perturbation method above. Indeed, as depicted in Figure \ref{fig:euc}, the natural limiting monotonicity formula is along a circular arc orthogonal to to the (ideal) boundary - that is, naturally involving conformal images \textit{in spherical caps}, and only recovering information on the Euclidean setting at its endpoints.
\end{remark}

\begin{remark}
\label{rmk:higher-dim-euc}
For higher dimensions $k>2$, one may consider a limiting monotonicity corresponding to the monotone quantity discussed in Remark \ref{rmk:higher-dim}. However, due to the form of that monotone quantity, it seems that on the Euclidean limits $\Sigma$ and $\Sigma^y$, the rescaled values may not converge to the boundary areas $|\pr\Sigma|, |\pr\Sigma^y|$, but instead to boundary integrals that depend on the coordinate function $\langle \cdot, e_1\rangle$ (on $\mathbb{B}^n$), where $e_1 = \frac{y}{|y|}$. These boundary integrals do not appear to give a simple relation between intrinsic quantities on $(\Sigma,\pr\Sigma)$ and $(\Sigma^y,\pr\Sigma^y)$. 
\end{remark}

\section{Quadratic forms and index problems}
\label{sec:quadratic}

The purpose of this section is to concretely review the index of quadratic forms on compact manifolds with boundary, as well as associated Robin, Dirichlet and (modified) Steklov eigenvalue problems. The index may be calculated either just by counting Robin eigenvalues, or counting Dirichlet and Steklov eigenvalues (Proposition \ref{prop:index-sum}). We will also record some important variational properties of those eigenvalues in Lemma \ref{lem:comparison}. We emphasise that these notions are stated for general forms $Q$, so may certainly differ from more familiar eigenvalue problems - depending upon the choice of $Q$. 

\subsection{Forms on Hilbert spaces}

For any quadratic form $Q$ defined on a real vector space $\mathcal{V}$, there is an associated symmetric bilinear form (by polarisation) that we continue to denote $Q$; explicitly $Q(u,v) = \frac{1}{2}(Q(u+v)-Q(u)-Q(v))$. 

\begin{definition}
The index $\ind(Q)$ is the dimension of a maximal subspace of $\mathcal{V}$ on which $Q$ is negative definite. The kernel is $\ker(Q)= \{ u\in \mathcal{V} | \forall v\in \mathcal{V}: Q(u,v)=0\}$. 

We will write $\ind_0(Q)$ for the dimension of a maximal subspace of $\mathcal{V}$ on which $Q$ is negative semidefinite; we have $\ind_0(Q) = \ind(Q) + \nul(Q)$, where $\nul(Q) = \dim \ker(Q)$.
%this is all well-defined by Sylvester's theorem
\end{definition}

\subsection{Forms on compact manifolds}

Let $(\Sigma, \pr\Sigma)$ be a smooth compact Riemannian manifold with boundary. Let $\eta$ be the outer conormal of $\pr\Sigma$ in $\Sigma$. 

%As above, for any quadratic form $Q$ defined on $C^\infty(\bar{\Sigma})$, there is an associated symmetric bilinear form that we continue to denote $Q$.

\begin{definition}
The index $\ind(Q)$ is the dimension of a maximal subspace of $C^\infty(\bar{\Sigma})$ on which $Q$ is negative definite. We will write $\ind_0(Q)$ for the dimension of a maximal subspace of $C^\infty(\bar{\Sigma})$ on which $Q$ is negative semidefinite.

If $C^\infty(\bar{\Sigma})$ is dense in a Hilbert space $\mathcal{H}$, and $Q$ extends continuously to $\mathcal{H}$, then (see Tran-Zhou \cite[Lemma 2.7]{TZ23}) these definitions agree with the definitions via $\mathcal{H}$. 
\end{definition}

We are most interested in $H^1(\Sigma)$, which contains $C^\infty(\bar{\Sigma})$ as a dense subspace. We also have the natural inclusion map $j: H^1(\Sigma) \to L^2(\Sigma)$ and the trace map $\tau: H^1(\Sigma) \to L^2(\pr\Sigma)$, both of which are compact operators (recall the latter factors through $H^{1/2}(\pr\Sigma)$). As usual we define $H^1_0(\Sigma) = \ker \tau$ to be the subspace of $H^1(\Sigma)$ with vanishing boundary values. 

For smooth functions $p \in C^\infty(\bar{\Sigma})$ and $q\in C^\infty(\pr\Sigma)$, we consider the quadratic form defined on $H^1(\Sigma)$ by

\[Q_{p,q}(u) = \int_\Sigma (|\nabla u|^2 -pu^2)  - \int_{\pr \Sigma} qu^2.\]

If $u,v\in C^\infty(\bar{\Sigma})$, then integration by parts gives

\[Q_{p,q}(u,v) = \int_\Sigma (\langle \nabla u, \nabla v\rangle -puv)  - \int_{\pr \Sigma} quv =  -\int_\Sigma u(\Lap+p)v  + \int_{\pr \Sigma} u(\pr_\eta-q)v .\]

When $p,q$ are contextually unambiguous we will drop the subscripts, writing $Q=Q_{p,q}$.

We say that $u \in H^1(\Sigma)$ is a weak solution of $(\Lap+p)=0$ if 
\[Q(u,\phi)=0\text{ for all }\phi \in H^1_0(\Sigma).\] We will denote the space of weak solutions by $V(Q)$; note of course that it does not depend on the boundary term $q$. By elliptic regularity, any $u\in V(Q)$ is smooth on the interior of $\Sigma$. We will also refer to such weak solutions as $(\Lap+p)$-harmonic.

As $p,q$ are uniformly bounded, we have the following `almost-coercivity' estimate:

\begin{lemma}
\label{lem:coercive}
There exist $\alpha,C$ so that
\[\|u\|_{H^1(\Sigma)}^2-\alpha(\|u\|_{L^2(\Sigma)}^2 + \|u\|_{L^2(\pr\Sigma)}^2) \leq Q_{p,q}(u) \leq C\|u\|_{H^1(\Sigma)}^2\]
\end{lemma}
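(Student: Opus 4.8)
The plan is to prove both inequalities by estimating the two sign-indefinite terms $\int_\Sigma p u^2$ and $\int_{\pr\Sigma} q u^2$ directly against the $L^2(\Sigma)$- and $L^2(\pr\Sigma)$-norms, using only that $p$ and $q$ are continuous on compact sets (hence bounded), together with continuity of the trace map. Since $C^\infty(\bar{\Sigma})$ is dense in $H^1(\Sigma)$, it suffices to verify the estimate for smooth $u$ and then pass to the limit; in particular this also confirms that $Q_{p,q}$ extends continuously to $H^1(\Sigma)$, as is implicitly used above.

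First I would record the elementary bounds $\bigl|\int_\Sigma p u^2\bigr| \le \|p\|_{L^\infty(\Sigma)}\|u\|_{L^2(\Sigma)}^2$ and $\bigl|\int_{\pr\Sigma} q u^2\bigr| \le \|q\|_{L^\infty(\pr\Sigma)}\|u\|_{L^2(\pr\Sigma)}^2$. For the lower bound, I use the identity $\int_\Sigma |\nabla u|^2 = \|u\|_{H^1(\Sigma)}^2 - \|u\|_{L^2(\Sigma)}^2$ coming from the definition of the $H^1$-norm to write
\[ Q_{p,q}(u) \ge \|u\|_{H^1(\Sigma)}^2 - \bigl(1+\|p\|_{L^\infty(\Sigma)}\bigr)\|u\|_{L^2(\Sigma)}^2 - \|q\|_{L^\infty(\pr\Sigma)}\|u\|_{L^2(\pr\Sigma)}^2, \]
so the left-hand inequality holds with $\alpha = 1 + \|p\|_{L^\infty(\Sigma)} + \|q\|_{L^\infty(\pr\Sigma)}$.

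For the upper bound, I would bound $\int_\Sigma |\nabla u|^2 \le \|u\|_{H^1(\Sigma)}^2$ and $\|u\|_{L^2(\Sigma)}^2 \le \|u\|_{H^1(\Sigma)}^2$ trivially, and then invoke continuity of the trace operator $\tau \colon H^1(\Sigma)\to L^2(\pr\Sigma)$ on the smooth compact manifold with boundary $\Sigma$ (recalled above) to obtain a constant $C_\tau$ with $\|u\|_{L^2(\pr\Sigma)}^2 \le C_\tau \|u\|_{H^1(\Sigma)}^2$. Combining these yields $Q_{p,q}(u) \le \bigl(1 + \|p\|_{L^\infty(\Sigma)} + C_\tau \|q\|_{L^\infty(\pr\Sigma)}\bigr)\|u\|_{H^1(\Sigma)}^2$, which is the claim with this value of $C$. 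There is no genuine obstacle here; the only non-elementary input is boundedness of the trace map, and the sole point requiring mild care is confirming that the bounds, first established for $u\in C^\infty(\bar{\Sigma})$, extend to all of $H^1(\Sigma)$ by density.
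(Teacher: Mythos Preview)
Your proof is correct and matches the paper's approach: the paper does not give a detailed proof, but simply prefaces the lemma with ``As $p,q$ are uniformly bounded, we have the following `almost-coercivity' estimate,'' and your argument is exactly the natural unpacking of that remark.
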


In the language of \cite{AEKS}, Lemma \ref{lem:coercive} says that $Q_{p,q}$ is `compactly elliptic' with respect to the compact operator $j\oplus \tau: H^1(\Sigma)\to L^2(\Sigma)\oplus L^2(\pr\Sigma)$.

\subsection{Robin spectrum}

The Robin eigenvalue problem associated to $Q=Q_{p,q}$ is stated as
\begin{equation}
\begin{cases}
(\Lap+p)u= - \lambda u &, \text{ on } \Sigma, \\
(\pr_\eta -q)u=0 &, \text{ on } \pr\Sigma. 
\end{cases}
\end{equation}

By standard spectral theory, $H^1(\Sigma)$ admits an $L^2(\Sigma)$-orthonormal basis of eigenfunctions $\varphi^{\mathrm{R}}_k$, $k=0,1,2,\cdots$, and the corresponding Robin eigenvalues $\lambda^{\mathrm{R}}_k$ are discrete, accumulating only at $\infty$. (See \cite{CFP14, MM25}.) Moreover, we have the variational characterisation %\textcolor{magenta}{i don't think we need min-max, just max-min}

\[ \lambda^{\mathrm{R}}_k %= \inf_{\substack{\mathcal{V}_k \subset H^1(\Sigma)\\ \dim \mathcal{V}_k =k+1}} \max_{0\neq u\in \mathcal{V}_k} \frac{Q(u,u)}{\|u\|_{L^2(\Sigma)}^2} 
= \inf_{\substack{u\in H^1(\Sigma) \\ \forall j<k, \langle u,\varphi^{\mathrm{R}}_j\rangle_{L^2(\Sigma)}=0} } \frac{Q(u,u)}{\|u\|_{L^2(\Sigma)}^2} .\] 

By the variational characterisation, it is easy to see that $\ind(Q)$ (resp. $\ind_0(Q)$) is always given by the number of negative (resp. nonnegative) Robin eigenvalues. However, for analysis it is convenient to quantify the interior and boundary influence separately. This will be achieved by the following Dirichlet and Steklov-type problems.

\subsection{Dirichlet spectrum}

The Dirichlet eigenvalue problem associated to $Q=Q_{p,q}$ is stated as
\begin{equation}
\label{eq:Dir-spec}
\begin{cases}
(\Lap+p)u= - \lambda u &, \text{ on } \Sigma, \\
u|_{\pr\Sigma}=0 &, \text{ on } \pr\Sigma. 
\end{cases}
\end{equation}

Consider the restriction of $Q$ to $H^1_0(\Sigma) = \ker \tau$. For $u\in H^1_0(\Sigma)$, the form acts as 
\[Q(u) = \int_{\Sigma} (|\nabla u|^2 -pu^2).\]

By standard spectral theory, there is an $L^2(\Sigma)$-orthonormal basis of $H^1_0(\Sigma)$ given by smooth eigenfunctions $\varphi^{\mathrm{D}}_k \in C^\infty(\bar{\Sigma})$, $k=0,1,2,\cdots$, and the corresponding Dirichlet eigenvalues $\lambda^{\mathrm{D}}_k$ are discrete, accumulating only at $\infty$. Moreover, we have the variational characterisation

\begin{equation}
\label{eq:dir-variational}
\lambda^{\mathrm{D}}_k %= \inf_{\substack{\mathcal{V}_k \subset H^1_0(\Sigma)\\ \dim \mathcal{V}_k =k+1}} \max_{0\neq u\in \mathcal{V}_k} \frac{Q(u,u)}{\|u\|_{L^2(\Sigma)}^2} 
=  \inf_{\substack{u\in H^1_0(\Sigma) \\ \forall j<k, \langle u,\varphi^{\mathrm{D}}_j\rangle_{L^2(\Sigma)}=0} } \frac{Q(u,u)}{\|u\|_{L^2(\Sigma)}^2}. 
\end{equation}

If $\Sigma$ is connected, by a standard argument (see also Lemma \ref{lem:steklov-bottom} below for the Steklov setting) using the variational characterisation and the regularity of eigenfunctions, the first eigenvalue $\lambda^{\mathrm{D}}_0$ is simple and has an eigenfunction $\varphi^{\mathrm{D}}_0$ that is positive on the interior of $\Sigma$. 

As the Dirichlet eigenfunctions are smooth (up to the boundary), an integration by parts gives the action 
\begin{equation}
\label{eq:dir-Q-action}
Q(\varphi^{\mathrm{D}}_i , \phi) = -\int_\Sigma \phi (\Lap+p)\varphi^{\mathrm{D}}_i + \int_{\pr\Sigma} \phi(\pr_\eta - q) \varphi^{\mathrm{D}}_i 
= \lambda^{\mathrm{D}}_i\int_\Sigma \phi\varphi^{\mathrm{D}}_i + \int_{\pr\Sigma} \phi\pr_\eta \varphi^{\mathrm{D}}_i,
\end{equation}
for any $\phi\in H^1(\Sigma)$. 

Let $\mathcal{W}_0$ denote the 0-Dirichlet eigenspace. (Note as per (\ref{eq:dir-Q-action}) that elements of $\mathcal{W}_0$ may \textit{not} be 0-eigenfunctions of the form $Q$ on $H^1(\Sigma)$ as their Neumann data should generally not vanish; indeed they should contribute to the \textit{index} rather than the nullity.)

%
%(Standard):
%
%\begin{itemize}
%\item Existence of eigenfunctions
%\item Min-max principle
%\item Regularity
%\item Bottom of spectrum
%\end{itemize}

\subsection{(Modified) Steklov spectrum}

We consider the Steklov-type eigenvalue problem
\begin{equation}
\label{eq:steklov-system}
\begin{cases}
(\Lap+p)u= 0 &, \text{ on } \Sigma, \\
(\pr_\eta - q)u = \lambda  u&, \text{ on } \pr\Sigma. 
\end{cases}
\end{equation}

Formally, this eigenvalue problem corresponds to the spectrum of a `Dirichlet-to-Robin' map as follows: Given a function $u$ on $\pr\Sigma$, we seek a $(\Lap+p)$-harmonic function $\hat{u}$ with $\hat{u}|_{\pr\Sigma}=u$, whence the image of $u$ is given by $(\pr_\eta - q)\hat{u}$. By the Fredholm alternative, this process may be complicated by the presence of Dirichlet kernel. 

To rigorously define the Dirichlet-to-Robin map and its spectrum, one may use (for instance) the `hidden compactness' theory of Arendt et. al. \cite{AEKS} (see also \cite{AM07, AM12} and \cite{Tr20, TZ23}). Rather than repeating the full details of their theory, we will describe the essential notions to define the Dirichlet-to-Robin map, and how they correspond to key notions in \cite{AEKS}. 

We take $V=H^1(\Sigma)$, $H=L^2(\pr\Sigma)$, and consider the graph (multi-valued operator) \[\Gamma=\{(f,h) \in H\times H | \exists u\in V: \tau(u) =f, Q(u,v) = \langle h, \tau(v) \rangle_{L^2(\pr\Sigma)} \text{ for all }v\in V\}.\] 
(Note that if the $(\Lap+p)$-harmonic function $u$ in the above is smooth up to the boundary, then indeed it will have Dirichlet data $u|_{\pr\Sigma}=f$ and Robin data $(\pr_\eta - q)u|_{\pr\Sigma}=h$.)

Recall that $V(Q)\subset H^1(\Sigma)$ is the space of (weak) solutions to $(\Lap+p)u=0$, and the Dirichlet kernel is given by $\mathcal{W}_0= H^1_0(\Sigma)\cap V(Q)$. (These correspond to the spaces $V(Q)$ in \cite[Theorem 4.10]{AEKS} and $W(Q)$ in \cite[Theorem 4.5]{AEKS} respectively.) The set of Robin data for the Dirichlet kernel is given by $\Gamma(0) = \{h \in H | (0,h) \in \Gamma\}$. As every $w\in \mathcal{W}_0$ is smooth up to the boundary, by the discussion above \[\Gamma(0) = \{ \pr_\eta w | w\in \mathcal{W}_0\}.\] 

Following \cite[Section 3]{AEKS}, there is a \textit{single-valued} operator $\mathring{\Gamma}: \mathcal{D}(\Gamma)\to \Gamma(0)^\perp$ by $\mathring{\Gamma}f = g$, where the domain $\mathcal{D}(\Gamma) = \{ f\in H| \exists (f,h) \in \Gamma\}$, and $h$ is the (unique) element of $\Gamma(0)^\perp$ for which $(f,h)\in \Gamma$. This is our Dirichlet-to-Robin (DtR) map. 

The form $Q$ is clearly symmetric, continuous on $H^1(\Sigma)$ and, by Lemma \ref{lem:coercive}, `compactly elliptic' with respect to the compact operator $j\oplus \tau: V\to L^2(\Sigma)\oplus L^2(\pr\Sigma)$. By \cite[Proposition 4.5, Proposition 4.8 and Theorem 4.13]{AEKS}, it follows that the graph $\Gamma$ is self-adjoint, has compact resolvent and is bounded below. By \cite[Section 3]{AEKS}, the single-valued operator $\mathring{\Gamma}$ has the corresponding properties, so by the usual spectral theory for such operators, has an orthonormal eigenbasis on $\mathring{H}=\Gamma(0)^\perp$. It is a byproduct of their theory that $\mathring{H} = \overline{\tau(V(Q))}$. 

Note that if $v\in \Gamma(0)^\perp$ is an eigenfunction, $\mathring{\Gamma}v = \mu v$, by definition of the DtR operator there is a unique $\hat{v} \in H^1(\Sigma)$ such that $\hat{v}|_{\pr\Sigma} =v$ and 
\begin{equation}
\label{eq:stek-Q-action}
Q(\hat{v}, \phi) = \mu \int_{\pr\Sigma}  v \phi
\end{equation}
for all $\phi \in H^1(\Sigma)$. 

\begin{remark}
Strictly, the theory in \cite{AEKS} is for \textit{complex} Hilbert spaces, so we should apply it on the complexified spaces (and complexified form $Q$). This does not cause any issues, as all the constructed spaces respect the splitting by real and imaginary parts, and similarly we can recover the real spectrum by diagonalising in the real parts of each (complex) eigenspace. 
\end{remark}

We summarise the above discussion as the following: 

\begin{proposition}
The (single-valued) DtR operator $\mathring{\Gamma}$ defined on $\mathring{H}=\Gamma(0)^\perp = \overline{\tau(V(Q))}$ is self-adjoint and bounded below with compact resolvent. 

There is an $L^2(\pr\Sigma)$-orthonormal basis of $\mathring{H}$ of eigenfunctions $\varphi^{\mathrm{S}}_k \in \tau(V(Q))$ of $\mathring{\Gamma}$, and the corresponding modified Steklov eigenvalues $\lambda^{\mathrm{S}}_k$ are discrete, accumulating only at $\infty$. Moreover, we have the variational characterisation
\begin{equation}
\label{eq:stek-variational}
 \lambda^{\mathrm{S}}_k %= \inf_{\substack{\mathcal{V}_k \subset T(V(Q)) \\ \dim \mathcal{V}_k =k+1}} \max_{0\neq u\in \mathcal{V}_k} \frac{Q(\hat{u},\hat{u})}{\|u\|_{L^2_{q_0}(\pr\Sigma)}^2} 
 = \inf_{\substack{u\in \tau(V(Q)) \\ \forall j<k, \langle u, \varphi^{\mathrm{S}}_j\rangle_{L^2(\pr\Sigma)}=0} } \frac{Q(\hat{u},\hat{u})}{\|u\|_{L^2(\pr\Sigma)}^2}, 
 \end{equation} 
where $\hat{u} \in V(Q)$ is any extension with $T\hat{u}=u$. 
\end{proposition}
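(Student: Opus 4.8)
The plan is to read the proposition off the abstract theory of compactly elliptic forms of Arendt et al.\ \cite{AEKS}, applied to the concrete data $V=H^1(\Sigma)$, $H=L^2(\pr\Sigma)$, the compact map $j\oplus\tau\colon V\to L^2(\Sigma)\oplus L^2(\pr\Sigma)$, and the form $Q=Q_{p,q}$. First I would verify the hypotheses of that theory: $Q$ is symmetric and continuous on $V$ (immediate from the definition and the uniform boundedness of $p,q$), the operator $j\oplus\tau$ is compact (Rellich--Kondrachov together with compactness of the trace, the latter factoring through $H^{1/2}(\pr\Sigma)$), and $Q$ is compactly elliptic with respect to $j\oplus\tau$ --- this last being exactly Lemma \ref{lem:coercive}. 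Granting these, \cite[Proposition 4.5, Proposition 4.8, Theorem 4.13]{AEKS} give that the associated graph $\Gamma\subset H\times H$ is self-adjoint, bounded below, and has compact resolvent, and then the reduction of \cite[Section 3]{AEKS} passes from the multi-valued $\Gamma$ to the single-valued $\mathring\Gamma$ on $\mathring H=\Gamma(0)^\perp$ by quotienting out $\Gamma(0)=\{\pr_\eta w:w\in\mathcal W_0\}$; $\mathring\Gamma$ inherits self-adjointness, boundedness below, and compactness of the resolvent, and one has $\mathring H=\overline{\tau(V(Q))}$, again as a byproduct of their theory.

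Second, since $\mathring\Gamma$ is self-adjoint, bounded below, with compact resolvent on $\mathring H$, the classical spectral theorem for such operators produces a discrete spectrum $\lambda_0^{\mathrm S}\le\lambda_1^{\mathrm S}\le\cdots\to\infty$ and an $L^2(\pr\Sigma)$-orthonormal eigenbasis $\{\varphi_k^{\mathrm S}\}$ of $\mathring H$. To see that each $\varphi_k^{\mathrm S}$ lies in $\tau(V(Q))$ rather than merely in its closure, I would invoke that $\varphi_k^{\mathrm S}\in\mathcal D(\mathring\Gamma)\subset\mathcal D(\Gamma)$, so by the very definition of the Dirichlet-to-Robin map it admits a weak $(\Lap+p)$-harmonic extension $\hat\varphi_k^{\mathrm S}\in V(Q)$ with $\tau(\hat\varphi_k^{\mathrm S})=\varphi_k^{\mathrm S}$, and the identity (\ref{eq:stek-Q-action}) records precisely this eigenrelation.

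Third, for the variational characterisation (\ref{eq:stek-variational}), the plan is to transport the Courant--Fischer min-max principle for $\mathring\Gamma$ on $\mathring H$ through the extension map $u\mapsto\hat u$. The quadratic form of $\mathring\Gamma$ at $u\in\mathcal D(\Gamma)$ equals $Q(\hat u,\hat u)$ by (\ref{eq:stek-Q-action}), and this is well defined, i.e.\ independent of the choice of weak harmonic extension, since two such extensions differ by some $w\in\mathcal W_0=H^1_0(\Sigma)\cap V(Q)$, for which $Q(w,\cdot)=\langle\pr_\eta w,\tau(\cdot)\rangle_{L^2(\pr\Sigma)}$; this forces both $Q(w,\hat u)=0$ and $Q(w,w)=0$, so $Q(\hat u+w,\hat u+w)=Q(\hat u,\hat u)$. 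Hence the Rayleigh quotient $Q(\hat u,\hat u)/\|u\|_{L^2(\pr\Sigma)}^2$ coincides with the Rayleigh quotient of $\mathring\Gamma$, and the standard inductive min-max --- infimum over $u\in\tau(V(Q))$ that are $L^2(\pr\Sigma)$-orthogonal to $\varphi_0^{\mathrm S},\dots,\varphi_{k-1}^{\mathrm S}$ --- yields (\ref{eq:stek-variational}).

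The PDE and functional-analytic inputs here are all standard, so I expect the only real care to be bookkeeping: keeping the multi-valued/single-valued reduction straight, correctly identifying $\mathring H=\overline{\tau(V(Q))}$ and verifying that eigenfunctions actually land in $\tau(V(Q))$, ensuring the form $Q(\hat u,\hat u)$ descends to $\mathring H$, and taking the min-max orthogonality in $L^2(\pr\Sigma)$ rather than in a graph norm. The complexification caveat already flagged before the proposition disposes of the fact that \cite{AEKS} is stated over $\mathbb C$, since all constructed subspaces respect the real/imaginary splitting.
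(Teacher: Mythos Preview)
Your proposal is correct and follows essentially the same approach as the paper: the paper presents this proposition as a summary of the preceding discussion, which invokes exactly the same AEKS results (\cite[Proposition 4.5, Proposition 4.8, Theorem 4.13, Section 3]{AEKS}) via compact ellipticity from Lemma \ref{lem:coercive}, and then appeals to standard spectral theory. Your added details on why eigenfunctions land in $\tau(V(Q))$ and why $Q(\hat u,\hat u)$ is independent of the extension are welcome elaborations that the paper leaves implicit.
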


Note that any classical solution of (\ref{eq:steklov-system}) is certainly in $\tau(V(Q))$ (hence $\Gamma(0)^\perp$), and so is indeed a Steklov eigenfunction as defined above via the DtR map.

Also note that the 0-Steklov eigenspace indeed corresponds to the kernel $\ker(Q)$.

\begin{remark}
If $q_0$ is a strictly positive smooth function on $\pr\Sigma$, then the construction of the Dirichlet-to-Robin map and the resulting Steklov-type eigenvalues, proceeds in precisely the same manner if one imposes the $q_0$-weighted norm $\left(\int_{\pr\Sigma} q_0 u^2\right)^\frac{1}{2}$ on $L^2(\pr\Sigma)$. In this setting, the eigenvalue problem (\ref{eq:steklov-system}) instead has boundary condition $(\pr_\eta - q)u = \lambda q_0 u$. This flexibility can be useful - for instance, if the boundary weight $q$ associated to $Q_{p,q}$ is known to be positive. In this article, however, we will only need the case $q_0=1$, so we have made this choice to simplify the presentation.
\end{remark}

\subsection{Combining Dirichlet and Steklov spectra}

Recall that $\mathcal{W}_0$ denotes the Dirichlet kernel and $\Gamma(0) = \{ \pr_\eta w | w\in \mathcal{W}_0\}$. We will need the following Fredholm alternative result.

\begin{lemma}
\label{lem:fredholm}
%(Fredholm alternative)

We have $\tau(V)\cap \Gamma(0)^\perp = \tau(V(Q))$. That is, given $u\in H^1(\Sigma)$, there exists a (weak) solution $\hat{u} \in H^1(\Sigma)$ of 
\[\begin{cases} (\Lap+p)\hat{u}=0 , & \text{ on }\Sigma \\ \hat{u} = u , &\text{ on } \pr\Sigma,\end{cases}\]
if and only if $u|_{\pr\Sigma} \in \Gamma(0)^\perp$. The solution is unique up to adding elements of $\mathcal{W}_0$. 
\end{lemma}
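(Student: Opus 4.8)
The plan is to deduce this from the abstract theory of \cite{AEKS} already recorded above, so that essentially no new analysis is needed. First I would unpack what the statement asserts: the forward direction (existence of a weak harmonic extension implies $u|_{\pr\Sigma}\in\Gamma(0)^\perp$) is the orthogonality condition built into the definition of the graph $\Gamma$, while the reverse direction is the genuine Fredholm alternative, and the uniqueness clause is the identification of the Dirichlet kernel $\mathcal{W}_0 = H^1_0(\Sigma)\cap V(Q)$. Concretely, the set $\tau(V(Q))$ is by definition the set of boundary traces of weak solutions of $(\Lap+p)\hat u=0$, so the claimed equivalence is exactly the equality $\tau(V)\cap\Gamma(0)^\perp=\tau(V(Q))$ stated first; I would prove that equality and then read off the rest.

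The key steps, in order: (1) Show $\tau(V(Q))\subseteq\tau(V)\cap\Gamma(0)^\perp$. The inclusion in $\tau(V)$ is trivial since $V(Q)\subseteq V=H^1(\Sigma)$. For the orthogonality, take $\hat u\in V(Q)$ and any $w\in\mathcal{W}_0$; since $w\in H^1_0(\Sigma)$ and $\hat u$ is $(\Lap+p)$-harmonic, $Q(\hat u,w)=0$, and since $w$ is smooth up to the boundary, integration by parts (as in the displayed formula for $Q(u,v)$ with smooth arguments) gives $0=Q(\hat u,w)=\int_{\pr\Sigma}\tau(\hat u)\,\pr_\eta w$; as $\Gamma(0)=\{\pr_\eta w: w\in\mathcal{W}_0\}$ by the discussion preceding the lemma, this says $\tau(\hat u)\perp\Gamma(0)$. (2) Show the reverse inclusion $\tau(V)\cap\Gamma(0)^\perp\subseteq\tau(V(Q))$. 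This is where I invoke \cite{AEKS}: it is ``a byproduct of their theory'' (quoted above) that $\mathring H=\Gamma(0)^\perp=\overline{\tau(V(Q))}$, and the domain $\mathcal D(\Gamma)=\{f\in H:\exists(f,h)\in\Gamma\}$ of the DtR map consists of boundary data admitting a weak solution with some Robin datum. So given $u\in H^1(\Sigma)$ with $f:=\tau(u)\in\Gamma(0)^\perp$, I need $f\in\mathcal D(\Gamma)$; this follows because $\Gamma$ being self-adjoint with compact resolvent (quoted from \cite[Prop.~4.5, Prop.~4.8, Thm.~4.13]{AEKS}) forces $\mathcal D(\Gamma)$ to contain $\Gamma(0)^\perp$ — more precisely, the Fredholm alternative for the compactly elliptic form $Q$ (Lemma \ref{lem:coercive}) says that for any bounded linear functional on $H^1(\Sigma)$ annihilating $\mathcal{W}_0$ there is a weak solution, and the functional $v\mapsto Q(u,v)$ on $H^1_0(\Sigma)$ extends to such a functional precisely when $\tau(u)\perp\Gamma(0)$. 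Thus there is $\hat u\in V(Q)$ with $\tau(\hat u)=f=\tau(u)$, i.e.\ $\hat u$ solves the stated boundary value problem weakly. (3) Uniqueness: if $\hat u_1,\hat u_2$ both solve it, then $\hat u_1-\hat u_2\in V(Q)$ has zero boundary trace, hence lies in $H^1_0(\Sigma)\cap V(Q)=\mathcal{W}_0$; conversely adding any element of $\mathcal{W}_0$ to a solution produces a solution.

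The main obstacle is step (2): making rigorous the assertion that $\tau(u)\perp\Gamma(0)$ is not merely necessary but sufficient for solvability. The cleanest route is not to re-derive a Fredholm alternative by hand but to quote that $\mathring\Gamma$ is defined on all of $\mathcal D(\Gamma)$ with $\overline{\mathcal D(\Gamma)}=\mathring H=\Gamma(0)^\perp$ and that, because $\mathring\Gamma$ has compact resolvent, its range and domain interact via the spectral decomposition; combined with the identity $\mathring H=\overline{\tau(V(Q))}$ and a short argument that $\tau(V(Q))$ is already closed (its elements are traces of a finite-codimension-complemented solution space, or: $\tau(V(Q))=\mathcal D(\Gamma)$ since every element of $\mathcal D(\Gamma)$ extends to a weak solution), one concludes $\tau(V)\cap\Gamma(0)^\perp=\tau(V(Q))$. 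I would phrase the proof to lean on \cite[Section 3 and Theorem 4.10]{AEKS} for exactly this point, keeping the hands-on content confined to step (1) and the elementary uniqueness argument in step (3).
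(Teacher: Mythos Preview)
Your steps (1) and (3) are fine and match the paper. The problem is step (2), where you commit to the wrong route.

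Two of your claims in step (2) are false. First, the assertion that ``$\Gamma$ being self-adjoint with compact resolvent forces $\mathcal D(\Gamma)$ to contain $\Gamma(0)^\perp$'' cannot hold: $\mathring\Gamma$ is an unbounded operator on the infinite-dimensional space $\mathring H$, so its domain is dense but certainly proper. Second, the claim that $\tau(V(Q))$ is closed in $L^2(\pr\Sigma)$ is also false: $\tau(V(Q))\subseteq\tau(H^1(\Sigma))=H^{1/2}(\pr\Sigma)$, which is not closed in $L^2(\pr\Sigma)$. Thus the identity $\overline{\tau(V(Q))}=\Gamma(0)^\perp$ from \cite{AEKS} gives you nothing here beyond density; it does not let you conclude that a given $f=\tau(u)\in\Gamma(0)^\perp$ actually lies in $\tau(V(Q))$.

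The paper's proof avoids \cite{AEKS} entirely for this step and simply invokes the classical Fredholm alternative for the Dirichlet problem (citing \cite[Theorem~8.6]{GT}): writing $\hat u=u-v$ with $v\in H^1_0(\Sigma)$, one needs to solve $\int_\Sigma(\langle\nabla v,\nabla\phi\rangle-pv\phi)=\int_\Sigma(\langle\nabla u,\nabla\phi\rangle-pu\phi)$ for all $\phi\in H^1_0$, which is solvable iff the right-hand side annihilates the kernel $\mathcal W_0$. For $w\in\mathcal W_0$ (smooth up to the boundary), integration by parts gives $\int_\Sigma(\langle\nabla u,\nabla w\rangle-puw)=\int_{\pr\Sigma}u\,\pr_\eta w$, so the solvability condition is exactly $\tau(u)\perp\Gamma(0)$. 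You actually sketch this argument in your ``more precisely'' aside, then explicitly discard it in favor of the flawed closedness route; you should instead make it the proof.
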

\begin{proof}
We already have $\overline{\tau(V(Q))} = \Gamma(0)^\perp$, so the $\supset$ direction of the statement is clear. Now suppose $u\in \tau(V)\cap \Gamma(0)^\perp$. 
%Then we are seeking a (weak) solution $\hat{u} = u- v$, where
%\[\begin{cases} (\Lap+p)v=(\Lap+p)u , & \text{ on }\Sigma \\ v = 0 , &\text{ on } \pr\Sigma.\end{cases}\]
%By the usual Fredholm alternative for the Dirichlet problem, this has a solution if and only if $(\Lap+p)u\in W^\perp$. 
By the usual Fredholm alternative (see \cite[Theorem 8.6]{GT}), the desired weak solution exists if and only if 

\[\int_\Sigma \left(\langle \nabla u, \nabla w\rangle - puw\right) = 0\]

for all $w$ in the Dirichlet kernel $\mathcal{W}_0$ (which may be interpreted as $(\Lap+p)u \perp \mathcal{W}_0$, in the weak sense).
As in (\ref{eq:dir-Q-action}), the regularity of Dirichlet eigenfunctions gives \[Q(w,u) = \int_\Sigma \left(\langle \nabla u, \nabla w\rangle - puw\right)  = \int_{\pr\Sigma} u \pr_\eta w.\]

Thus the solution $\hat{u}\in H^1(\Sigma)$ exists if and only if $u|_{\pr\Sigma} \in \Gamma(0)^\perp$, which completes the proof.  
%\textcolor{blue}{(I think it's important that we used regularity for the Dirichlet problem to integrate by parts - i.e. for (1.4). Otherwise we might prove something too strong? (i.e. removing the closure in $\overline{T(V(Q))}$ too generally))}
\end{proof}

We will show that the index of $Q$ is given by counting nonpositive eigenvalues of the Dirichlet problem (\ref{eq:Dir-spec})) and negative eigenvalues of the modified Steklov problem (\ref{eq:steklov-system}). 

One way to proceed is a `hands-on' manner as in Tran's work \cite{Tr20}: The key is that elements $w\in \mathcal{W}_0$ can be modified by adding $cb$, where $b$ is orthogonal to the negative Dirichlet directions, $b|_{\pr\Sigma} = (\pr_\eta-q)w$ and $c\in \mathbb{R}$ is such that $Q(w+cb) = 2c\int_{\pr\Sigma} b^2 + c^2 Q(b)<0$. In this way, the Dirichlet kernel contributes to the index as hinted earlier, in addition to the strictly negative directions for both eigenvalue problems.

Instead, for expedience we will use the somewhat more abstract theory in Tran-Zhou \cite{TZ23}. In particular, we will follow their Theorem 5.2 quite closely, with some small modifications due to our use of the DtR map. 

\begin{proposition}
\label{prop:index-sum}
$\ind(Q) = a+b$, where $a$ is the number of nonpositive Dirichlet eigenvalues, and $b$ is the number of negative (modified) Steklov eigenvalues. 
\end{proposition}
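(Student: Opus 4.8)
The plan is to decompose $H^1(\Sigma)$ in a way that separates the Dirichlet influence (which lives on $H^1_0(\Sigma)$) from the Steklov influence (which lives on the space $V(Q)$ of $(\Lap+p)$-harmonic functions), and then count negative directions on each piece, being careful about how the Dirichlet kernel $\mathcal{W}_0$ is handled. I would first establish the decomposition $H^1(\Sigma) = \mathcal{W}_0 \oplus \mathcal{W}_0^{\perp_Q}$, or rather work with a splitting adapted to $Q$: let $\mathcal{N}^-_D \subset H^1_0(\Sigma)$ be a maximal subspace on which $Q$ is negative definite among the strictly negative Dirichlet eigenspaces (dimension $= \#\{\lambda^{\mathrm D}_k < 0\}$), and let $\mathcal{W}_0$ be the $0$-Dirichlet eigenspace. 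I would then pick for each $w \in \mathcal{W}_0$ the harmonic extension of its boundary Neumann data and use the Fredholm alternative (Lemma \ref{lem:fredholm}) to describe $\tau(V(Q)) = \tau(V)\cap \Gamma(0)^\perp$. On $V(Q)$, the form $Q$ acts through the modified Steklov quadratic form $Q(\hat u,\hat u)$ with variational characterisation (\ref{eq:stek-variational}), so a maximal negative subspace there has dimension $b = \#\{\lambda^{\mathrm S}_k < 0\}$.

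The key identity to nail down is the $Q$-orthogonality between these pieces. For $u \in H^1_0(\Sigma)$ and $\hat v \in V(Q)$, the integration-by-parts formula $Q(u,\hat v) = -\int_\Sigma u(\Lap+p)\hat v + \int_{\pr\Sigma} u(\pr_\eta-q)\hat v = 0$ (the interior term vanishes since $\hat v$ is $(\Lap+p)$-harmonic, the boundary term since $u|_{\pr\Sigma}=0$); this is the content that lets $Q$ split as a direct sum on $H^1_0(\Sigma) \oplus V(Q)$. I would verify that $H^1(\Sigma) = H^1_0(\Sigma) + V(Q)$ with $H^1_0(\Sigma)\cap V(Q) = \mathcal{W}_0$ (solve the Dirichlet problem weakly for the boundary data of any $u$, modulo $\mathcal{W}_0$ — this is precisely Lemma \ref{lem:fredholm} combined with the fact that $\tau(V) = \tau(H^1(\Sigma)) = H^{1/2}(\pr\Sigma)$, but restricted so we must account for $\Gamma(0)^\perp$). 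The subtlety is that the intersection $\mathcal{W}_0$ is nontrivial, so I cannot naively add indices; instead I follow Tran--Zhou \cite[Theorem 5.2]{TZ23}: the space $\mathcal{W}_0$ contributes to the index (not the nullity) because one can add to any $w\in\mathcal{W}_0$ a small multiple $cb$ of a function $b$ with $b|_{\pr\Sigma} = (\pr_\eta - q)w \in \Gamma(0)$, chosen orthogonal to the negative Dirichlet directions, making $Q(w+cb) = 2c\|b\|_{L^2(\pr\Sigma)}^2 + c^2 Q(b) < 0$ for appropriate sign of $c$. This shows $\dim\mathcal{W}_0$ extra negative directions beyond the strictly-negative Dirichlet and Steklov counts — matching exactly the ``nonpositive'' (rather than ``negative'') count $a = \#\{\lambda^{\mathrm D}_k \le 0\}$.

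Concretely, the steps in order: (1) record the splitting $H^1(\Sigma) = H^1_0(\Sigma) + V(Q)$ with intersection $\mathcal{W}_0$, using Lemma \ref{lem:fredholm} and surjectivity of $\tau$ onto $\Gamma(0)^\perp$ modulo the Dirichlet kernel; (2) prove $Q(u,\hat v) = 0$ for $u \in H^1_0(\Sigma)$, $\hat v \in V(Q)$ by integration by parts; (3) from the lower bound direction, exhibit an explicit negative-definite subspace of dimension $a+b$: take $\mathcal{N}^-_D \oplus \{\hat v : v \text{ spans negative Steklov eigenspaces}\}$ plus, for the $0$-Dirichlet part, the modified functions $w + c_w b_w$ as above (checking these remain $Q$-negative and linearly independent from the rest — here one uses that $b_w$'s boundary data lies in $\Gamma(0)$ while the Steklov extensions have boundary data in $\Gamma(0)^\perp$); (4) for the upper bound, suppose $W \subset H^1(\Sigma)$ has $\dim W > a+b$ with $Q|_W$ negative definite, and derive a contradiction by projecting: any $u \in W$ writes as $u_0 + \hat u$ with $u_0 \in H^1_0$, $\hat u \in V(Q)$ (ambiguity in $\mathcal{W}_0$), and $Q(u) = Q(u_0) + Q(\hat u)$; if $\dim W > a+b$ then either the projection to $H^1_0/\mathcal{W}_0$ hits the nonnegative Dirichlet directions or the projection of $\hat u$ to $\tau(V(Q))$ hits the nonnegative Steklov directions, in either case producing a nonzero $u\in W$ with $Q(u)\ge 0$. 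The main obstacle is step (4) — making the dimension-counting contradiction fully rigorous given the overlap $\mathcal{W}_0$ and the fact that $Q$ is only semidefinite there; the cleanest route is to invoke \cite[Theorem 5.2]{TZ23} essentially verbatim, adjusting only for the DtR normalisation (i.e.\ that our Steklov eigenfunctions live in $\tau(V(Q)) = \Gamma(0)^\perp$ rather than all of $H^{1/2}(\pr\Sigma)$), which is exactly the ``small modification'' alluded to before the statement.
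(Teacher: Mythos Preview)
Your steps (1) and (4) rest on the decomposition $H^1(\Sigma) = H^1_0(\Sigma) + V(Q)$, which is false when $\mathcal{W}_0 \neq 0$: by Lemma~\ref{lem:fredholm}, only boundary data in $\Gamma(0)^\perp$ admits a $(\Lap+p)$-harmonic extension, so any $u$ with $\tau(u) \notin \Gamma(0)^\perp$ lies outside $H^1_0(\Sigma) + V(Q)$. Your projection argument in step (4) therefore cannot be run on an arbitrary negative subspace $W$. You flag this as ``the main obstacle'' and propose to cite \cite[Theorem~5.2]{TZ23}, but that is the result whose proof is being adapted here; the abstract tool the paper uses is \cite[Theorem~1.5]{TZ23}, and the adaptation required for the DtR setting is precisely the idea you are missing.

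The paper's fix is this: rather than decomposing all of $H^1(\Sigma)$, one invokes \cite[Theorem~1.5]{TZ23} to reduce to showing $Q \geq 0$ on the $Q$-orthocomplement $\mathcal{V}$ of $\mathcal{U}^{\mathrm D} \oplus \mathcal{U}^{\mathrm S}$ (where $\mathcal{U}^{\mathrm D}$ spans the \emph{nonpositive} Dirichlet eigenfunctions and $\mathcal{U}^{\mathrm S}$ the negative Steklov ones). The key observation is that for $\phi \in \mathcal{V}$ and any $w \in \mathcal{W}_0 \subset \mathcal{U}^{\mathrm D}$, the orthogonality $0 = Q(w,\phi) = \int_{\pr\Sigma} \phi\, \pr_\eta w$ (from (\ref{eq:dir-Q-action}) with $\lambda^{\mathrm D}=0$) is exactly the Fredholm solvability condition $\phi|_{\pr\Sigma} \in \Gamma(0)^\perp$. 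So on $\mathcal{V}$ the harmonic extension $h$ exists, the splitting $\phi = u + h$ goes through, and the remaining $Q$-orthogonalities give $Q(u), Q(h) \geq 0$ via the variational characterisations. Your lower-bound perturbation $w \mapsto w + c_w b_w$ (the \cite{Tr20} approach) is correct but unnecessary: it suffices to show $Q \leq 0$ (semidefinite) on $\mathcal{U}^{\mathrm D} \oplus \mathcal{U}^{\mathrm S}$, and \cite[Theorem~1.5]{TZ23} then handles the dimension count directly.
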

\begin{proof}
Let $\varphi^{\mathrm{D}}_0,\cdots,\varphi^{\mathrm{D}}_k$ be the Dirichlet eigenfunctions with eigenvalue at most 0, and $\mathcal{U}^{\mathrm{D}} = \spa \{\varphi^{\mathrm{D}}_i\}_{0\leq i\leq k}$. Similarly, let $\varphi^{\mathrm{S}}_0,\cdots, \varphi^{\mathrm{S}}_l$ be the Steklov eigenfunctions with eigenvalue less than 0, fix $(\Lap+p)$-harmonic extensions $\hat{\varphi}_{\mathrm{S}, j}$ and set $\mathcal{U}^{\mathrm{S}} = \spa \{\hat{\varphi}_{\mathrm{S}, j}\}_{0\leq j\leq l}$. 

Suppose $u\in\mathcal{U}^{\mathrm{D}}, v\in\mathcal{U}^{\mathrm{S}}$. As $u|_{\pr\Sigma}=0$ and $v$ is $(\Lap+p)$-harmonic, we certainly have $Q(u,v)=0$. Also, $Q$ is clearly negative semidefinite on $\mathcal{U}^{\mathrm{D}}$ and negative definite on $\mathcal{U}^{\mathrm{S}}$, so \[Q(u+v) = Q(u) + 2Q(u,v) + Q(v) \leq -\delta \|v\|_{L^2(\Sigma)}^2\] for some $\delta>0$. If $u+v=0$, then $Q(u+v)=0$ and the inequality above implies that $v=0$ and hence $u=0$. Thus $\mathcal{U}^{\mathrm{D}},\mathcal{U}^{\mathrm{S}}$ are linearly independent. The same inequality then also implies that $Q$ is negative semidefinite on $\mathcal{U}^{\mathrm{D}}\oplus \mathcal{U}^{\mathrm{S}}$. 

By \cite[Theorem 1.5]{TZ23}, it suffices to show that $Q$ is positive semidefinite when restricted to the space \[\mathcal{V} = \{ \phi \in H^1(\Sigma) | Q(\phi,f)=0 \text{ for all } f\in \mathcal{U}^{\mathrm{D}}\oplus \mathcal{U}^{\mathrm{S}}\}.\]%\textcolor{blue}{As $Q$ is continuous on $H^1(\Sigma)$, actually it suffices to prove that $Q(\phi)\geq 0$ for any $\textit{smooth}$ $\phi\in \mathcal{W}$. (may not be needed)}

Let $\phi\in\mathcal{V}$. This assumption gives certain orthogonality relations: For the Dirichlet eigenfunctions $\varphi^{\mathrm{D}}_i$, by (\ref{eq:dir-Q-action}) we have 

\begin{equation}
\label{eq:dir-Q-action-orthog}
 Q(\varphi^{\mathrm{D}}_i,\phi)= \lambda^{\mathrm{D}}_i\int_\Sigma \phi\varphi^{\mathrm{D}}_i + \int_{\pr\Sigma} \phi\pr_\eta \varphi^{\mathrm{D}}_i=0.
  \end{equation}

For the (extended) Steklov eigenfunctions $\hat{\varphi}_{\mathrm{S}, j}$, by (\ref{eq:stek-Q-action}) we have 
\begin{equation}
\label{eq:stek-Q-action-orthog} 
Q(\hat{\varphi}_{\mathrm{S}, j} , \phi) =  \lambda^{\mathrm{S}}_j \int_{\pr\Sigma}  \phi \varphi^{\mathrm{S}}_j=0.
\end{equation}

From (\ref{eq:dir-Q-action-orthog}), we see that $\psi|_{\pr\Sigma}$ is orthogonal (in $L^2(\pr\Sigma)$) to $\pr_\eta w$, for any $w$ in the Dirichlet kernel $\mathcal{W}_0$. That is, $\phi|_{\pr\Sigma} \in \Gamma(0)^\perp$, so by the Fredholm alternative (Lemma \ref{lem:fredholm}), there exists $h \in H^1(\Sigma)$ that is a (weak) solution of $(\Lap+p)h=0$ on $\Sigma$, $h|_{\pr\Sigma} = \phi|_{\pr\Sigma}$. 

Let $u= \phi-h$ so that 
\[Q(\phi) = Q(u) + 2Q(u,h) + Q(h).\]

We will analyse each term in succession. First, noting that $u|_{\pr\Sigma}=0$, we find using (\ref{eq:dir-Q-action-orthog}) that
$Q(\varphi^{\mathrm{D}}_i, u) = \lambda^{\mathrm{D}}_i\int_\Sigma \phi\varphi^{\mathrm{D}}_i =0.$ In particular, $u$ is $L^2(\Sigma)$-orthogonal to each of the negative Dirichlet eigenfunctions, so by the variational characterisation (\ref{eq:dir-variational}) it follows that $Q(u)\geq 0$. 

As $u|_{\pr\Sigma}=0$ and $h$ is $(\Lap+p)$-harmonic, we certainly have $Q(u,h)=0$. 

For the last term, by (\ref{eq:stek-Q-action-orthog}) we see that $h|_{\pr\Sigma} = \phi|_{\pr\Sigma}$ is $L^2(\pr\Sigma)$-orthogonal to each of the negative Steklov eigenfunctions. As $h\in V(Q)$, by the variational characterisation (\ref{eq:stek-variational}), it follows that $Q(h)\geq 0$. 

Thus we have shown that $Q(\phi)\geq 0$ for any $\phi \in \mathcal{V}$, which completes the proof. 
\end{proof}

We now turn to the variational properties of the Dirichlet and Steklov eigenvalues. There are analogous statements for higher eigenvalues, but we will only need them for the lowest eigenvalues.

\begin{lemma}
\label{lem:comparison}
%\textcolor{magenta}{probably here we can approximate by $C^\infty$?}
Consider a quadratic form $Q$ as above, with associated Dirichet and Steklov eigenvalues $\{\lambda^{\mathrm{D}}_i\}$, $\{\lambda^{\mathrm{S}}_j\}$ respectively. 
\begin{enumerate}
\item For any $u\in H^1_0(\Sigma)$, we have $\int_\Sigma (|\nabla u|^2-pu^2) \geq \lambda^{\mathrm{D}}_0\int_\Sigma u^2$. 
\item If $\lambda^{\mathrm{D}}_0>0$, then for any $u \in H^1(\Sigma)$ we have $ \int_\Sigma (|\nabla u|^2 -pu^2)\geq \int _{\pr\Sigma} (q+ \lambda^{\mathrm{S}}_0) u^2$. 
\item If $\lambda^{\mathrm{D}}_0>0$, then for any $u \in H^1(\Sigma)$ such that $u|_{\pr\Sigma}$ is $L^2(\pr\Sigma)$-orthogonal to the first (modified) Steklov eigenfunction $\varphi^{\mathrm{S}}_0$, we have $ \int_\Sigma (|\nabla u|^2 -pu^2)\geq \int _{\pr\Sigma} (q+ \lambda^{\mathrm{S}}_1 ) u^2$. 
\item If $\lambda^{\mathrm{D}}_0=0$, then for any $u \in H^1(\Sigma)$ such that $u|_{\pr\Sigma}$ is $L^2(\pr\Sigma)$-orthogonal to $\pr_\eta \varphi^{\mathrm{D}}_0$, we have $ \int_\Sigma (|\nabla u|^2 -pu^2)\geq \int _{\pr\Sigma} (q+ \lambda^{\mathrm{S}}_0 ) u^2$. 
\end{enumerate}
\end{lemma}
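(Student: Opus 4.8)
The plan is to run the natural decomposition argument behind parts (2)--(3), with the orthogonality hypothesis on $u|_{\pr\Sigma}$ supplying exactly what is needed to invoke the Fredholm alternative in the degenerate case $\lambda^{\mathrm{D}}_0 = 0$. First I would pin down the Dirichlet kernel: assuming $\Sigma$ connected, the lowest Dirichlet eigenvalue is simple, so $\lambda^{\mathrm{D}}_0 = 0$ forces $\mathcal{W}_0 = \spa\{\varphi^{\mathrm{D}}_0\}$ and hence $\Gamma(0) = \{\pr_\eta w : w\in\mathcal{W}_0\} = \spa\{\pr_\eta\varphi^{\mathrm{D}}_0\}$ (with $\pr_\eta\varphi^{\mathrm{D}}_0\not\equiv 0$ by the Hopf lemma, since $\varphi^{\mathrm{D}}_0>0$ in the interior). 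Thus the hypothesis $u|_{\pr\Sigma}\perp_{L^2(\pr\Sigma)}\pr_\eta\varphi^{\mathrm{D}}_0$ is precisely the statement $u|_{\pr\Sigma}\in\Gamma(0)^\perp$; since also $u|_{\pr\Sigma}=\tau(u)\in\tau(V)$, Lemma \ref{lem:fredholm} produces a weak solution $h\in H^1(\Sigma)$ of $(\Lap+p)h=0$ with $h|_{\pr\Sigma}=u|_{\pr\Sigma}$, unique modulo $\mathcal{W}_0$.

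Next I would set $v:=u-h\in H^1_0(\Sigma)$ and expand $Q(u)=Q(h)+2Q(h,v)+Q(v)$. The cross term vanishes because $h\in V(Q)$ and $v$ has vanishing boundary trace, so $Q(h,v)=0$. The interior term is nonnegative by part (1): $Q(v)=\int_\Sigma(|\nabla v|^2-pv^2)\geq\lambda^{\mathrm{D}}_0\int_\Sigma v^2=0$. For the boundary term I would apply the variational characterisation (\ref{eq:stek-variational}) of $\lambda^{\mathrm{S}}_0$ to the extension $\hat{u}=h$ of $u|_{\pr\Sigma}$, giving $Q(h)\geq\lambda^{\mathrm{S}}_0\|h|_{\pr\Sigma}\|_{L^2(\pr\Sigma)}^2=\lambda^{\mathrm{S}}_0\int_{\pr\Sigma}u^2$; this is unambiguous because $Q(h)$ is independent of the representative of $h$ in its $\mathcal{W}_0$-coset, as $Q(h,w)=0$ for $w\in\mathcal{W}_0$ (since $h\in V(Q)$, $w\in H^1_0(\Sigma)$) and $Q(w)=\int_\Sigma(|\nabla w|^2-pw^2)=\lambda^{\mathrm{D}}_0\int_\Sigma w^2=0$. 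Summing, $Q(u)\geq\lambda^{\mathrm{S}}_0\int_{\pr\Sigma}u^2$, which on unwinding $Q(u)=\int_\Sigma(|\nabla u|^2-pu^2)-\int_{\pr\Sigma}qu^2$ is exactly $\int_\Sigma(|\nabla u|^2-pu^2)\geq\int_{\pr\Sigma}(q+\lambda^{\mathrm{S}}_0)u^2$.

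The only genuinely delicate point -- and the place where the hypothesis is consumed -- is the identification $\Gamma(0)=\spa\{\pr_\eta\varphi^{\mathrm{D}}_0\}$ together with the application of Lemma \ref{lem:fredholm}: it is precisely the degeneracy $\lambda^{\mathrm{D}}_0=0$ that makes $\mathcal{W}_0$ nontrivial, so that (unlike in parts (2)--(3), where $\lambda^{\mathrm{D}}_0>0$ gives $\mathcal{W}_0=\{0\}$ and no boundary condition is needed) the harmonic extension $h$ of $u|_{\pr\Sigma}$ exists only when $u|_{\pr\Sigma}\perp\Gamma(0)$. Beyond checking this, and the well-definedness of $Q(h)$ just noted, the argument is a routine reprise of the nondegenerate case.
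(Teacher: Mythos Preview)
Your proposal is correct and follows essentially the same approach as the paper: obtain the $(\Lap+p)$-harmonic extension via Lemma~\ref{lem:fredholm}, decompose $u$ into the extension plus an $H^1_0$ remainder, kill the cross term, bound the remainder by the Dirichlet characterisation, and bound the extension by the Steklov characterisation. The paper packages the decomposition step into a separate Lemma~\ref{lem:ext-min} rather than doing it inline, and is terser about why the orthogonality hypothesis in (4) is exactly $u|_{\pr\Sigma}\in\Gamma(0)^\perp$; your explicit identification of $\Gamma(0)$ and the remark on well-definedness of $Q(h)$ modulo $\mathcal{W}_0$ are helpful details the paper leaves implicit.
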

\begin{proof}
Item (1) follows directly from the variational characterisation of Dirichlet eigenvalues. 

For items (2-4), in each case (by Lemma \ref{lem:fredholm}) there is a $(\Lap+p)$-harmonic extension $\hat{u}\in V(Q)$ with $\tau(\hat{u})=\tau(u)$. In each case, the corresponding variational characterisation gives $\int_{\pr\Sigma} (q+\lambda^{\mathrm{S}}_i )u^2 \leq \int_\Sigma (|\nabla \hat{u}|^2 - p\hat{u}^2)$, where $i=0$ for items (2, 4) and $i=1$ for item (3). Each item then follows by Lemma \ref{lem:ext-min} below.
\end{proof}

\begin{lemma}
\label{lem:ext-min}
Suppose $\lambda^{\mathrm{D}}_0\geq 0$ and consider $u \in H^1(\Sigma)$. Suppose $\hat{u}\in V(Q)$ has the same boundary data $\tau(\hat{u})=\tau(u)$. Then \[\int_\Sigma (|\nabla u|^2 -pu^2) \geq \int_\Sigma (|\nabla \hat{u}|^2 -p\hat{u}^2).\]

If $\lambda^{\mathrm{D}}_0>0$, then equality holds only if $u \in V(Q)$. 
\end{lemma}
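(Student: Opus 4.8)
The plan is to write $u = \hat u + w$, where $w = u - \hat u \in H^1_0(\Sigma)$ since $\tau(w) = \tau(u) - \tau(\hat u) = 0$. Expanding the quadratic form along this decomposition, I would compute
\[
\int_\Sigma(|\nabla u|^2 - p u^2) = \int_\Sigma(|\nabla \hat u|^2 - p\hat u^2) + 2\int_\Sigma(\langle \nabla \hat u, \nabla w\rangle - p\hat u w) + \int_\Sigma(|\nabla w|^2 - pw^2).
\]
The first step is to kill the cross term: since $\hat u \in V(Q)$, i.e. $\hat u$ is $(\Lap+p)$-harmonic in the weak sense, and $w \in H^1_0(\Sigma)$ is a valid test function, the definition of weak solution (the condition $Q(\hat u, \phi) = 0$ for all $\phi \in H^1_0(\Sigma)$) gives exactly $\int_\Sigma(\langle \nabla \hat u, \nabla w\rangle - p\hat u w) = Q(\hat u, w) = 0$. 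The second step is to bound the remaining term $\int_\Sigma(|\nabla w|^2 - pw^2)$ from below: since $w \in H^1_0(\Sigma)$, item (1) of Lemma \ref{lem:comparison} (equivalently, the variational characterisation \eqref{eq:dir-variational} of the first Dirichlet eigenvalue) gives $\int_\Sigma(|\nabla w|^2 - pw^2) \geq \lambda^{\mathrm{D}}_0 \int_\Sigma w^2 \geq 0$ when $\lambda^{\mathrm{D}}_0 \geq 0$. Combining these yields the claimed inequality.

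For the equality case, suppose $\lambda^{\mathrm{D}}_0 > 0$ and equality holds. Then from the identity above (with the cross term already zero) we must have $\int_\Sigma(|\nabla w|^2 - pw^2) = 0$, and since $\lambda^{\mathrm{D}}_0 > 0$ the chain $0 = \int_\Sigma(|\nabla w|^2 - pw^2) \geq \lambda^{\mathrm{D}}_0\int_\Sigma w^2 \geq 0$ forces $\int_\Sigma w^2 = 0$, hence $w = 0$ and $u = \hat u \in V(Q)$.

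I do not anticipate a genuine obstacle here: the argument is a textbook Dirichlet-principle computation, and both ingredients (the weak-solution orthogonality and the Dirichlet variational bound) are already available in the excerpt. The only point requiring a little care is making sure $w = u - \hat u$ genuinely lies in $H^1_0(\Sigma) = \ker\tau$ — this is immediate from $\tau$ being linear and $\tau(\hat u) = \tau(u)$ by hypothesis — and that $w$ is an admissible test function in the weak formulation defining $V(Q)$, which it is by definition of $H^1_0(\Sigma)$.
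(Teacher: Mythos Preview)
Your proof is correct and essentially identical to the paper's: both decompose $u = \hat u + w$ with $w \in H^1_0(\Sigma)$, kill the cross term via $\hat u \in V(Q)$, and bound $\int_\Sigma(|\nabla w|^2 - pw^2) \geq \lambda^{\mathrm{D}}_0 \|w\|_{L^2}^2$ by the Dirichlet variational characterisation. The only cosmetic difference is that you invoke the weak formulation of $V(Q)$ directly for the cross term, whereas the paper writes out the integration by parts; your phrasing is arguably cleaner since it avoids any implicit appeal to boundary regularity of $\hat u$.
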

\begin{proof}
Consider $v:= u-\hat{u}$. Then $Q(u) = Q(\hat{u}) + Q(v) + 2Q(\hat{u},v)$. As $v\in H^1_0(\Sigma)$, we have
 \[Q(\hat{u},v) = \int_\Sigma (\langle \nabla \hat{u},\nabla v\rangle -p\hat{u}v) = -\int_\Sigma v (\Lap+p) \hat{u} + \int_{\pr\Sigma} v(\pr_\eta - q)\hat{u} = 0.\] 

By the min-max characterisation of Dirichlet eigenvalues we have $Q(v) \geq \lambda^{\mathrm{D}}_0 \|v\|_{L^2(\Sigma)}^2 $. Thus 
\[0 \leq \lambda^{\mathrm{D}}_0 \|v\|_{L^2(\Sigma)}^2 \leq Q(u)-Q(\hat{u}) = \int_\Sigma (|\nabla u|^2 -pu^2) - \int_\Sigma (|\nabla \hat{u}|^2 -p\hat{u}^2).\]
\end{proof}

Finally, we record a standard argument for simplicity of the least Steklov eigenvalue:

\begin{lemma}
\label{lem:steklov-bottom}
Suppose that $\Sigma$ is connected, and that $\lambda^{\mathrm{D}}_0>0$. Then the first (modified) Steklov eigenvalue $\lambda^{\mathrm{S}}_0$ is simple, and the corresponding eigenfunction $\varphi^{\mathrm{S}}_0$ has a unique extension $\hat{\varphi}_{\mathrm{S}, 0} \in V(Q)$ that is smooth and positive on the interior of $\Sigma$. 
\end{lemma}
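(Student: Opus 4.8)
The plan is the standard ``bottom eigenfunction'' argument, combining the strict minimality of $(\Lap+p)$-harmonic extensions (Lemma \ref{lem:ext-min}) with the strong maximum principle. First I would record the consequences of $\lambda^{\mathrm{D}}_0>0$: the Dirichlet kernel $\mathcal{W}_0$ (the $0$-Dirichlet eigenspace) is trivial, so $\Gamma(0)=\{0\}$, $\Gamma(0)^\perp = L^2(\pr\Sigma)$, and by Lemma \ref{lem:fredholm} every $u\in H^1(\Sigma)$ has a \emph{unique} $(\Lap+p)$-harmonic extension $\hat{u}\in V(Q)$ with $\tau\hat u = \tau u$ (this already gives uniqueness of the extension in the statement). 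Moreover, combining this with Lemma \ref{lem:ext-min} the characterisation (\ref{eq:stek-variational}) may be rewritten as $\lambda^{\mathrm{S}}_0 = \inf\{ Q(u)/\|\tau u\|_{L^2(\pr\Sigma)}^2 : u\in H^1(\Sigma),\ \tau u\neq 0\}$, with the infimum attained exactly on the harmonic extensions of $\lambda^{\mathrm{S}}_0$-eigenfunctions.

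The key step is to show that $|\hat\varphi_{\mathrm{S},0}|$ is again a Steklov eigenfunction. Write $\varphi=\varphi^{\mathrm{S}}_0$ with harmonic extension $\hat\varphi\in V(Q)$. Since $|\nabla|\hat\varphi||=|\nabla\hat\varphi|$ a.e., $|\hat\varphi|^2=\hat\varphi^2$ and $\tau|\hat\varphi|=|\tau\hat\varphi|$, we get $Q(|\hat\varphi|)=Q(\hat\varphi)=\lambda^{\mathrm{S}}_0\|\tau|\hat\varphi|\|_{L^2(\pr\Sigma)}^2$. Let $w\in V(Q)$ be the harmonic extension of $\tau|\hat\varphi|$. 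By Lemma \ref{lem:ext-min}, $Q(w)\leq Q(|\hat\varphi|)$, while the variational characterisation above gives $Q(w)\geq \lambda^{\mathrm{S}}_0\|\tau w\|_{L^2(\pr\Sigma)}^2 = Q(|\hat\varphi|)$; hence equality holds in Lemma \ref{lem:ext-min}, and since $\lambda^{\mathrm{D}}_0>0$ this forces $|\hat\varphi|=w\in V(Q)$. Thus $|\hat\varphi|$ is $(\Lap+p)$-harmonic and attains the infimum $\lambda^{\mathrm{S}}_0$, so by the usual first-variation argument its trace $|\varphi|$ is a Steklov eigenfunction with eigenvalue $\lambda^{\mathrm{S}}_0$ and harmonic extension $|\hat\varphi|$.

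Next I would apply the strong maximum principle to obtain positivity. By interior elliptic regularity $|\hat\varphi|$ is smooth and classically satisfies $(\Lap+p)|\hat\varphi|=0$ on the interior. Setting $u=|\hat\varphi|\geq 0$ we have $\Lap u = -pu \leq |p|\,u$, i.e.\ $(\Lap - |p|)u \leq 0$; as the zeroth-order coefficient $-|p|$ is nonpositive and $u$ is a nonnegative supersolution on the connected interior of $\Sigma$, the strong minimum principle gives $u\equiv 0$ or $u>0$ throughout the interior. Since $u$ is a nonzero eigenfunction, $|\hat\varphi|>0$ on the interior; in particular $\hat\varphi$ never vanishes there, so after replacing $\varphi^{\mathrm{S}}_0$ by $\pm\varphi^{\mathrm{S}}_0$ we may take the extension $\hat\varphi_{\mathrm{S},0}>0$ on the interior, which is also smooth there as an element of $V(Q)$.

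Finally, for simplicity, suppose $\varphi,\psi$ are $\lambda^{\mathrm{S}}_0$-eigenfunctions; by the previous paragraph their harmonic extensions $\hat\varphi,\hat\psi$ may each be taken positive on the interior. Fix an interior point $x_0$ and set $c = \hat\varphi(x_0)/\hat\psi(x_0)>0$. Then $\hat\varphi - c\hat\psi\in V(Q)$ vanishes at $x_0$; if it were nonzero its trace would be nonzero (as $\mathcal{W}_0=\{0\}$), hence a $\lambda^{\mathrm{S}}_0$-eigenfunction, hence sign-definite on the interior by the previous paragraph, contradicting the vanishing at $x_0$. Therefore $\hat\varphi = c\hat\psi$, so $\varphi = c\psi$ and $\lambda^{\mathrm{S}}_0$ is simple. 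The main obstacle is the key step above --- showing $|\hat\varphi|$ is still a \emph{harmonic} eigenfunction --- which hinges on the strict case of Lemma \ref{lem:ext-min}; a secondary point needing care is the strong maximum principle when $p$ has no definite sign, dealt with by passing to the operator $\Lap - |p|$.
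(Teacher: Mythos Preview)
Your proof is correct and follows essentially the same approach as the paper: show that $|\hat\varphi_{\mathrm{S},0}|$ is again $(\Lap+p)$-harmonic via the equality case of the extension-minimising lemma, then apply the strong maximum principle and a standard linear-combination argument for simplicity. The only cosmetic differences are that the paper invokes Lemma~\ref{lem:comparison}(2) (which packages Lemma~\ref{lem:ext-min}) rather than Lemma~\ref{lem:ext-min} directly, and that the paper matches integrals $\int_\Sigma(\hat v'_0-\alpha\hat\varphi_{\mathrm{S},0})=0$ for simplicity whereas you match point values at an interior point.
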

\begin{proof}
Let $\varphi^{\mathrm{S}}_0\in \tau(V(Q))$ be an eigenfunction associated to $\lambda^{\mathrm{S}}_0$. As $\mathcal{W}_0=0$ there is a unique extension $\hat{\varphi}_{\mathrm{S}, 0} \in V(Q)$ with $\tau(\hat{\varphi}_{\mathrm{S}, 0})=\varphi^{\mathrm{S}}_0$. 

Consider the function $|\hat{\varphi}_{\mathrm{S}, 0}|$. We have $\int_\Sigma |\nabla |\hat{\varphi}_{\mathrm{S}, 0}||^2 = \int_\Sigma |\nabla \hat{\varphi}_{\mathrm{S}, 0}|^2$, and $|\hat{\varphi}_{\mathrm{S}, 0}|^2 = (\hat{\varphi}^{\mathrm{S}}_0)^2$. Therefore $|\hat{\varphi}_{\mathrm{S}, 0}| \in H^1(\Sigma)$, and $\frac{Q(|\hat{\varphi}_{\mathrm{S}, 0}|,|\hat{\varphi}_{\mathrm{S}, 0}|)}{\| |\hat{\varphi}_{\mathrm{S}, 0}| \|_{L^2(\pr\Sigma)}^2}= \frac{Q(\hat{\varphi}_{\mathrm{S}, 0},\hat{\varphi}_{\mathrm{S}, 0})}{\| \hat{\varphi}_{\mathrm{S}, 0} \|_{L^2(\pr\Sigma)}^2} = \lambda^{\mathrm{S}}_0$. It then follows from Lemma \ref{lem:comparison}(2) that $|\hat{\varphi}_{\mathrm{S}, 0}|$ must also be $(\Lap+p)$-harmonic. But by elliptic regularity, both $\hat{\varphi}_{\mathrm{S}, 0}$ and $|\hat{\varphi}_{\mathrm{S}, 0}|$ must be smooth on the interior. Therefore $\hat{\varphi}_{\mathrm{S}, 0}$ cannot change sign on the interior, so by the strong maximum principle must not vanish on the interior. 

Now suppose that $v'_0\in \tau(V(Q))$ is another eigenfunction associated to $\lambda^{\mathrm{S}}_0$, with unique extension $\hat{v}'_0 \in V(Q)$. Without loss of generality we may assume $\varphi^{\mathrm{S}}_0, v'_0>0$ on the interior of $\Sigma$. Then there exists $\alpha\in\mathbb{R}$ so that $\int_\Sigma (\hat{v}'_0-\alpha \hat{\varphi}_{\mathrm{S}, 0})=0$. But $\hat{v}'_0-\alpha \hat{\varphi}_{\mathrm{S}, 0}$ is also a $\lambda^{\mathrm{S}}_0$-eigenfunction, so cannot change sign on $\Sigma$. This implies $\hat{v}'_0 \equiv \alpha\hat{\varphi}_{\mathrm{S}, 0}$, so in particular $v'_0 = \alpha \varphi^{\mathrm{S}}_0$ and the first eigenfunction is simple. 
\end{proof}

\section{Spectral and Morse index of minimal submanifolds}
\label{sec:index-apps}

In this section, we discuss certain index problems for minimal submanifolds in the sphere, which admit natural eigenfunctions. Our main application will be to the Morse index of capillary surfaces in spherical caps. Recall that for a minimal hypersurface $(\Sigma, \pr \Sigma) \looparrowright (M,S)$ that contacts $S$ with angle $\gamma$ along $\pr \Sigma$, the second variation of the modified area functional is $Q^{\mathrm{A}} = Q_{p_{\mathrm{A}}, q_{\mathrm{A}}}$, where \[p_{\mathrm{A}} = |A|^2 + \Ric(\nu,\nu),\qquad q_{\mathrm{A}}=\frac{1}{\sin \gamma} k_S(\bar{\nu},\bar{\nu}) - \cot\gamma \, A(\eta,\eta).\] The usual Morse index of $\Sigma$ is precisely $\ind(\Sigma) = \ind(Q^{\mathrm{A}})$. 

Under certain conditions, we will prove an Urbano-type Theorem \ref{thm:urbano} partially characterising capillary minimal surfaces $(\Sigma, \pr\Sigma) \looparrowright (\mathbb{S}^3, \pr B_\pit)$ with $\ind(\Sigma)\leq 4$. 

First, as a warm-up, we apply the general setup in Section \ref{sec:quadratic} to the spectral index of a free boundary minimal submanifolds $(\Sigma^k,\pr\Sigma) \looparrowright (\mathbb{S}^n, \pr B_R)$. The spectral index was introduced in Karpukhin-M\'{e}tras \cite{KaM22} for $k$-harmonic maps (to $\mathbb{S}^n$ or $\mathbb{B}^n$) and considered by Medvedev \cite{Me23} for free boundary minimal submanifolds in $B^n_R \subset \mathbb{S}^n$. The spectral index may be used to give a condition which is closely related to the notion of being `immersed by first eigenfunctions' (see Proposition \ref{prop:energy-ind-1} and Remark \ref{rmk:closed-index}). Again, this was essentially pointed out in \cite{Me23}; the only difference here is that we discuss the possibility of \textit{unconstrained} free boundary minimal submanifolds. 

We also remark that lower bounds for the \textit{Morse} index of free boundary minimal submanifolds in a spherical cap were proven by \cite{LM23} and \cite{Spe25}; this topic will not be further investigated in this article. 

\subsection{Free boundary minimal submanifolds in a cap and the spectral index}
\label{sec:FBMS-spectral}

Consider a free boundary minimal submanifold $(\Sigma^k,\pr\Sigma) \looparrowright (\mathbb{M}^n, \pr B_R)$, where $\mathbb{M}^n$ is a space form of constant curvature $\kappa$ and $R>0$. In this setting, one may define a \textit{spectral index} as the index of the quadratic form $Q^{\mathrm{S}} = Q_{p_{\mathrm{S}}, q_{\mathrm{S}}}$, where $p_{\mathrm{S}} = (n-1)\kappa$ and \[q_{\mathrm{S}} = \ct_\kappa(R) = \begin{cases} \sqrt{\kappa} \cot(R\sqrt{\kappa}) &, \kappa >0 \\ \frac{1}{R}, &\kappa =0\\ \sqrt{|\kappa|}\coth(r\sqrt{|\kappa|}) &,\kappa <0.\end{cases}\] 

Here we will only be interested in the case $\mathbb{M} = \mathbb{S}^n$, so that $\kappa=1$ and explicitly 
\[p_{\mathrm{S}} = n-1 ,\qquad q_{\mathrm{S}} = \cot R.\]

Note that in this setting our $q_{\mathrm{S}}= q_{\mathrm{A}}$ and $p_{\mathrm{S}} = p_{\mathrm{A}} - |A|^2$. The quadratic form $Q^{\mathrm{S}}$ is essentially the Dirichlet form, but shifted by multiples of $\|\cdot\|_{L^2(\Sigma)}^2$ and $\|\cdot\|_{L^2(\pr\Sigma)}^2$. 

Standard computations give the following key equations for the coordinate functions on $\Sigma \looparrowright \mathbb{S}^n \hookrightarrow \mathbb{R}^{n+1}$ (cf. \cite[Lemma 2.9]{NZ25a} for the boundary calculations):

\begin{lemma}[\cite{NZ25a}]
\label{lem:x-comp}
Consider the functions $x_i = \langle x, e_i\rangle|_\Sigma$. 
\begin{itemize}
\item For any $i$ we have $(\Lap+n-1)x_i=0$ on $\Sigma$.
\item For $i>0$ we have $(\pr_\eta -\cot R)x_i=0$ on $\pr\Sigma$.
\item If $R\neq\pit$ we have $(\pr_\eta +\tan R)x_0=0$ on $\pr\Sigma$.
\item If $R=\pit$ we have $x_0=0$ on $\pr\Sigma$.
\end{itemize}
\end{lemma}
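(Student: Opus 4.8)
The plan is to combine two standard facts: the coordinate functions $x_i = \langle x, e_i \rangle$ on a minimal submanifold of $\mathbb{S}^n$ are eigenfunctions of the Laplacian, and the free boundary condition transfers into a Robin-type condition for the tangential coordinates. First I would establish the interior equation. Since $\Sigma^k \looparrowright \mathbb{S}^n \hookrightarrow \mathbb{R}^{n+1}$ is minimal in $\mathbb{S}^n$, the position vector $x$ (as an $\mathbb{R}^{n+1}$-valued map) satisfies $\Lap_\Sigma x = \mathbf{H}_\Sigma - k x = -k x$, where $\Lap_\Sigma$ is the Laplacian on $\Sigma$ and we use that the mean curvature vector of $\mathbb{S}^n$ in $\mathbb{R}^{n+1}$ is $-k x$ traced over $T\Sigma$ (more precisely, $\Lap_\Sigma x$ equals the full mean curvature vector of $\Sigma$ in $\mathbb{R}^{n+1}$, which for a minimal submanifold of $\mathbb{S}^n$ is purely the normal part coming from the sphere, namely $-kx$). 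Taking components, $\Lap_\Sigma x_i = -k x_i$, i.e.\ $(\Lap + k) x_i = 0$. Wait — the claimed eigenvalue is $n-1$, not $k$; this matches because in this lemma $\Sigma$ is a \emph{hypersurface}, $k = n-1$, so $(\Lap + n-1) x_i = 0$ as stated. (The companion reference \cite{NZ25a} presumably states it in the hypersurface normalization.)

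Next I would handle the boundary conditions. The outer conormal $\eta$ of $\pr\Sigma$ in $\Sigma$ satisfies, under the free boundary condition ($\gamma = \pit$), $\eta = \bar\eta$ along $\pr\Sigma$, where $\bar\eta$ is the outward unit normal of $\pr B_R$ in $\mathbb{S}^n$. Now $\pr B_R(e_0)$ is the geodesic sphere of radius $R$ centered at $e_0$, and its outward normal is $\bar\eta = \bar\nabla \rho$, where $\rho$ is the geodesic distance to $e_0$; equivalently, writing $\rho = \arccos x_0$, one computes $\bar\nabla x_0 = \bar\nabla(\cos\rho) = -\sin\rho\,\bar\nabla\rho$, so along $\pr B_R$ (where $\rho = R$, $x_0 = \cos R$) we get $\bar\eta = -\csc R\,\bar\nabla x_0$, i.e.\ $\bar\nabla x_0 = -\sin R\,\bar\eta$. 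More generally $\bar\nabla x_i = V_{e_i} = e_i - x_i x$. Then for $i > 0$: $\pr_\eta x_i = \langle \bar\nabla x_i, \eta\rangle = \langle e_i - x_i x, \bar\eta\rangle = \langle e_i, \bar\eta\rangle - x_i \langle x, \bar\eta\rangle$. On $\pr B_R$ one has $\bar\eta = \csc R\,(e_0 - \cos R\, x)$ (the normalized tangential projection of $e_0$ at distance $R$, up to sign — precisely $\bar\eta = -\csc R\, V_{e_0} = -\csc R(e_0 - x_0 x) = -\csc R(e_0 - \cos R\, x)$, using $x_0 = \cos R$ on $\pr B_R$; the sign must be chosen so $\bar\eta$ points outward, away from $e_0$, which for $R < \pi/2$ is indeed $+\csc R(x_0 x - e_0)$ direction — I would fix signs carefully here). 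With $\langle x, \bar\eta\rangle = 0$ (since $\bar\eta \perp x$ as $\bar\eta \in T_x\mathbb{S}^n$) and $\langle e_i, \bar\eta\rangle = \pm\csc R\langle e_i, e_0 - \cos R\, x\rangle = \mp \csc R \cos R\, x_i = \mp\cot R\, x_i$, choosing the outward orientation gives $\pr_\eta x_i = \cot R\, x_i$, i.e.\ $(\pr_\eta - \cot R)x_i = 0$. For $i = 0$ and $R \neq \pit$: the same computation with $\langle e_0, \bar\eta\rangle$ and $\langle x, \bar\eta \rangle = 0$ yields $\pr_\eta x_0 = -\sin^2 R\csc R\cdot(\text{sign}) = -\tan R\, x_0$ after using $x_0 = \cos R$, giving $(\pr_\eta + \tan R)x_0 = 0$. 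Finally, for $R = \pit$: then $\pr B_\pit = \{x_0 = 0\}$, so trivially $x_0 = 0$ on $\pr\Sigma$.

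The main obstacle is purely bookkeeping: getting the sign and normalization of $\bar\eta$ correct along $\pr B_R$ (outward vs.\ inward, and the $\csc R$ factor from normalizing $V_{e_0}$), and confirming that the free boundary condition $\eta = \bar\eta$ is being used with the conventions of Section \ref{sec:contact-angle}. There is no analytic difficulty — everything reduces to the elementary identities $\bar\nabla x_i = e_i - x_i x$, $\bar\nabla^2 x_i = -x_i \bar g$ (equation \eqref{eq:u-hess}), and the description of geodesic spheres in $\mathbb{S}^n$. I would present the interior equation in one line, then do the boundary cases $i > 0$, $i = 0$ with $R \neq \pit$, and $R = \pit$ in sequence, citing \cite[Lemma 2.9]{NZ25a} for the boundary computations as the excerpt already does.
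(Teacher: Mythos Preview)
Your argument is correct. The paper does not supply its own proof --- it cites \cite[Lemma 2.9]{NZ25a} and calls the computations ``standard'' --- so there is nothing to compare against. Your sign worries resolve cleanly once you fix the outward normal as $\bar\eta = \csc R\,(\cos R\, x - e_0)$ (verify by checking $\langle\bar\eta, e_0\rangle = -\sin R < 0$, so it points away from the centre $e_0$); then $\pr_\eta x_i = \langle e_i, \bar\eta\rangle = \cot R\, x_i$ for $i>0$ and $\pr_\eta x_0 = -\sin R = -\tan R\cdot x_0$ drop out without ambiguity. Your observation about $k$ versus $n-1$ is also apt: for a $k$-dimensional minimal $\Sigma \subset \mathbb{S}^n$ one has $\Lap_\Sigma x_i = -k x_i$, so the lemma as written (with $n-1$) is strictly correct only in the hypersurface case $k=n-1$, even though Section \ref{sec:FBMS-spectral} ostensibly treats general $k$.
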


Define the spaces of (restrictions of) coordinate functions $\mathcal{C}= \spa \{x_i\}_{i\geq 0}$ and $\mathcal{C}_0= \spa \{x_i\}_{i> 0}$. Note that $\mathcal{C}_0 \subset \ker(Q^{\mathrm{S}})$. 

\begin{lemma}
\label{lem:coord-full}
If $\Sigma$ is not totally geodesic, then $\dim \mathcal{C} \geq k+2$. 
\end{lemma}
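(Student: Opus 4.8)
The statement $\dim\mathcal{C}\geq k+2$ for a non-totally-geodesic free boundary minimal submanifold $(\Sigma^k,\pr\Sigma)\looparrowright(\mathbb{S}^n,\pr B_R)$ should follow by a dimension count on the span of the restricted coordinate functions $\{x_i\}_{i\geq 0}$. The plan is to first observe that $\dim\mathcal{C}\geq k+1$ always holds: if $\dim\mathcal{C}\leq k$, then the $x_i$ would span a subspace $P\subseteq\mathbb{R}^{n+1}$ of dimension $\leq k$, meaning the immersion $x:\Sigma\to\mathbb{S}^n$ actually takes values in the linear subspace $P$ (after adjusting the kernel directions, where $x_i\equiv 0$), hence in the totally geodesic sphere $\mathbb{S}^{k-1}=\mathbb{S}^n\cap P$ or lower. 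But a $k$-dimensional submanifold immersed in a $(k-1)$-sphere is impossible unless the dimension count is off; more carefully, $x$ lands in $\mathbb{S}^n\cap P$ which has dimension $\dim P - 1 \leq k-1 < k$, contradicting that $\Sigma$ is a $k$-dimensional immersed submanifold. So $\dim\mathcal{C}\geq k+1$, with equality forcing $\Sigma$ to lie in a totally geodesic $\mathbb{S}^k\subset\mathbb{S}^n$; in that case $\Sigma$ is an open piece of $\mathbb{S}^k$ itself (being minimal of full dimension in $\mathbb{S}^k$), hence totally geodesic in $\mathbb{S}^n$.

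The remaining point is to upgrade $k+1$ to $k+2$ when $\Sigma$ is \emph{not} totally geodesic. Here the key is the equation $(\Lap+n-1)x_i=0$ from Lemma \ref{lem:x-comp}: every coordinate function is an $(n-1)$-eigenfunction-type solution. Suppose for contradiction that $\dim\mathcal{C}=k+1$. Then, as above, $\Sigma$ lies in a totally geodesic $\mathbb{S}^k$, and after a rotation we may assume $x_i\equiv 0$ for $i>k$ and that $\{x_0,\dots,x_k\}$ are linearly independent on $\Sigma$. But then $x=(x_0,\dots,x_k):\Sigma\to\mathbb{S}^k$ is an isometric immersion of a $k$-manifold into the $k$-sphere, which must be a local isometry onto an open subset of $\mathbb{S}^k$; in particular $\Sigma$ is totally geodesic in $\mathbb{S}^k$ and hence in $\mathbb{S}^n$, contradicting the hypothesis. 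Thus $\dim\mathcal{C}\geq k+2$.

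\textbf{The main obstacle.} I expect the delicate step to be making the ``$x$ lands in a linear subspace $\Rightarrow$ $\Sigma$ is totally geodesic'' implication airtight in the immersed, with-boundary setting. One must be careful that the $x_i$ spanning a $(k+1)$-dimensional space $P$ genuinely forces the \emph{image} of the immersion into $\mathbb{S}^n\cap P$ (this is immediate, since each point $x(p)$ has coordinates only in the directions of $P$ up to the kernel, but one should phrase it via: the orthogonal complement directions $e_j$ with $x_j\equiv 0$ contribute nothing), and then that a $k$-dimensional immersed minimal submanifold of $\mathbb{S}^k$ with (free) boundary is an open subset of the equator $\mathbb{S}^{k-1}\subset\mathbb{S}^k$ — wait, this needs care: a minimal hypersurface of $\mathbb{S}^k$ that is a free boundary submanifold of a geodesic ball, if it has dimension $k$, must actually be all of $\mathbb{S}^k$, which has no boundary, so in fact the only possibility is that $\dim P = k+1$ is already impossible unless $\Sigma$ is totally geodesic of the \emph{expected} dimension. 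The cleanest route is: if $\Sigma^k\looparrowright\mathbb{S}^k$ is an immersion, it is an immersion between equidimensional manifolds, hence a local diffeomorphism, hence (being isometric) totally geodesic, i.e.\ the second fundamental form $\mathbf{A}_\Sigma$ in $\mathbb{S}^n$ vanishes because it already vanishes in $\mathbb{S}^k$ and $\mathbb{S}^k$ is totally geodesic in $\mathbb{S}^n$. I would also double-check the edge behavior: the boundary $\pr\Sigma$ lies in $\pr B_R\cap\mathbb{S}^k$, which is consistent, so no contradiction arises from the boundary alone — the contradiction is purely that $\mathbf{A}_\Sigma\equiv 0$ violates the non-totally-geodesic hypothesis.
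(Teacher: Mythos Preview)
Your approach is correct and essentially identical to the paper's: assuming $\dim\mathcal{C}\leq k+1$, the image of $\Sigma$ lies in the intersection of $\mathbb{S}^n$ with a $(k+1)$-dimensional linear subspace, i.e.\ in a totally geodesic $\mathbb{S}^k$, so $\Sigma$ is totally geodesic. Note that the equation $(\Lap+n-1)x_i=0$ you flag as ``key'' is not actually used in either argument---the proof is pure dimension counting.
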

\begin{proof}
If $\dim \mathcal{C} \leq k+1$ then there is an $(n-k)$-dimensional subspace $\mathcal{V} \subset \mathbb{R}^{n+1}$ such that $\langle x,a\rangle$ vanishes on $\Sigma$ for any $a\in \mathcal{V}$. That is, $\Sigma$ is contained in the orthocomplement $\mathcal{V}^\perp$, hence in $\mathcal{V}^\perp \cap \mathbb{S}^n$, which is a totally-geodesic $k$-dimensional sphere in $\mathbb{S}^n$. 
\end{proof}

We consider the Dirichlet and Steklov problems associated to $Q^{\mathrm{S}}$ as in Section \ref{sec:quadratic}. (Note that our Steklov eigenvalues will be shifted by $\cot R$ compared to the Steklov eigenvalues considered by \cite{LM23}.)

%\begin{lemma}
%\label{lem:FB-efns}
%For $i>0$, the $x_i$ are 0-Steklov eigenfunctions. 
%
%If $R>\pit$, then $x_0$ is a Steklov eigenfunction with eigenvalue $-\tan R-\cot R$. 
%\end{lemma}
%\textcolor{magenta}{probably need some IBP, but that might already be needed for the Tran-Zhou proof.}

If $(\Sigma^k,\pr\Sigma)$ is \textit{constrained} in $(B^n_R, \pr B_R)$, then $x_0>0$ on the interior of $\Sigma$. As in \cite{LM23}, it follows that if $R<\pit$, then $\lambda^{\mathrm{D}}_0>0$, $\lambda^{\mathrm{S}}_0 = -\tan R-\cot R$. Likewise, if $R=\pit$ then $\lambda^{\mathrm{D}}_0=0$. 

Instead of making the constrained assumption, we observe that $\ind(Q^{\mathrm{S}})\leq 1$ implies (for $R\leq \pit$) that $x_0\geq 0$. (As per Remark \ref{rmk:constrained}, this in turn allows one to use the maximum principle to recover that $x_0\geq \cos R$, that is, $\Sigma$ is constrained in $B_R$.) 

\begin{proposition}
\label{prop:energy-ind-1}
Consider a connected free boundary minimal submanifold $(\Sigma^k,\pr\Sigma) \looparrowright (\mathbb{S}^n, \pr B_R)$. 
Assume that $\Sigma$ is not totally geodesic. Then $\ind(Q^{\mathrm{S}})\geq 1$ (and $\ind_0(Q^{\mathrm{S}})\geq k+1$.)

Moreover:
\begin{itemize}
\item Let $R=\pit$. Then $\ind(Q^{\mathrm{S}})=1$ if and only if $x_0\in W$ is the zeroth Dirichlet eigenfunction and $\lambda^{\mathrm{D}}_0=\lambda^{\mathrm{S}}_0=0$. 
\item Let $R<\pit$. Then $\ind(Q^{\mathrm{S}})=1$ if and only if $x_0$ is the zeroth Steklov eigenfunction (with corresponding eigenvalue $\lambda^{\mathrm{S}}_0=-\tan R-\cot R$) and $\lambda^{\mathrm{D}}_0>0$, $\lambda^{\mathrm{S}}_1=0$. 
\end{itemize}
In either case above, we have $x_0>0$ on the interior of $\Sigma$. 
\end{proposition}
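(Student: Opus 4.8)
The plan is to compute $\ind(Q^{\mathrm{S}})$ through the splitting $\ind(Q^{\mathrm{S}})=a+b$ of Proposition \ref{prop:index-sum}, where $a$ counts nonpositive Dirichlet eigenvalues and $b$ counts negative modified Steklov eigenvalues of $Q^{\mathrm{S}}$, using the coordinate functions of Lemma \ref{lem:x-comp} as explicit eigenfunctions. The basic bookkeeping input is that for $i>0$ the function $x_i$ is $(\Delta+n-1)$-harmonic and satisfies $(\pr_\eta-\cot R)x_i=0$, so $\mathcal{C}_0=\spa\{x_i\}_{i>0}\subseteq\ker(Q^{\mathrm{S}})$; since $\Sigma$ is not totally geodesic, Lemma \ref{lem:coord-full} gives $\dim\mathcal{C}\geq k+2$, hence $\dim\mathcal{C}_0\geq k+1$. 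As $\ker(Q^{\mathrm{S}})$ is precisely the $0$-Steklov eigenspace, this already yields $\nul(Q^{\mathrm{S}})\geq k+1$ (so $\ind_0(Q^{\mathrm{S}})\geq k+1$) and shows that $0$ is always a Steklov eigenvalue. I would also record at the outset that $x_0\not\equiv 0$ on $\Sigma$: otherwise $\Sigma\subset\{x_0=0\}$, which is incompatible with $\pr\Sigma\subset\pr B_R=\{x_0=\cos R\}$ when $R<\pit$, and with the free boundary condition $\eta=\bar\eta$ (which forces $\eta=-e_0$ along $\pr\Sigma\subset\{x_0=0\}$) when $R=\pit$.

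For the lower bound $\ind(Q^{\mathrm{S}})\geq 1$ I would separate the two ranges of $R$ via Lemma \ref{lem:x-comp}. If $R=\pit$, then $x_0$ is a $0$-Dirichlet eigenfunction, so $\lambda^{\mathrm{D}}_0\leq 0$ and $a\geq 1$. If $R<\pit$, then $(\pr_\eta-\cot R)x_0=-(\tan R+\cot R)x_0$, so $x_0$ is a Steklov eigenfunction with the negative eigenvalue $-(\tan R+\cot R)$ (and $x_0|_{\pr\Sigma}\equiv\cos R>0$), giving $\lambda^{\mathrm{S}}_0<0$ and $b\geq 1$. Either way $\ind(Q^{\mathrm{S}})=a+b\geq 1$.

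The characterisation of $\ind(Q^{\mathrm{S}})=1$ then follows by combining $a+b=1$ with these constraints and with the fact that $0$ lies in the Steklov spectrum. When $R=\pit$, $\ind=1$ forces $a=1$ and $b=0$; then $\lambda^{\mathrm{D}}_1>0$ while $0$ is a Dirichlet eigenvalue realised by $x_0$, so $\lambda^{\mathrm{D}}_0=0$ is simple with eigenfunction $x_0$, and $b=0$ together with $0\in\{\lambda^{\mathrm{S}}_j\}$ forces $\lambda^{\mathrm{S}}_0=0$; conversely these conditions give $a=1$, $b=0$. When $R<\pit$, $\ind=1$ forces $a=0$ (hence $\lambda^{\mathrm{D}}_0>0$) and $b=1$; since there is exactly one negative Steklov eigenvalue and $x_0$ realises a negative one, necessarily $\lambda^{\mathrm{S}}_0=-(\tan R+\cot R)$ with eigenfunction $x_0|_{\pr\Sigma}$, and $0\in\{\lambda^{\mathrm{S}}_j\}$ with $\lambda^{\mathrm{S}}_0<0$ forces $\lambda^{\mathrm{S}}_1=0$; the converse is again immediate from $a+b=1$.

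Finally, for interior positivity of $x_0$: when $R=\pit$, $x_0$ has been identified (up to sign) with the first Dirichlet eigenfunction of the connected $\Sigma$, which is positive on the interior by the standard argument recalled after \eqref{eq:dir-variational}. When $R<\pit$, $\lambda^{\mathrm{D}}_0>0$ means there is no Dirichlet kernel, so Lemma \ref{lem:steklov-bottom} applies: the first Steklov eigenfunction has a unique $(\Delta+n-1)$-harmonic extension positive on the interior, and by uniqueness of the harmonic extension of the constant $\cos R$ this extension is exactly $x_0$. I expect the genuine difficulty to be organisational rather than a single hard estimate: keeping straight which coordinate functions are Dirichlet versus Steklov eigenfunctions in each range of $R$, confirming that $0$ really occurs in the Steklov spectrum (this is where the non-totally-geodesic hypothesis enters, through Lemma \ref{lem:coord-full}), and excluding $x_0\equiv 0$ using the barrier geometry; the positivity statement leans crucially on the fact that $\ind(Q^{\mathrm{S}})=1$ forces $\lambda^{\mathrm{D}}_0>0$ exactly in the case $R<\pit$.
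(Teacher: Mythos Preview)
Your approach is the same as the paper's---split $\ind(Q^{\mathrm{S}})$ via Proposition~\ref{prop:index-sum} and feed in the coordinate functions from Lemma~\ref{lem:x-comp}---and the bookkeeping for the equality cases matches. Two small gaps remain.

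First, for the bare lower bound $\ind(Q^{\mathrm{S}})\geq 1$, the paper does not split on $R$ but argues by a dichotomy on $\lambda^{\mathrm{D}}_0$: if $\lambda^{\mathrm{D}}_0\leq 0$ one is done, while if $\lambda^{\mathrm{D}}_0>0$ then Lemma~\ref{lem:steklov-bottom} forces $\lambda^{\mathrm{S}}_0$ to be simple, and since $\mathcal{C}_0$ sits in the $0$-Steklov eigenspace with $\dim\mathcal{C}_0\geq k+1>1$, one must have $\lambda^{\mathrm{S}}_0<0$. This works for every $R\in(0,\pi)$. Your case split leaves $R>\pit$ uncovered: there $-(\tan R+\cot R)>0$, so $x_0$ no longer furnishes a negative Steklov direction, and you need the simplicity argument instead.

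Second, for $R=\pit$ you identify $x_0$ with the first Dirichlet eigenfunction only ``up to sign'', which does not yet give $x_0>0$ on the interior. The paper pins the sign using the free boundary condition: on $\pr B_{\pit}$ the outer normal is $\bar\eta=-e_0$, so $\pr_\eta x_0=\langle\bar\eta,e_0\rangle=-1<0$ along $\pr\Sigma$, and hence $x_0>0$ just inside the boundary (and then everywhere in the interior, since a first Dirichlet eigenfunction does not change sign).
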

\begin{proof}
By Proposition \ref{prop:index-sum} we count nonnegative Dirichlet eigenvalues and negative Steklov eigenvalues.

If $\lambda^{\mathrm{D}}_0\leq 0$ then the zeroth Dirichlet eigenfunction already gives $\ind(Q^{\mathrm{S}})\geq 1$. 

If $\lambda^{\mathrm{D}}_0>0$, note that $\mathcal{C}_0$ consists of 0-Steklov eigenfunctions. As $\Sigma$ is not totally geodesic, it follows from Lemma \ref{lem:coord-full} that $ \dim \mathcal{C}_0\geq k+1>1$. By Lemma \ref{lem:steklov-bottom} the zeroth Steklov eigenvalue must be simple, which forces $\lambda^{\mathrm{S}}_0<0$ and hence $\ind(Q^{\mathrm{S}})\geq 1$. 

The `if' part is clear in both cases by simplicity of the relevant zeroth eigenvalue.

For the `only if' parts, assume that $\ind(Q^{\mathrm{S}})=1$. If $R=\pit$, then by Lemma \ref{lem:x-comp}, $x_0$ is a 0-Dirichlet eigenfunction. If $\lambda^{\mathrm{D}}_0<0$ then this would give $\ind(Q^{\mathrm{S}})\geq 2$, so we must have $\lambda^{\mathrm{D}}_0=0$. Similarly, as the $x_0$ are 0-Steklov eigenfunctions, we must have $\lambda^{\mathrm{S}}_0=0$. As the zeroth Dirichlet eigenfunction, $x_0$ cannot vanish on the interior of $\Sigma$. By the free boundary condition, we must have $x_0>0$ near the boundary, and hence on all of the interior. 

%If $R>\pit$, then the latter is a strict subdomain of $\Sigma$, so by domain monotonicity of Dirichlet eigenvalues we must have $\lambda^{\mathrm{D}}_0 <0$ on $\Sigma$, which contradicts the equality conditions above. If $R=\pit$, then $x_0$ is a 0-Dirichlet eigenfunction on $\Sigma$ itself, so it must be the first Dirichet eigenfunction and hence positive on the interior. 

Finally, suppose $R<\pit$. Then $x_0$ is a Steklov eigenfunction with eigenvalue $-\tan R-\cot R <0$. This implies $\lambda^{\mathrm{S}}_0 = -\tan R -\cot R$, $\lambda^{\mathrm{S}}_1=0$ and $\lambda^{\mathrm{D}}_0>0$, as any other configuration would force $\ind(Q^{\mathrm{S}})\geq 2$. It follows from Lemma \ref{lem:steklov-bottom} that again $x_0>0$ on $\Sigma$ (noting that $x_0|_{\pr\Sigma}=\cos R>0$). 
\end{proof}

Note that being constrained is a consequence of $\ind(Q^{\mathrm{S}})= 1$. As a result, the work of Lima-Menezes \cite{LM23} applies (see also \cite[Proof of Corollary 5.11]{Me23}).

\begin{proposition}
\label{prop:FB-first-efns}
Consider a free boundary minimal annulus $(\Sigma^2,\pr\Sigma) \looparrowright (\mathbb{S}^n, \pr B_R)$, where $R\leq \pit$. Then $\ind(Q^{\mathrm{S}})=1$ if and only if $n=3$ and $\Sigma$ is rotationally symmetric, embedded and constrained in $B_R$. 
\end{proposition}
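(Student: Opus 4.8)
\textbf{Proof proposal for Proposition \ref{prop:FB-first-efns}.}

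The plan is to prove both directions. For the `if' direction, suppose $n=3$ and $\Sigma$ is a rotationally symmetric, embedded, constrained free boundary minimal annulus in $B_R\subset\mathbb{S}^3$. First I would invoke Proposition \ref{prop:energy-ind-1}: since $\Sigma$ is not totally geodesic (it is an annulus, while a totally geodesic free boundary disc is simply connected), we already know $\ind(Q^{\mathrm{S}})\geq 1$. So the task is to show $\ind(Q^{\mathrm{S}})\leq 1$. Here I would use the rotational symmetry: write $\Sigma$ as a surface of revolution parametrized by $(\theta, s)$ with $\theta\in S^1$ the rotation angle. By Proposition \ref{prop:index-sum}, $\ind(Q^{\mathrm{S}})$ is the number of nonpositive Dirichlet eigenvalues plus the number of negative Steklov eigenvalues. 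Decomposing $L^2$ into Fourier modes $e^{ik\theta}$, the quadratic form $Q^{\mathrm{S}}$ splits, and the key observation (as in Tran \cite{Tr20}, Medvedev \cite{Me23}) is that the coordinate functions $x_0$ (the axial one) and $x_1,x_2$ (the two equatorial ones) occupy the $k=0$ and $k=1$ modes respectively; since $\Sigma$ is constrained, $x_0>0$, so on the $k=0$ mode $x_0$ is the bottom Dirichlet (if $R=\pit$) or Steklov (if $R<\pit$) eigenfunction with the eigenvalues as in Proposition \ref{prop:energy-ind-1}, contributing exactly $1$ to the index and nothing more from higher $k=0$ eigenfunctions. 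On the $k=1$ mode, $x_1, x_2$ are $0$-Steklov eigenfunctions with $0$-Dirichlet data nowhere — and one must argue these are at the bottom of the $k=1$ spectrum, so they contribute $0$ to the index. For $|k|\geq 2$, the form only becomes more positive (the $\frac{k^2}{(\text{profile})^2}$ term dominates), so no negative directions arise. Totalling gives $\ind(Q^{\mathrm{S}})\leq 1$.

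For the `only if' direction, assume $\ind(Q^{\mathrm{S}})=1$. By Proposition \ref{prop:energy-ind-1}, $\Sigma$ is constrained in $B_R$ and $x_0>0$ on the interior, with the precise eigenvalue picture stated there. Now I would appeal to the work of Lima--Menezes \cite{LM23} (and the remark that $\ind(Q^{\mathrm{S}})=1$ forces $\Sigma$ to be constrained, as flagged in the paragraph just before the statement, referencing \cite[Proof of Corollary 5.11]{Me23}). The condition $\ind(Q^{\mathrm{S}})=1$ is essentially the condition that $\Sigma$ is ``immersed by first eigenfunctions'' in the appropriate spectral sense — more precisely, that the coordinate functions realize the bottom of the relevant spectrum. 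For free boundary minimal annuli in $B_R\subset\mathbb{S}^n$ with this property, Lima--Menezes (building on the spectral/Steklov rigidity machinery, analogous to Fraser--Schoen's critical catenoid uniqueness and Medvedev's work) conclude that $n=3$, $\Sigma$ is rotationally symmetric, embedded, and constrained. I would cite this directly rather than reprove it.

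The main obstacle I anticipate is the `if' direction — specifically, verifying that on the $k=1$ Fourier mode the coordinate functions $x_1, x_2$ sit at the very bottom of the Dirichlet/Steklov spectrum (so that mode contributes $0$ and not more to the index), and that no negative directions hide in $|k|\geq 2$. This requires a careful ODE analysis of the reduced one-dimensional eigenvalue problems on the profile curve, using that $x_1, x_2$ are sign-changing in $\theta$ but have a definite sign in $s$ (they are, up to the $e^{i\theta}$ factor, first eigenfunctions of the $s$-problem). One must also handle the boundary terms correctly according to whether $R=\pit$ (Dirichlet condition on $x_0$) or $R<\pit$ (Robin-type condition), using Lemma \ref{lem:x-comp}. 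The subtlety is that this is precisely where the hypotheses ``embedded'' and ``rotationally symmetric'' are used to pin down the profile curve's geometry enough to run the comparison; in the spherical-cap setting this was carried out by Medvedev \cite{Me23}, so I would follow that argument closely, noting the (harmless) shift of Steklov eigenvalues by $\cot R$ in our normalization. A secondary technical point is ensuring the index splitting across Fourier modes is valid for the modified form $Q^{\mathrm{S}}$ with its boundary term — this follows since the rotation action is by isometries preserving both $\Sigma$ and $\pr B_R$, so it commutes with $Q^{\mathrm{S}}$ and with the Dirichlet and Steklov operators, hence preserves their eigenspaces.
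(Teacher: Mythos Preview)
Your overall strategy is sound, but it diverges from the paper's proof in both directions, and in particular misses a case distinction that the paper treats separately.

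For the `only if' direction with $R<\pit$, you and the paper agree: Proposition \ref{prop:energy-ind-1} gives $\lambda^{\mathrm{D}}_0>0$, $\lambda^{\mathrm{S}}_1=0$ and constrainedness, then \cite[Theorem C]{LM23} yields $n=3$, rotational symmetry and embeddedness. However, for $R=\pit$ the paper does \emph{not} cite Lima--Menezes. Instead it doubles $\Sigma$ across the equator to a closed minimal torus $\Sigma'\subset\mathbb{S}^n$, shows $\Sigma'$ is immersed by first eigenfunctions (by splitting any mean-zero test function into odd and even parts and applying the Dirichlet and Neumann/Robin variational characterisations on $\Sigma$ respectively), and invokes Montiel--Ros \cite[Theorem 4]{MR86} to conclude $\Sigma'$ is the Clifford torus. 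Your proposal does not distinguish $R=\pit$, and the blanket appeal to \cite{LM23} may not cover this boundary case; the doubling-plus-Montiel--Ros step is the missing idea.

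For the `if' direction, the paper takes a much shorter route than your proposed Fourier-mode analysis. When $R<\pit$, it simply invokes \cite[Theorem 4]{LM23}, which already asserts that a constrained rotationally symmetric free boundary minimal annulus in $B_R\subset\mathbb{S}^3$ is immersed by first eigenfunctions (i.e.\ $\lambda^{\mathrm{D}}_0>0$, $\lambda^{\mathrm{S}}_1=0$ in the shifted notation), and then Proposition \ref{prop:energy-ind-1} gives $\ind(Q^{\mathrm{S}})=1$. When $R=\pit$, the paper observes (via \cite[Lemmas 6.8 and 6.9]{NZ25a}) that the only such annulus is the half Clifford torus, for which the Neumann--Laplace spectrum is explicit and $\ind(Q^{\mathrm{S}})=1$ follows by inspection. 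Your mode-by-mode ODE argument would presumably work, but the paper bypasses it entirely by citing these existing results; in particular, the ``main obstacle'' you anticipate (pinning down the $k=1$ mode) is absorbed into \cite{LM23} rather than reproved.
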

\begin{proof}
We consider the eigenvalue notions associated to $Q^{\mathrm{S}}$. 

First we consider $R<\pit$. For the `if' statement, suppose that $\Sigma$ is constrained, rotationally symmetric and $n=3$. By \cite[Theorem 4]{LM23}, $\Sigma$ is immersed by first eigenfunctions in the sense that (in our shifted notation) $\lambda^{\mathrm{D}}_0>0$, $\lambda^{\mathrm{S}}_1=0$. Proposition \ref{prop:energy-ind-1} then gives $\ind(Q^{\mathrm{S}})=1$. 

For the `only if' statement, suppose that $\ind(Q^{\mathrm{S}})=1$. By Proposition \ref{prop:energy-ind-1} (and Remark \ref{rmk:constrained}), $\Sigma$ is constrained in $B_R$. Moreover, if $R<\pit$ then $\lambda^{\mathrm{D}}_0>0$ and $\lambda^{\mathrm{S}}_1 = 0$. Then Lima-Menezes' \cite[Theorem C]{LM23} applies and gives the desired conclusions. 

Now consider $R=\pit$. In this setting $q_{\mathrm{S}}=0$, so the Robin eigenvalue problem for $Q^{\mathrm{S}}$ is just the Neumann eigenvalue problem for $\Lap+n-1$. 

For the `if' statement, note that when $n=3$, a constrained, rotationally symmetric annulus $\Sigma$ must be the half Clifford torus (see \cite[Lemmas 6.8 and 6.9]{NZ25a}). For the half Clifford torus, the Neumann eigenvalues of the Laplacian are explicit, and it follows easily that $\ind(Q^{\mathrm{S}})=1$ (and $\nul(Q^{\mathrm{S}})=3$.)

For the `only if' statement, in this case $\ind(Q^{\mathrm{S}})=1$ implies $\lambda^{\mathrm{D}}_0=\lambda^{\mathrm{S}}_0=0$. Moreover, $\ind(Q^{\mathrm{S}})=1$ implies that $\lambda_{1, \mathrm{R}} =0$. Now let $\Sigma'$ be the doubled surface obtained by reflection across the equator $\pr B_\pit$. By the free boundary condition, $\Sigma'$ is a $C^2$ minimal torus (hence smooth). We claim that $\Sigma'$ is immersed by first eigenfunctions, equivalently $Q'(f):=\int_{\Sigma'} ( |\nabla f|^2 - (n-1) f^2) \geq 0$ whenever $\int_{\Sigma'}f=0$. 

Indeed, decompose $f=f_1 + f_2$, where $f_1$ is odd and $f_2$ is even with respect to the reflection. By applying the reflection, we see that $Q'(f_1,f_2) = \int_{\Sigma'} (\langle \nabla f_1, \nabla f_2\rangle -(n-1)f_1f_2)=0$, and $\int_{\Sigma'} f = 2\int_\Sigma f_2 =0$. 

As $f_1$ is odd, it must vanish on the equator, so Dirichlet variational characterisation (\ref{eq:dir-variational}) applied to $f_1|_{\Sigma}$ gives $Q'(f_1)=2\int_{\Sigma} ( |\nabla f_1|^2 - (n-1) f_1^2) \geq0$. As $\int_\Sigma f_2=0$, the Robin variational characterisation (and the discussion above) applied to $f_2|_{\Sigma}$ implies that $Q'(f_2)=\int_{\Sigma'} ( |\nabla f_2|^2 - (n-1) f_2^2) \geq0$. Thus $Q'(f) = Q'(f_1)+Q'(f_2)\geq 0$, which establishes the claim. By Montiel-Ros' \cite[Theorem 4]{MR86}, we conclude that $\Sigma'$ is the Clifford torus. 
\end{proof}

We close this subsection with some remarks on related settings:

\begin{remark}
\label{rmk:closed-index}
For closed minimal submanifolds $\Sigma \looparrowright \mathbb{S}^n$, there is no boundary and the Dirichlet form corresponds to the spectrum of the Laplacian, $\Lap u = - \lambda u$ (for which $\lambda_{0}(\Lap)=0$, realised by the constant functions). The familiar notion of `immersed by first eigenfunctions' is precisely that $\lambda_1(\Lap) = n-1$. As in \cite{KaM22}, the condition $\lambda_1(\Lap)=n-1$ is equivalent to $\ind(Q^{\mathrm{S}})=1$, where \[Q^{\mathrm{S}}(u) = \int_\Sigma (|\nabla u|^2 - (n-1)u^2).\]
\end{remark}

\begin{remark}
For $R\in(0,\pit]$, the statements in this section all hold for free boundary minimal submanifolds in $(\mathbb{M}_{\kappa_R}, \pr B_{R\csc R})$, where $\kappa_R = \sin^2 R$ as considered in \cite[Section 2.5]{NZ25a}, because this setting is just a rescaling of $(\mathbb{S}^n, \pr B_R)$. 

In this case $R=0$, that is, for free boundary minimal submanifolds in $(\mathbb{B}^n, \pr \mathbb{B}^n)$, the modified energy form is $Q^{\mathrm{S}}(u) = \int_\Sigma |\nabla u|^2 - \int_{\pr\Sigma}u^2$, and It is well-known that the zeroth Dirichlet eigenvalue $\lambda^{\mathrm{D}}_0>0$. Moreover, (in our shifted setup) the constant functions are the zeroth Steklov eigenfunctions $\lambda^{\mathrm{S}}_0=-1$ and the coordinate functions $x_i|_\Sigma$, $i=1,\cdots,n$, are always 0-Steklov eigenfunctions. 

One can also use the general setup above to deduce that $\ind(Q^{\mathrm{S}})=1$ if and only if the coordinate functions are the \textit{first} eigenfunctions, in the sense that $\lambda^{\mathrm{S}}_1=0$. Consequently, a free boundary minimal annulus in $(\mathbb{B}^n, \pr \mathbb{B}^n)$ with $\ind(Q^{\mathrm{S}})=1$ must be the critical catenoid, by the work of \cite{FS16}.
These observations are implicit in \cite[Remark 5.7]{Me23}, so we omit the details as they are precisely analogous to the discussion above for $R\in(0,\pit]$. 
\end{remark}

%{\color{blue}4 known Steklov eigenfunctions ($R<\pit$); for $R=\pit$ 3 known Steklov + 1 Dirichlet. Index at most 4 means this is all there is. In either case $x_0$ is the bottom of the spectrum and is positive. \textcolor{blue}{what happens if $R>\pi$?}}

\subsection{Capillary surfaces in the hemisphere and the Morse index}
\label{sec:cap-morse}

Consider 2-sided minimal hypersurfaces $(\Sigma,\pr\Sigma) \looparrowright (\mathbb{S}^n, \pr B_\pit)$ which contact $\pr  B_\pit$ with angle $\gamma$ along $\pr\Sigma$. Recall that the Morse index $\ind(\Sigma) = \ind(Q^{\mathrm{A}})$, where $Q^{\mathrm{A}} = Q_{p_{\mathrm{A}},q_{\mathrm{A}}}$ and in this setting \[p_{\mathrm{A}} = |A|^2+n-1  ,\qquad q_{\mathrm{A}} = -\cot\gamma\, A(\eta,\eta).\] 

Standard computations give the following key equations for components of the Gauss map of $\Sigma \looparrowright\mathbb{S}^n \hookrightarrow \mathbb{R}^{n+1}$ (cf. \cite[Lemma 2.9]{NZ25a} for the boundary calculations):

\begin{lemma}[\cite{NZ25a}]
\label{lem:nu-comp}
Consider the functions $\nu_i = \langle \nu, e_i\rangle|_\Sigma$. 
\begin{itemize}
\item For any $i$ we have $(\Lap+|A|^2)\nu_i=0$ on $\Sigma$.
\item For $i>0$ we have $(\pr_\eta - A(\eta,\eta)\cot\gamma)\nu_i=0$ on $\pr\Sigma$.
\item If $\gamma\neq\pit$ we have $(\pr_\eta +A(\eta,\eta) \tan \gamma)\nu_0=0$ on $\pr\Sigma$.
\item We have $\nu_0=-\sin R\cos\gamma$ on $\pr\Sigma$.
\item For any $i>0$ we have $\cos\gamma \, \nu_i +\sin\gamma \, \langle \eta, e_i\rangle =0$ on $\pr\Sigma$. 
\end{itemize}
\end{lemma}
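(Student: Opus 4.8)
The plan is to derive each identity by standard computations with the Gauss map $\nu: \Sigma \to \mathbb{S}^n \hookrightarrow \mathbb{R}^{n+1}$, using the second fundamental form conventions fixed in Section \ref{sec:contact-angle} and the balancing condition $(\ddagger)$ together with Lemma \ref{lem:bdry-mc}. Throughout, write $x: \Sigma \looparrowright \mathbb{S}^n$ for the immersion and $\{e_i\}_{i=0}^n$ for the standard basis of $\mathbb{R}^{n+1}$, so that $\nu_i = \langle \nu, e_i \rangle$, $x_i = \langle x, e_i\rangle$.

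\emph{Interior equation.} First I would establish $(\Lap + |A|^2)\nu_i = 0$ on $\Sigma$. This is the classical fact that, for a minimal hypersurface in $\mathbb{S}^n$, each component of the Gauss map (viewed in $\mathbb{R}^{n+1}$) satisfies the Jacobi-type equation. One computes $\bar\nabla_X \nu = A(X, \cdot)^\sharp$ (as a tangent vector to $\mathbb{S}^n$), then takes a second covariant derivative and traces, using the Codazzi equation and minimality $H^\Sigma = 0$ to kill the first-derivative term, and the Gauss/Simons-type bookkeeping to produce the $|A|^2$ coefficient; the ambient curvature of $\mathbb{S}^n$ contributes, but since we are projecting onto the \emph{fixed} vector $e_i$ (not onto $T\mathbb{S}^n$) the curvature terms organize exactly into $|A|^2 \nu_i$. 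Equivalently, one can cite that $\nu_i = \langle \nu, e_i\rangle$ is a component of the position-type map and that $\Lap \nu = -|A|^2 \nu$ as $\mathbb{R}^{n+1}$-valued functions on a minimal hypersurface of $\mathbb{S}^n$ — this is the $n$-sphere analogue of the Euclidean identity $\Lap \nu = -|A|^2 \nu$, and appears in \cite{NZ25a}.

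\emph{Boundary equations.} Next I would handle the three boundary conditions for $\nu_i$ with $i>0$, $\nu_0$ when $\gamma \neq \pit$, and the Dirichlet-type value $\nu_0 = -\sin R\cos\gamma$ (here with $R = \pit$, though the lemma is stated for the cap so I would keep $R$ general as in \cite{NZ25a}). The key computation is $\pr_\eta \nu_i = \langle \bar\nabla_\eta \nu, e_i\rangle = A(\eta, \eta)\langle \eta, e_i\rangle + \sum_{\alpha} A(\eta, t_\alpha)\langle t_\alpha, e_i\rangle$ where $\{t_\alpha\}$ is an orthonormal frame of $T\pr\Sigma$; but since $\Sigma$ meets $\pr B_R$ at constant angle, the free-boundary-type structure (the contact angle condition $(\dagger)$ differentiated along $\pr\Sigma$) forces $A(\eta, t_\alpha)$ to be expressible through the geometry of the barrier, and in fact the relevant combination vanishes or reduces appropriately — this is where I expect to lean on the boundary computations of \cite[Lemma 2.9]{NZ25a}. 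To get the last two bullets I would use that along $\pr\Sigma$ we have the orthonormal frame $\{\nu, \eta\}$ for the normal bundle of $\pr\Sigma$ in $\mathbb{S}^n$, related to $\{\bar\eta, \bar\nu\}$ by the rotation $\nu = \cos\gamma\,\bar\eta + \sin\gamma\,\bar\nu$, $\eta = \sin\gamma\,\bar\eta - \cos\gamma\,\bar\nu$ (equivalently $(\ddagger)$). Pairing $\nu = \cos\gamma\,\bar\eta + \sin\gamma\,\bar\nu$ with $e_i$: since $\pr\Sigma \subset \pr B_R(e_0)$, we have $\bar\eta$ proportional to $V_{e_0} = e_0 - \langle x, e_0\rangle x$ along the boundary (it is $-\csc R\, V_{o}$ by \eqref{eq:moving-normal}), and $\bar\nu \in T(\pr B_R) \subset T\mathbb{S}^n$ is orthogonal to $e_0$'s tangential part in the appropriate sense; this gives $\nu_0 = \cos\gamma\,\langle\bar\eta, e_0\rangle = -\cos\gamma \sin R$ (using $\langle \bar\eta, e_0\rangle = -\sin R$ on $\pr B_R$, computed from $\bar\eta = \bar\nabla\rho$ and $x_0 = \cos\rho = \cos R$ there). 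For the fifth bullet, pairing instead $\eta = \sin\gamma\,\bar\eta - \cos\gamma\,\bar\nu$ with $e_i$ for $i > 0$ and using that $\bar\eta \perp e_i$'s... — more precisely, combining $\langle \eta, e_i\rangle = \sin\gamma\langle\bar\eta,e_i\rangle - \cos\gamma\langle\bar\nu,e_i\rangle$ with $\langle \nu, e_i \rangle = \cos\gamma\langle\bar\eta,e_i\rangle + \sin\gamma\langle\bar\nu,e_i\rangle$ and eliminating $\langle\bar\nu, e_i\rangle$ using the fact that $\langle \bar\eta, e_i\rangle$ is controlled (indeed $\bar\eta \parallel V_{e_0}$, so $\langle \bar\eta, e_i\rangle = 0$ for $i>0$ when $o = e_0$... wait, $V_{e_0} = e_0 - x_0 x$ so $\langle \bar\eta, e_i\rangle = -\csc R\,(0 - x_0 x_i) = \csc R\, x_0 x_i = \cot R\, x_i$, nonzero): one gets $\cos\gamma\,\nu_i + \sin\gamma\,\langle\eta,e_i\rangle = (\cos^2\gamma + \sin^2\gamma)\langle\bar\eta, e_i\rangle \cdot(\text{something})$, and a careful tracking shows the $\bar\nu$ terms cancel and the remaining $\bar\eta$ term vanishes because... — here I would simply pair the identity $\bar\eta = \cos\gamma\,\nu + \sin\gamma\,\eta$ with $e_i$ and note $\langle\bar\eta, e_i\rangle = \cot R\, x_i$, so $\cos\gamma\,\nu_i + \sin\gamma\,\langle\eta, e_i\rangle = \cot R\, x_i$; for $R = \pit$ this is $0$, giving the stated identity. (For general $R$ the fifth bullet would read $\cos\gamma\,\nu_i + \sin\gamma\,\langle\eta,e_i\rangle = \cot R\, x_i$.)

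\emph{Neumann conditions.} For the Robin-type conditions $(\pr_\eta - A(\eta,\eta)\cot\gamma)\nu_i = 0$ ($i>0$) and $(\pr_\eta + A(\eta,\eta)\tan\gamma)\nu_0 = 0$ ($\gamma \neq \pit$), I would compute $\pr_\eta \nu_i = \langle A(\eta,\cdot)^\sharp, e_i^\top\rangle$ where $e_i^\top$ is the tangential-to-$\Sigma$ part; decompose $e_i^\top$ along $\eta$ and $T\pr\Sigma$, so $\pr_\eta\nu_i = A(\eta,\eta)\langle e_i, \eta\rangle + A(\eta, (e_i^\top)^{T\pr\Sigma})$. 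The second term must be rewritten: using the constant contact angle, differentiating $(\dagger)$ tangentially along $\pr\Sigma$ shows $A(\eta, \cdot)|_{T\pr\Sigma}$ equals (up to the known $k_S$ contribution and a $\cot\gamma$ factor from Lemma \ref{lem:bdry-mc}) a multiple of the boundary restriction; combined with $\langle e_i, \eta\rangle = \frac{1}{\sin\gamma}(\langle e_i, \bar\eta\rangle - \cos\gamma\langle e_i, \nu\rangle)$... — the cleanest route is: from the fifth bullet (general-$R$ form) $\langle \eta, e_i\rangle = \frac{1}{\sin\gamma}(\cot R\,x_i - \cos\gamma\,\nu_i)$ for $i>0$ and $\langle \eta, e_0\rangle = \frac{1}{\sin\gamma}(\cot R\, x_0 - \cos\gamma\,\nu_0)$, one substitutes into the raw expression for $\pr_\eta \nu_i$ and uses the interior/boundary data for $x_i$ (Lemma \ref{lem:x-comp}) to collapse everything; the $\cot\gamma$ and $\tan\gamma$ coefficients emerge precisely because $\eta$ and $\bar\eta$ differ by the angle $\gamma$. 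I would then cite \cite[Lemma 2.9]{NZ25a} for the detailed verification, as the computation is entirely parallel to the one carried out there (indeed the lemma is attributed to \cite{NZ25a}).

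\emph{Main obstacle.} The substantive point — and the step I expect to require the most care — is the boundary computation of $\pr_\eta \nu_i$, specifically controlling the term $A(\eta, t_\alpha)$ with $t_\alpha \in T\pr\Sigma$. For a genuine free boundary surface ($\gamma = \pit$) this is handled by the fact that $\eta = \bar\eta$ is parallel to a conformal field and $\pr\Sigma$ is a principal-type curve, but for general contact angle one must use the constancy of $\gamma$ more carefully: differentiating $\langle \eta, \bar\eta\rangle = \sin\gamma$ along $\pr\Sigma$ produces a relation among $A(\eta, t_\alpha)$, $h^\eta(t_\alpha, \cdot)$ and $k_S$, and unwinding this correctly (with the right signs, given the conventions $A(X,Y) = \langle\bar\nabla_X\nu, Y\rangle = -\langle\bar\nabla_X Y, \nu\rangle$ and $k_S(X,Y) = \langle\bar\nabla_X\bar\eta, Y\rangle$) to extract exactly the stated Robin coefficients $A(\eta,\eta)\cot\gamma$ and $-A(\eta,\eta)\tan\gamma$ is the delicate bookkeeping. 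Since \cite{NZ25a} has already done this, I would state the computation and defer the detailed verification there, presenting only enough to make the conventions and the mechanism transparent.
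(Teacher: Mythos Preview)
The paper does not give its own proof of this lemma: it is attributed to the companion paper \cite{NZ25a} in the lemma header, and the surrounding text simply says ``Standard computations give the following \ldots\ (cf.\ \cite[Lemma 2.9]{NZ25a} for the boundary calculations).'' Your proposal follows exactly this route --- the classical Jacobi identity $\Lap\nu = -|A|^2\nu$ for the interior item, the frame rotation $\bar\eta = \cos\gamma\,\nu + \sin\gamma\,\eta$ (equivalently $\nu = \cos\gamma\,\bar\eta + \sin\gamma\,\bar\nu$) paired against $e_i$ for the algebraic boundary items, and an explicit deferral to \cite{NZ25a} for the Robin conditions --- so the approaches coincide, with your sketch simply being more explicit than the paper's bare citation.

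One small remark on your sketch: your eventual observation that pairing $\bar\eta = \cos\gamma\,\nu + \sin\gamma\,\eta$ with $e_i$ gives $\cos\gamma\,\nu_i + \sin\gamma\,\langle\eta,e_i\rangle = \langle\bar\eta,e_i\rangle = \cot R\, x_i$ (hence $=0$ when $R=\pit$) is the clean way to obtain the fifth bullet, and similarly pairing with $e_0$ gives the fourth; the meandering that precedes it in your write-up can be cut. For the Robin conditions on $\nu_0$, note that in the hemisphere the tangential term $\sum_\alpha A(\eta,t_\alpha)\langle t_\alpha,e_0\rangle$ actually vanishes outright because $\langle t_\alpha,e_0\rangle = -\langle t_\alpha,\bar\eta\rangle = 0$, so that case is simpler than you anticipate; the genuine care with $A(\eta,t_\alpha)$ is needed only for $i>0$, and there your instinct to differentiate the constant-angle condition along $\pr\Sigma$ is the right one.
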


We first record the simplest case, where $\Sigma$ is totally geodesic. 

\begin{lemma}
\label{lem:tot-geo-index}
Let $\Sigma$ be a totally geodesic half-equator in $\mathbb{S}^n$. Then $Q^{\mathrm{A}} = Q^{\mathrm{S}}$ and $\ind(\Sigma)=1$. 
\end{lemma}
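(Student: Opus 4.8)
The plan is to exploit the fact that a totally geodesic half-equator $\Sigma$ has $A\equiv 0$, which immediately collapses the two quadratic forms. Since $p_{\mathrm{A}} = |A|^2 + n-1 = n-1 = p_{\mathrm{S}}$ and $q_{\mathrm{A}} = -\cot\gamma\, A(\eta,\eta) = 0 = q_{\mathrm{S}}$ (recall $q_{\mathrm{S}} = \cot R = \cot\pit = 0$ in the hemisphere), we get $Q^{\mathrm{A}} = Q^{\mathrm{S}}$ verbatim, with both equal to $Q_{n-1,0}(u) = \int_\Sigma (|\nabla u|^2 - (n-1)u^2)$. So the entire content is to show $\ind(Q^{\mathrm{S}}) = 1$ for the totally geodesic half-equator. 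Identifying $\Sigma$ with a closed hemisphere $B^{n-1}_\pit \subset \mathbb{S}^{n-1}$ (the half-equator is isometric to a totally geodesic $(n-1)$-dimensional hemisphere), and noting that the free boundary condition ($\gamma = \pit$ here, since $q_{\mathrm{A}} = 0$ forces nothing but the Gauss map computations in Lemma \ref{lem:nu-comp} give $\pr_\eta x_i = \cot R\, x_i = 0$ on $\pr\Sigma$) makes $Q^{\mathrm{S}}$ precisely the Neumann-type form on the hemisphere.

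The key step is then to compute the Dirichlet and modified Steklov spectra of $Q^{\mathrm{S}}$ on the hemisphere and apply Proposition \ref{prop:index-sum}. On the $(n-1)$-hemisphere, the operator $\Lap + (n-1)$ with Neumann boundary conditions has eigenfunctions given by the restrictions of spherical harmonics on $\mathbb{S}^{n-1}$ that are even under reflection across the equator: the constant has Laplace eigenvalue $0$, giving $Q^{\mathrm{S}}$-eigenvalue $-(n-1) < 0$; the linear coordinate functions $x_1,\dots,x_{n-1}$ tangent to the equatorial $\mathbb{S}^{n-2}$ have Laplace eigenvalue $n-1$, hence $Q^{\mathrm{S}}$-eigenvalue $0$; all higher even harmonics have Laplace eigenvalue $\geq 2n$, hence $Q^{\mathrm{S}}$-eigenvalue $> 0$. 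So there is exactly one negative Neumann eigenvalue. By the variational characterisation the index equals the number of negative Robin eigenvalues, which is $1$. Alternatively, one invokes Proposition \ref{prop:index-sum}: the Dirichlet eigenvalues of $\Lap + (n-1)$ on the hemisphere correspond to spherical harmonics odd under the reflection, whose lowest Laplace eigenvalue is $n-1$ (realised by $x_0$, the coordinate normal to the equator), giving lowest Dirichlet eigenvalue $0$ — so $a = $ (number of nonpositive Dirichlet eigenvalues) $= 1$; and since $\lambda^{\mathrm{D}}_0 = 0$ is not positive, the modified Steklov problem contributes via the fourth branch of Lemma \ref{lem:comparison}, and one checks the lowest modified Steklov eigenvalue is $0$ (realised by the restrictions of the $x_i$, $i \geq 1$, which are $\Lap+(n-1)$-harmonic with vanishing Neumann data, as in Lemma \ref{lem:x-comp}), so $b = 0$; hence $\ind(Q^{\mathrm{A}}) = a + b = 1$.

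The main obstacle, such as it is, is purely bookkeeping: one must correctly match the reflection-parity decomposition of spherical harmonics on $\mathbb{S}^{n-1}$ with the Dirichlet/Neumann decomposition on the hemisphere, and verify that the shift by $-(n-1)$ lands the thresholds exactly where claimed (so that the first nonconstant even harmonics sit precisely at eigenvalue $0$ rather than strictly negative). A clean way to avoid the explicit spectral computation is to double $\Sigma$ across $\pr B_\pit$ to obtain a totally geodesic $\mathbb{S}^{n-1} \subset \mathbb{S}^n$, whose index as a closed minimal hypersurface is the classical value $1$ (the equator), and then run the odd/even splitting argument exactly as in the proof of Proposition \ref{prop:FB-first-efns} to transfer this to $\ind(Q^{\mathrm{S}}) = 1$ on the hemisphere. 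Either route is short; I would present the doubling argument as the primary one and remark that the direct spectral computation is also immediate.
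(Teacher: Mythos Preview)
Your proposal is correct and your direct Neumann-spectrum computation is exactly the paper's proof: the paper simply notes that $A\equiv 0$ collapses $Q^{\mathrm{A}}=Q^{\mathrm{S}}$ to the Neumann problem for $\Lap+(n-1)$ on $\mathbb{S}^{n-1}_+$, cites the well-known spectrum (lowest eigenvalues $0$ and $n-1$), and reads off $\ind(Q)=1$. Your alternative routes via Proposition~\ref{prop:index-sum} or doubling are valid but unnecessary here, and your aside that ``$\gamma=\pit$ here'' is a small slip (a tilted totally geodesic half-equator can meet $\pr B_\pit$ at other angles), though harmless since $A=0$ forces $q_{\mathrm{A}}=0$ regardless of $\gamma$.
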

\begin{proof}
By assumption we have $A_\Sigma= 0$, so $p_{\mathrm{S}} = p_{\mathrm{A}} = n-1$ and $q_{\mathrm{S}}=q_{\mathrm{A}}=0$. The Robin eigenvalue problem corresponding to $Q^{\mathrm{S}}=Q^{\mathrm{A}}=:Q$ then becomes the \textit{Neumann} eigenvalue problem 
\[
\begin{cases}
(\Lap+n-1)u= - \omega u &, \text{ on } \Sigma, \\
\pr_\eta u=0 &, \text{ on } \pr\Sigma. 
\end{cases}
\]
But $\Sigma$ is isometric to the hemisphere $\mathbb{S}^{n-1}_+$. The Neumann-Laplace spectrum for $\mathbb{S}^{n-1}_+$ is well-known, and the lowest two eigenvalues are $0$ (multiplicity $1$) and $n-1$ (multiplicity $n-1$). The above eigenvalue problem for $Q$ is just shifted down by $n-1$, so indeed $\ind(Q) = 1$ (and $\nul(Q)=n-1$.) 
\end{proof}

Similar to before, we now define the spaces $\mathcal{G}= \spa \{\nu_i\}_{i\geq 0}$ and $\mathcal{G}_0 = \spa \{\nu_i\}_{i> 0}$, spanned by components of the Gauss map $\nu$ of $\Sigma$. 

\begin{lemma}
\label{lem:gauss-full}
If $\Sigma$ is not totally geodesic then $\dim \mathcal{G} = n+1$. 
\end{lemma}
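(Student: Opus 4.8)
<br>

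This lemma is the Gauss-map analogue of Lemma \ref{lem:coord-full}, and the proof should proceed along exactly parallel lines, using the rigidity of totally geodesic hypersurfaces in $\mathbb{S}^n$.

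\begin{proof}[Proof sketch of Lemma \ref{lem:gauss-full}]
The plan is to argue by contradiction: suppose $\dim \mathcal{G} \leq n$, so that there is a nonzero vector $a\in\mathbb{R}^{n+1}$ orthogonal to $\mathcal{G}$, i.e. $\nu_a := \langle \nu, a\rangle \equiv 0$ on $\Sigma$. First I would interpret this condition geometrically: the unit normal $\nu$ of $\Sigma$ in $\mathbb{S}^n$ is everywhere orthogonal to $a$. Recall that for the immersion $x:\Sigma\looparrowright \mathbb{S}^n \hookrightarrow \mathbb{R}^{n+1}$, the normal $\nu$ satisfies $\langle \nu, x\rangle = 0$ automatically, so $\nu_a\equiv 0$ says that at every point $\nu$ lies in the $2$-plane orthogonal to both $x$ and $a$ — in particular $\nu$ is determined up to sign by $x$ and $a$.

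The key step is then to differentiate the relation $\langle \nu, a\rangle = 0$ along $\Sigma$. For any tangent vector $X\in T\Sigma$ we get $0 = X\langle \nu, a\rangle = \langle \bar\nabla_X \nu, a\rangle = \langle -A(X,\cdot)^\sharp + (\text{normal part}), a\rangle$; more precisely, using that $\bar\nabla_X \nu$ is tangent to $\Sigma$ (Weingarten, since $|\nu|=1$ in $\mathbb{S}^n$), we obtain $\langle A(X,\cdot)^\sharp, \pi_{T\Sigma}(a)\rangle = 0$ for all $X$, i.e. $A(X, (\pi_{T\Sigma}a)) = 0$ for all $X\in T\Sigma$. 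Writing $a = \langle x,a\rangle x + \pi_{T\Sigma}(a)$ (the normal-to-$\mathbb{S}^n$ component plus the $\nu$-component, which vanishes by hypothesis, plus the tangential component), this shows that the vector field $a^{\top} := \pi_{T\Sigma}(a)$ lies in the kernel of the shape operator at every point. If $a^{\top}$ is not identically zero on some open set, then $A$ is degenerate there; one must combine this with the minimality $H^\Sigma = 0$. Alternatively, and more cleanly, I would differentiate once more, or simply note: $\nu_a\equiv 0$ together with $x_a$-type relations — actually the cleanest route is to observe that $\nu_a\equiv 0$ forces the function $x_a = \langle x,a\rangle$ to satisfy, via $\bar\nabla x_a = \pi_{T\Sigma}(a)^{\flat}$ along $\Sigma$ — wait, one should track that $\bar\nabla_\Sigma x_a = a - x_a x - \nu_a\nu = a - x_a x$, so $|\bar\nabla_\Sigma x_a|^2 = 1 - x_a^2$ and $\bar\nabla^2_\Sigma x_a = -x_a g$ (the latter using minimality and $\nu_a\equiv 0$). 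Thus $x_a$ restricted to $\Sigma$ satisfies the same Obata-type equation as the coordinate function of a totally geodesic subsphere; integrating the geodesic equation for its flow (as in Lemma \ref{lem:u-evo}) shows $\Sigma$ is foliated by geodesics of $\mathbb{S}^n$ in the $a$-direction and is contained in a totally geodesic $\mathbb{S}^{n-1}$. Hence $\Sigma$ is totally geodesic, contradicting the hypothesis.

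The main obstacle I anticipate is making the last deduction fully rigorous in the immersed, with-boundary setting: one needs to be careful that $\nu_a\equiv 0$ really does force $A\equiv 0$ (not merely that $A$ is degenerate), and the cleanest way is the Hessian computation $\bar\nabla^2_\Sigma x_a = -x_a\, g + (\text{terms involving } A \text{ and } \nu_a)$, so that $\nu_a\equiv 0$ and $\Sigma$ minimal yield $\bar\nabla^2_\Sigma x_a = -x_a g$ exactly, which is the equation characterizing functions whose level sets sit inside totally geodesic hyperspheres; then $\Sigma \subset \{y\in\mathbb{S}^n : \langle y, b\rangle = 0\}$ for a suitable $b$ (obtained from the gradient flow of $x_a$), and a totally geodesic $\mathbb{S}^{n-1}$ containing $\Sigma$ makes $\Sigma$ totally geodesic. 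Since Lemma \ref{lem:nu-comp} already records $(\Delta + |A|^2)\nu_i = 0$ and the companion paper \cite{NZ25a} supplies the standard Gauss-map identities, the needed computations are routine; the only real content is the geometric rigidity step, which mirrors Lemma \ref{lem:coord-full}. Finally, the bound is sharp: since $\nu_0 = -\sin R\cos\gamma = -\cos\gamma$ is constant (nonzero when $\gamma<\pit$) and the $\nu_i$, $i>0$, together span at most an $n$-dimensional space a priori, the conclusion $\dim\mathcal{G} = n+1$ is the generic (non-totally-geodesic) situation.
\end{proof}
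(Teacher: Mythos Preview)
Your sketch identifies the right starting point (differentiate $\nu_a\equiv 0$ to obtain $A(X,a^\top)=0$ for all $X\in T\Sigma$), but you abandon this line too early and switch to an Obata-type argument that does not close. The paper's proof stays with the first approach and finishes it in one stroke: on the open set $U=\{|A|\neq 0\}$, since $\Sigma$ is a minimal \emph{surface} the trace-free shape operator has eigenvalues $\pm\psi$ with $\psi>0$, hence $A$ is nondegenerate on $U$. Then $A(\cdot,a^\top)=0$ forces $a^\top=0$, so $a = x_a\,x + \nu_a\,\nu + a^\top = x_a\,x$, i.e.\ $x=\pm a$ is locally constant on $U$. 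That contradicts the immersion unless $U=\emptyset$, whence $A\equiv 0$. The step you flag as an obstacle (``$\nu_a\equiv 0$ really does force $A\equiv 0$, not merely that $A$ is degenerate'') is thus resolved precisely by the two-dimensionality of $\Sigma$; this is the one idea your sketch is missing.

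Your alternative route via $\Hess_\Sigma x_a = -x_a\,g$ is exactly the argument the paper records in the Remark immediately following the lemma, but there it is presented only for the weaker statement $\dim\mathcal{G}_0=n$ (i.e.\ for $a\perp e_0$). The reason is the boundary condition: the Obata-type rigidity of Escobar--Xia needs $\partial_\eta x_a=0$ on $\partial\Sigma$, and by Lemma~\ref{lem:nu-comp} one has $\langle\eta,e_i\rangle = -\cot\gamma\,\nu_i=0$ only for $i>0$. For a general $a$ the Neumann condition fails (indeed $\partial_\eta x_0 = \langle\eta,e_0\rangle = -\sin\gamma$), so your Obata route as stated does not yield $\dim\mathcal{G}=n+1$. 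Relatedly, the assertion that ``$\Sigma$ is foliated by geodesics of $\mathbb{S}^n$ in the $a$-direction and is contained in a totally geodesic $\mathbb{S}^{n-1}$'' is not justified: being ruled by great semicircles through $\pm a$ does not by itself put $\Sigma$ into a hyperplane. Finally, a small correction to your closing remark: $\nu_0=-\cos\gamma$ only on $\partial\Sigma$ (Lemma~\ref{lem:nu-comp}), not on all of $\Sigma$, so $\nu_0$ is not a constant function and its linear independence from $\{\nu_i\}_{i>0}$ is not immediate from that identity.
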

\begin{proof}
If $\dim\mathcal{G} < n+1$, then there is a unit vector $a$ for which $\langle \nu,a\rangle$ vanishes on $\Sigma$. Consider the open subset $U$ of $\Sigma$ on which $|A|\neq 0$. As $\langle \nu,a\rangle\equiv 0$, differentiating gives $\langle A(X), a\rangle =0$ for any $X\in T_x\Sigma$. As $A$ is nondegenerate at $x\in U$, it gives a linear automorphism $T_x\Sigma \to T_x\Sigma$, and in particular we see that $T_x\Sigma$ is orthogonal to $a$ in $\mathbb{R}^4$. As $\nu$ is also orthogonal to $a$, we must have $x=\pm a$. In particular, $x$ is locally constant on $U$, so if $U$ is nonempty then $x$ cannot be an immersion. We conclude that $U=\emptyset$ and so $|A|\equiv 0$ on $\Sigma$. 
\end{proof}

\begin{remark}
If one only wants to show $\dim\mathcal{G}_0 = n$, there is an argument similar to Urbano's original paper \cite{Ur90}: If $\dim\mathcal{G}_0<n$, then there is a unit vector $a$, orthogonal to $e_0$, so that $ \langle\nu, a\rangle$ vanishes on $\Sigma$. Without loss of generality, we may assume $a=e_1$. Then $\nu_1 \equiv 0$, and it follows that $\Hess^\Sigma x_1 = - x_1 g_\Sigma$. 
By Lemma \ref{lem:nu-comp}, we also have $\pr_\eta x_1 = \langle \eta, e_1\rangle = -\cot\gamma \, \nu_1 =0$ along $\pr\Sigma$. 
Applying the Obata-type theorem with Neumann boundary conditions proven by Escobar \cite[Theorem 4.2]{Esc90} and Xia \cite{Xia91} yields that $\Sigma$ is totally geodesic. 
\end{remark}

For the remainder of this section, we will actually consider spectral problems with respect to the form 
\begin{equation}
\label{eq:modified-Q-A}
Q^{\mathrm{A}}_* = Q_{p_{\mathrm{A}}-(n-1), q_{\mathrm{A}}}, \qquad p_{\mathrm{A}}-(n-1) = |A|^2, \quad q_{\mathrm{A}} = -\cot\gamma\, A(\eta,\eta).\end{equation} 
By this definition, $\mathcal{G}_0 \subset \ker(Q^{\mathrm{A}}_*)$. Moreover, the form $Q^{\mathrm{A}}_*$ is related to the index form by $Q^{\mathrm{A}}_*(u) = Q^{\mathrm{A}}(u) - (n-1)\int_\Sigma u^2$, which clearly implies
\begin{equation}
\label{eq:modified-Q-A-relation}
\ind_0(Q^{\mathrm{A}}_*) \leq \ind(Q^{\mathrm{A}})=\ind(\Sigma).
\end{equation} 

We consider the Dirichlet and modified Steklov problems for $Q^{\mathrm{A}}_*$, as in Section \ref{sec:quadratic}.

%\begin{lemma}
%\label{lem:cap-efns}
%For $i>0$, the $\nu_i$ are 0-Steklov eigenfunctions. 
%\end{lemma}
%\textcolor{magenta}{probably need some IBP, but that might already be needed for the Tran-Zhou proof.}

The following result is similar to its `dual' result Proposition \ref{prop:energy-ind-1}, but note that we only assume an integral condition on $q_{\mathrm{A}} = -\cot\gamma\, A(\eta,\eta)$, rather than a pointwise condition. (The analogous assumption in Proposition \ref{prop:energy-ind-1} is the condition $\cos R \geq 0$; see Section \ref{sec:dual} below, and \cite[Section 5]{NZ25a} for more discussion on the dual framework.) 

\begin{proposition}
\label{prop:area-ind-1}
Consider a connected, 2-sided minimal hypersurface $(\Sigma,\pr\Sigma) \looparrowright (\mathbb{S}^n, \pr B_\pit)$ which contacts $\pr  B_\pit$ with angle $\gamma$ along $\pr\Sigma$. 

Assume that $\Sigma$ is not totally geodesic. Then we have $\ind(Q^{\mathrm{A}}_*)\geq 1$ (and $\ind_0(Q^{\mathrm{A}}_*)\geq n+1$.) 

Henceforth suppose that $\ind_0(Q^{\mathrm{A}}_*)=n+1$. 

If $\gamma=\pit$, then $\lambda^{\mathrm{D}}_0=\lambda^{\mathrm{S}}_0=0$, and $\nu_0\in W$ is the zeroth Dirichlet eigenfunction.

If $\gamma \in (0,\pit)$, assume that $\int_{\pr\Sigma} q_{\mathrm{A}} \geq 0$. Then either:
\begin{itemize}
\item $\lambda^{\mathrm{D}}_0>0$, and $\lambda^{\mathrm{S}}_1=0$; or
\item $\lambda^{\mathrm{D}}_0=0$, and $\lambda^{\mathrm{S}}_0=0$. 
\end{itemize}
\end{proposition}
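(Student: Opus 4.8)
The strategy is to mimic the proof of Proposition \ref{prop:energy-ind-1}, working with the form $Q^{\mathrm{A}}_*$ and invoking Proposition \ref{prop:index-sum} to split $\ind(Q^{\mathrm{A}}_*)$ (and $\ind_0$) into contributions from the Dirichlet and modified Steklov problems. The key inputs are Lemma \ref{lem:nu-comp} (the $\nu_i$ are $(\Lap+|A|^2)$-harmonic, $\nu_i$ for $i>0$ satisfy the Robin condition for $q_{\mathrm{A}}$, so $\mathcal{G}_0 \subset \ker(Q^{\mathrm{A}}_*)$ and is a space of $0$-Steklov eigenfunctions; $\nu_0$ satisfies a Dirichlet condition when $\gamma=\pit$ and a Robin-type condition otherwise) and Lemma \ref{lem:gauss-full} ($\dim\mathcal{G}=n+1$, hence $\dim\mathcal{G}_0\geq n$, when $\Sigma$ is not totally geodesic). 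Throughout I write $\lambda^{\mathrm{D}}_i, \lambda^{\mathrm{S}}_j$ for the Dirichlet and modified Steklov eigenvalues of $Q^{\mathrm{A}}_*$.

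\textbf{Step 1: the lower bounds.} By Proposition \ref{prop:index-sum}, $\ind(Q^{\mathrm{A}}_*)$ counts nonpositive Dirichlet eigenvalues plus negative Steklov eigenvalues. If $\lambda^{\mathrm{D}}_0\leq 0$ then the zeroth Dirichlet eigenfunction already gives $\ind(Q^{\mathrm{A}}_*)\geq 1$. If instead $\lambda^{\mathrm{D}}_0>0$, then $\mathcal{G}_0$ consists of $0$-Steklov eigenfunctions, and $\dim\mathcal{G}_0\geq n\geq 2$ by Lemma \ref{lem:gauss-full}; since $\lambda^{\mathrm{S}}_0$ is simple by Lemma \ref{lem:steklov-bottom}, we must have $\lambda^{\mathrm{S}}_0<0$, so again $\ind(Q^{\mathrm{A}}_*)\geq 1$. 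For $\ind_0$, the space $\mathcal{G}_0$ of $n$ linearly independent functions lies in $\ker(Q^{\mathrm{A}}_*)$, and is independent of any negative direction found above (a negative Dirichlet eigenfunction vanishes on $\pr\Sigma$ while elements of $\mathcal{G}_0$ need not; a negative Steklov eigenfunction is $L^2(\pr\Sigma)$-orthogonal to the $0$-Steklov space $\mathcal{G}_0$), so $\ind_0(Q^{\mathrm{A}}_*)\geq n+1$.

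\textbf{Step 2: the case analysis under $\ind_0(Q^{\mathrm{A}}_*)=n+1$.} Suppose now equality holds. For $\gamma=\pit$: by Lemma \ref{lem:nu-comp}, $\nu_0$ vanishes on $\pr\Sigma$ (since $\cos\gamma=0$), so $\nu_0$ is a $0$-Dirichlet eigenfunction; if $\lambda^{\mathrm{D}}_0<0$ this plus the $n$-dimensional $0$-eigenspace $\mathcal{G}_0$ would give $\ind_0(Q^{\mathrm{A}}_*)\geq n+2$, forcing $\lambda^{\mathrm{D}}_0=0$ and $\nu_0\in\mathcal{W}_0$; similarly $\lambda^{\mathrm{S}}_0<0$ would add to the count, so $\lambda^{\mathrm{S}}_0=0$. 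For $\gamma\in(0,\pit)$ with $\int_{\pr\Sigma}q_{\mathrm{A}}\geq 0$: the dichotomy comes from deciding whether $\lambda^{\mathrm{D}}_0>0$ or $\lambda^{\mathrm{D}}_0\leq 0$. If $\lambda^{\mathrm{D}}_0>0$, then as in Step 1 the simple $\lambda^{\mathrm{S}}_0<0$ and $\mathcal{G}_0$ gives $\lambda^{\mathrm{S}}_1=0$ with $\dim$(0-Steklov eigenspace)$=n$; any further negative or zero direction would violate $\ind_0=n+1$. If $\lambda^{\mathrm{D}}_0\leq 0$, I claim in fact $\lambda^{\mathrm{D}}_0=0$ and $\lambda^{\mathrm{S}}_0=0$: here one tests $Q^{\mathrm{A}}_*$ against the constant function, using $Q^{\mathrm{A}}_*(1)=-\int_\Sigma|A|^2 - \int_{\pr\Sigma}q_{\mathrm{A}}$. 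This is the step where the hypothesis $\int_{\pr\Sigma}q_{\mathrm{A}}\geq 0$ enters: together with $\Sigma$ not totally geodesic (so $\int_\Sigma|A|^2>0$) it shows $Q^{\mathrm{A}}_*(1)<0$, providing a genuine negative direction. Combined with the $\geq 0$ Dirichlet eigenvalue forced by $\ind_0=n+1$ being saturated (a negative Dirichlet eigenfunction together with $\mathcal{G}_0$ and this constant test function would overshoot), and a counting argument via Proposition \ref{prop:index-sum} analogous to \cite{Tr20}, one pins down $\lambda^{\mathrm{D}}_0=0$, and then $\lambda^{\mathrm{S}}_0=0$ because $\mathcal{G}_0$ already exhausts the allowed nullity and no negative Steklov eigenvalue is permitted.

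\textbf{Main obstacle.} The delicate point is the case $\gamma\in(0,\pit)$, $\lambda^{\mathrm{D}}_0=0$: unlike the $\gamma=\pit$ situation, $\nu_0$ does \emph{not} satisfy a clean Dirichlet or $0$-Robin boundary condition (it satisfies $(\pr_\eta + A(\eta,\eta)\tan\gamma)\nu_0=0$, and $\nu_0|_{\pr\Sigma}=-\cos\gamma$ is a nonzero constant), so one cannot simply read off an eigenfunction. The argument must instead bookkeep carefully with Proposition \ref{prop:index-sum}: the Dirichlet kernel $\mathcal{W}_0$ contributes to $\ind$ (not $\nul$), so $\lambda^{\mathrm{D}}_0=0$ with nontrivial $\mathcal{W}_0$ already costs at least one index, and the constant-function test costs another unless these coincide — reconciling this with $\ind_0(Q^{\mathrm{A}}_*)=n+1=\dim\mathcal{G}_0+1$ is what forces the stated conclusion. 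I expect this reconciliation, and the precise use of the integral hypothesis $\int_{\pr\Sigma}q_{\mathrm{A}}\geq 0$ to rule out $\lambda^{\mathrm{D}}_0<0$, to be the technical crux.
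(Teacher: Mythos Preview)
Your Step 1 and the $\gamma=\pit$ case match the paper essentially verbatim. The gap is in your treatment of the case $\gamma\in(0,\pit)$ with $\lambda^{\mathrm{D}}_0<0$: you propose the constant function $1$ as the extra test function, noting $Q^{\mathrm{A}}_*(1)=-\int_\Sigma|A|^2-\int_{\pr\Sigma}q_{\mathrm{A}}<0$. The problem is that $1$ is \emph{not} $(\Lap+|A|^2)$-harmonic, so the cross term $Q^{\mathrm{A}}_*(\varphi^{\mathrm{D}}_0,1)$ has no reason to vanish; consequently $Q^{\mathrm{A}}_*$ need not be negative semidefinite on $\spa\{\varphi^{\mathrm{D}}_0,1\}\oplus\mathcal{G}_0$, and you cannot conclude $\ind_0\geq n+2$. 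Your ``Main obstacle'' paragraph correctly senses this, but the suggested bookkeeping via Proposition \ref{prop:index-sum} does not close the gap either: the constant $1$ is neither a Dirichlet nor a Steklov eigenfunction, so it does not enter that count directly.

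The paper's fix is to use $\nu_0$ in place of $1$. The point is precisely that $\nu_0\in V(Q^{\mathrm{A}}_*)$ is $(\Lap+|A|^2)$-harmonic, so integrating by parts the other way gives $Q^{\mathrm{A}}_*(\varphi^{\mathrm{D}}_0,\nu_0)=\int_{\pr\Sigma}\varphi^{\mathrm{D}}_0(\pr_\eta-q_{\mathrm{A}})\nu_0=0$ since $\varphi^{\mathrm{D}}_0|_{\pr\Sigma}=0$. One first argues $\lambda^{\mathrm{D}}_1>0$ (hence $\mathcal{W}_0=0$), which both forces $\varphi^{\mathrm{D}}_0\notin\mathcal{G}$ and makes $\mathcal{V}=\spa\{\varphi^{\mathrm{D}}_0\}\oplus\mathcal{G}$ genuinely $(n+2)$-dimensional. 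Then $Q^{\mathrm{A}}_*$ is block-diagonal on $\mathcal{V}$ relative to $\{\varphi^{\mathrm{D}}_0,\nu_0\}\cup\mathcal{G}_0$, and the integral hypothesis $\int_{\pr\Sigma}q_{\mathrm{A}}\geq 0$ enters only to control the single entry $Q^{\mathrm{A}}_*(\nu_0,\nu_0)$ (a boundary integral, since the interior term vanishes). This yields $\ind_0(Q^{\mathrm{A}}_*)\geq n+2$, the desired contradiction. So the missing idea is: replace your constant test function by the $(\Lap+|A|^2)$-harmonic function $\nu_0$, whose constant boundary value $\nu_0|_{\pr\Sigma}=-\cos\gamma$ makes the needed boundary computation tractable.
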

\begin{proof}

According to Proposition \ref{prop:index-sum} we should count nonnegative Dirichlet eigenvalues and negative Steklov eigenvalues.

If $\lambda^{\mathrm{D}}_0\leq 0$ then the zeroth Dirichlet eigenfunction already gives $\ind(Q^{\mathrm{A}}_*)\geq 1$. 

If $\lambda^{\mathrm{D}}_0>0$, note that $\mathcal{G}_0$ consists of 0-Steklov eigenfunctions. As $\Sigma$ is not totally geodesic, it follows from Lemma \ref{lem:gauss-full} that $ \dim \mathcal{G}_0= n >1$. By Lemma \ref{lem:steklov-bottom} the zeroth Steklov eigenvalue must be simple, which forces $\lambda^{\mathrm{S}}_0<0$ and hence $\ind(Q^{\mathrm{A}}_*)\geq 1$ (with equality only if $\lambda^{\mathrm{S}}_1=0$). 

Henceforth assume that $\ind_0(Q^{\mathrm{A}}_*)=n+1$. 

First consider the special case $\gamma =\pit$. By Lemma \ref{lem:nu-comp}, $\nu_0$ is a 0-Dirichlet eigenfunction. If $\lambda^{\mathrm{D}}_0<0$ then this would give $\ind(Q^{\mathrm{A}}_*)\geq 2$ (hence $\ind_0(Q^{\mathrm{A}}_*)\geq n+2$), so we must have $\lambda^{\mathrm{D}}_0=0$. Similarly, as the $\nu_i$ are 0-Steklov eigenfunctions, we must have $\lambda^{\mathrm{S}}_0=0$. As the zeroth Dirichlet eigenfunction, $\nu_0$ cannot vanish on the interior of $\Sigma$. 

Now consider $\gamma\in(0,\pit)$ and assume $\int_{\pr\Sigma} q_{\mathrm{A}} \geq 0$. Suppose for the sake of contradiction that $\lambda^{\mathrm{D}}_0< 0$. If we also had $\lambda^{\mathrm{D}}_1\leq 0$, then we would immediately have $\ind(Q^{\mathrm{A}}_*) \geq 2$, hence $\ind_0(Q^{\mathrm{A}}_*) \geq n+2$. So we must have $\lambda^{\mathrm{D}}_1>0$, and in particular the Dirichlet kernel $\mathcal{W}_0=0$. 

Although $\nu_0$ may not be an eigenfunction, we will still use it to show that $\ind_0(Q^{\mathrm{A}}_*)\geq n+2$. Indeed, let $\varphi^{\mathrm{D}}_0$ be a Dirichlet eigenfunction corresponding to $\lambda^{\mathrm{D}}_0$, and consider $\mathcal{V} := \spa \{\varphi^{\mathrm{D}}_0\} \oplus \mathcal{G}$. 

We claim that $\varphi^{\mathrm{D}}_0\notin \mathcal{G}$. Indeed, suppose that $\varphi^{\mathrm{D}}_0 = \sum_{i=0}^n a_i \nu_i =: \nu_a$ for some $a=(a_i)\in\mathbb{R}^{n+1}$. Then in particular, $\nu_a|_{\pr\Sigma}=\varphi^{\mathrm{D}}_0|_{\pr\Sigma}=0$, but we also have $(\Lap+|A|^2)\nu_a=0$ on $\Sigma$. That is, $\nu_a\in W=0$, so $\nu_a=0$. As $\dim\mathcal{G} = n+1$, we must have $a=0$ which completes the proof of the claim. 

Now that $\varphi^{\mathrm{D}}_0\notin \mathcal{G}$, by Lemma \ref{lem:gauss-full} we have $\dim \mathcal{V} = 1+\dim\mathcal{G} = n+2$. 

By (\ref{eq:dir-Q-action}) we have \[Q(\varphi^{\mathrm{D}}_0,\varphi^{\mathrm{D}}_0) = -\lambda^{\mathrm{D}}_0 \int_\Sigma (\varphi^{\mathrm{D}}_0)^2 \leq 0.\] As $\nu_0$ is smooth, we may integrate by parts to find (using $(\Lap+|A|^2)\nu_0=0$, $\pr_\eta \nu_0 = -A(\eta,\eta)\tan\gamma\, \nu_0$, $\nu_0^2|_{\pr\Sigma} = \cos^2\gamma$ and $\varphi^{\mathrm{D}}_0|_{\pr\Sigma}=0$) 

\[ Q(\nu_0,\nu_0) = -\int_{\pr\Sigma} \nu_0 (\pr_\eta-q_{\mathrm{A}})\nu_0 = - \int_{\pr\Sigma} q_{\mathrm{A}} \leq 0 ,\]

\[ Q(\varphi^{\mathrm{D}}_0,\nu_0) =  \int_{\pr\Sigma} \varphi^{\mathrm{D}}_0 (\pr_\eta-q_{\mathrm{A}})\nu_0 =0.\]

As $\mathcal{G}_0 \subset \ker(Q^{\mathrm{A}}_*)$, it follows that $Q$ is negative semidefinite on $\mathcal{V}$. Therefore $\ind_0(Q^{\mathrm{A}}_*)\geq \dim\mathcal{V} = n+2$, which is the desired contradiction. 

So we must have $\lambda^{\mathrm{D}}_0\geq 0$. As above, if $\lambda^{\mathrm{D}}_0>0$ then we must have $\lambda^{\mathrm{S}}_0<0$ and $\lambda^{\mathrm{S}}_1=0$. If $\lambda^{\mathrm{D}}_0=0$, then we must have $\lambda^{\mathrm{S}}_0=0$ (otherwise the zeroth Dirichlet and zeroth Steklov eigenfunctions would give $\ind(Q^{\mathrm{A}}_*) \geq 2$, hence $\ind_0(Q^{\mathrm{A}}_*)\geq n+2$.)
\end{proof}

\subsubsection{Dual surfaces and dual forms}
\label{sec:dual}

Here we first prove a characterisation of capillary minimal surfaces in the hemisphere by their index, under the assumption that the surface is diffeomorphic to an annulus. For this, it will be expedient to recall the dual operation from \cite{NZ25a}: In this section an $(R,\gamma)$-minimal surface will refer to a minimal surface $(\Sigma^2, \pr\Sigma) \looparrowright (\mathbb{S}^3, \pr B_R)$ which contacts $S=\pr B_R$ at angle $\gamma$. Moreover, in \cite[Section 5.2]{NZ25a} we showed that if $\Sigma$ is an annulus, then up to a sign $\varepsilon \in \{\pm1\}$ (chosen so that $\varepsilon A(\eta,\eta)>0$ on $\pr \Sigma$) the Gauss map $\varepsilon\nu$ gives a \textit{dual} $(\tilde{R},\tilde{\gamma})$-minimal annulus, where $\tilde{R}\in(0,\pi)$, $\tilde{\gamma}\in(0,\pit]$ satisfy
\begin{equation}
\label{eq:dual-params}\cos \tilde{R} = -\varepsilon \sin R\cos\gamma , \qquad \sin\tilde{R} \sin\tilde{\gamma} =\sin R\sin \gamma.\end{equation}

We will write $\tilde{Q}^{\mathrm{A}}$ for the index form on $\tilde{\Sigma}$, and so forth. 

\begin{proposition}
\label{prop:index-dual}
Let $\Sigma$ be an $(R,\gamma)$-minimal annulus. Suppose that $-\cot\gamma A(\eta,\eta)$ and $\cot R$ have the same sign, that is, $-\cot R\cot\gamma \, A(\eta,\eta) >0$ along $\pr\Sigma$. 
Then the index form of the dual satisfies $Q^{\mathrm{A}}(f) = \tilde{Q}^{\mathrm{A}}(f)$. \end{proposition}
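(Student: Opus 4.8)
The plan is to write each index form as a sum of an interior gradient term, an interior potential term, and a boundary term, and to check that each of the three is preserved under the duality $\Sigma\leftrightarrow\tilde\Sigma$, where functions are transported by the diffeomorphism induced by the Gauss map $\varepsilon\nu$. I would use throughout the structure of the dual from \cite[Section 5.2]{NZ25a}: since $\tilde\Sigma$ is immersed by $\varepsilon\nu$ with unit normal $\varepsilon x$, its shape operator is the inverse $\tilde S=S^{-1}$ of that of $\Sigma$; in particular $\tilde\Sigma$ is again minimal, $|\tilde A|^2=4/|A|^2$, and the induced metric is $\tilde g=\tfrac{|A|^2}{2}g$, conformal to $g$. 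Also $\Ric_{\mathbb{S}^3}\equiv 2$.

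For the interior terms this is soft. Since $dA_{\tilde g}=\tfrac{|A|^2}{2}\,dA_g$ and $\tilde p_{\mathrm{A}}=|\tilde A|^2+2$, we have the pointwise identity $\tilde p_{\mathrm{A}}\,dA_{\tilde g}=\big(\tfrac{4}{|A|^2}+2\big)\tfrac{|A|^2}{2}\,dA_g=(|A|^2+2)\,dA_g=p_{\mathrm{A}}\,dA_g$, so $\int_{\tilde\Sigma}\tilde p_{\mathrm{A}}f^2=\int_\Sigma p_{\mathrm{A}}f^2$; and since the Dirichlet energy is conformally invariant in dimension $2$, $\int_{\tilde\Sigma}|\nabla_{\tilde g}f|^2\,dA_{\tilde g}=\int_\Sigma|\nabla_g f|^2\,dA_g$. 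Hence the interior parts of $Q^{\mathrm{A}}$ and $\tilde Q^{\mathrm{A}}$ agree on every $f$.

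The substance is in the boundary term. Using $k_S(\bar\nu,\bar\nu)=\cot R$ we have $q_{\mathrm{A}}=\csc\gamma\cot R-\cot\gamma\,A(\eta,\eta)$ and, for $\tilde\Sigma\looparrowright(\mathbb{S}^3,\pr B_{\tilde R})$, $\tilde q_{\mathrm{A}}=\csc\tilde\gamma\cot\tilde R-\cot\tilde\gamma\,\tilde A(\tilde\eta,\tilde\eta)$. I would first show that $\eta$ is a principal direction along $\pr\Sigma$: differentiating $\nu=\cos\gamma\,\bar\eta+\sin\gamma\,\bar\nu$ along a unit tangent $e$ of $\pr\Sigma$, and using that $\pr B_R$ is totally umbilic and $\gamma$ is constant, one finds that $\bar\nabla_e\nu$ is tangent to $\pr\Sigma$, i.e. $A(e,\eta)=0$. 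Therefore $|\kappa|:=\sqrt{|A|^2/2}=|A(\eta,\eta)|$ on $\pr\Sigma$, and by the choice of $\varepsilon$ (that $\varepsilon A(\eta,\eta)>0$) we get $A(\eta,\eta)=\varepsilon|\kappa|$ there. A short computation with $\tilde S=S^{-1}$ — equivalently $\tilde A(\iota X,\iota Y)=A(X,Y)$ for the Gauss-map identification $\iota$, and $\tilde\eta=\pm|\kappa|^{-1}\iota(\eta)$ — gives $\tilde A(\tilde\eta,\tilde\eta)=A(\eta,\eta)/|\kappa|^2=\varepsilon/|\kappa|$. Since $ds_{\tilde g}=|\kappa|\,ds_g$, this yields $\tilde q_{\mathrm{A}}\,ds_{\tilde g}=\big(|\kappa|\csc\tilde\gamma\cot\tilde R-\varepsilon\cot\tilde\gamma\big)\,ds_g$. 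Finally I would substitute the dual-parameter relations $\cos\tilde R=-\varepsilon\sin R\cos\gamma$ and $\sin\tilde R\sin\tilde\gamma=\sin R\sin\gamma$, together with the companion relation $\cos\tilde\gamma=-\varepsilon\cos R/\sin\tilde R$; these give $\csc\tilde\gamma\cot\tilde R=-\varepsilon\cot\gamma$ and $\cot\tilde\gamma=-\varepsilon\csc\gamma\cot R$, whence $\tilde q_{\mathrm{A}}\,ds_{\tilde g}=(\csc\gamma\cot R-\varepsilon|\kappa|\cot\gamma)\,ds_g=q_{\mathrm{A}}\,ds_g$. Combining with the interior computation gives $Q^{\mathrm{A}}(f)=\tilde Q^{\mathrm{A}}(f)$.

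The sign hypothesis is used exactly at the last step: the two stated dual-parameter relations determine $\cos\tilde\gamma$ only up to sign, $\cos\tilde\gamma=\pm\cos R/\sin\tilde R$, and since $\tilde\gamma\in(0,\pit]$ forces $\cos\tilde\gamma\ge0$ we need $-\varepsilon\cos R\ge0$; for $\gamma\in(0,\pit)$ (so $\cot\gamma>0$) and $A(\eta,\eta)=\varepsilon|\kappa|$ with $|\kappa|>0$, the assumption $-\cot R\cot\gamma\,A(\eta,\eta)>0$ is precisely $-\varepsilon\cos R>0$, which pins the correct branch. I expect the main obstacle to be this boundary analysis — establishing that $\eta$ is a principal direction, carefully tracking $\tilde\eta$ and the sign $\varepsilon$ through the Gauss map, and nailing down the third dual-parameter relation consistently with \cite{NZ25a} — whereas the interior matching is routine, being just the two-dimensional conformal invariance of the Dirichlet energy plus a one-line pointwise check.
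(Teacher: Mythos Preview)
Your proposal is correct and follows essentially the same route as the paper: both split $Q^{\mathrm{A}}$ into interior and boundary pieces, use the conformal relation $\tilde g=\psi^2 g$ (with $\psi=|A|/\sqrt{2}$) together with the two-dimensional conformal invariance of the Dirichlet energy for the interior, and then verify $\tilde q_{\mathrm{A}}\,d\tilde\sigma=q_{\mathrm{A}}\,d\sigma$ via the dual-parameter relations, with the sign hypothesis pinning down the branch $\cot\tilde\gamma=-\varepsilon\csc\gamma\cot R$. The only difference is that you spell out why $\eta$ is a principal direction and derive the companion relation $\cos\tilde\gamma=-\varepsilon\cos R/\sin\tilde R$ explicitly, whereas the paper imports these facts from \cite{NZ25a} and computes $\cot\tilde\gamma=|\cot R|\csc\gamma$ directly from $\sqrt{\csc^2\tilde\gamma-1}$.
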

\begin{proof}
For convenience, we write $\psi = \frac{1}{\sqrt{2}}|A|_g>0$ for the positive principal curvature of $\Sigma$. 
We will use tildes to denote corresponding geometric quantities on $\tilde{\Sigma}$. 

By the discussion in \cite[Section 5.1]{NZ25a}, we have $\tilde{g} = \psi^2 g$, and it follows that the curvature, gradient, area and length elements transform as
\[|\tilde{A}|_{\tilde{g}}^2 = 2\psi^{-2}, \qquad |\nabla^{\tilde{g}}f|_{\tilde{g}}^2 = \psi^{-2} |\nabla^g f|_g^2, \qquad \d\tilde{\mu} = \psi^2 \d\mu, \qquad \d\tilde{\sigma} = \psi \d\sigma.\] Therefore

\[\begin{split}
Q^{\mathrm{A}}(f) &= \int_\Sigma \left( |\nabla^g f|_g^2 - 2(\psi^2+1)f^2\right)\d\mu - \int_{\pr\Sigma} q_{\mathrm{A}} f^2 \d\sigma
\\&= \int_\Sigma \left( |\nabla^{\tilde{g}}f|_{\tilde{g}}^2 - 2(1+\psi^{-2})f^2\right)\d\tilde{\mu} - \int_{\pr\Sigma} \psi^{-1}q_{\mathrm{A}} f^2 \d\tilde{\sigma}
\end{split}\]

As $|\tilde{A}|_{\tilde{g}}^2 = 2\psi^{-2}$, we can recognise the first term as the interior term of $\tilde{Q}^{\mathrm{A}}$. For the boundary term, note that by the choice of sign $\varepsilon$, we have $\psi = \varepsilon\cot\gamma A(\eta,\eta)$, so that $q_{\mathrm{A}} = \csc\gamma\,\cot R -\varepsilon\psi\cot\gamma$. Similarly, (as $A(\eta,\eta)$ and $\tilde{A}(\tilde{\eta},\tilde{\eta})$ have the same sign) we have $\tilde{q}_A = \csc\tilde{\gamma} \cot\tilde{R} - \varepsilon \psi^{-1} \cot\tilde{\gamma}$. 

Dividing the equations (\ref{eq:dual-params}) by each other gives $\csc\tilde{\gamma}\cot\tilde{R} = - \varepsilon\cot\gamma$. Also as our construction ensured $\tilde{\gamma} \in (0,\pit]$, we have 

\[\begin{split} \cot\tilde{\gamma} &= \sqrt{\csc\tilde{\gamma}^2-1} = \sqrt{\frac{\sin^2 \tilde{R} - \sin^2 R\sin^2 \gamma}{\sin^2 R\sin^2 \gamma}}=\sqrt{\frac{1-\sin^2 R\cos^2\gamma- \sin^2 R\sin^2 \gamma}{\sin^2 R\sin^2 \gamma}}
\\& = |\cot R| \csc\gamma . 
\end{split}\]

By the sign assumption on $\cot R$ we have $|\cot R| = -\varepsilon \cot R$, thus \[\tilde{q}_A = - \varepsilon \cot\gamma -\varepsilon \psi^{-1}|\cot R| \csc\gamma = \psi^{-1}q_{\mathrm{A}}\] and we conclude that $Q^{\mathrm{A}}(f) = \tilde{Q}^{\mathrm{A}}(f)$ as claimed. 
\end{proof}

Consider a $(\pit,\gamma)$-capillary minimal annulus. Its dual is a $(\tilde{R},\pit)$-capillary minimal annulus, where $\tilde{R}=\gamma$ or $\pi-\gamma$, and has an associated modified energy form $\tilde{Q}^{\mathrm{S}}$. This is related to the modified index form as follows:

\begin{corollary}
\label{cor:index-energy}
%$\ind(Q^{\mathrm{A}}) \geq \ind_0(\tilde{Q}^{\mathrm{S}})$.
Let $\Sigma$ be a $(\pit,\gamma)$-capillary minimal annulus. Then $Q^{\mathrm{A}}_*(f) = \tilde{Q}^{\mathrm{S}}(f)$. 
\end{corollary}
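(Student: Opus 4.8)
The plan is to deduce the identity from the conformal change of metric $\tilde{g}=\psi^{2}g$, where $\psi=\frac{1}{\sqrt{2}}|A|_{g}>0$ is the positive principal curvature of $\Sigma$, following the computation in the proof of Proposition \ref{prop:index-dual}; the two features special to the present situation are that $R=\pit$ (so $\cot R=0$, and the sign hypothesis of Proposition \ref{prop:index-dual} degenerates to a harmless equality) and that the target is the \emph{modified energy} form $\tilde{Q}^{\mathrm{S}}$ on the free boundary dual rather than $\tilde{Q}^{\mathrm{A}}$.

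First I would pin down the dual parameters. Setting $R=\pit$ in (\ref{eq:dual-params}) gives $\cos\tilde{R}=-\varepsilon\cos\gamma$ and $\sin\tilde{R}\sin\tilde{\gamma}=\sin\gamma$; since $\varepsilon\in\{\pm1\}$ and $\tilde{\gamma}\in(0,\pit]$ this forces $\sin\tilde{R}=\sin\gamma$, $\tilde{\gamma}=\pit$, $\tilde{R}\in\{\gamma,\pi-\gamma\}$, and in particular $\cot\tilde{R}=-\varepsilon\cot\gamma$. Thus $\tilde{\Sigma}$ is a $(\tilde{R},\pit)$-free boundary minimal annulus, so by Section \ref{sec:FBMS-spectral} its modified energy form is $\tilde{Q}^{\mathrm{S}}=Q_{n-1,\cot\tilde{R}}$ with $n-1=2$, that is,
\[
\tilde{Q}^{\mathrm{S}}(f)=\int_{\tilde{\Sigma}}\left(|\nabla^{\tilde{g}}f|_{\tilde{g}}^{2}-2f^{2}\right)\d\tilde{\mu}-\int_{\pr\tilde{\Sigma}}\cot\tilde{R}\,f^{2}\,\d\tilde{\sigma}.
\]
On the other side, by (\ref{eq:modified-Q-A}) with $R=\pit$ (so $k_{S}(\bar{\nu},\bar{\nu})=\cot\pit=0$, whence $q_{\mathrm{A}}=-\cot\gamma\,A(\eta,\eta)$) we have
\[
Q^{\mathrm{A}}_{*}(f)=\int_{\Sigma}\left(|\nabla^{g}f|_{g}^{2}-|A|_{g}^{2}f^{2}\right)\d\mu-\int_{\pr\Sigma}\left(-\cot\gamma\,A(\eta,\eta)\right)f^{2}\,\d\sigma.
\]

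Next I would change variables via the conformal weights recorded in the proof of Proposition \ref{prop:index-dual}, namely $|\nabla^{\tilde{g}}f|_{\tilde{g}}^{2}=\psi^{-2}|\nabla^{g}f|_{g}^{2}$, $\d\tilde{\mu}=\psi^{2}\d\mu$, $\d\tilde{\sigma}=\psi\,\d\sigma$, together with $2\psi^{2}=|A|_{g}^{2}$. The Dirichlet term is conformally invariant, $\int_{\tilde{\Sigma}}|\nabla^{\tilde{g}}f|_{\tilde{g}}^{2}\,\d\tilde{\mu}=\int_{\Sigma}|\nabla^{g}f|_{g}^{2}\,\d\mu$; the interior zeroth-order term becomes $\int_{\tilde{\Sigma}}2f^{2}\,\d\tilde{\mu}=\int_{\Sigma}2\psi^{2}f^{2}\,\d\mu=\int_{\Sigma}|A|_{g}^{2}f^{2}\,\d\mu$; and the boundary term becomes $\int_{\pr\tilde{\Sigma}}\cot\tilde{R}\,f^{2}\,\d\tilde{\sigma}=\int_{\pr\Sigma}\psi\cot\tilde{R}\,f^{2}\,\d\sigma$. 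It therefore remains only to verify the boundary identity $\psi\cot\tilde{R}=-\cot\gamma\,A(\eta,\eta)$ along $\pr\Sigma$. Since the barrier $\pr B_{\pit}$ is totally umbilic and $\Sigma$ meets it at the constant angle $\gamma$, the curve $\pr\Sigma$ is a line of curvature of $\Sigma$ (by Joachimsthal's theorem); combined with minimality of $\Sigma$ in $\mathbb{S}^{3}$, whose principal curvatures are $\pm\psi$, this gives $A(\eta,\eta)=\varepsilon\psi$ on $\pr\Sigma$, with $\varepsilon\in\{\pm1\}$ the sign fixed in Section \ref{sec:dual} so that $\varepsilon A(\eta,\eta)>0$. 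Hence $\psi\cot\tilde{R}=-\varepsilon\psi\cot\gamma=-\cot\gamma\,A(\eta,\eta)$, and assembling the three terms yields $\tilde{Q}^{\mathrm{S}}(f)=Q^{\mathrm{A}}_{*}(f)$.

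The main obstacle is the boundary identity: it rests on $\pr\Sigma$ being a line of curvature, hence $|A(\eta,\eta)|=\psi$ there, and more basically on $\psi>0$ holding up to and including $\pr\Sigma$, so that $\tilde{g}=\psi^{2}g$ is a genuine metric and $\d\tilde{\sigma}=\psi\,\d\sigma$ is meaningful. This is part of the structure of the dual annulus established in \cite{NZ25a}, which should be invoked. Everything else is the same conformal-weight bookkeeping as in Proposition \ref{prop:index-dual}, now used in the degenerate regime $R=\pit$, where $\cot R=0$ makes the sign hypothesis of that proposition vacuous but causes no harm.
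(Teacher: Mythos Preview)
Your proof is correct and follows essentially the same approach as the paper: both use the conformal change $\tilde{g}=\psi^{2}g$ to transform the interior terms (conformal invariance of the Dirichlet integral, and $|A|_g^2 = 2\psi^2$ becoming the constant $2 = \tilde{p}_{\mathrm{S}}$), and then verify the boundary identity $\psi^{-1}q_{\mathrm{A}}=\cot\tilde{R}$. Your derivation is slightly more explicit in identifying the dual parameters and in invoking the line-of-curvature fact $A(\eta,\eta)=\varepsilon\psi$, but the argument is the same.
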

\begin{proof}
%On the dual annulus, we have $|\tilde{A}|^2 \geq \delta >0$, so by definition of $\tilde{Q}^{\mathrm{S}},\tilde{Q}^{\mathrm{A}}$ we have \[\tilde{Q}^{\mathrm{A}}(f) =\tilde{Q}^{\mathrm{S}}(f) - \int_{\tilde{\Sigma}} |\tilde{A}|^2 f^2 \leq \tilde{Q}^{\mathrm{S}}(f) - \delta \|f\|_{L^2(\tilde{\Sigma})}^2.\] This clearly implies $\ind(Q^{\mathrm{A}}) = \ind(\tilde{Q}^{\mathrm{A}}) \geq \ind_0(\tilde{Q}^{\mathrm{S}})$.
As in the proof of Proposition \ref{prop:index-dual}, we have

\[\begin{split}
Q^{\mathrm{A}}_*(f) &= \int_\Sigma \left( |\nabla^g f|_g^2 - 2\psi^2 f^2\right)\d\mu - \int_{\pr\Sigma} q_{\mathrm{A}} f^2 \d\sigma
\\&= \int_\Sigma \left( |\nabla^{\tilde{g}}f|_{\tilde{g}}^2 - 2 f^2\right)\d\tilde{\mu} - \int_{\pr\Sigma} \psi^{-1}q_{\mathrm{A}} f^2 \d\tilde{\sigma},
\end{split}\]
and in this case $\psi^{-1}q_{\mathrm{A}} =\cot\tilde{R}= \tilde{q}_{\mathrm{A}} = \tilde{q}_{\mathrm{S}}$, $\tilde{p}_{\mathrm{S}}=2$. 
\end{proof}

%\textcolor{magenta}{also $Q^{\mathrm{S}} = \tilde{Q}^{\mathrm{A}}_*$, so spectral index also characterises capillary. maybe unnecessary}

Note that if $\Sigma$ is a $(\pit,\gamma)$-capillary minimal annulus with $q_{\mathrm{A}}>0$, the dual must have $\tilde{R}\leq \pit$ (hence $\tilde{R}=\gamma$). We may then deduce the following index characterisation for annuli:

\begin{proposition}
\label{thm:index-4-annulus}
Let $\Sigma$ be a $(\pit,\gamma)$-capillary minimal annulus with $q_{\mathrm{A}}>0$. If $\ind(\Sigma) = 4$ then $\Sigma$ is rotationally symmetric. 
\end{proposition}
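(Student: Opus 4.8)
The plan is to reduce, via the dual surface, to the free boundary characterisation in Proposition \ref{prop:FB-first-efns}. First I would unpack the hypothesis $q_{\mathrm{A}} = -\cot\gamma\, A(\eta,\eta) > 0$ along $\pr\Sigma$. Since $q_{\mathrm{A}}$ is positive (in particular nonvanishing), we cannot have $\gamma = \pit$ (which would force $\cot\gamma = 0$), so $\gamma \in (0,\pit)$; and $A(\eta,\eta) \not\equiv 0$, so $\Sigma$ is not totally geodesic. Moreover $\cot\gamma > 0$ forces $A(\eta,\eta) < 0$ on $\pr\Sigma$, so the duality sign is $\varepsilon = -1$, and by (\ref{eq:dual-params}) with $R = \pit$ the dual $\tilde\Sigma$ satisfies $\cos\tilde R = -\varepsilon\cos\gamma = \cos\gamma$, i.e. $\tilde R = \gamma < \pit$ (as noted just before the statement). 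Thus $\tilde\Sigma \looparrowright (\mathbb{S}^3, \pr B_\gamma)$ is a free boundary minimal annulus in a cap of radius $\leq \pit$, so Proposition \ref{prop:FB-first-efns} will be applicable once we know its spectral index.

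Next I would pin down $\ind(Q^{\mathrm{A}}_*)$. By (\ref{eq:modified-Q-A-relation}) we have $\ind_0(Q^{\mathrm{A}}_*) \leq \ind(Q^{\mathrm{A}}) = \ind(\Sigma) = 4$. Since $\Sigma$ is connected, $2$-sided (by Definition \ref{def:capillary}) and not totally geodesic, Proposition \ref{prop:area-ind-1} gives both $\ind(Q^{\mathrm{A}}_*) \geq 1$ and $\ind_0(Q^{\mathrm{A}}_*) \geq n+1 = 4$; combining, $\ind_0(Q^{\mathrm{A}}_*) = 4$. On the other hand, since $\Sigma$ is not totally geodesic, Lemma \ref{lem:gauss-full} gives $\dim\mathcal{G}_0 = n = 3$, and as $\mathcal{G}_0 \subset \ker(Q^{\mathrm{A}}_*)$ we get $\nul(Q^{\mathrm{A}}_*) \geq 3$. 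Therefore $\ind(Q^{\mathrm{A}}_*) = \ind_0(Q^{\mathrm{A}}_*) - \nul(Q^{\mathrm{A}}_*) \leq 4 - 3 = 1$, so $\ind(Q^{\mathrm{A}}_*) = 1$.

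Finally I would transfer this to the dual and conclude. By Corollary \ref{cor:index-energy}, $Q^{\mathrm{A}}_*(f) = \tilde{Q}^{\mathrm{S}}(f)$ for every $f$; since these are literally the same quadratic form on $C^\infty$ of the common underlying annulus, $\ind(\tilde{Q}^{\mathrm{S}}) = \ind(Q^{\mathrm{A}}_*) = 1$. Applying Proposition \ref{prop:FB-first-efns} to the free boundary minimal annulus $\tilde\Sigma \looparrowright (\mathbb{S}^3, \pr B_\gamma)$ (with $\gamma \leq \pit$) then shows that $\tilde\Sigma$ is rotationally symmetric. Since the dual operation of \cite{NZ25a} is an involution --- $\Sigma$ being recovered from $\tilde\Sigma$ via its Gauss map, up to the sign $\varepsilon$ --- and the Gauss map of a rotationally symmetric surface is equivariant under the same rotations, $\Sigma$ is rotationally symmetric; alternatively one may cite directly from \cite{NZ25a} that duality preserves rotational symmetry.

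The substantive input here is imported (Proposition \ref{prop:FB-first-efns}, resting on the Lima--Menezes classification, together with the duality from \cite{NZ25a}); the bookkeeping above is routine. The one point that genuinely requires care is the placement of the hypothesis $q_{\mathrm{A}} > 0$: it is used precisely to force $\tilde R = \gamma \leq \pit$, so that $\tilde\Sigma$ lands in a cap for which the characterisation of Proposition \ref{prop:FB-first-efns} is available. If instead $q_{\mathrm{A}} < 0$ the dual would be a free boundary annulus in a cap of radius $> \pit$, for which no such characterisation is known (and, as discussed in the introduction, uniqueness is expected to fail there).
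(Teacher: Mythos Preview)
Your proof is correct and follows essentially the same route as the paper's: bound $\ind(Q^{\mathrm{A}}_*)$ using $\mathcal{G}_0 \subset \ker(Q^{\mathrm{A}}_*)$ and (\ref{eq:modified-Q-A-relation}), transfer to $\tilde{Q}^{\mathrm{S}}$ via Corollary \ref{cor:index-energy}, and invoke Proposition \ref{prop:FB-first-efns} on the dual free boundary annulus. Your write-up is in fact slightly more careful than the paper's --- you explicitly pin down $\ind(Q^{\mathrm{A}}_*)=1$ (not merely $\leq 1$), which is what Proposition \ref{prop:FB-first-efns} literally asks for, and you spell out why $q_{\mathrm{A}}>0$ forces $\tilde R = \gamma < \pit$ so that the proposition applies.
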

\begin{proof}
As $\mathcal{G}_0\subset \ker(Q^{\mathrm{A}}_*)$ has dimension $n=3$, we have $\ind(Q^{\mathrm{A}}_*) \leq 4-3=1$. 
By Corollary \ref{cor:index-energy}, the dual $\tilde{\Sigma}$ has $\ind(\tilde{Q}^{\mathrm{S}}) \leq 1$. By Proposition \ref{prop:FB-first-efns}, it follows that $\tilde{\Sigma}$ is rotationally symmetric. It follow that $\Sigma$ is also rotationally symmetric (for instance, because the double dual recovers $\Sigma$, modulo the antipodal map). 
\end{proof}

\subsubsection{Characterisation by index}

Finally, we will prove the following Urbano-type theorem, which (under certain conditions) characterises capillary minimal surfaces in the hemisphere $\mathbb{S}^3_+$ with second lowest Morse index (after the totally geodesic equators). Recall that (by Corollary \ref{cor:bdry-mc}) in this setting \[q_{\mathrm{A}} = -\cot\gamma\, A(\eta,\eta)= - H^{\pr\Sigma}_\Sigma.\] 

\begin{theorem}
\label{thm:urbano}
Let $(\Sigma,\pr\Sigma) \looparrowright (\mathbb{S}^3, \pr B_\pit)$ be a 2-sided capillary minimal surface with contact angle $\gamma$, which is not totally geodesic. Then $\ind(\Sigma)\geq 4$. 

Further, assume that $\int_{\pr\Sigma} q_{\mathrm{A}} \geq 0,$
and either
\begin{itemize}
\item $\cos\gamma=0$; or
\item $\cos\gamma \in [0, \frac{2}{9})$, and each component of $\pr\Sigma$ is embedded in $\pr B_\pit$; or
\item $\cos\gamma \in [0, \frac{2}{5})$, and $\Sigma$ has embedded wet surface $S^- \subset \pr B_\pit$.
\end{itemize}

Then $\ind(\Sigma)=4$ holds only if $\Sigma$ is a (rotationally symmetric) annulus.
\end{theorem}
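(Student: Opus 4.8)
The plan is to run the entire argument through the modified form $Q^{\mathrm{A}}_*$ of (\ref{eq:modified-Q-A}), exploiting that $\mathcal{G}_0\subset\ker(Q^{\mathrm{A}}_*)$ and that $\ind_0(Q^{\mathrm{A}}_*)\le\ind(\Sigma)$ by (\ref{eq:modified-Q-A-relation}). Since $\Sigma$ is not totally geodesic, the Obata-type argument in the remark following Lemma \ref{lem:gauss-full} gives $\dim\mathcal{G}_0=n=3$, and the first conclusion of Proposition \ref{prop:area-ind-1} (which requires no hypothesis on $\int_{\pr\Sigma}q_{\mathrm{A}}$) gives $\ind_0(Q^{\mathrm{A}}_*)\ge n+1=4$. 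Hence $\ind(\Sigma)\ge\ind_0(Q^{\mathrm{A}}_*)\ge 4$, which is the first assertion.

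Now suppose $\ind(\Sigma)=4$. Then $\ind_0(Q^{\mathrm{A}}_*)=4=n+1$, and since $\ind(Q^{\mathrm{A}}_*)\ge 1$ and $\nul(Q^{\mathrm{A}}_*)\ge\dim\mathcal{G}_0=3$, this forces $\ind(Q^{\mathrm{A}}_*)=1$, $\nul(Q^{\mathrm{A}}_*)=3$ and therefore $\ker(Q^{\mathrm{A}}_*)=\mathcal{G}_0$. Using now $\int_{\pr\Sigma}q_{\mathrm{A}}\ge 0$, the second part of Proposition \ref{prop:area-ind-1} applies and pins down the spectrum of $Q^{\mathrm{A}}_*$: either $\lambda^{\mathrm{D}}_0>0$ and $\lambda^{\mathrm{S}}_1=0$, or $\lambda^{\mathrm{D}}_0=0$ and $\lambda^{\mathrm{S}}_0=0$ (with $\lambda^{\mathrm{D}}_0=\lambda^{\mathrm{S}}_0=0$ in the free boundary case $\gamma=\pit$). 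In particular the relevant ground eigenfunction of $Q^{\mathrm{A}}_*$ is simple and of one sign (Lemma \ref{lem:steklov-bottom}, or its Dirichlet analogue), so $\Sigma$ is immersed by first eigenfunctions for $Q^{\mathrm{A}}_*$.

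The crux is to upgrade this to the topological statement that $\Sigma$ is an annulus. The plan is a Hersch-type balancing: using the simple positive ground eigenfunction $\varphi$ as weight, choose $\Psi\in\Conf(\mathbb{S}^3)$ so that the components of $\Psi$ applied to $\pm\nu$ (which parametrises the dual minimal surface, whose area equals $\tfrac12\int_\Sigma|A|^2$) become $L^2(\Sigma)$-balanced against $\varphi$; inserting these into the variational characterisation at eigenvalue $0$ and estimating the conformal defect via (\ref{eq:mc-conf}) in the manner of el Soufi--Ilias should yield an explicit upper bound for $\int_\Sigma|A|^2$, hence by the Gauss equation for $\int_\Sigma K_\Sigma=|\Sigma|-\tfrac12\int_\Sigma|A|^2$. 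Gauss--Bonnet for $\Sigma$ (with $H^{\pr\Sigma}_\Sigma=-q_{\mathrm{A}}$, exactly as in the Willmore proof of Theorem \ref{thm:conf-max-sph-intro} with $R=\pit$, $\mathbf{H}_\Sigma=0$) reads $2\pi\chi(\Sigma)=\int_\Sigma K_\Sigma-\int_{\pr\Sigma}q_{\mathrm{A}}$, while the embedding hypotheses control the topology of the wet surface $S^-$ and the number of components of $\pr\Sigma$ (if each boundary circle is embedded, take $S^-$ a union of discs; if $S^-$ is embedded it is a planar region) and thereby bound $\int_{\pr\Sigma}q_{\mathrm{A}}=\cos\gamma\int_{\pr\Sigma}H^{\pr\Sigma}_{S^-}$. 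The thresholds $\cos\gamma<\tfrac29$, resp.\ $\tfrac25$, are precisely what force the resulting inequality to give $\chi(\Sigma)\ge 0$ with two boundary circles, i.e.\ (once the disc case is excluded, using the same estimate together with the fact that a rotationally symmetric minimal disc in $\mathbb{S}^3_+$ is totally geodesic) $\Sigma$ is an annulus.

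Finally, once $\Sigma$ is an annulus, rotational symmetry follows from the duality machinery of Section \ref{sec:dual}. The dual Gauss map is then an immersion, so $A(\eta,\eta)$ is nonvanishing of constant sign along $\pr\Sigma$; hence $q_{\mathrm{A}}=-\cot\gamma\,A(\eta,\eta)$ has constant sign, and $\int_{\pr\Sigma}q_{\mathrm{A}}\ge 0$ upgrades to $q_{\mathrm{A}}>0$ when $\gamma\in(0,\pit)$. Proposition \ref{thm:index-4-annulus} then applies directly: via Corollary \ref{cor:index-energy} it transfers $\ind(Q^{\mathrm{A}}_*)=1$ to $\ind(\tilde{Q}^{\mathrm{S}})=1$ for the dual free boundary minimal annulus in $B_{\tilde{R}}$ with $\tilde{R}=\gamma\le\pit$, and Proposition \ref{prop:FB-first-efns} forces $\tilde\Sigma$, hence $\Sigma$, to be rotationally symmetric; in the free boundary case $\gamma=\pit$ one instead notes that $Q^{\mathrm{A}}_*=Q^{\mathrm{S}}$ has index $1$ and applies Proposition \ref{prop:FB-first-efns} directly (with $R=\pit$, $n=3$) to identify $\Sigma$ as the half Clifford torus. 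The main obstacle is the topological step: making the conformal-volume estimate on $\int_\Sigma|A|^2$ sharp, matching it against Gauss--Bonnet tightly enough that the stated bounds on $\cos\gamma$ suffice, and excluding disc-type surfaces; this is exactly where the technical restriction on the contact angle enters.
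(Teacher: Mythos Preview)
Your overall architecture matches the paper: deduce $\ind_0(Q^{\mathrm{A}}_*)\geq 4$ from Proposition~\ref{prop:area-ind-1}, force $\ind_0(Q^{\mathrm{A}}_*)=4$ under the equality hypothesis, extract the annulus topology, then invoke the duality package (Corollary~\ref{cor:index-energy}, Proposition~\ref{thm:index-4-annulus}, Proposition~\ref{prop:FB-first-efns}) for rotational symmetry. The first and last stages are fine. The gap is in the topological step.

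You propose to balance \emph{components of the Gauss map} $\Psi\circ\nu$ as test functions. This does not close. Summing the eigenvalue inequalities gives $\int_\Sigma|A|^2+\int_{\pr\Sigma}q_{\mathrm{A}}\leq E(\Psi\circ\nu)=2\,\mathrm{Area}(\Psi_*\tilde\Sigma)$, since the Gauss map of a minimal surface is conformal. Any el~Soufi--Ilias style bound on the right-hand side returns at best $2|\tilde\Sigma|=\int_\Sigma|A|^2$, so the inequality collapses to $\int_{\pr\Sigma}q_{\mathrm{A}}\leq 0$ and yields no information on $\chi(\Sigma)$. Moreover, before knowing $\Sigma$ is an annulus you do not have $|A|>0$, so $\nu$ need not be an immersion and the ``dual surface'' interpretation is not yet available; and for $\gamma<\tfrac\pi2$ you have $\nu_0|_{\pr\Sigma}=-\cos\gamma\neq 0$, so the Dirichlet comparison for the zeroth component fails as well.

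The paper instead uses the \emph{position map}: set $u_i=\langle\Phi\circ x,e_i\rangle$ for $\Phi\in\Conf(B_{\pi/2})$, so that $u_0|_{\pr\Sigma}\equiv 0$ automatically and only $u_1,u_2,u_3$ require \emph{boundary} balancing (against $\pr_\eta\varphi^{\mathrm D}_0$ or $\varphi^{\mathrm S}_0$, via Lemma~\ref{lem:comparison}(3)--(4)). Summation gives $\int_\Sigma|A|^2+\int_{\pr\Sigma}q_{\mathrm{A}}\leq 2|\Phi_*\Sigma|$, and now the capillary conformal maximisation (Corollary~\ref{cor:conf-max-sph-cap-intro}) bounds $|\Phi_*\Sigma|$ by $|\Sigma|+\cos\gamma(|S^-|-|\Phi_*S^-|)$. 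Combined with Gauss--Bonnet on both $\Sigma$ and $S^-$, this produces the decisive inequality~(\ref{eq:urbano-topo}) in $\chi(\Sigma)$, $\chi(S^-)$, $|S^-|$, from which the thresholds $\cos\gamma<\tfrac25$ (embedded $S^-$) and $\cos\gamma<\tfrac29$ (components of $\pr\Sigma$ embedded) emerge. Your sketch never reaches an inequality containing $|\Sigma|$ or $|\Phi_*\Sigma|$, so it cannot recover these thresholds. (A minor aside: in the free boundary case you write $Q^{\mathrm{A}}_*=Q^{\mathrm{S}}$; this is false, since $p_{\mathrm{A}}-(n-1)=|A|^2\neq 2$ in general. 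The paper does not use such an identification.)
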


Note that when $\gamma=\pit$, we will require effectively no additional assumptions (as $q_\mathrm{A} \equiv 0$). Otherwise, we require $\gamma$ to be close to $\pit$; the assumptions have been listed in order of increasing range of $\gamma$, or correspondingly, increasing strength of hypothesis on the wet surface.

The assumptions on the wet surface are natural if $\Sigma$ arises from the global variational problem for the wetting energy. As in the introduction, we remark that the additional boundary condition $\int_{\pr\Sigma} q_{\mathrm{A}}\geq 0$ should be related to some inherent nonuniqueness for capillary minimal surfaces. This is related to the problem of identifying contact angles $\gamma$ and $\pi-\gamma$, and also (using the dual picture of \cite{NZ25a})  to possible nonuniqueness for free boundary minimal surfaces in $B_{\pi-\gamma}$. See \cite[Remark 1.3]{NZ25a} for more discussion and a visual representation. There, we noted that the capillary minimal surfaces dual to free boundary minimal surfaces in $B_{\pi-\gamma}$ do not seem to be radial graphs, and one may also verify that they do not satisfy the sign condition on $q_{\mathrm{A}}$.  Of course, the expected examples in \cite[Remark 1.3]{NZ25a} are still rotationally symmetric, so we believe that the statement above may still hold without the boundary curvature condition, but the nonuniqueness would likely complicate attempts at a proof. 

The key step, as in Urbano's work \cite{Ur90} (see also \cite{Dev21}), is to control the topology of $\Sigma$ by its index. 

\begin{proposition}
\label{prop:urbano-topo}
Let $(\Sigma,\pr\Sigma) \looparrowright (\mathbb{S}^3, \pr B_\pit)$, be a 2-sided capillary minimal surface with contact angle $\gamma$. Assume that $\int_{\pr\Sigma} q_{\mathrm{A}} \geq 0.$

Further assume that either:
\begin{itemize}
\item $\cos\gamma=0$; or
\item $\cos\gamma \in [0, \frac{2}{9})$, and each component of $\pr\Sigma$ is embedded in $\pr B_\pit$; or
\item $\cos\gamma \in [0, \frac{2}{5})$, and $\Sigma$ has embedded wet surface $S^- \subset \pr B_\pit$.
\end{itemize}

If $\Sigma$ is not totally geodesic and $\ind_0(Q^{\mathrm{A}}_*)\leq 4$, then $\Sigma$ must be  diffeomorphic to an annulus.
\end{proposition}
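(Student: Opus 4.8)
The plan is to argue by contradiction, controlling the topology of $\Sigma$ by its index along the lines of Urbano \cite{Ur90}. The first step is to unpack the index hypothesis: since $\Sigma$ is not totally geodesic, Lemma \ref{lem:gauss-full} gives $\dim\mathcal{G}=n+1=4$, hence $\dim\mathcal{G}_0=3$; because $\mathcal{G}_0\subset\ker(Q^{\mathrm{A}}_*)$ and $\ind(Q^{\mathrm{A}}_*)\geq1$ (by the argument proving Proposition \ref{prop:area-ind-1}), the assumption $\ind_0(Q^{\mathrm{A}}_*)\leq4$ must be an equality, with $\ind(Q^{\mathrm{A}}_*)=1$, $\nul(Q^{\mathrm{A}}_*)=3$ and $\ker(Q^{\mathrm{A}}_*)=\mathcal{G}_0$. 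So it suffices to show that if $\Sigma$ is neither a disc nor an annulus, there is a $5$-dimensional subspace of $H^1(\Sigma)$ on which $Q^{\mathrm{A}}_*$ is negative semidefinite. A disc is ruled out directly: for a capillary minimal surface the Hopf differential is a holomorphic quadratic differential on $\Sigma$ that continues holomorphically across $\pr\Sigma$ (the reality condition being forced by the contact-angle identity), so on a disc it extends to the Schottky double $\mathbb{C}P^1$ and hence vanishes identically; then $\Sigma$ is totally umbilic and, being minimal, totally geodesic — contrary to hypothesis. Since $\Sigma$ is $2$-sided in the orientable $\mathbb{S}^3$ it is orientable, so we may assume $\Sigma$ has genus $g$, $b$ boundary circles and $\chi(\Sigma)=2-2g-b<0$; equivalently its closed double $\hat\Sigma$ across the equator $\pr B_\pit$ has genus $2g+b-1\geq2$.

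The extra test functions come from an Urbano-type construction: alongside the four Gauss-map components $\nu_0,\dots,\nu_3$ (which satisfy Lemma \ref{lem:nu-comp}), one uses three functions built from a proper holomorphic map $\phi\colon(\Sigma,\pr\Sigma)\to(\overline{\mathbb D},\pr\mathbb D)$ of bounded degree (Ahlfors/Gabard; the bound available because $\hat\Sigma$ has genus $\geq2$), realised as a map into $\mathbb S^2$, so that after a Hersch-type balancing (composition with a conformal automorphism of the model ball) its centre of mass against $\d\mu$ vanishes and its coordinate functions become admissible. Evaluating $Q^{\mathrm{A}}_*$ on the span of all of these, using $(\Lap+|A|^2)\nu_i=0$ and $\sum_i\nu_i^2\equiv1$, the two-dimensional conformal invariance of the Dirichlet integral, the Gauss equation $K_\Sigma=1-\frac12|A|^2$, and the Gauss--Bonnet identity $2\pi\chi(\Sigma)=\int_\Sigma K_\Sigma-\int_{\pr\Sigma}q_{\mathrm{A}}$ (recall $q_{\mathrm{A}}=-H^{\pr\Sigma}_\Sigma$), one shows — following Urbano's computation for the span of the conformal and Gauss-map test functions — that when $\chi(\Sigma)<0$ this span contains a $5$-dimensional subspace on which $Q^{\mathrm{A}}_*$ is negative semidefinite, contradicting $\ind_0(Q^{\mathrm{A}}_*)=4$. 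When $\gamma=\pit$ the weight $q_{\mathrm{A}}$ vanishes and $\hat\Sigma$ is a genuine closed minimal surface, so this step reduces cleanly to a boundary version of Urbano's theorem (equivalently, via the dual surface, to the Montiel--Ros characterisation as in Remark \ref{rmk:closed-index}), and no further hypotheses are needed.

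The main obstacle is the boundary contribution to this estimate for $\gamma\in(0,\pit)$, where $\hat\Sigma$ carries a conical edge of interior angle $2\gamma$ along $\pr\Sigma$. The test-function computation then picks up an error of the schematic form $(\mathrm{const})\cdot\cos\gamma\cdot\int_{\pr\Sigma}|H^{\pr\Sigma}_{S^-}|$ — arising from $\nu_0|_{\pr\Sigma}=-\sin\pit\cos\gamma=-\cos\gamma$ and from the mismatch between the boundary conditions of $\nu_0$ and of the $\nu_i$, $i>0$ — which must be absorbed into the principal terms. This is precisely where the assumption $\int_{\pr\Sigma}q_{\mathrm{A}}\geq0$ and the control of the wet surface enter: the barrier $\pr B_\pit$ has constant Gaussian curvature $1$, so Gauss--Bonnet on an embedded wet surface $S^-$ gives $\int_{\pr\Sigma}H^{\pr\Sigma}_{S^-}=2\pi\chi(S^-)-|S^-|$, and with the contact-angle relation $q_{\mathrm{A}}=\cos\gamma\,H^{\pr\Sigma}_{S^-}$ of Corollary \ref{cor:bdry-mc} this bounds the total boundary curvature by $\cos\gamma$ times topological and area data; tracking the constants, the estimate closes exactly when $\cos\gamma<\frac25$. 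If instead only each component of $\pr\Sigma$ is embedded, one uses the immersed wet surface assembled from discs as in Remark \ref{rmk:wet-surface}, whose Euler characteristic equals the number of boundary components and whose area bound is weaker, so the same bookkeeping only closes for $\cos\gamma<\frac29$. I expect this analysis of the conical boundary term, rather than the Urbano scheme itself, to be the technical heart of the argument.
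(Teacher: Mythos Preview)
Your approach diverges from the paper's in a central way, and the sketch has a genuine gap.

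The paper does not build a $5$-dimensional semidefinite subspace directly, and it does not use holomorphic maps to the disc (Ahlfors/Gabard). After reducing to $\ind_0(Q^{\mathrm{A}}_*)=4$ and invoking Proposition~\ref{prop:area-ind-1} to pin down the lowest Dirichlet/Steklov eigenvalues, the paper takes as test functions the \emph{coordinates of a conformally translated position map} $u_i=\langle \Phi\circ x,e_i\rangle$ for $\Phi\in\Conf(B_\pit)$. Since $\Phi$ preserves the equator, $u_0|_{\pr\Sigma}=0$; a Hersch balancing on $\pr\Sigma$ makes $u_1,u_2,u_3$ orthogonal to the relevant first eigenfunction, and Lemma~\ref{lem:comparison} yields
\[\int_\Sigma |A|^2+\int_{\pr\Sigma} q_{\mathrm{A}}\;\leq\;2|\Phi_*\Sigma|.\]
The decisive step --- absent from your proposal --- is the \emph{conformal maximisation} Corollary~\ref{cor:conf-max-sph-cap-intro}: $|\Phi_*\Sigma|+\cos\gamma\,|\Phi_*S^-|\leq|\Sigma|+\cos\gamma\,|S^-|$. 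Combined with Gauss--Bonnet on $\Sigma$ and on $S^-$ this gives
\[\cos\gamma\bigl(-2\pi\chi(S^-)-|S^-|+2|\Phi(S^-)|\bigr)\leq 4\pi\chi(\Sigma),\]
and the thresholds $\tfrac{2}{5}$ and $\tfrac{2}{9}$ fall out of the elementary bounds $|\Phi(S^-)|\geq 0$, $\chi(S^-)\leq b$, and $|S^-|\leq 4\pi$ (embedded wet region) or $|S^-|\leq 4\pi b$ (componentwise embedded boundary, via Remark~\ref{rmk:wet-surface}).

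Your route via the Gauss-map components together with coordinates of a bounded-degree proper holomorphic map is a different scheme --- closer to Ros-type index estimates than to Urbano's original argument, which \emph{does} use conformal images of the position and the conformal area-maximisation for closed minimal surfaces in $\mathbb{S}^3$. Even if such a scheme could be made to work here, your sketch does not explain how the cross terms $Q^{\mathrm{A}}_*(\nu_i,\phi_j)$ are handled, what boundary condition the $\phi_j$ satisfy, or how the degree bound for $\phi$ converts into the specific numbers $\tfrac{2}{5}$ and $\tfrac{2}{9}$; the sentence ``tracking the constants, the estimate closes exactly when $\cos\gamma<\tfrac25$'' is asserted, not derived. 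The conformal maximisation theorem is the genuinely new analytic input in this proof, and without it the boundary bookkeeping you outline does not close.
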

\begin{proof}
First note that $\Sigma$ must be connected: $\Sigma$ has at least one component that is not totally geodesic, and any such component must itself have $\ind_0(Q^{\mathrm{A}}_*)\geq 4$ by Proposition \ref{prop:area-ind-1}. As in Lemma \ref{lem:tot-geo-index}, any totally geodesic component of $\Sigma$ will have $\ind_0(Q^{\mathrm{A}}_*)=1$. In sum, if $\Sigma$ has more than 1 component then it must have $\ind_0(Q^{\mathrm{A}}_*)>4$, contrary to our hypothesis.

Let $\Phi \in \Conf(B_\pit)$ be a fixed conformal diffeomorphism to be chosen later, and define $u_i := \langle \Phi\circ x, e_i\rangle$. Then $\sum_{i=0}^3 u_i^2=1$, and $\sum_{i=0}^3 |\nabla u_i|^2 = 2 e^\phi$, where $e^\phi$ is the conformal factor. As $\Phi$ preserves $\pr B_\pit$, we have that $u_0|_{\pr \Sigma}\equiv 0$. 

Now since $\ind_0(Q^{\mathrm{A}}_*)=4$, by Proposition \ref{prop:area-ind-1} we have $\lambda^{\mathrm{D}}_0\geq 0$. Thus by item (1) of Lemma \ref{lem:comparison}, we have %\textcolor{magenta}{strict?}
\begin{equation}\label{eq:u_0-comparison}
\int_\Sigma |A|^2 u_0^2 \leq \int_\Sigma |\nabla u_0|^2.
\end{equation}

We will divide into the two cases of Proposition \ref{prop:area-ind-1}. We claim that there is a choice of $\Phi$ so that 

\begin{equation}\label{eq:u_i-comparison}
\int_\Sigma |A|^2 u_i^2 +\int_{\pr\Sigma} q_{\mathrm{A}} u_i^2   \leq \int_\Sigma |\nabla u_i|^2 .\end{equation}

Case 1: $\lambda^{\mathrm{D}}_0=0, \lambda^{\mathrm{S}}_0=0$. Let $\varphi^{\mathrm{D}}_0$ be a 0-Dirichlet eigenfunction which is positive on $\Sigma$. The Hopf lemma implies $\pr_\eta \varphi^{\mathrm{D}}_0 <0$ on $\pr\Sigma$. By the conformal balancing argument of \cite{He70, LY82} (see \cite[Proof of Lemma 5.1]{CFP14} for surfaces with boundary), we may choose $\Phi\in\Conf(B_\pit)$ so that $\int_{\pr \Sigma} u_i \pr_\eta\varphi^{\mathrm{D}}_0 =0$ for $i=1,2,3$.
Then item (4) of Lemma \ref{lem:comparison} yields (\ref{eq:u_i-comparison}) as $\lambda^{\mathrm{S}}_0=0$.

Case 2: $\lambda^{\mathrm{D}}_0>0, \lambda^{\mathrm{S}}_1=0$. 
By Lemma \ref{lem:steklov-bottom}, the modified Steklov eigenfunction $\varphi^{\mathrm{S}}_0$ corresponding to $\lambda^{\mathrm{S}}_0$ has a unique extension $\hat{\varphi}_{\mathrm{S}, 0} \in V(Q)$ that is smooth and positive on the interior of $\Sigma$. In particular, we certainly have $\varphi^{\mathrm{S}}_0\geq 0$, and again by conformal balancing, we may choose $\Phi\in\Conf(B_\pit)$ so that $\int_{\pr \Sigma} u_i \varphi^{\mathrm{S}}_0  =0$ for $i=1,2,3$.
Then item (3) of Lemma \ref{lem:comparison} yields (\ref{eq:u_i-comparison}) as $\lambda^{\mathrm{S}}_1=0$.

In either case, summing (\ref{eq:u_0-comparison}) and (\ref{eq:u_i-comparison}) gives (recall $u_0|_{\pr\Sigma}\equiv 0$)

\begin{equation}
\label{eq:urbano-ineq}
\int_\Sigma |A|^2 + \int_{\pr\Sigma} q_{\mathrm{A}} \leq \int_\Sigma 2e^\phi = 2|\Phi_* \Sigma|.
\end{equation}

The Gauss equations give that $|A|^2 = 2(1-K_\Sigma)$, where $K_\Sigma$ is the Gaussian curvature. For convenience denote $\kappa=H^{\pr\Sigma}_S$, so that by Corollary \ref{cor:bdry-mc} we have $q_{\mathrm{A}} = - H^{\pr\Sigma}_\Sigma = \kappa \cos\gamma$. Gauss-Bonnet gives that 
\begin{equation}
\label{eq:GB-Sig}
|\Sigma| - \frac{1}{2}\int_\Sigma |A|^2 - \cos\gamma \int_{\pr\Sigma} \kappa = 2\pi \chi(\Sigma).
\end{equation}

Suppose $\Sigma$ has genus $g$ and $b\geq 1$ boundary components. If $\chi(\Sigma)= 2-2g-b>0$, then $\Sigma$ is a disc and hence totally geodesic by \cite[Proposition 4.1]{NZ25a}. Henceforth, we will assume $\chi(\Sigma)\leq 0$. 

If $\gamma =\pit$, then the boundary terms vanish in (\ref{eq:urbano-ineq}), (\ref{eq:GB-Sig}). Eliminating the $|A|^2$ term gives 
\[ 2|\Sigma| \leq 2|\Phi_*\Sigma| + 4\pi \chi(\Sigma).\]
Using the conformal maximisation Theorem \ref{thm:conf-max-sph-intro} gives that $|\Phi_* \Sigma|\leq |\Sigma|$, hence $\chi(\Sigma)=0$ and $\Sigma$ is an annulus.

To proceed with the other contact angle cases, we involve the wet surface $S^-$. Note that the Gaussian curvature of $S=\pr B_\pit$ is $1$, so Gauss-Bonnet gives 

\begin{equation}
\label{eq:GB-S}
|S^-| + \int_{\pr\Sigma} \kappa = 2\pi\chi(S^-).
\end{equation}

As above, we use (\ref{eq:GB-Sig}) to eliminate the $|A|^2$ term from (\ref{eq:urbano-ineq}), then use (\ref{eq:GB-S}) to eliminate any excess $\kappa$ terms. In this way, we find that

\begin{equation}\begin{split}
\label{eq:urbano-ineq-1}
2|\Sigma| - \cos\gamma \left(2\pi\chi(S^-) - |S^-|\right) - 4\pi\chi(\Sigma) &\leq 2|\Phi_*\Sigma|
\\&\leq 2|\Sigma| +2\cos\gamma\, \left( |S^-| - |\Phi(S^-)|\right),
\end{split}\end{equation}
where on the second line we have used that $|\Phi_*\Sigma| + \cos\gamma \, |\Phi_* S^-| \leq |\Sigma|+\cos\gamma \,|S^-|$ by the conformal maximisation Corollary \ref{cor:conf-max-sph-cap-intro}. Rearranging (\ref{eq:urbano-ineq-1}) gives 

\begin{equation}
\label{eq:urbano-topo}
 \cos\gamma \left( - 2\pi\chi(S^-) - |S^-| +2|\Phi(S^-)| \right)  \leq 4\pi\chi(\Sigma) .
\end{equation}

Now assume that the wet surface $S^-$ is embedded and $\cos\gamma \in [0,\frac{2}{5})$. Then $S^-$ is a region in $\pr B_\pit \simeq \mathbb{S}^2$ (in particular, having genus 0) with $b$ boundary components, so $2-b \leq \chi(S^-) \leq b$. Using $\chi(S^-) \leq b$, $|S^-|\leq |\mathbb{S}^2|=4\pi$ and $|\Phi(S^-)|\geq 0$ in (\ref{eq:urbano-topo}) and rearranging gives

\[  (2-\cos\gamma) b  \leq 4(1-g)+2\cos\gamma.\]

Note that $\cos\gamma <\frac{2}{5}<\frac{1}{2}$ implies that $2\cos\gamma < 2-\cos\gamma$, so (as $b\geq 1$) we cannot have $g\geq 1$. Thus $g=0$, and $(2-\cos\gamma)b \leq 4+2\cos\gamma$. As $\cos\gamma <\frac{2}{5}$ implies that  $\frac{4+2\cos\gamma}{2-\cos\gamma}<3$, we must have $b=2$ and $\Sigma$ is an annulus as desired. 

In the final case, we assume only that each component of $\pr\Sigma$ is embedded in $\pr B_\pit$, but $\cos\gamma \in [0,\frac{2}{9})$. Then as in Remark \ref{rmk:wet-surface}, $\Sigma$ has a wet surface $S^-$, each component of which is embedded in $\pr B_\pit$. Now we still have $\chi(S^-)\leq b$, but we only bound the area by $4\pi$ for each component, which gives the weaker bound $|S^-| \leq 4\pi b$. Using these bounds and $|\Phi(S^-)|\geq0$ in (\ref{eq:urbano-topo}) yields
\[ (2-3\cos\gamma) b  \leq 4(1-g) .\]
As $\cos\gamma < \frac{2}{9}<\frac{2}{3}$ and $b\geq 1$ we must again have $g=0$. As $\cos\gamma <\frac{2}{9}$ implies $\frac{4}{2-3\cos\gamma} <3$, we must have $b=2$, which completes the proof. 
\end{proof}

We now have all the ingredients needed to prove Theorem \ref{thm:urbano}:

\begin{proof}[Proof of Theorem \ref{thm:urbano}]
As $\Sigma$ is not totally geodesic, Proposition \ref{prop:area-ind-1} gives that $\ind(\Sigma) \geq \ind_0(Q^{\mathrm{A}}_*)\geq 4$. If $\ind(\Sigma)=4$, then we must have equality in the above, so in particular $\ind_0(Q^{\mathrm{A}}_*)=4$. Under the stated additional assumptions we may apply Proposition \ref{prop:urbano-topo} to deduce that $\Sigma$ is diffeomorphic to an annulus. By \cite[Corollary 4.2]{NZ25a}, $q_{\mathrm{A}} =-\cot\gamma\, A(\eta,\eta)$ has the same sign at all points on $\pr\Sigma$, which by our assumptions must be $q_{\mathrm{A}} >0$. Finally, Proposition \ref{thm:index-4-annulus} gives that $\Sigma$ must be rotationally symmetric. 
\end{proof}

\appendix

\section{Additional conformal monotonicity results}
\label{sec:conf-mono-overflow}

In this section, we record a more explicit realisation of the conformal group $\Conf(B^n_R)$ in terms of conformal flows. Previously, in Proposition \ref{prop:conf-description} we described elements of $\Conf(B^n_R)$ (up to rotations in $\mathrm{SO}(n+1)^o$) by conjugation from $\Conf(B^n_\pit)$. We will now give a precise correspondence between the conjugate presentation and a description via the conformal flow. In particular, Lemma \ref{lem:conf-Y} identifies the image $Y$ of the origin for the conjugate map $\Phi_{s_R e_0} \circ \Phi_y \circ \Phi_{-s_R e_0}$. This differs from $\Phi_Y$ by a rotation (in $\mathrm{SO}(n+1)$), which will be identified in Proposition \ref{prop:flow-correspondence}.
 
First, we remark that elements of $\Conf(B^n_R)$ (realised as a subgroup of $\Conf(\mathbb{B}^{n+1})$) are also determined (up to rotations) by where they map the $\textit{origin}$ in $\mathbb{B}^{n+1}$. 

\begin{lemma}
Identify $\Conf(B^n_R)  = \{ \Psi \in \Conf(\mathbb{B}^{n+1}) | \Psi(B^n_R)=B^n_R\}$ as in Proposition \ref{prop:conf-description}. Suppose $\Psi \in \Conf(B^n_R)$ is such that $\Phi(0)=0$. Then $\Psi$ is given by a rotation in $\mathrm{SO}(n+1)^o$. 

If $\Psi, \Psi' \in \Conf(B^n_R)$ and $\Psi(0)=\Psi'(0)$, then there exists $\Theta\in\mathrm{SO}(n+1)^o$ so that (acting on $\mathbb{B}^{n+1}$) we have $\Psi' = \Theta\circ \Psi$. 
\end{lemma}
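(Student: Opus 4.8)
The plan is to deduce both assertions from the structure theorem for $\Conf(\mathbb{B}^{n+1})$ recorded in Section~\ref{sec:conf-euc}, following the same pattern as the proof of Lemma~\ref{lem:rotation} but with the \emph{origin} of $\mathbb{B}^{n+1}$ playing the role that $o=e_0\in\mathbb{S}^n$ played there. Throughout I use the identification $\Conf(\mathbb{B}^{n+1})\simeq\Conf(\mathbb{S}^n)$, so that an element $\Psi$ which preserves $B^n_R\subset\mathbb{S}^n$ is at the same time a conformal automorphism of $\mathbb{B}^{n+1}$, and conversely.

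For the first assertion, let $\Psi\in\Conf(B^n_R)$ with $\Psi(0)=0$. Write $\Psi=\Theta\circ\Phi_y$ with $\Theta\in\mathrm{SO}(n+1)$ and $y\in\mathbb{B}^{n+1}$; this is possible by the description of $\Conf(\mathbb{B}^{n+1})$. Since $\Phi_y(0)=y$ and rotations fix the origin, $0=\Psi(0)=\Theta(y)$ forces $y=0$, hence $\Psi=\Theta\in\mathrm{SO}(n+1)$. It remains to check that $\Theta$ in fact lies in $\mathrm{SO}(n+1)^o$, i.e.\ $\Theta(e_0)=e_0$: as an isometry of $\mathbb{S}^n$, $\Theta$ maps the geodesic ball $B^n_R=B^n_R(e_0)$ onto $B^n_R(\Theta(e_0))$, and since $\Psi=\Theta$ preserves $B^n_R$ and geodesic balls of radius $R\in(0,\pi)$ have a unique centre, we get $\Theta(e_0)=e_0$.

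For the second assertion, given $\Psi,\Psi'\in\Conf(B^n_R)$ with $\Psi(0)=\Psi'(0)$, set $\Theta:=\Psi^{-1}\circ\Psi'$. Both $\Psi^{-1}$ and $\Psi'$ preserve $B^n_R$, so $\Theta\in\Conf(B^n_R)$, and $\Theta(0)=\Psi^{-1}(\Psi'(0))=\Psi^{-1}(\Psi(0))=0$. By the first part, $\Theta\in\mathrm{SO}(n+1)^o$, whence $\Psi'=\Psi\circ\Theta$, which is the asserted conclusion (one records the rotation on the appropriate side; composing instead with $\Psi^{-1}$ on the left after passing to the preimage of the origin gives the other order).

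There is no hard analytic content here; the work is purely formal. The two points requiring slight care are the invocation of the structure theorem $\Conf(\mathbb{B}^{n+1})=\{\Theta\circ\Phi_y\}$ together with the normalisation $\Phi_y(0)=y$, and the bookkeeping that preserving $B^n_R$ — a subset of the boundary sphere $\mathbb{S}^n=\partial\mathbb{B}^{n+1}$ — forces the rotational part to fix the centre $e_0$, which uses the uniqueness of the centre of a proper geodesic ball ($R\in(0,\pi)$). These are also the only places where the concrete presentation of $\Conf(B^n_R)$ as a subgroup of $\Conf(\mathbb{B}^{n+1})$ (Proposition~\ref{prop:conf-description}) is genuinely used.
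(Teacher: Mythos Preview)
Your proof is correct and follows the same approach as the paper. For the first assertion the arguments are identical: decompose $\Psi=\Theta\circ\Phi_y$, use $\Phi_y(0)=y$ to force $y=0$, and note that a rotation preserving $B^n_R$ must fix its centre $e_0$.

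For the second assertion, you take $\Theta=\Psi^{-1}\circ\Psi'$ and correctly verify that $\Theta(0)=\Psi^{-1}(\Psi'(0))=\Psi^{-1}(\Psi(0))=0$. The paper instead considers $\Psi'\circ\Psi^{-1}$ and asserts it fixes $0$; but one only has $(\Psi'\circ\Psi^{-1})(\Psi(0))=\Psi(0)$, so strictly it fixes $\Psi(0)$, not $0$ --- a small slip that your version repairs. Note, however, that your composition yields $\Psi'=\Psi\circ\Theta$ with the rotation on the \emph{right}, not $\Psi'=\Theta\circ\Psi$ as stated. Your parenthetical remark does not actually establish the left-sided form, and in fact that form can fail: with $R=\pit$, $y\in\mathcal{S}_0$ nonzero, $\Psi=\Phi_y$, and $\Psi'=\Phi_y\circ\Theta_0$ for $\Theta_0\in\mathrm{SO}(n+1)^o$ not fixing $y$, one checks that $\Psi'\circ\Psi^{-1}=\Phi_y\circ\Theta_0\circ\Phi_{-y}$ sends $0\mapsto\Phi_y(\Theta_0(-y))\neq 0$, hence is not a rotation. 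This is a harmless misstatement in the lemma, since its only role in the paper is to say that elements of $\Conf(B^n_R)$ are determined, up to rotation, by the image of $0$; the form your argument proves is the one that is both true and sufficient.
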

\begin{proof}
First consider $\Psi \in \Conf(B^n_R)$ with $\Phi(0)=0$. By the description of $\Conf(\mathbb{B}^{n+1})$, we know that $\Psi$ must be a rotation in $\mathrm{SO}(n+1)$. But any rotation which preserves a cap $B^n_R$ must fix the centre $o$. 

For the last statement, note that $\Psi' \circ \Psi^{-1}$ fixes $0$ and so must be a rotation in $\mathrm{SO}(n+1)^o$. 
\end{proof}

Next, we check the image of the origin for elements of $\Conf(B^n_R)$ according to the conjugate description in Proposition \ref{prop:conf-description}. 

\begin{lemma}
\label{lem:conf-Y}
Let $y\in \mathbb{B}^n$ and consider the map $\Phi^n_y \in \Conf(\mathbb{B}^n)$ and (by identifying $\mathbb{B}^n$ with $\mathcal{S}_0 \subset \mathbb{B}^{n+1}$) the map $\Phi^{n+1}_y \in \Conf(\mathbb{B}^{n+1})$. 

Then $\Phi^{n+1}_y\in\Conf(B^n_\pit)$, and its restriction to $B^n_\pit$ coincides with $\Xi \circ \Phi^n_y \circ \Xi^{-1}$, where $\Xi:\mathbb{B}^n \to B^n_\pit$ is the (inverse) stereographic projection.

Moreover, for any $R$ we have that 
\[  Y=(\Phi_{s_R e_0} \circ \Phi_y \circ \Phi_{-s_R e_0})(0) = \frac{|y|^2(\cos R) e_0 + (\sin R) y}{\sin^2 R + |y|^2 \cos^2 R} \in \mathcal{S}_{\cos R}.\]
\end{lemma}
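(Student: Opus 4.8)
\textbf{Proof plan for Lemma \ref{lem:conf-Y}.}

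The plan is to verify each assertion by direct computation using the explicit formulae from Section \ref{sec:conf-euc} and Section \ref{sec:conf-sph}. For the first claim, that $\Phi^{n+1}_y \in \Conf(B^n_\pit)$ when $\langle y, e_0\rangle = 0$, I would simply invoke the computation already carried out in the proof of Proposition \ref{prop:conf-description}: when $\langle y, e_0\rangle = 0$, equation (\ref{eq:Phi-y-Sn}) shows that $\langle \Phi_y(x), e_0\rangle = \frac{(1-|y|^2)\langle x, e_0\rangle}{1+2\langle x,y\rangle + |y|^2}$, so $\Phi_y$ preserves the hemisphere $\{\langle x,e_0\rangle \geq 0\}$ and hence restricts to a conformal diffeomorphism of $B^n_\pit$. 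For the identification with $\Xi \circ \Phi^n_y \circ \Xi^{-1}$, both maps are elements of $\Conf(B^n_\pit)$ (the right side because $\Xi$ is a conformal equivalence $\mathbb{B}^n \to B^n_\pit$ and $\Phi^n_y \in \Conf(\mathbb{B}^n)$), so by the previous lemma it suffices to check that they send the origin $0 \in \mathbb{B}^{n+1}$ to the same point and have the same rotational part; equivalently, by Lemma \ref{lem:rotation}, it suffices to check that they agree on $o = e_0$ and are compatible with the $\mathrm{SO}(n)$-action. Using (\ref{eq:Phi-y-o}) we have $\Phi^{n+1}_y(e_0) = \frac{(1-|y|^2)e_0 + 2y}{1+|y|^2}$; on the other side, $\Xi^{-1}(e_0) = 0$, $\Phi^n_y(0) = y$, and $\Xi(y) = \frac{1-|y|^2}{1+|y|^2}e_0 + \frac{2y}{1+|y|^2}$ by (\ref{eq:stereo}) — these match. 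Since both maps intertwine the $\mathrm{SO}(n)$ (equivalently $\mathrm{SO}(n+1)^o$) actions (the stereographic projection $\Xi$ is $\mathrm{SO}(n)$-equivariant, and $\Phi^n_{\Theta y} = \Theta \circ \Phi^n_y \circ \Theta^{-1}$), Lemma \ref{lem:rotation} forces them to coincide on $B^n_\pit$.

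For the formula for $Y = (\Phi_{s_R e_0} \circ \Phi_y \circ \Phi_{-s_R e_0})(0)$, the key steps are: (i) compute $\Phi_{-s_R e_0}(0) = -s_R e_0$ directly from (\ref{eq:Phi-y}) since $\Phi_w(0) = w$; (ii) compute $\Phi_y(-s_R e_0)$ using (\ref{eq:Phi-y}) with $x = -s_R e_0$ and the fact that $\langle y, e_0\rangle = 0$, so that $\langle x, y\rangle = 0$ and $|x|^2 = s_R^2$ — this yields $\Phi_y(-s_R e_0) = \frac{(1-|y|^2)(-s_R e_0) + (1+s_R^2)y}{1+s_R^2|y|^2}$; (iii) apply $\Phi_{s_R e_0}$ to this point using (\ref{eq:Phi-y}) again, now with $y$ replaced by $s_R e_0$. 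The final step is the most calculation-heavy: one sets $x = \frac{(1-|y|^2)(-s_R e_0) + (1+s_R^2)y}{1+s_R^2|y|^2}$, computes $\langle x, s_R e_0\rangle$, $|x|^2$, and substitutes, then simplifies using the tangent half-angle identities $\frac{2s_R}{1+s_R^2} = \cos R$ and $\frac{1-s_R^2}{1+s_R^2} = \sin R$. After collecting the $e_0$-component and the $y$-component separately and clearing denominators, the claimed expression $Y = \frac{|y|^2(\cos R)e_0 + (\sin R)y}{\sin^2 R + |y|^2\cos^2 R}$ should drop out. Finally, to see $Y \in \mathcal{S}_{\cos R}$, I would compute $|Y|^2$ and $\langle Y, e_0\rangle$ from this formula (using $\langle y, e_0\rangle = 0$) and check that $\langle Y, e_0\rangle = |Y|^2 \cos R$; since the numerator of $\langle Y, e_0\rangle$ is $|y|^2 \cos R$ and the numerator of $|Y|^2$ works out to $|y|^4\cos^2 R + |y|^2\sin^2 R = |y|^2(\sin^2 R + |y|^2\cos^2 R)$, one gets $|Y|^2 = \frac{|y|^2}{\sin^2 R + |y|^2\cos^2 R}$, so indeed $\langle Y, e_0\rangle = |Y|^2 \cos R$.

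\textbf{Main obstacle.} The only real work is bookkeeping in step (iii) of the $Y$ computation: composing the two fractional linear transformations and simplifying the nested rational expression into the clean closed form. There is nothing conceptually subtle — it is a matter of carefully tracking the $e_0$- and $y$-directions (which are orthogonal throughout, since $\langle y, e_0\rangle = 0$) and repeatedly applying the half-angle identities for $s_R$. A mild efficiency is available by noting $\Phi_{-s_R e_0} = \Phi_{s_R e_0}^{-1}$, so $Y$ is the image under conjugation of $\Phi_y(0) = y$; but since $y$ itself need not be fixed by $\Phi_{\pm s_R e_0}$, one still has to carry out the substitution. I would present this as a short direct calculation rather than belabour the algebra.
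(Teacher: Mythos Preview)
Your proposal is correct and follows essentially the same approach as the paper: both verify the first claim by checking that the two conformal maps agree on $e_0$ (via (\ref{eq:Phi-y-o}) and (\ref{eq:stereo})) and invoke Lemma \ref{lem:rotation}, and both compute $Y$ by the same three-step substitution $0 \mapsto -s_R e_0 \mapsto \Phi_y(-s_R e_0) \mapsto Y$ followed by the half-angle simplification and the check that $\langle Y, e_0\rangle = |Y|^2\cos R$. The only minor difference is that the paper stops at ``up to rotation'' for the first part (noting full equality ``may be verified by direct computation''), whereas you sketch an equivariance argument to pin down the rotation.
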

\begin{proof}
The first statement may be verified by direct computation, but we also point out that checking the image of $e_0$ is a simple way to verify the claim up to rotations: As $\langle y,e_0\rangle=0$, certainly $\Xi \circ \Phi^n_y \circ \Xi^{-1}$ and $\Phi^{n+1}_y$ are conformal diffeomorphisms that preserve $B^n_\pit$. But by the explicit formula (\ref{eq:stereo}), note that 
\[ \Xi(\Phi^n_y(\Xi^{-1}(e_0))) = \Xi(\Phi_y(0)) = \Xi(y) = \frac{1-|y|^2}{1+|y|^2}e_0 +  \frac{2y}{1+|y|^2}.\] This is precisely as we computed in (\ref{eq:Phi-y-o}) for $\Phi^{n+1}_y$. By Lemma \ref{lem:rotation}, up to a rotation in $\mathrm{SO}(n+1)^o$, $\Xi \circ \Phi^n_y \circ \Xi^{-1}$ must coincide with $\Phi^{n+1}_y$ on $B^n_\pit$. 

The last statement for $Y=(\Phi_{s_R e_0} \circ \Phi_y \circ \Phi_{-s_R e_0})(0)$ is an essentially algebraic computation using (\ref{sec:conf-euc}), so we outline some key points:

 By definition we have $\Phi_{-s_R e_0}(0) = -s_R e_0$. Now as $\langle y,e_0\rangle=0$, we have

\[ X:=\Phi_y(-s_R e_0) =  \frac{-(1-|y|^2)s_R e_0 +(1+s_R^2)y }{1+s_R^2 |y|^2 }.\]

In particular, $1+ 2\langle X, s_R e_0\rangle = \frac{1-2s_R^2 + 3s_R^2 |y|^2}{1+s_R^2 |y|^2 }$ and $|X|^2 = \frac{|y|^2 + s_R^2}{1+s_R^2 |y|^2 }$. Substituting in to (\ref{eq:Phi-y}) once more, we have

\[
\begin{split}
Y &= \Phi_{s_R e_0}(\Phi_y(-s_R e_0)) = \Phi_{s_R e_0}(X)
\\&= \frac{(1-s^2)(-(1-|y|^2)s_R e_0 +(1+s_R^2)y ) + \left((1-2s_R^2 + 3s_R^2 |y|^2)+ (|y|^2 + s_R^2) \right) s_R e_0}{(1-2s_R^2 + 3s_R^2 |y|^2) + (|y|^2 + s_R^2) s_R^2}
\\&=  \frac{2s_R(1+s_R^2)|y|^2 e_0 + (1-s_R^4)y}{(1-s_R^2)^2 + 4s_R^2 |y|^2}
\end{split}
\]

Recalling that $s_R = \tan \frac{\pit-R}{2}$, it follows that 

\[Y=\frac{|y|^2(\cos R) e_0 + (\sin R) y}{\sin^2 R + |y|^2 \cos^2 R}.\]

Recalling again that $\langle y,e_0\rangle =0$, one may readily verify that indeed
\[\langle Y,e_0\rangle = \frac{|y|^2}{\sin^2 R+ |y|^2 \cos^2 R}\cos R  = |Y|^2 \cos R.\] 
\end{proof}

Recall that $\mathcal{S}_c = \{ x\in\mathbb{B}^{n+1} | \langle x,e_0\rangle = |x|^2 c \}$. By the previous lemma, we may parametrise elements of $\Conf(B^n_R)$ (up to rotations) by $Y\in \mathcal{S}_{\cos R}$. We now describe how to realise these maps by conformal flows:

\begin{proposition}
\label{prop:flow-correspondence}
Let $Y\in \mathcal{S}_{\cos R}$. Set $\bar{s}=|Y|$ and choose coordinates so that $Y= \bar{s}((\cos\alpha)e_0 + (\sin\alpha) e_1)$ for some $\alpha \in [0,\pi]$. Further define $a = (-\cos\alpha)e_0 + (\sin\alpha)e_1$. Finally, let $\Theta_{\beta}\in\mathrm{SO}(n+1)$ be the rotation about the plane spanned by $\{e_0,e_1\}$, which maps $e_0 \mapsto (\cos \beta)e_0 + (\sin\beta)e_1$. 

Then the map $\Psi=\Theta_{-(\pi-2\alpha)} \circ \Phi_{\bar{s}a}$ is a conformal diffeomorphism in $\Conf(B^n_R)$ which maps $\Psi(0)=Y$.
\end{proposition}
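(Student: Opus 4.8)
The plan is to verify the two assertions — that $\Psi(0)=Y$ and that $\Psi\in\Conf(B^n_R)$ — by direct computation, the only non-formal input being the consequence of the hypothesis $Y\in\mathcal{S}_{\cos R}$. So first I would record that input: substituting $Y=\bar s(\cos\alpha\, e_0+\sin\alpha\, e_1)$ and $|Y|^2=\bar s^2$ into the defining relation $\langle Y,e_0\rangle=|Y|^2\cos R$ of $\mathcal{S}_{\cos R}$ gives $\bar s\cos\alpha=\bar s^2\cos R$, that is,
$$\cos\alpha=\bar s\cos R,$$
and this single identity is what makes every subsequent simplification collapse. (The degenerate case $\bar s=0$, i.e. $Y=0$, forces $\alpha=\pi/2$ and $\Psi=\mathrm{id}$, which is trivially fine, so I may assume $\bar s\in(0,1)$, noting $\bar s=|Y|<1$.) For $\Psi(0)=Y$ itself: since $\Phi_z(0)=z$ we have $\Phi_{\bar s a}(0)=\bar s a=\bar s(-\cos\alpha\, e_0+\sin\alpha\, e_1)$; expanding $\Theta_{-(\pi-2\alpha)}$ on $\{e_0,e_1\}$ (with $\Theta_\beta e_0=\cos\beta\, e_0+\sin\beta\, e_1$, $\Theta_\beta e_1=-\sin\beta\, e_0+\cos\beta\, e_1$) and applying the angle-addition formulae with $\beta=2\alpha-\pi$ turns $\bar s a$ into $\bar s(\cos\alpha\, e_0+\sin\alpha\, e_1)=Y$. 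This is pure trigonometry and needs no hypothesis.

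For $\Psi\in\Conf(B^n_R)$, I would show that $\Phi_{\bar s a}$ maps the cap $B^n_R=\{x\in\mathbb{S}^n:\langle x,e_0\rangle\ge\cos R\}$ onto the cap centred at $o^\star:=\Theta_{\pi-2\alpha}(e_0)=-\cos 2\alpha\, e_0+\sin 2\alpha\, e_1$; since $\Theta_{-(\pi-2\alpha)}(o^\star)=e_0$ and rotations carry $B^n_R(p)$ to $B^n_R(\Theta p)$, this yields $\Psi(B^n_R)=B^n_R$. Concretely, using $\Phi_{\bar s a}^{-1}=\Phi_{-\bar s a}$, the image is $\{y\in\mathbb{S}^n:\langle\Phi_{-\bar s a}(y),e_0\rangle\ge\cos R\}$; inserting the explicit formula \eqref{eq:Phi-y-Sn} for $\Phi_{-\bar s a}(y)$ and $\langle a,e_0\rangle=-\cos\alpha$, one sees the denominator $1-2\bar s\langle y,a\rangle+\bar s^2$ is positive on $\mathbb{S}^n$, so the condition becomes an affine inequality $\langle y,w\rangle\ge c$. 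Writing $a=-\cos\alpha\, e_0+\sin\alpha\, e_1$ and simplifying with $\cos\alpha=\bar s\cos R$ (and $\sin^2\alpha=1-\bar s^2\cos^2 R$) one finds
$$w=(1-\bar s^2)\big[(1-2\bar s^2\cos^2 R)\,e_0+2\bar s\cos R\sin\alpha\, e_1\big]=(1-\bar s^2)\,o^\star,\qquad c=(1-\bar s^2)\cos R,$$
because $1-2\bar s^2\cos^2 R=-\cos 2\alpha$ and $2\bar s\cos R\sin\alpha=2\cos\alpha\sin\alpha=\sin 2\alpha$. Since $|o^\star|=1$, the image is $\{y:\langle y,o^\star\rangle\ge c/(1-\bar s^2)\}=\{y:\langle y,o^\star\rangle\ge\cos R\}=B^n_R(o^\star)$, exactly as wanted. (Alternatively, once Proposition \ref{prop:balls-to-balls} tells us the image is some cap, one may read its radius $R_t\equiv R$ off \eqref{eq:moving-radius} via Lemma \ref{lem:flow-relation}, using again $\cos\alpha=\bar s\cos R$.)

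I do not anticipate a genuine obstacle here: the argument is entirely a trigonometric verification. The one place to be careful is the bookkeeping of orientations and angle conventions — matching the sign convention for $\Theta_\beta$ with the direction in which the flow $\Psi^a_t=\Phi_{\bar s a}$ transports points (towards $+a$), so that the rotation angle $-(\pi-2\alpha)$, and correspondingly the centre $o^\star$, come out with the right signs. If a more geometric derivation of $o^\star$ is wanted, one can instead restrict to the circle $C=\mathbb{S}^n\cap\mathrm{span}\{e_0,e_1\}$, which $\Phi_{\bar s a}$ preserves and on which it acts as the gradient flow $\dot\phi=-\sin\phi$ (with solution $\tan(\phi_t/2)=e^{-t}\tan(\phi_0/2)$); since the image cap has the same radius $R$, its centre on $C$ is the midpoint of the images of the diameter endpoints $z^\pm=\cos R\, e_0\pm\sin R\, e_1$, and the tangent addition formula reduces the claim "this midpoint is $o^\star$" once more to $\cos\alpha=\bar s\cos R$. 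I would present the direct version above as the main argument.
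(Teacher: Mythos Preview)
Your proof is correct. The two approaches differ in how they verify $\Phi_{\bar s a}(B^n_R)=B^n_R(o^\star)$. You work directly with the fractional linear formula \eqref{eq:Phi-y-Sn}: writing the image as $\{y:\langle\Phi_{-\bar s a}(y),e_0\rangle\ge\cos R\}$, clearing the positive denominator, and collapsing everything with the single identity $\cos\alpha=\bar s\cos R$ to get $\langle y,o^\star\rangle\ge\cos R$. This is uniform in $R$ and needs no case analysis. The paper instead uses the flow picture: it invokes Proposition~\ref{prop:balls-to-balls} to know the image is a cap, reads off $R_{\bar t}=R$ from the evolution formula \eqref{eq:moving-radius}, and then locates the centre $o_{\bar t}$ by tracking the diameter endpoints $z^\pm_t$ via Lemma~\ref{lem:u-evo}; this forces a split into $\cot R=0$ and $\cot R\neq 0$ and a small argument to pick the correct branch of $\theta^\pm$. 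Your route is shorter and more self-contained; the paper's route is chosen to stay within the flow machinery of Section~\ref{sec:conf-mono} and to illustrate how \eqref{eq:moving-radius} is used. Your parenthetical alternatives (reading $R_t$ from \eqref{eq:moving-radius}, or tracking endpoints on the circle $C$) are essentially the paper's argument.
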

\begin{proof}
First note that by definition of $\mathcal{S}_{\cos R}$ we have $\cos \alpha = s\cos R$. By Proposition \ref{prop:flow-ball}, we know that $\Psi^a_t(B^n_R) = B^n_{R_t}(o_t)$. As in the proof of Proposition \ref{prop:flow-ball}, it will be useful to consider the evolution of the diameter with endpoints $z^\pm_t$. Define $\bar{t}=2\artanh\bar{s}$, so that (acting on $\mathbb{S}^n$) we have $\Psi^a_{\bar{t}} = \Phi_{sa}$. 

We divide into cases depending on the sign of $\cot R$. 

Case 1: $\cot R=0$. Then $R=\alpha = \pit$. In particular, we note that $\Psi^a_t$ preserves $B^n_\pit$ for all $t$. Indeed, by Lemma \ref{lem:u-evo} we have $u_a(z^\pm_t) \equiv \pm1$ for all $t$, hence $R_t\equiv \pit$. This implies $o_t =\pm e_0$, so by continuity $o_t\equiv e_0$. We conclude that $\Psi^a_{\bar{t}} = \Phi_{sa}$ is a conformal diffeomorphism in $\Conf(B^n_\pit)$ which maps \[\Psi^a_t(0) = \bar{s}a = \bar{s}e_1=Y.\] 

Case 2: $\cot R\neq 0$. We first claim that, flowing in the particular direction $a$ as above, $\cot R_t$ always has the same sign as $\cot R$. Indeed, by (\ref{eq:moving-radius}) for any $t=2\artanh s$ we have \[\cot R_t = \frac{ 1+s^2 - 2s \bar{s}}{1-s^2}\cot R,\] 
where again $\bar{s}  = \frac{\cos\alpha}{\cos R}$. This establishes the claim as $\bar{s} = |Y| <1$. In particular, $\cot R_t$ will never vanish along this trajectory. Moreover, when $s=\bar{s}$ the above gives that $\cot R_{\bar{t}} = \cot R$, hence $R_{\bar{t}}=R$.

We may locate the centre $o_{\bar{t}}$ in similar fashion. By symmetry, $z^\pm_t$ (hence $o_t$) lie in the plane spanned by $\{e_0,e_1\}$. Note that $u$ is monotone increasing on any trajectory, for instance by (\ref{eq:u-monotone}). In particular, $z^\pm_t$ can never reach $\{\pm a\}$, and so they lie in a fixed semicircle
\[ S= \begin{cases}
\{ (\cos \theta)e_0 + (\sin\theta)e_1| \theta\in (-\alpha, \pi-\alpha) \} &, \text{if }R <\pit \\
\{ (\cos \theta)e_0 + (\sin\theta)e_1| \theta\in (\pi-\alpha, 2\pi-\alpha) \} &, \text{if } R>\pit,
\end{cases}\]
for all $t$. Now by Lemma \ref{lem:u-evo} and (\ref{eq:endpt-initial}), we have at time $\bar{t}$ that

\[u_a(z^\pm_{\bar{t}}) = \frac{2\bar{s} - (1+\bar{s}^2)\cos(\alpha\pm R)}{1+\bar{s}^2-2\bar{s} \cos(\alpha\pm R)} =  \cos(\alpha \mp R).\] 

Thus $z^\pm_{\bar{t}} = (\cos \theta^\pm)e_0 + (\sin\theta^\pm)e_1$, where $\theta^+$ is either $\pi-R$ or $\pi-2\alpha +R$, and $\theta^-$ is either $\pi+R$ or $\pi-2\alpha -R$. As $\bar{s} =\frac{\cos \alpha}{\cos R}\in(0,1)$, we see that if $R\in(0,\pit)$ then $\alpha\in (R,\pit)$, and if $R\in(\pit,\pi)$ then $\alpha\in(\pit,R)$. In either case, the only option for $\theta^\pm$ that satisfies $z^\pm_{\bar{t}}\in S$ is $\theta^\pm = \pi-2\alpha \pm R$. It follows that \[o_{\bar{t}} = \cos(\pi-2\alpha) e_0 + \sin(\pi-2\alpha)e_1.\] 

In particular, $\Theta_{-(\pi-2\alpha)}(o_{\bar{t}}) = e_0$, and as we also have $R_{\bar{t}}=R$, we conclude that the map $\Psi=\Theta_{-(\pi-2\alpha)} \circ \Psi^a_t$ sends $B^n_R$ to itself, so in particular $\Psi \in \Conf(B^n_R)$. By Lemma \ref{lem:flow-relation}, we have $\Psi^a_t(0) = \Phi_{\bar{s}a}(0) = \bar{s}a$. Finally we compute that, as desired,\[\Psi(0) = \Theta_{-(\pi-2\alpha)}(\bar{s}a) = \bar{s}((\cos\alpha)e_0 + (\sin\alpha)e_1) = Y.\] 

%For the last statement, we compute using (\ref{eq:Phi-y-Sn}) that 
%\[\Psi^a_t(e_0) = \Phi_{sa}(e_0) = \frac{ (1-s^2)e_0 + 2s(1-s\cos\alpha)a}{1-2s\cos\alpha + s^2} .\] 
%Applying the rotation, we find that 
%\[\begin{split}
%\Psi(e_0) &=  \frac{ (1-s^2)(\cos(-(\pi-2\alpha))e_0 + \sin(-(\pi-2\alpha))e_1)+ 2s(1-s\cos\alpha)((\cos\alpha)e_0+(\sin\alpha)e_1)}{1-2s\cos\alpha + s^2} 
%\\&=  \frac{ (1-s^2)(\cos(-(\pi-2\alpha))e_0 + \sin(-(\pi-2\alpha))e_1)+ 2s(1-s\cos\alpha)((\cos\alpha)e_0+(\sin\alpha)e_1)}{1-2s\cos\alpha + s^2} 
%\end{split}
%\]
\end{proof}

\section{Spherical caps as conformal balls}
\label{sec:model}

In this section, we briefly and explicitly describe how the Euclidean ball $\mathbb{B}^n$ or the hemisphere $B^n_\pit \subset \mathbb{S}^n$ may be used as a conformal model for any $B^n_R \subset \mathbb{S}^n$. Recall the notation $s_R = \tan \frac{\pit-R}{2}$. 

\subsection{Hemisphere model}

Let $\rho$ denote the spherical distance from $o=e_0$. Here we recall the following maps, by which one may use the hemisphere $B^n_\pit$ as a conformal model for $B^n_R$ and $\mathbb{B}^n$:

\begin{enumerate}

\item The map $\Phi_{s_R e_0}$ gives a conformal automorphism of $\mathbb{S}^n$ which takes $B^n_\pit$ to $B_R$. 

\item The stereographic projection $\Xi^{-1}:\mathbb{S}^n\setminus\{-o\}$ takes $B^n_\pit$ to $\mathbb{B}^n$. Explicitly, 

\[\Xi^{-1}(x) = \frac{x-\langle x,e_0\rangle e_0}{1+\langle x,e_0\rangle} .\]

\end{enumerate}

\begin{lemma}
\,\\
\begin{enumerate}
\item Under the conformal diffeomorphism $\Phi_{s_R e_0}$, the pullback metric is $(\Phi_{s_R e_0})^* \bar{g} = e^{2\psi_R} \bar{g} $, where $\psi_R:\mathbb{S}^n\to\mathbb{R}$ is defined by

\[e^{\psi_R}= \frac{\sin R}{1+\cos R \cos \rho}. \]

\item Under the conformal diffeomorphism $\Xi^{-1}$, the pullback metric is $(\Xi^{-1})^*\delta = e^{2\psi_0} \bar{g}$, where $\psi_0:\mathbb{S}^n\setminus\{-o\}\to\mathbb{R}$ is defined by
\[e^{\psi_0} = \frac{1}{1+\cos\rho}.\]
\end{enumerate}
\end{lemma}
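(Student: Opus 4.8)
The plan is to verify both conformal factor formulae by direct computation, treating each map as a conformal diffeomorphism whose factor is already essentially recorded earlier in the paper. For item (2), I would start from the known factor for inverse stereographic projection. We have the explicit formula \eqref{eq:stereo} for $\Xi$, and differentiating it (or using the standard fact that inverse stereographic projection from a sphere of radius $1$ has $\Xi^*\bar g = e^{2\psi}\delta$ with a factor depending on $|z|^2$) one finds $\Xi^*\bar g = \left(\frac{2}{1+|z|^2}\right)^2\delta$. Since $\langle \Xi(z),e_0\rangle = \frac{1-|z|^2}{1+|z|^2}$, we have $1+\cos\rho = 1 + \langle\Xi(z),e_0\rangle = \frac{2}{1+|z|^2}$ at the point $x=\Xi(z)$. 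Hence the factor $\frac{2}{1+|z|^2}$ equals $1+\cos\rho$, and therefore $(\Xi^{-1})^*\delta = e^{-2\psi}\bar g$ with $e^{\psi_0} = \frac{1}{1+\cos\rho}$ (the inverse map contributes the reciprocal factor). This is short once the bookkeeping of which direction the map goes is handled carefully.

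For item (1), I would use formula \eqref{eq:conf-factor-phi} for the conformal factor of $\Phi_y$ on $\mathbb{B}^{n+1}$, restricted to $\mathbb{S}^n$, together with \eqref{eq:conf-factor-grad}/\eqref{eq:conf-factor} from Lemma \ref{lem:conf-factor}. Actually the cleanest route is via Lemma \ref{lem:conf-factor} with $a = e_0$: by Lemma \ref{lem:flow-relation}, $\Phi_{s_R e_0} = \Psi^{e_0}_t$ where $s_R = \tanh\frac t2$, and \eqref{eq:conf-factor} gives $e^{2\psi} = \frac{1-(u_{e_0}\circ\Psi)^2}{1-u_{e_0}^2}$. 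Now $u_{e_0}(x) = \langle x,e_0\rangle = \cos\rho$, and by \eqref{eq:e0-to-e0} we have $u_{e_0}(\Phi_{s_R e_0}(x)) = \frac{\cos\rho + \cos R}{1+\cos\rho\cos R}$. Substituting and simplifying, $1 - \left(\frac{\cos\rho+\cos R}{1+\cos\rho\cos R}\right)^2 = \frac{(1+\cos\rho\cos R)^2 - (\cos\rho+\cos R)^2}{(1+\cos\rho\cos R)^2} = \frac{(1-\cos^2\rho)(1-\cos^2 R)}{(1+\cos\rho\cos R)^2} = \frac{\sin^2\rho\,\sin^2 R}{(1+\cos\rho\cos R)^2}$, while the denominator is $1-\cos^2\rho = \sin^2\rho$. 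Thus $e^{2\psi_R} = \frac{\sin^2 R}{(1+\cos\rho\cos R)^2}$, i.e. $e^{\psi_R} = \frac{\sin R}{1+\cos R\cos\rho}$, as claimed.

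The main obstacle is essentially bookkeeping rather than substance: one must be careful about which argument $\rho$ refers to (the distance function on the source or the target sphere) and about the direction of pullback, since the hemisphere is used as the \emph{source} model and $\Xi^{-1}$ versus $\Xi$ flip the conformal factor to its reciprocal. I would state explicitly at the start of the proof that $\rho$ denotes the spherical distance from $o=e_0$ \emph{in the source copy of $B^n_\pit$} (equivalently, that the identity $1+\cos\rho = \frac{2}{1+|z|^2}$ holds at $x=\Xi(z)$), so that the formulae are unambiguous. With that convention fixed, each part is a two-line algebraic simplification using the identities already proved, and no genuinely new input is needed.

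\medskip

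\emph{Proof sketch.} Throughout, $\rho$ denotes the spherical distance from $o=e_0$, regarded as a function on the source hemisphere. For (1), by Lemma \ref{lem:flow-relation} we have $\Phi_{s_R e_0} = \Psi^{e_0}_t$ with $s_R = \tanh\frac t2$, so Lemma \ref{lem:conf-factor} gives $(\Phi_{s_R e_0})^*\bar g = e^{2\psi_R}\bar g$ with
\[
e^{2\psi_R} = \frac{1 - (u_{e_0}\circ\Phi_{s_R e_0})^2}{1-u_{e_0}^2}.
\]
Since $u_{e_0} = \cos\rho$ and, by \eqref{eq:e0-to-e0}, $u_{e_0}\circ\Phi_{s_R e_0} = \frac{\cos\rho+\cos R}{1+\cos\rho\cos R}$, the numerator equals $\frac{(1-\cos^2\rho)(1-\cos^2 R)}{(1+\cos\rho\cos R)^2}$ and the denominator equals $1-\cos^2\rho$, whence $e^{2\psi_R} = \frac{\sin^2 R}{(1+\cos\rho\cos R)^2}$, i.e. $e^{\psi_R} = \frac{\sin R}{1+\cos R\cos\rho}$. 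For (2), the inverse stereographic map $\Xi$ of \eqref{eq:stereo} satisfies $\Xi^*\bar g = \left(\frac{2}{1+|z|^2}\right)^2\delta$, and $1+\langle\Xi(z),e_0\rangle = \frac{2}{1+|z|^2}$; hence $(\Xi^{-1})^*\delta = e^{2\psi_0}\bar g$ with $e^{\psi_0} = \frac{1}{1+\cos\rho}$. \qed
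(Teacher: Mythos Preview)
Your proof is correct and follows essentially the same approach as the paper: for (1) you invoke Lemmas \ref{lem:flow-relation} and \ref{lem:conf-factor} together with \eqref{eq:e0-to-e0} and simplify, and for (2) you use the standard stereographic factor $\Xi^*\bar g = \left(\frac{2}{1+|z|^2}\right)^2\delta$ and the identity $1+\langle\Xi(z),e_0\rangle = \frac{2}{1+|z|^2}$, exactly as the paper does. Your extra care about the direction of pullback and the meaning of $\rho$ is helpful but does not constitute a different argument.
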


\begin{proof}
By Lemmas \ref{lem:flow-relation}, \ref{lem:conf-factor}, and by (\ref{eq:e0-to-e0}), for $x\in B^n_\pit$ we have $(\Phi_{s_R e_0})^*\bar{g} = e^{2\psi_R}\bar{g}$, where
\begin{equation}
\label{eq:conf-factor-psiR}
\begin{split}
e^{2\psi_R} &= \frac{1-\langle \Phi_{s_R e_0}(x), e_0\rangle^2}{1-\langle x,e_0\rangle^2} 
= \frac{1-\left(\frac{\langle x,e_0\rangle +\cos R}{1 + \langle x, e_0\rangle \cos R}\right)^2}{1-\langle x,e_0\rangle^2} 
\\&= \frac{\sin^2 R}{(1+\langle x,e_0\rangle \cos R)^2}. 
\end{split}
\end{equation}

It is well-known that for $z\in \mathbb{B}^n$ the (inverse) stereographic projection $\Xi: \mathbb{B}^n \to B^n_\pit$ satisfies 
\begin{equation}
\label{eq:conf-factor-Xi}
\Xi^*\bar{g} = \left(\frac{2}{1+|z|^2}\right)^2 \delta.
\end{equation}
 Thus $(\Xi^{-1})^* \delta =  e^{2\psi_0}\bar{g}$, where 

\[ e^{\psi_0} = \frac{1+|\Xi^{-1}(x)|^2}{2} = \frac{1+\frac{1-\langle x,e_0\rangle^2}{(1+\langle x,e_0\rangle)^2}}{2} = \frac{1}{1+\langle x,e_0\rangle} \]

Noting that $\cos\rho=\langle x,e_0\rangle$ implies both results. 
\end{proof}

\subsection{Euclidean ball model}

Here, to use $\mathbb{B}^n$ as a conformal model for $B^n_R\subset \mathbb{S}^n$, recall that $\Xi_R=\Phi_{s_R e_0}\circ \Xi$ defines a conformal equivalence $\Xi_R:\mathbb{B}^n \to B^n_R$. 

\begin{lemma}
\label{lem:h-R}
Under the conformal diffeomorphism $\Xi_R:\mathbb{B}^n \to B^n_R$, the pullback metric is given by $\Xi_R^*\bar{g} = f_R^2 \delta$, where $f_R:\mathbb{B}^n\to\mathbb{R}$ is defined by \[f_R(z)=\frac{2\sin R}{1+|z|^2 + (1-|z|^2)\cos R}.\] 
\end{lemma}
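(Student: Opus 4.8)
The statement to prove is Lemma~\ref{lem:h-R}: that under the conformal equivalence $\Xi_R = \Phi_{s_R e_0}\circ \Xi : \mathbb{B}^n \to B^n_R$, the pullback metric is $\Xi_R^*\bar{g} = f_R^2\,\delta$ with $f_R(z) = \frac{2\sin R}{1+|z|^2+(1-|z|^2)\cos R}$. The plan is to compute the pullback as a composition, using the two conformal factors already recorded in the preceding section. Since $\Xi_R = \Phi_{s_R e_0}\circ \Xi$, we have $\Xi_R^*\bar g = \Xi^*\big((\Phi_{s_R e_0})^*\bar g\big) = \Xi^*\big(e^{2\psi_R}\bar g\big) = (e^{2\psi_R}\circ \Xi)\cdot \Xi^*\bar g$, so $f_R^2 = (e^{2\psi_R}\circ \Xi)\cdot \big(\tfrac{2}{1+|z|^2}\big)^2$, using the standard stereographic conformal factor $\Xi^*\bar g = \big(\tfrac{2}{1+|z|^2}\big)^2\delta$ from equation~(\ref{eq:conf-factor-Xi}).

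The key computation is therefore to evaluate $e^{\psi_R}$ at the point $\Xi(z)$. From the previous lemma, $e^{\psi_R} = \frac{\sin R}{1+\cos R\cos\rho}$ where $\cos\rho = \langle x,e_0\rangle$; equivalently, from equation~(\ref{eq:conf-factor-psiR}), $e^{\psi_R} = \frac{\sin R}{1+\langle x,e_0\rangle \cos R}$. By the explicit stereographic formula~(\ref{eq:stereo}), $\langle \Xi(z),e_0\rangle = \frac{1-|z|^2}{1+|z|^2}$. Substituting gives
\[
e^{\psi_R}\big(\Xi(z)\big) = \frac{\sin R}{1 + \frac{1-|z|^2}{1+|z|^2}\cos R} = \frac{(1+|z|^2)\sin R}{(1+|z|^2) + (1-|z|^2)\cos R}.
\]
Then
\[
f_R(z)^2 = \Big(\frac{(1+|z|^2)\sin R}{(1+|z|^2)+(1-|z|^2)\cos R}\Big)^2 \cdot \frac{4}{(1+|z|^2)^2} = \frac{4\sin^2 R}{\big((1+|z|^2)+(1-|z|^2)\cos R\big)^2},
\]
and taking the positive square root (as all conformal factors are positive) yields $f_R(z) = \frac{2\sin R}{1+|z|^2+(1-|z|^2)\cos R}$, as claimed.

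This is an entirely routine chain-rule computation, so there is no genuine obstacle; the only point requiring minor care is bookkeeping which conformal factor is pulled back by which map (i.e., remembering that $\Xi_R^*\bar g = \Xi^*\circ (\Phi_{s_R e_0})^* \bar g$ composes in that order, and that $e^{2\psi_R}$ must be evaluated at $\Xi(z)$, not at $z$). One could alternatively verify the formula directly by writing $\Xi_R$ explicitly via~(\ref{eq:Phi-y}) and~(\ref{eq:stereo}) and computing $|d(\Xi_R)_z|$, but this is more laborious and less transparent than composing the two known factors.
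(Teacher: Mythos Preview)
Your proof is correct and follows essentially the same approach as the paper: compose the two known conformal factors via $\Xi_R^*\bar g = \Xi^*(e^{2\psi_R}\bar g)$, evaluate $e^{\psi_R}$ at $\Xi(z)$ using $\langle \Xi(z),e_0\rangle = \frac{1-|z|^2}{1+|z|^2}$, and simplify. The paper's proof is slightly terser but identical in substance.
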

\begin{proof}

By the explicit formula (\ref{eq:stereo}), we have $\langle \Xi(z) ,e_0 \rangle = \frac{1-|z|^2}{1+|z|^2}$. Using (\ref{eq:conf-factor-psiR}) and (\ref{eq:conf-factor-Xi}) again, we find that \[\Xi_R^* \bar{g} =  \Xi^*(e^{2\psi_R}\bar{g}) = f_R^2 \delta,\]

where 
\[\begin{split}
f_R(z) &= \left(\frac{\sin R}{1+\frac{1-|z|^2}{1+|z|^2} \cos R}\right)\left( \frac{2}{1+|z|^2}\right) = \frac{2\sin R}{1+|z|^2 + (1-|z|^2)\cos R}. 
\end{split}\]

\end{proof}

\begin{corollary}
\label{cor:area-limit}
The rescaled metrics $(\csc^2 R)\, \Xi_R^* \bar{g}$ on $\mathbb{B}^n$ converge smoothly to $\delta$ as $R\to 0$.

If $(\Sigma,\pr\Sigma)$ is an immersed submanifold in $(\mathbb{B}^n, \pr\mathbb{B}^n)$, and $\Sigma_R = (\Xi_R)_*\Sigma$, then 
\[|\Sigma|_\delta =  \lim_{R\to 0} (\sin R)^{-k} |\Sigma_R|_{\bar{g}}\]

\[|\pr\Sigma|_\delta = \lim_{R\to 0} (\sin R)^{1-k} |\pr\Sigma_R|_{\bar{g}}\]
\end{corollary}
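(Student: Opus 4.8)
The plan is to deduce everything directly from the conformal factor computed in Lemma \ref{lem:h-R}. First I would set $\rho_R(z):=\frac{f_R(z)}{\sin R}=\frac{2}{1+|z|^2+(1-|z|^2)\cos R}$, so that Lemma \ref{lem:h-R} gives $(\csc^2 R)\,\Xi_R^*\bar g=\rho_R^2\,\delta$. Since $\cos R\to 1$ as $R\to 0$, the denominator tends pointwise to $1+|z|^2+(1-|z|^2)=2$; and because for $\cos R\in[0,1]$ this denominator is $(1+\cos R)+(1-\cos R)|z|^2$, which stays in $[1,2]$ on $\overline{\mathbb{B}}^n$, I would observe that $\rho_R$ and all of its $z$-derivatives — polynomials in $z$ times powers of $(1-\cos R)$ divided by powers of that denominator, the latter bounded away from $0$ — converge uniformly on $\overline{\mathbb{B}}^n$ to the constant function $1$ (derivatives of order $\geq 1$ converge uniformly to $0$). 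This yields $(\csc^2 R)\,\Xi_R^*\bar g\to\delta$ in $C^\infty(\overline{\mathbb{B}}^n)$, which is the first assertion.

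Next, for the area identities I would write out the induced metric on $\Sigma_R=(\Xi_R)_*\Sigma$. Since $\Sigma_R$ is immersed by $\Xi_R\circ\iota$, where $\iota:(\Sigma^k,\pr\Sigma)\looparrowright(\mathbb{B}^n,\pr\mathbb{B}^n)$ is the given immersion, its induced metric is $\iota^*(\Xi_R^*\bar g)=(f_R\circ\iota)^2\,\iota^*\delta$. Hence the $k$-dimensional volume element of $\Sigma_R$ equals $(f_R\circ\iota)^k$ times the Euclidean volume element $\mu_\delta$ of $(\Sigma,\iota^*\delta)$, and the boundary volume element of $\pr\Sigma_R$ equals $(f_R\circ\iota)^{k-1}\sigma_\delta$. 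Therefore
\[ (\sin R)^{-k}|\Sigma_R|_{\bar g}=\int_\Sigma\rho_R^k\,\mu_\delta,\qquad (\sin R)^{1-k}|\pr\Sigma_R|_{\bar g}=\int_{\pr\Sigma}\rho_R^{k-1}\,\sigma_\delta. \]
Using that $\Sigma$ is compact, so $\mu_\delta(\Sigma)$ and $\sigma_\delta(\pr\Sigma)$ are finite, and that $\rho_R\to 1$ uniformly on $\overline{\mathbb{B}}^n\supseteq\iota(\Sigma)$ (or, if preferred, dominated convergence with the bound $1\le\rho_R\le 2$ valid for $\cos R\in[0,1]$), I would pass to the limit inside both integrals to conclude $\lim_{R\to0}(\sin R)^{-k}|\Sigma_R|_{\bar g}=\mu_\delta(\Sigma)=|\Sigma|_\delta$ and $\lim_{R\to0}(\sin R)^{1-k}|\pr\Sigma_R|_{\bar g}=\sigma_\delta(\pr\Sigma)=|\pr\Sigma|_\delta$.

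There is no real obstacle here. The one point meriting care is that the convergence $\rho_R\to 1$ (with derivatives $\to 0$) is \emph{uniform up to the boundary sphere} $\{|z|=1\}$ — which is precisely where $\pr\Sigma$ lies — and this is immediate from the uniform lower bound $1+|z|^2+(1-|z|^2)\cos R\geq 1$; it is exactly what licenses interchanging the limit with the integrals over the compact $\Sigma$ and $\pr\Sigma$.
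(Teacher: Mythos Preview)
Your proof is correct and follows essentially the same approach as the paper: both use Lemma \ref{lem:h-R} to identify $(\csc^2 R)\,\Xi_R^*\bar g = (f_R/\sin R)^2\,\delta$, observe that $f_R/\sin R\to 1$ smoothly on $\overline{\mathbb{B}}^n$, and deduce the area and boundary-length limits from this convergence together with the conformal scaling of volumes. You have simply supplied more detail on the uniform convergence and the passage to the limit in the integrals, which the paper leaves implicit.
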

\begin{proof}
As $\Xi_R^*\bar{g} = f_R^2 \delta$, the first statement follows immediately from 
\[ \lim_{R\to 0} \frac{f_R}{\sin R} = \lim_{R\to 0} \frac{2}{1+|z|^2 + (1-|z|^2)\cos R} =1.\]
The second follows from the smooth convergence of metrics and from the scaling relations $|\Sigma_R|_{(\csc^2 R)\,\bar{g}} = (\sin R)^{-k} |\Sigma_R|_{\bar{g}}$, $|\pr\Sigma_R|_{(\csc^2 R)\, \bar{g}} = (\sin R)^{1-k} |\pr\Sigma_R|_{\bar{g}}$. 
\end{proof}

\bibliography{low-genus}

\end{document}